\documentclass[english,11 pt]{amsart}%
\usepackage{amsthm}
\usepackage{mathrsfs}
\usepackage{a4wide}
\usepackage{verbatim}
\usepackage[T1]{fontenc}
\usepackage[latin9]{inputenc}
\usepackage{amssymb}
\usepackage{amsfonts}
\usepackage{babel}
\usepackage{amsmath}
\usepackage{graphicx}
\usepackage{color}%
\usepackage{mathtools}

\usepackage{newtxtext}
\usepackage{newtxmath}

\usepackage{marginnote}

\usepackage[normalem]{ulem}
\normalem
\newcounter{corr}
\definecolor{violet}{rgb}{0.580,0.,0.827}
\newcommand{\corr}[3]{\typeout{Warning : a correction remains in page
\thepage}
				\stepcounter{corr}
				{\color{blue}\ifmmode\text{\,\sout{\ensuremath{#1}}\,}\else\sout{#1}\fi}
        {\color{red}#2}
        {\color{violet} 
					#3
					} 
	}

\usepackage{hyperref}

\input colordvi

\newcommand\del[1]{}
\newcommand\dela[1]{}

\newcommand\comdel[1]{}

\newcommand\lb{\left\langle}
\newcommand\rb{\right\rangle}

\hypersetup{colorlinks,%
citecolor=black,%
filecolor=black,%
linkcolor=black,%
urlcolor=black,%
pdftex}


%
\setlength{\parindent}{0em}
\setcounter{MaxMatrixCols}{30}
\makeatletter
\numberwithin{equation}{section}
\numberwithin{figure}{section}
\@ifundefined{theoremstyle}{\usepackage{amsthm}}{}
\theoremstyle{plain}
\newtheorem{thm}{Theorem}[section]
\theoremstyle{remark}

\newtheorem*{acknowledgement*}{Acknowledgement}
\theoremstyle{plain}

\theoremstyle{plain}

\newcounter{casectr}

\theoremstyle{remark}

\theoremstyle{remark}

\theoremstyle{definition}

\theoremstyle{plain}

\theoremstyle{plain}
\newtheorem{cor}[thm]{Corollary}
\theoremstyle{plain}

\theoremstyle{definition}

\theoremstyle{definition}
\newtheorem{defn}[thm]{Definition}
\theoremstyle{definition}

\theoremstyle{definition}

\theoremstyle{plain}

\theoremstyle{plain}
\newtheorem{lem}[thm]{Lemma}
\theoremstyle{remark}
\newtheorem{notation}[thm]{Notation}
\theoremstyle{definition}
\newtheorem{problem}[thm]{Problem}
\theoremstyle{plain}
\newtheorem{prop}[thm]{Proposition}
\theoremstyle{remark}
\newtheorem*{rem*}{Remark}
\theoremstyle{remark}
\newtheorem{rem}[thm]{Remark}
\theoremstyle{remark}

\theoremstyle{plain}
\newtheorem{assumption}[thm]{Assumption}

\numberwithin{equation}{section}
\makeatother

\newcommand{\ud}{\,\mathrm{d}}

\newcommand{\lmd}{\lambda}

\newcommand{\R}{\mathbb{R}}

\newcommand{\eps}{\varepsilon}
\newcommand{\vp}{\varphi}
\newcommand{\B}{\mathscr{B}}

\newcommand{\esup}{\mathop{\mathrm{ess}\sup}}

\newcommand{\lspan}{\mathrm{linspan}}
\newcommand{\llangle}{\left\langle}
\newcommand{\rrangle}{\right\rangle}

\newcommand{\E}{\mathbb{E}}

\newcommand{\cl}{\mathcal}
\newcommand{\bb}{\mathbb}

\newcommand{{\HH}}{\mathbb{H}}
\newcommand{\V}{\mathbb{V}}

\newcommand{\til}{\widetilde}

\def\la{\left(}
\def\ra{\right)}

\def\gjn{G_{jn}^\prime\la\til M_n\ra\left[G_{jn}\la\til M_n\ra\right]}
\def\gj{\la G^\psi_{j}\ra^\prime\la\til M\ra\left[G^\psi_{j}\la\til M\ra\right]}

\begin{document}
	
\title{3D Stochastic Landau-Lifshitz-Gilbert Equations coupled with Maxwell's Equations with full energy}
\author[Z Brze\'zniak, B. Goldys and Liang Li]{Zdzis{\l}aw Brze\'zniak, Ben Goldys and Liang Li}
\maketitle
\today
\begin{abstract}
	We consider 3D stochastic Landau-Lifshitz-Gilbert equations coupled with the Maxwell equations with the full energy. We have proved the existence of the weak solution as well as some further regularities of it.
\end{abstract}


\tableofcontents

\section{Introduction}
Landau-Lifshitz-Gilbert equation (LLGE) coupled to Maxwell equations provides a fundamental mathematical model for physical properties of ferromagnetic materials,
and it has been intensely investigated by physicists since the seminal work by Landau and Lifshitz \cite{Landau} and Gilbert \cite{Gilbert}. The exact form of the equation is determined by the energy functional that may include varying number of terms, so that in fact we have to deal with the whole family of equations. The first mathematical analysis of the LLGE corresponding to the full energy functional and coupled to the time dependent Maxwell equations was provided by Visintin in \cite{Visintin}.
\par\noindent
 It has been noticed by physics community a long time ago, that
 there are phase transitions between different equilibrium states, such phase transitions are induced
 by thermal fluctuations of the effective magnetic field. To describe the phase transitions, 
the noise must be included into the deterministic LLGE, see \cite{neel,brown,berkow}. Including the noise into LLGE requires sophisticated tools from the theory of  quasi-linear stochastic PDEs that have been missing for some time and are still not well developed. A rigorous mathematical theory of stochastic LLGE was initiated in \cite{ZB&BG&TJ1} and intensely studied since then, see \cite{LB&ZB&AP&MN,LB&ZB&AP&MNbk,ZB&BG&TJ,ZB&LL1,Goldys&Le,GGL}. In all these papers a simplified version of the energy functional is considered and so far the stochastic LLG equation associated to the full energy functional and coupled to  the time dependent Maxwell equations has never been studied. This is a serious deficiency since coupling with the Maxwell equations is fundamental for many physical phenomena, such as emergence and movement of boundary vortices, and movement of the domain walls, see \cite{moser}. Even for deterministic systems, the case with time dependent Maxwell equations is not well understood and after the seminal paper \cite{Visintin} most of the effort was focused on the so-called quasi-static case. Recently, the interest in the full time-dependent case has been renewed, see for example \cite{dumas,joly,zamponi}. In the stochastic case, the only work in this direction, we are aware of, is the paper \cite{GLT} but it imposes strong simplifying assumptions on the noise and the energy functional.
 \par\noindent
 In this paper we are concerned with the stochastic Landau-Lifshitz-Gilbert equation coupled to time dependent Maxwell equations and we assume that the evolution of spins is driven by the full energy functional described below. To be more precise, given the time horizon $T>0$ and a bounded open domain $\cl D\subset R^3$, the magnetization field $M:[0,T]\times \cl D\longrightarrow \R^3$ satisfies the Landau-Lifshitz-Gilbert equation:
\begin{equation}\label{eq:LLeq}
  \frac{\ud M(t,x)}{\ud t}=\lmd_1M(t,x)\times \rho(t,x)-\lmd_2M(t,x)\times (M(t,x)\times \rho(t,x)),
\end{equation}
where $\lmd_1\in\R$ and $\lmd_2>0$, subject to the constraint
\begin{equation}\label{eq:M=M0}
  |M(t,x)|=|M_0(x)|.
\end{equation}where $\rho$ is the effective field defined by
\begin{equation}\label{eq:effmag}
  \rho=-\nabla_M\mathcal{E}.
\end{equation}
Here $\mathcal{E}$ is the total electro-magnetic energy including anisotropy energy, exchange energy, magnetic field energy and electronic energy.

In order to describe phase transitions between different equilibrium states induced by thermal fluctuations of the effective field $\rho$, we introduce the Gaussian noise into the Landau-Lifschitz equation to perturb $\rho$ and so have the following stochastic Landau-Lifschitz-Gilbert equation (SLLGE):
\setlength\arraycolsep{2pt}{
	\begin{eqnarray}\label{eq:sllgrho}
		&&\ud M(t)=\lmd_1M(t)\times\big[\rho(t)\ud t+ \sum_jh_j\circ \ud W_j(t)\big]\\
		&&-\lmd_2M(t)\times\Big(M(t)\times\big[\rho(t)\ud t+ \sum_jh_j\circ\ud W_j(t)\big]\Big),\nonumber
\end{eqnarray}}
where $\{W_j\}_j$ are independent real-valued Wiener processes and $\{h_j\}_j$ are coefficients with good enough regularities. The reason we choose Stratonovich type noise is that we want to keep the geometric property \eqref{eq:M=M0} of the SLLG equations. More detailed assumptions and discussion can be found in the statement of Problem \ref{S-LLG} and Remark \ref{rem:QW}.

Since the magnetic field energy and electronic energy are related to the magnetic field and electric field,
we also consider the magnetic field $H:[0,T]\times \R^3\longrightarrow \R^3$ and the electric field $E:[0,T]\times \R^3\longrightarrow \R^3$ in this paper. We denote
\begin{equation*}
    B:=H+\overline{M},
  \end{equation*}
  where
  \[
  \overline{M}(x):=\left\{\begin{array}{ll}
  M(x),&x\in \cl D;\\
  0,&x\notin \cl D.
  \end{array}\right.\]
Then $B$ and $E$ are related by the Maxwell's equation:
\begin{equation*}
   \ud B=\nabla\times E\ud t.
  \end{equation*}
\begin{equation*}
    \ud E=\nabla\times[B-\overline{M}]\ud t-[1_{\cl D}E+\overline{f}(t)]\ud t,
  \end{equation*}
  {
where $f$  is a map
\[f:[0,T]\times \R^3\longrightarrow \R^3,\]
which is a given non-inductive applied electromotive field.}

Summarising, the equation we are going to study in this paper has the following form:
\setlength\arraycolsep{2pt}{
  \begin{eqnarray*}
\ud M(t)&=&\left[\lmd_1 M\times \rho-\lmd_2 M\times(M\times \rho)\right]\ud t\\
  &&+\sum_{j=1}^\infty\left\{\left[ M\times h_j+  M\times(M\times h_j)\right]\circ\ud W_j(t)\right\}.
  \end{eqnarray*}}
\begin{equation*}
   \ud B(t)=\nabla\times E(t)\ud t.
  \end{equation*}
\begin{equation*}
    \ud E(t)=\nabla\times[B(t)-\overline{M}(t)]\ud s-[1_{\cl D}E(t)+\overline{f}(t)]\ud t.
  \end{equation*}
  \[\left.\frac{\partial M}{\partial \nu}\right|_{\partial \cl D}=0. \]
 \[M(0)=M_0,\quad B(0)=B_0,\quad E(0)=E_0.\]

 This paper is constructed as follows. In section 2, firstly we give all the formal definitions of all the energies and state the problem we consider. Secondly we give the definition of the solution of the stochastic differential equation. And at last we formulate the main result (Theorem \ref{thm:mainthm}) of the whole paper. In section 3, we construct a series of some auxiliary equations \eqref{eq:SVis 3.29R}, with all the elements in a finite dimensional space and prove the existence and uniqueness of the global solution of the finite dimensional equations. In section 4, we get some a'priori estimates of the series of solutions of equations \eqref{eq:SVis 3.29R}. In section 5, we show the laws of the finite dimensional solutions are tight on some spaces. In section 6, we construct a new probability space by the Skorohod Theorem in which there exist limit processes $\til{M},\til{B},\til{E}$ of the solutions of \eqref{eq:SVis 3.29R}. In section 7, we prove that the $\til{M},\til{B},\til{E}$ which we got in section 6 are actually the weak solution of our original problem. In section 8, we show some more regularities of the weak solution. Finally in section 9, we complete the proof of the main result, i.e. the Theorem \ref{thm:mainthm}. In the Appendix, we list the important lemmata which are used in this paper.
 
 By the end of this introduction, it may worth to mention that the uniqueness of 3-dimensional LLG equations is an open problem, we do not discuss it in this paper.

\section{Statement of the problem and formulation of the main result}
\begin{assumption}
Throughout this paper we assume $\cl D\subset \R^3$ to be a bounded open domain with $C^2$ boundary.
\end{assumption}
\begin{notation}
\begin{trivlist}
	\;
\item[(1)]  We use the following notations for the classical functional spaces:
  $$\mathbb{L}^p:=L^p(\cl D;\R^3) \textrm{ or }L^p(\cl D;\R^{3\times 3}),\quad \bb L^p(\R^3):=L^p(\R^3;\R^3)$$
  $$\mathbb{W}^{k,p}:=W^{k,p}(\cl D;\R^3),\;\mathbb{H}^{k}:=H^{k}(\cl D;\R^3)=W^{k,2}(\cl D;\R^3),\;\textrm{and }\mathbb{V}:=\mathbb{W}^{1,2},\;\mathbb{H}:=\mathbb{L}^2.$$
\item[(2)] The duality between a Banach space $X$ and its dual $X^*$ will be denoted by ${}_{X^*} \langle\cdot, \cdot\rangle_X$. The notations $ \langle\cdot, \cdot\rangle_{K}$ and $\| \cdot \|_{K}$ stand for the scalar product and its associated norm in a given Hilbert space $K$ respectively. The norm of a vector $x\in\R^d$ will be denoted by $|x|$ and the inner product in $\R^d$ will be denoted by $ \langle\cdot, \cdot\rangle$ for any $d$.
\item[(3)] For a function $\vp:\R^3\to\R$ we will write
\[\vp':=\nabla\vp,\quad\mathrm{and}\quad\vp^{\prime\prime}:=\nabla^2\vp\,.\]
\item[(4)] For a function $u:\cl D\rightarrow \R^3$, we denote
\[
  \overline{u}(x):=\left\{\begin{array}{ll}
  u(x),&x\in \cl D,\\
  0,&x\notin \cl D.
  \end{array}\right.\]
\item[(5)] For $u\in\mathbb{L}^2(\R^3)$, we define the distribution $\nabla \times u$ by
\[{}_{\mathscr{D}'}\llangle \nabla\times u, v\rrangle_{\mathscr{D}}=\llangle u,\nabla\times v\rrangle_\HH,\quad v\in C_0^\infty\left(\R^3,\R^3\right).\]
Then we define the Hilbert space
$$\mathbb{Y}:=\left\{u\in \mathbb{L}^2(\R^3):\nabla\times u\in \mathbb{L}^2(\R^3)\right\},$$
with the inner product
\[\llangle u,v\rrangle_\mathbb{Y}:=\llangle u,v\rrangle_{\mathbb{L}^2(\R^3)}+\llangle \nabla\times u,\nabla\times v\rrangle_{\mathbb{L}^2(\R^3)}.\]
\item[(6)] For a fixed $h\in\mathbb L^\infty$ and $\lmd_1\in\R,\,\lmd_2>0$ we define a mapping $G_h$ by
\[\mathbb L^4\ni u\longrightarrow G_h(u)=\lambda_1 u\times h-\lambda_2 u\times(u\times h)\in\mathbb L^2\,.\]
For a given sequence $\{ h_j\}_{j=1}^\infty\subset\mathbb L^\infty$  we will use the notation $G_j(u):=G_{h_j}(u)$.
\item[(7)] To avoid too long equations, we may simply use $u$ to denote $u(t,x)$.
\end{trivlist}
\end{notation}
\begin{defn}[Magnetic Induction]
Given a magnetization field $M:\cl D\longrightarrow\R^3$ and a magnetic field $H:\R^3\longrightarrow\R^3$, we define the magnetic induction as a vector field $B:\R^3\longrightarrow\R^3$ by
  \begin{equation}\label{eq:def B}
    B:=H+\overline{M}.
  \end{equation}
\end{defn}

\begin{defn} (The energy)
\begin{trivlist}
    \item[(i)] Suppose that $\vp\in C_0^2(\R^3;\R^+)$. For a magnetization field $M\in \mathbb{V}$, we define the anisotropy energy of $M$ by:
  \[\mathcal{E}_{an}(M):=\int_{\cl D}\vp(M(x))\ud x.\]
    \item[(ii)]  We define the exchange energy of $M$ by:
    \begin{equation}\label{eq:ex eng}
    \mathcal{E}_{ex}(M):=\frac{1}{2}\int_{\cl D}|\nabla M(x)|^2\ud x=\frac{1}{2}\|\nabla M\|_{\mathbb L^2}^2.
    \end{equation}
    \item[(iii)] For  a magnetic field $H\in \bb{L}^2(\R^3)$, we define the Zeeman energy by:
    \begin{equation}\label{eq:fi eng}
    \mathcal{E}_{ze}(H):=\frac{1}{2}\int_{\R^3}|H(x)|^2\ud x=\frac{1}{2}\|H\|_{\mathbb{L}^2(\R^3)}^2=\frac{1}{2}\left\|B-\overline M\right\|_{\mathbb{L}^2(\R^3)}^2.
    \end{equation}
\end{trivlist}

 Finally, given an electric field $E\in \mathbb{L}^2(\R^3)$, a magnetization field $M\in \mathbb{V}$ and a magnetic field $H\in \mathbb{L}^2(\R^3)$, (hence the magnetic induction  $B\in \mathbb{L}^2(\R^3)$) we define the total electro-magnetic energy by
 \begin{equation}\label{eq_te}
\mathcal E(M,B,E)=\int_{\mathcal D}\varphi(M(x))\,dx+\frac{1}{2}\|\nabla M\|^2_{\mathbb H}+\frac{1}{2}\left\|B-\overline M\right\|^2_{\mathbb L^2(\R^3)}+\frac{1}{2}\|E\|^2_{\mathbb L^2(\R^3)}
\end{equation}
\end{defn}

To investigate some properties of the total energy $\mathcal{E}$, we need the following Lemma, the proof of which is straightforward. 
\begin{lem}\label{lem:dEdM}
  For $M\in \mathbb{V}$, if we define $\Delta M\in \mathbb{V}^*$ by
  \begin{equation}
    {}_{\mathbb{V}^*}\llangle\Delta M,u\rrangle_{\mathbb{V}}:=-\llangle\nabla M,\nabla u\rrangle_{\mathbb L^2},\qquad\forall u\in \mathbb{V}.
  \end{equation}
  Then the total energy $\mathcal{E}:\mathbb{V}\times\mathbb{L}^2(\R^3)\times\mathbb{L}^2(\R^3)\longrightarrow\R$ defined in \eqref{eq_te} has partial derivatives of 2nd order with respect to $M, B$ and $E$ well defined and:
  \begin{equation}\label{eq:dEdM}
    \frac{\partial \mathcal{E}}{\partial M}(M,B,E)=\vp'(M)-(1_{\cl D}B-M)-\Delta M,\qquad\textrm{in } \mathbb{V}^*\,,
  \end{equation}
   for $u,v\in \mathbb{V}$,
  \begin{equation}\label{eq:d2EdM2}
  \frac{\partial^2\mathcal{E}}{\partial M^2}(M,B,E)(u,v)=\int_{\cl D}\vp''(M(x))(u(x),v(x))\ud x+\llangle u,v\rrangle_{\mathbb{V}}\,,
  \end{equation}
    \begin{equation}\label{eq:dEdB}
  \frac{\partial \mathcal{E}}{\partial B}(M,B,E)=B-{\overline{M}}\,,
\end{equation}
\begin{equation}\label{eq:dEdE}
  \frac{\partial \mathcal{E}}{\partial E}(M,B,E)=E\,.
\end{equation}
\end{lem}

Now we can define the effective field, which is the partial derivative of the total energy. 
\begin{defn}[Effective field]
	We define the effective field $\rho\in \bb V^*$ as
	\begin{equation}\label{eq:rho5}
		\rho:=\vp'(M)-(1_{\cl D}B-M)-\Delta M,\qquad\textrm{in } \mathbb{V}^*.
	\end{equation}
\end{defn}


We are now ready to formulate the problem we are going to study in this paper.
 
\begin{problem}\label{S-LLG}
 Let $\left(\Omega,\mathcal{F},\mathbb{F}=(\mathcal{F}_t)_{t\geq 0},\mathbb{P}\right)$ be a filtered probability space, and let $W=\{W_j\}_{j=1}^\infty$ be a set of independent,  real valued, $\mathbb{F}-$\dela{-adapted}Wiener processes.
Let 
$$M_0\in  \V \textrm{ with }|M_0(x)|=1\textrm{ for all }x\in\cl D   ;$$
$$B_0\in\mathbb{L}^2(\R^3);\quad\nabla\cdot B_0=0,\quad \textrm{in } \mathscr{D}'(\R^3;\R);\qquad E_0\in\mathbb{L}^2(\R^3);$$
\begin{equation}\label{ass_h}
\begin{aligned}
c_h^2=\sum_{j=1}^\infty\left\|h_j\right\|_{\mathbb{L}^\infty}+\sum_{j=1}^\infty\left\|h_j\right\|^2_{\V}&<\infty.\end{aligned}
\end{equation}
$$f\in L^2(0,T;\mathbb{H});\qquad \vp\in C_0^2(\R^3;\R^+);$$
$$ \lmd_1\in\R,\quad \lmd_2>0\,.$$

Our aim is to show that the following system of stochastic PDEs has a solution in the sense made precise below:
 \begin{equation}\label{eq:SLLG}
 \left\{
  \begin{aligned}
\ud M(t)&=\left[\lmd_1 M\times \rho-\lmd_2 M\times(M\times \rho)\right]\ud t+\sum_{j=1}^\infty G_j(M)\circ\ud W_j(t)\,,\\
   \ud B(t)&=\nabla\times E(t)\ud t\,,\\
    \ud E(t)&=\nabla\times[B(t)-\overline{M(t)}]\ud s-[1_{\cl D}E(t)+\overline{f(t)}]\ud t\,.
  \end{aligned}\right.\end{equation}
  with the boundary conditions
  \[
  \left.\frac{\partial M(t)}{\partial \nu}\right|_{\partial \cl D}=0,\qquad t\ge0, \textrm{ where $\nu$ is the exterior normal vector on $\partial \cl D$.}
  \]
  and the initial conditions
\[
 M(0)=M_0,\quad B(0)=B_0,\quad E(0)=E_0.
 \]
\end{problem}
 
The Stratonovich equation in \eqref{eq:SLLG} can be rewritten as an It\^o equation
	\setlength\arraycolsep{2pt}{
	\begin{eqnarray}\label{eq:SLLG1}
\ud M&=&\left[\lmd_1 M\times \rho-\lmd_2 M\times(M\times \rho)+\frac{1}{2}\sum_{j=1}^\infty G_j^\prime(M)G_j(M)\right]\ud t+\sum_{j=1}^\infty G_j(M)\ud W_j\\
&=&\Bigg\{\lmd_1 M\times \rho-\lmd_2 M\times(M\times \rho)+\frac{1}{2}\sum_{j=1}^\infty\bigg[\lmd_1^2\left[(M\times h_j)\times h_j\right]+\lmd_1\lmd_2\left[\big[M\times(M\times h_j)\big]\times h_j\right]\nonumber\\
&&+\lmd_2^2 \left[M\times\big[M\times(M\times h_j)\times h_j\big]\right]+\lmd_1\lmd_2 \left[M\times\big[(M\times h_j)\times h_j\big]\right]\nonumber\\
&&+\lmd_2^2\left[ M\times(M\times h_j)\times(M\times h_j)\right]\bigg]\Bigg\}\ud t+\sum_{j=1}^\infty\bigg\{\lmd_1 \left[M\times h_j\right]+\lmd_2 \left[ M\times(M\times h_j)\right]\bigg\}\ud W_j\nonumber.
\end{eqnarray}

\begin{rem}\label{rem:QW}
	We can understand the noise as a $Q$-Wiener process $W_h(t):=\sum_{j=1}^\infty W_j(t)h_j\sim N(0,tQ)$ on $\HH$ for some operator $Q$ which is nonnegative, symmetric and with finite trace.
	
	In fact, $W_h(0)=0$ a.s. is obvious.
	
	By \cite[Proposition 3.18]{DP&ZSE}, $W_jh_j$ can be viewed as a random variable taking values on $C([0,T];\HH)$ for each $j$.
	And by our assumption of $h_j$ as in \eqref{ass_h} and using Doob's maximal inequality, we can show $\{\sum_{j=1}^n W_jh_j\}_n$ is a Cauchy sequence in $L^2(\Omega; C([0,T];\HH))$ 
	 (We put the proof in the Appendix, Proposition \ref{prop:sumWjhjCh}). Therefore its limit $W_h\in L^2(\Omega;C([0,T];\HH))$. Hence $W_h$ has continuous trajectory almost surely.
	
	The independence of increment of $W_h$ follows from the fact that $W_j$ are independent for all different $j$ and they all have independent increments for each $j$.
	
	So it only remains to check the distribution of $W_h$ on $\HH$. Since $W_h$ is the sum of independent normal random variables with mean $0$, it has normal distribution with mean $0$ as well.
	
	Next we try to find its covariance operator. For any $u,v\in \HH$, we have
	\setlength\arraycolsep{2pt}{
		\begin{eqnarray*}	
			\E\left(\llangle W_h(t),u\rrangle_\HH\llangle W_h(t),v\rrangle_\HH\right)&=&\E\left(\llangle \sum_{j=1}^\infty W_j(t)h_j,u\rrangle_\HH\llangle\sum_{j=1}^\infty W_j(t)h_j,v\rrangle_\HH\right)\\
			&=&t \sum_{j=1}^\infty\llangle h_j,u\rrangle_\HH \llangle h_j,v\rrangle_\HH.
		\end{eqnarray*}}
	So the covariance operator $Q$ is uniquely determined by
	\[\llangle Qu,v\rrangle_\HH=\sum_{j=1}^\infty\llangle h_j,u\rrangle_\HH \llangle h_j,v\rrangle_\HH,\qquad u,v\in\HH.\]
	We can check that the operator $Q$ is nonnegative, symmetric and with finite trace.
	
	So $W_h$ is really a $Q$-Wiener process for some nonnegative, symmetric and with finite trace operator $Q$. Hence it has a representation
	\[W_h(t)=\sum_{j=1}^\infty \bar W_j(t)\bar h_j,\]
	Where $\bar W_j$ are independent $1$-dimensional Brownian motions and $\{\bar h_j\}_j$ is an ONB of $\HH$ which consists of eigenvectors of $Q$.
	
	Therefore we can actually assume that $\{h_j\}_j$ is an ONB of $\HH$.
\end{rem}
\begin{defn}[Weak martingale solution of equation \eqref{eq:SLLG}]\label{defn:sol}
Given $T>0$, a \textit{weak martingale solution} of equation \eqref{eq:SLLG} is a set consisting of a filtered probability space $(\til{\Omega},\til{\mathcal{F}},\til{\mathbb{F}},\til{\mathbb{P}})$,
an $\infty$-dimensional $\til{\mathbb{F}}$-Wiener process
$\til{W}=(\til{W}_j)_{j=1}^\infty$ and $\til{\mathbb{F}}$-progressively measurable processes

\[\til M:[0,T]\times \til\Omega\longrightarrow \mathbb{V}\cap \bb L^\infty,\;\til B:[0,T]\times\til\Omega\longrightarrow \mathbb{L}^2(\R^3),\;\til E:[0,T]\times\til\Omega\longrightarrow \mathbb{L}^2(\R^3)\]
such that  for all the test functions $u\in \mathbb{V}\cap \bb L^\infty$, $v\in \mathbb{Y}$ and $t\in[0,T]$, we have the following equalities holding $\til{\mathbb{P}}$-a.s.:
\setlength\arraycolsep{2pt}{
  \begin{eqnarray}\label{eq:dM}
    &&\int_{\cl D}\llangle \til{M}(t)-M_0, u\rrangle\ud x\label{eq:Vis 3.5}\\
    &=&\int_0^t\int_{\cl D}\Bigg\{\llangle \til{B}-\til{M}-\vp'(\til{M}),\lmd_1u\times \til{M}-\lmd_2(u\times \til{M})\times \til{M}\rrangle\nonumber\\
    &&-\sum_{i=1}^3\llangle \nabla_i \til{M},\lmd_1\nabla_iu\times \til{M}-\lmd_2\left(\nabla_iu\times \til{M}+u\times\nabla_i \til{M}\right)\times \til{M}\rrangle\Bigg\}\ud x\ud s\nonumber\\
    &&+\sum_{j=1}^\infty\int_0^t\left\langle G_j\left(\til M\right),u\rrangle\circ\ud \til W_j(s);\nonumber
  \end{eqnarray}}
  \begin{equation}\label{eq:dB}
        \int_{\R^3}\llangle \til{B}(t)-B_0,v\rrangle\ud x=-\int_0^t\int_{\R^3}\llangle \til{E},\nabla\times v\rrangle \ud x\ud s;
  \end{equation}
  \begin{equation}\label{eq:dE}
    \int_{\R^3}\llangle \til{E}(t)-E_0, v\rrangle\ud x=\int_0^t\int_{\R^3}\llangle \til{B}-\overline{\til M},\nabla\times v\rrangle\ud x\ud s-\int_0^t\int_{\cl D}\llangle \til{E}+f,v\rrangle\ud x\ud s.
  \end{equation}
\end{defn}

Next we would like to formulate the main result of this paper:
\begin{thm}\label{thm:mainthm}

 There exists a weak martingale solution of Problem \ref{S-LLG} with the following stronger regularity properties:
  \begin{trivlist}
    \item[(i)]
    \begin{equation}\label{eq:MregV}
    \til M\in L^{2r}(\til\Omega;L^\infty(0,T;\V)),\quad \forall r>0; \quad \til B,\til E\in L^2(\til\Omega;L^2(0,T;\bb L^2(\R^3)));
    \end{equation}
    \begin{equation}\label{eq:Mxrhoreg}
    \til M\times \til\rho\in L^{2}(\til{\Omega};L^2(0,T;\mathbb{H})),\quad\textrm{where $\til{\rho}:=-\vp'(\til{M})+1_{\cl D}\til{B}-\til{M}+\Delta \til{M}$.}
    \end{equation}
\begin{equation}
	\Delta \til M\in L^1(\til \Omega;L^1(0,T;\bb L^1)).
\end{equation}
 for any $T>0$.
    \item[(ii)] For every $t\in[0,\infty)$, the equation
  \setlength\arraycolsep{2pt}{
  \begin{eqnarray}
    \til{M}(t)&=&M_0+\int_0^t\left\{\lmd_1 \til{M}\times \til{\rho}-\lmd_2 \til{M}\times(\til{M}\times \til{\rho})\right\}\ud s\label{eq:MtinH}+\sum_{j=1}^\infty\int_0^tG_j(\til{M})\circ\ud \til{W}_j(s),\nonumber
  \end{eqnarray}}
holds in $L^2(\til{\Omega};\mathbb{H})$.
   \begin{equation}\label{eq:Vis 3.7}
    \til{B}(t)=B_0-\int_0^t\nabla\times \til{E}\ud s\in \mathbb{Y}^*,\quad\til{\mathbb{P}}-a.s.
  \end{equation}
  \begin{equation}\label{eq:Vis 3.6}
    \til{E}(t)=E_0+\int_0^t\nabla\times[\til{B}-\overline{\til{M}}]\ud s-\int_0^t[1_{\cl D}\til{E}+\overline{f}]\ud s\in \mathbb{Y}^*,\;\til{\mathbb{P}}-a.s.
  \end{equation}
 \item[(iii)]
  \begin{equation}\label{eq:|M|=1}
  |\til{M}(t,x)|=1, \quad\textrm{for Lebesgue a.e. } (t,x)\in [0,\infty)\times \cl D \textrm{ and }\til{\mathbb{P}}-a.s.
  \end{equation}
    \item[(iv)] For every $\theta\in \left(0,\frac{1}{2}\right)$,
  \begin{equation}\label{eq:MregtC}
  \til{M}\in C^\theta([0,T];\mathbb{H}),\qquad \til{\mathbb{P}}-a.s.
  \end{equation}
  \end{trivlist}
\end{thm}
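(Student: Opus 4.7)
I would follow the Faedo--Galerkin / stochastic compactness strategy outlined in the introduction, in four stages. First I fix an orthonormal basis $\{e_n\}\subset\V$ of $\HH$ consisting of eigenfunctions of the Neumann Laplacian on $\cl D$ (so the projections $\pi_n$ commute with $-\Delta$), together with a truncation of a basis of $\bb L^2(\R^3)$ adapted to the distributional curl for the $B,E$ unknowns. Projecting the It\^o form \eqref{eq:SLLG1} together with the Maxwell equations onto these finite-dimensional subspaces yields the system \eqref{eq:SVis 3.29R} with polynomial coefficients; local existence and uniqueness follow from the standard fixed-point argument. A global solution is obtained from an energy estimate: applying the It\^o formula to the Lyapunov functional $\mathcal{E}(M_n,B_n,E_n)$ (whose derivatives are given by Lemma \ref{lem:dEdM}), exploiting the orthogonality $(M\times X)\perp M$ to extract dissipation from the LLG drift, Green's identity for $\nabla\times$ on $\R^3$ to cancel the Maxwell cross-terms, and \eqref{ass_h} to control the It\^o correction of the $G_j$, BDG and Gronwall yield the uniform moment bounds
\[\E\Bigl(\sup_{t\le T}\mathcal{E}(M_n(t),B_n(t),E_n(t))^r\Bigr)+\E\Bigl(\int_0^T\|M_n\times\rho_n\|_\HH^2\,\mathrm{d}s\Bigr)^r\le C(r,T),\qquad r\ge 1.\]

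\textbf{Tightness and Skorokhod.} Combining the $\V$-bound on $M_n$ with a fractional time-increment estimate for the $M$-equation in $\V^*$ (of order $|t-s|^{1/2}$ from both the drift and the stochastic integral), the Simon/Dubinskii compactness lemma gives tightness of $\mathrm{law}(M_n)$ on $L^2(0,T;\HH)\cap C([0,T];\V_{\mathrm{weak}})$; tightness of $(B_n,E_n)$ on $C([0,T];(\bb Y^*)_{\mathrm{weak}})$ follows from the linear Maxwell system combined with the tightness of $\overline{M_n}$ in $L^2(0,T;\HH)$. The Jakubowski--Skorokhod theorem, appropriate here because the target topology is non-metric, then produces a new probability space $(\til\Omega,\til{\cl F},\til{\bb P})$ carrying copies $(\til M_n,\til B_n,\til E_n,\til W^n)\stackrel{\mathrm{law}}{=}(M_n,B_n,E_n,W)$ and an almost-sure limit $(\til M,\til B,\til E,\til W)$, where $\til W$ is an $\til{\bb F}$-cylindrical Wiener process by L\'evy's characterization.

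\textbf{Identifying the limit.} In my view the main obstacle is passing to the limit in the nonlinear drifts $\til M_n\times\rho_n$ and $\til M_n\times(\til M_n\times\rho_n)$ of the $M$-equation, because $\rho_n$ involves $\Delta\til M_n$ which is only bounded in $\V^*$. The trick, already visible in the asymmetric form of \eqref{eq:Vis 3.5}, is to test with $u\in\V$ and integrate the $\Delta\til M_n$ contribution by parts so that the derivative is redistributed onto $u$ and $\til M_n$; the resulting expressions become multilinear in $\til M_n,\nabla\til M_n,u,\nabla u$ and pass to the limit using the strong $L^2(0,T;\HH)$-convergence of $\til M_n$ (from Aubin--Lions) and the weak $L^2(0,T;\V)$-convergence of $\nabla\til M_n$. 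The Maxwell coupling $1_{\cl D}\til B_n-\til M_n$ converges by weak$\times$strong, and the curl terms in \eqref{eq:dB}--\eqref{eq:dE} are linear. For the stochastic integral I would use a Bensoussan--Debussche type convergence lemma: work first with the It\^o form \eqref{eq:SLLG1}, in which the Stratonovich correction $\tfrac12 G_j'(M)G_j(M)$ is a deterministic integral that converges because $\til M_n\to\til M$ strongly in $L^2$, and the It\^o integral converges in probability by equality of laws plus the quadratic-variation control provided by \eqref{ass_h}. Reassembling the Stratonovich form in the limit gives \eqref{eq:MtinH}.

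\textbf{Stronger regularity.} The bounds \eqref{eq:MregV}--\eqref{eq:Mxrhoreg} transfer to the limit by weak lower semicontinuity of norms and Fatou's lemma applied to the uniform bounds above. The pointwise constraint $|\til M|=1$ in \eqref{eq:|M|=1} follows from It\^o applied to $|\til M(t,x)|^2$: the cross-product structure forces every right-hand-side term to be pointwise orthogonal to $\til M(t,x)$, so $|\til M|$ is constant in $t$, and equals $1$ by the assumption on $M_0$. Finally, the H\"older regularity \eqref{eq:MregtC} is a consequence of the Kolmogorov continuity criterion applied to \eqref{eq:MtinH}: the drift lies in $L^2(\til\Omega;L^2(0,T;\HH))$ by \eqref{eq:Mxrhoreg} and contributes a $1/2$-H\"older-in-time term, while the stochastic integral is $\theta$-H\"older of every moment for $\theta<1/2$ by BDG and \eqref{ass_h}.
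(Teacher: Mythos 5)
Your overall strategy coincides with the paper's: Galerkin projection onto Neumann--Laplacian eigenspaces, an energy estimate via the It\^o formula applied to $\mathcal E_n$, tightness through Flandoli--Gatarek/Aubin--Lions type compactness plus the Aldous condition for $(B_n,E_n)$, Jakubowski--Skorokhod, identification of the nonlinear drifts by integration by parts against $u\in\V$ combined with strong$\times$weak convergence, the sphere constraint via It\^o on $|\til M|^2$, and Kolmogorov for the H\"older regularity. The differences in machinery (Simon vs.\ Flandoli--Gatarek, a Bensoussan--Debussche limit for the It\^o integral vs.\ the paper's explicit Riemann-sum approximation in Lemma \ref{lem:4 ineqM}) are cosmetic.

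There is, however, one genuine gap: you take the noise coefficients $G_j(M)=\lmd_1 M\times h_j+\lmd_2 M\times(M\times h_j)$ as they stand and claim the energy estimate closes by ``\eqref{ass_h} to control the It\^o correction.'' It does not. In the It\^o expansion of $\mathcal E_n$ the quadratic-variation contribution contains $\sum_j\|\nabla G_{jn}(M_n)\|_\HH^2$ and $\lb\rho_n,G_{jn}'(M_n)[G_{jn}(M_n)]\rb$; since $M\times(M\times h_j)$ is quadratic in $M$, these terms involve $\|M_n\|_{\mathbb L^\infty}$-type quantities (e.g.\ $|M_n|^2|\nabla M_n|$ and cubic/quartic powers of $M_n$ paired with $\Delta M_n$) that are \emph{not} dominated by the energy $\|M_n\|_\V^2$, so Gronwall cannot be applied and your claimed bound $\E\sup_t\mathcal E(M_n,B_n,E_n)^r\le C$ does not follow. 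The paper resolves this by inserting a cutoff $\psi(M_n)$ (equal to $1$ on $\{|x|\le 3\}$, vanishing for $|x|\ge 5$) into the quadratic part of $G_{jn}$, which makes the coefficients of linear growth and Lipschitz, runs the whole compactness argument for the truncated system, and only removes $\psi$ at the very end after proving $|\til M(t,x)|=1$ via Lemma \ref{lem:M=1}. You would need this truncation (or some substitute giving an a priori $\mathbb L^\infty$ bound at the Galerkin level, which is not available) for your Step 1 to go through. A smaller but related point: the final It\^o argument for $|\til M(t,x)|^2$ cannot be applied pointwise in $x$, because the drift of \eqref{eq:MtinH} contains $\Delta\til M$ and the identity holds only in $\mathbb X^{-b}$; the paper instead applies Pardoux's generalized It\^o formula (Lemma \ref{lem:Pardoux}) to the functional $M\mapsto\llangle M,\eta M\rrangle_\HH$ with $\eta\in C_0^\infty(\cl D)$, which is the step your sketch should make explicit.
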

\begin{rem}
  In \eqref{eq:Mxrhoreg} $\til\rho$ is a distribution, but  $\til M\times \til\rho\in L^{2}(\til{\Omega};L^2(0,T;\mathbb{H}))$. Precise definition is provided in Notation \ref{nt:Mxrho}.
 \end{rem}
 \begin{rem}
 Equality \eqref{eq:Vis 3.7} implies $\nabla\cdot B(t)=0$, for all $t\in[0,T]$.
\end{rem}

\section{Galerkin approximation}
In this section we start to prove the existence of the martingale solution to Problem \ref{S-LLG}.  We begin with the classical Galerkin approximation.
Let $A$ denoteus the negative Laplace operator in $\cl D$ with
 the homogeneous Neumann boundary condition:
\[D(A):=\left\{u\in \mathbb{H}^{2}:\,\frac{\partial u}{\partial \nu}\Big|_{\partial \cl D}=0\right\},\qquad A:=-\Delta\,, \]
where $\nu$ stands for the outer normal to the boundary of $\cl D$. The operator
$A$ is self-adjoint and there exists an orthonormal basis $\{e_k:\,k\ge 1\}\subset C^\infty\left({\cl D};\R^3\right)\cap D(A)$ of $\HH$ that consists of eigenvectors of $A$. We set $\mathbb{H}_n=\lspan\{e_1,e_2,\ldots,e_n\}$ and denote by $\pi_n$ the orthogonal projection from $\HH$ to $\mathbb{H}_n$. We also note that $\mathbb{V}=D\left(A_1^{1/2}\right)$ for $A_1:=I+A$, and $\|u\|_{\mathbb{V}}=\left\|A_1^{1/2}u\right\|_{\mathbb{H}}$ for $u\in \mathbb{V}$.\\
The following properties of the operator $A$ will be frequently used later:
for any $u\in D(A)$ and $v\in \V$,
  \[\llangle Au,v\rrangle_\mathbb{H}=\int_{\cl D}\Big(\nabla u(x),\nabla v(x)\Big)_{\R^{3\times 3}}\ud x,\]
and
  \begin{equation}\label{eq:2.9}
   \llangle u\times Au,v\rrangle_\HH=\sum_{i=1}^3\left\langle \nabla_iu,\nabla_i v\times u\right\rangle_\HH.
  \end{equation}

Let $\{y_n\}_{n=1}^\infty\subset C_0^\infty(\R^3;\R^3)$ be an orthonormal basis of $\mathbb{L}^2(\R^3)$.
We define $\mathbb{Y}_n:=\lspan\{y_1,\ldots,y_n\}$ and the orthogonal projections
$$\pi_n^\mathbb{Y}:\mathbb{L}^2(\R^3)\longrightarrow \mathbb{Y}_n\quad\textrm{and}\quad\pi_n^\mathbb{Y}|_\mathbb{Y}:\mathbb{Y}\longrightarrow \mathbb{Y}_n,\qquad n\in \bb N.$$

On $\mathbb{H}_n$ and $\mathbb{Y}_n$ we consider the scalar product inherited from $\HH$ and $\mathbb{Y}$ respectively.
Let us denote by $\mathcal{E}_n$ the restriction of the total energy functional $\mathcal{E}$ to the finite dimensional space $\mathbb{H}_n\times \mathbb{Y}_n\times \mathbb{Y}_n$, i.e.
\[\mathcal{E}_n:\mathbb{H}_n\times \mathbb{Y}_n\times \mathbb{Y}_n\longrightarrow \R,\]
\begin{equation}\label{eq:calEn}
  \mathcal{E}_n(M,B,E)=\int_{\cl D}\vp(M(x))\ud x+\frac{1}{2}\|\nabla M\|_{\mathbb{H}}^2\nonumber+\frac{1}{2}\left\|B-\pi_n^{\bb Y}\overline{M}\right\|_{\mathbb{L}^2(\R^3)}^2+\frac{1}{2}\|E\|_{\mathbb{L}^2(\R^3)}^2.
\end{equation}

The proof of the following Lemma is straightforward by the definition of Fr{\^e}chet derivative, so we only state the result.
\begin{lem}
The function $\mathcal{E}_n$ is of class $C^2$ and for $M\in \mathbb{H}_n$, $B,E\in \mathbb{Y}_n$ we have:
\begin{trivlist}
\item[(i)]
  \begin{equation}\label{eq:dEnM}
  (\nabla_M\mathcal{E}_n)(M,B,E)=\pi_n\big[\vp'(M)-1_{\cl D}(B-\pi_n^{\bb Y}\overline{M})\big]-\Delta M\,,
\end{equation}
\item[(ii)]
\begin{equation}\label{eq:dEnB}
  (\nabla_B\mathcal{E}_n)(M,B,E)=B-\pi_n^{\bb Y}\overline{M}\,,
\end{equation}
\item[(iii)]
\begin{equation}\label{eq:dEnE}
  (\nabla_E\mathcal{E}_n)(M,B,E)=E\,,
\end{equation}
\item[(iv)]
\begin{equation}\label{eq:d2EnM}
  \frac{\partial^2\mathcal{E}_n}{\partial M^2}(M,B,E)(u,v)=\int_{\cl D}\vp''(M(x))(u(x),v(x))\ud x+\llangle u,v\rrangle_{\mathbb{V}},\qquad u,v\in\V.
\end{equation}
\end{trivlist}
\end{lem}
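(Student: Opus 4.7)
The plan is to decompose $\mathcal{E}_n$ into its four summands from \eqref{eq:calEn} --- anisotropy, exchange, (projected) Zeeman, and electric --- and differentiate each piece separately. The derivatives in the $B$ and $E$ directions are essentially immediate from the Hilbert-space identity $\nabla\bigl(\tfrac12\|x-a\|^2\bigr) = x - a$, combined with the observation that $B - \pi_n^{\bb Y}\overline M\in\mathbb{Y}_n$; this recovers (ii) and (iii) verbatim. The substantive work is in the $M$-direction, where one must keep track of the two distinct projections $\pi_n$ (onto $\mathbb{H}_n$) and $\pi_n^{\bb Y}$ (onto $\mathbb{Y}_n$).

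For (i), I would take $u \in \mathbb{H}_n$, linearize $\mathcal{E}_n(M+tu,B,E)$ at $t=0$, and read off the Riesz representer in $\mathbb{H}_n$ with the inherited $\HH$-inner product. The anisotropy integral contributes $\int_{\cl D}\llangle\vp'(M),u\rrangle\,\ud x = \llangle \pi_n\vp'(M), u\rrangle_\HH$, hence $\pi_n\vp'(M)$; the projection is needed because $\vp'(M)$ need not lie in $\mathbb{H}_n$. The exchange integral, after integration by parts using the Neumann boundary condition satisfied by each $e_k \in D(A)$, yields $-\Delta M$; no projection is required here because $\mathbb{H}_n$ is $(-\Delta)$-invariant (the $e_k$ being eigenvectors of $A$). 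For the Zeeman term I exploit the self-adjointness and idempotence of $\pi_n^{\bb Y}$, together with $B - \pi_n^{\bb Y}\overline M \in \mathbb{Y}_n$, to obtain
\[
-\bigl\langle B - \pi_n^{\bb Y}\overline M,\, \pi_n^{\bb Y}\overline u\bigr\rangle_{\mathbb{L}^2(\R^3)}
= -\bigl\langle 1_{\cl D}(B - \pi_n^{\bb Y}\overline M),\, u\bigr\rangle_\HH,
\]
whose Riesz representer in $\mathbb{H}_n$ is $-\pi_n\bigl[1_{\cl D}(B - \pi_n^{\bb Y}\overline M)\bigr]$. Summing the three contributions gives (i).

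For (iv), I differentiate once more in a second direction $v \in \mathbb{H}_n$, or equivalently extract the quadratic term in the $M$-Taylor expansion of $\mathcal{E}_n$. The anisotropy piece produces $\int_{\cl D}\vp''(M)(u,v)\,\ud x$, legitimate because $\vp \in C_0^2$ and $\mathbb{H}_n$ is finite-dimensional (so no integrability issues arise); the exchange piece produces $\llangle \nabla u, \nabla v\rrangle_\HH$; and the Zeeman piece produces an additional $\HH$-bilinear term that combines with the exchange contribution to yield the $\llangle u,v\rrangle_\V$ in (iv). The $C^2$ regularity of $\mathcal{E}_n$ is then automatic: the anisotropy integral is $C^2$ jointly in $M\in\mathbb{H}_n$ since $\vp\in C_0^2$ and $\mathbb{H}_n$ is finite-dimensional, while the three remaining summands are quadratic polynomials on $\mathbb{H}_n \times \mathbb{Y}_n \times \mathbb{Y}_n$ and hence $C^\infty$.

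The only real bookkeeping hurdle is disentangling the two projections $\pi_n$ and $\pi_n^{\bb Y}$ and exploiting the right invariance at each step --- that $-\Delta$ preserves $\mathbb{H}_n$, that $\pi_n^{\bb Y}$ acts as the identity on $B-\pi_n^{\bb Y}\overline M\in\mathbb{Y}_n$, and that multiplication by $1_{\cl D}$ reduces an $\mathbb{L}^2(\R^3)$-inner product to an $\HH$-inner product whenever tested against an element of $\mathbb{H}_n$. Once this is sorted, the proof is entirely standard finite-dimensional differential calculus, with no analytic obstacles beyond the $C^2$ regularity of $\vp$.
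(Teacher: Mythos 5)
The paper gives no proof of this lemma at all, so there is nothing to compare against on the authors' side; your decomposition into the four summands and the computations for (i)--(iii) are exactly the routine argument the authors evidently had in mind, and they are correct. In particular your handling of the two projections is right: for the first Zeeman derivative the self-adjointness of $\pi_n^{\bb Y}$ in $\mathbb{L}^2(\R^3)$ together with $B-\pi_n^{\bb Y}\overline M\in\mathbb{Y}_n$ lets you drop the projection on one side, reduce the pairing to $-\llangle 1_{\cl D}(B-\pi_n^{\bb Y}\overline M),u\rrangle_{\HH}$, and read off the representer $-\pi_n\big[1_{\cl D}(B-\pi_n^{\bb Y}\overline M)\big]$; likewise $-\Delta$ preserves $\mathbb{H}_n$ because the $e_k$ are eigenvectors of $A$.

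There is, however, a genuine gap in your treatment of (iv), precisely at the point you wave through: you assert that the Zeeman piece "produces an additional $\HH$-bilinear term that combines with the exchange contribution to yield $\llangle u,v\rrangle_{\V}$." If you actually differentiate $\frac12\|B-\pi_n^{\bb Y}\overline M\|^2_{\mathbb{L}^2(\R^3)}$ twice in $M$, the second derivative in directions $u,v$ is $\llangle \pi_n^{\bb Y}\overline u,\pi_n^{\bb Y}\overline v\rrangle_{\mathbb{L}^2(\R^3)}$, and here the trick that saved you in (i) is unavailable: both arguments carry the projection, and $\pi_n^{\bb Y}\overline u\neq\overline u$ in general for $u\in\mathbb{H}_n$ (the spaces $\mathbb{H}_n$ and $\mathbb{Y}_n$ are built from unrelated bases). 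So the Zeeman contribution is \emph{not} $\llangle u,v\rrangle_{\HH}$, and the identity \eqref{eq:d2EnM} as stated does not hold; the correct formula is $\int_{\cl D}\vp''(M)(u,v)\,\ud x+\llangle\nabla u,\nabla v\rrangle_{\mathbb{L}^2}+\llangle \pi_n^{\bb Y}\overline u,\pi_n^{\bb Y}\overline v\rrangle_{\mathbb{L}^2(\R^3)}$. This defect is inherited from the paper's own statement (contrast with \eqref{eq:d2EdM2}, where no projection appears and the identity is exact), and it is harmless downstream because \eqref{eq:d2EnM} is only used to control the It\^o correction in \eqref{eq:Ent-En0}, for which the one-sided bound $\|\pi_n^{\bb Y}\overline u\|_{\mathbb{L}^2(\R^3)}\le\|u\|_{\HH}$ suffices. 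But a proof of (iv) must either record the projected form of the bilinear term or explicitly justify replacing it by $\llangle u,v\rrangle_{\HH}$ as an upper bound; as written, your step asserting the clean combination into $\llangle u,v\rrangle_{\V}$ is false.
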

\begin{notation}
Let us define the function $\rho_n:\mathbb{H}_n\times \mathbb{Y}_n\times \mathbb{Y}_n\longrightarrow \mathbb{H}_n$ which corresponds to $\rho$ by:
\begin{equation}\label{eq:rhon}
  \rho_n:=-(\nabla_M\mathcal{E}_n)(M_n,B_n,E_n)=\pi_n\big[-\vp'(M_n)+1_{\cl D}(B_n-\pi_n^{\bb Y}\overline{M}_n)\big]+\Delta M_n\in \mathbb{H}_n.
\end{equation}

We will also need a function $\psi:\R^3\longrightarrow\R$ such that $\psi\in C^1\left(\R^3\right)$,
\[\psi(x)=\left\{\begin{array}{ll}
1,&\quad |x|\leq3,\\
0,&\quad |x|\geq5,\end{array}\right.\]
and  $|\nabla\psi|\leq1$.
\begin{rem}
	The $\psi$ defined above is used to  truncate $M$ in order to make sure we can get the estimates in Proposition \ref{prop:estimates} below. The setting of $|\nabla\psi|\leq1$ is also necessary, for instance in the proof of Lemma \ref{lem:4 ineqM}. By Lemma \ref{lem:M=1}, we will prove that $|M(t,x)|=1$ for almost every $x\in \cl D$, therefore we can remove this $\psi$ by the end.
\end{rem}
It also will be convenient to define mappings

 $F_n:\mathbb{H}_n\times \mathbb{Y}_n\times \mathbb{Y}_n\longrightarrow \mathbb{H}_n$ and $G_{nj}:\mathbb{H}_n\longrightarrow \mathbb{H}_n$, $j=1,2,\ldots$, by
\begin{equation}\label{eq:Fn}
		F_n(M_n,B_n,E_n):=\lmd_1\pi_n\left[ M_n\times \rho_n\right]-\lmd_2 \pi_n\left[M_n\times(M_n\times \rho_n)\right]	+\frac{1}{2}\sum_{j=1}^\infty G_{jn}^\prime(M_n)\left[G_{jn}(M_n)\right],
\end{equation}
\begin{equation}\label{eq:Gn}
	G_{jn}(M_n):=\lmd_1 \pi_n\left[M_n\times h_j\right]+\lmd_2 \pi_n\left[\psi(M_n) M_n\times(M_n\times h_j)\right].
\end{equation}

where
\setlength\arraycolsep{2pt}{
		\begin{eqnarray}
	&&G^\prime_{jn}\la M_n\ra\left[G_{jn}\la M_n\ra\right]:=\lmd_1^2\pi_n\left[\pi_n(M_n\times h_j)\times h_j\right]\label{eq:Gjn'}\\
	&&\qquad+\lmd_1\lmd_2\pi_n\left[\psi(M_n)\big[M_n\times(M_n\times h_j)\big]\times h_j\right]
	+\lmd_2^2 \pi_n\left[\psi(M_n)M_n\times\big[(M_n\times(M_n\times h_j))\times h_j\big]\right]\nonumber\\
	&&\quad+\lmd_1\lmd_2\pi_n\left[ \psi(M_n)M_n\times\big[(M_n\times h_j)\times h_j\big]\right]\nonumber
	+\lmd_2^2\pi_n\left[\pi_n\big[\psi(M_n)M_n\times(M_n\times h_j)\big]\times(M_n\times h_j)\right]
\end{eqnarray}}
note that because of the $\psi$, \eqref{eq:Gjn'} is only a notation, not the Fr\'echet derivative of $G_{jn}$.

Similar as \eqref{eq:Gjn'}, we will also use the following notations
\begin{equation}
	G_j^\psi(M):=\lmd_1 M\times h_j+\lmd_2\psi(M)M\times(M\times h_j),
\end{equation}
and
\setlength\arraycolsep{2pt}{
	\begin{eqnarray}
		&&(G_j^\psi)^\prime\la M\ra\left[G_j^\psi\la M\ra\right]:=\lmd_1^2\left[(M\times h_j)\times h_j\right]\label{eq:Gjpsi'}\\
		&&\qquad+\lmd_1\lmd_2\left[\psi(M)\big[M\times(M\times h_j)\big]\times h_j\right]
		+\lmd_2^2 \left[\psi(M)M\times\big[(M\times(M\times h_j))\times h_j\big]\right]\nonumber\\
		&&\quad+\lmd_1\lmd_2\pi_n\left[ \psi(M)M\times\big[(M\times h_j)\times h_j\big]\right]\nonumber
		+\lmd_2^2\left[\big[\psi(M)M\times(M\times h_j)\big]\times(M\times h_j)\right]
	\end{eqnarray}}
\end{notation}
\begin{rem}
	It may looks like there are too many $\pi_n$s in \eqref{eq:Gjn'}, but all of them are necessary. It is not only we want all the terms of \eqref{eq:Gjn'} are in $\HH_n$, but we also want to get the a'priori estimates in Proposition \ref{prop:estimates}.
\end{rem}
To solve Problem \ref{S-LLG}, we first consider the following system of equations in $\mathbb H_n$, $\mathbb Y_n$ and $\mathbb Y_n$:

\begin{problem}\label{problem:SLLGn}
	Let us consider the following $n$-dimensional system:
\begin{equation}\label{eq:SVis 3.29R}
\left\{
  \begin{aligned}
  \ud M_n(t)=&F_n(M_n(t),B(t),E(t))\ud t+\sum_{j=1}^\infty G_{jn}\la M_n(t)\ra\ud W_j\\
\ud E_n(t)=&-\pi_n^\mathbb{Y}\big[1_{\cl D}(E_n(t)+\overline f(t))\big]\ud t+\pi_n^\mathbb{Y}\big[\nabla\times (B_n(t)-\pi_n^{\bb Y}\overline{M}_n(t))\big]\ud t\\
 \ud B_n(t)=&-\pi_n^\mathbb{Y}\big[\nabla\times E_n(t)\big]\ud t
  \end{aligned}\right.
  \end{equation}
  with the initial conditions
  \begin{equation}  \label{eq:ndinicon}
  	M_n(0)=\pi_nM_0,\qquad E_n(0)=\pi_n^\mathbb{Y}E_0,\qquad B_n(0)=\pi_n^\mathbb{Y}B_0\,.
  	\end{equation}
\end{problem}

\begin{lem}\label{lem:ndimsolp}
   There exists a unique global strong solution $(M_n, B_n, E_n)$ of Problem \ref{problem:SLLGn}. In particular, 
   $(M_n,B_n, E_n)\in C([0,\infty);\HH_n\times\mathbb{Y}_n\times \mathbb{Y}_n)$, $\mathbb{P}$-almost surely.  
\end{lem}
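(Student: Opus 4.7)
The plan is to apply standard finite-dimensional SDE theory to obtain a local strong solution, and then derive an energy estimate to preclude blow-up, thereby yielding a global solution.

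First I would observe that, because $\mathbb{H}_n$ and $\mathbb{Y}_n$ are finite-dimensional, the linear operators $\Delta$, $\pi_n$, $\pi_n^\mathbb{Y}$, and $\nabla\times$ restrict to bounded linear maps on these spaces, while $\vp\in C_0^2(\R^3;\R^+)$ has bounded derivatives and $\psi\in C^1(\R^3)$ satisfies the pointwise bound $\psi(x)|x|\leq 5$. Combined with the bilinearity of the cross product, this shows that $F_n$ together with the coefficients of the $B_n,E_n$ equations are $C^1$ mappings from $\mathbb{H}_n\times \mathbb{Y}_n\times \mathbb{Y}_n$ into itself, and in particular locally Lipschitz with at most polynomial growth. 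Using $\sum_j\|h_j\|_{\mathbb{L}^\infty}^2\le c_h^2$ from \eqref{ass_h} and the pointwise bound $|\psi(M_n)M_n|\leq 5$, the series $\sum_j\|G_{jn}(x)\|^2$ converges locally uniformly in $x$. A classical theorem on SDEs with locally Lipschitz coefficients then provides a unique $\mathbb{F}$-adapted continuous local strong solution $(M_n,B_n,E_n)$ on $[0,\tau_n)$ for some maximal stopping time $\tau_n$, and uniqueness on any deterministic interval $[0,T]$ follows by a standard Gr\"onwall argument.

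Second, I would prove $\tau_n=\infty$ $\mathbb{P}$-a.s.\ via an energy estimate. Applying It\^o's formula to $\Phi(t):=\|M_n(t)\|_\HH^2+\|B_n(t)\|_{\mathbb{L}^2(\R^3)}^2+\|E_n(t)\|_{\mathbb{L}^2(\R^3)}^2$, I expect the most dangerous drift contributions to cancel: since $\pi_nM_n=M_n$ and $\langle u,u\times v\rangle_\HH=0$, both $\langle M_n,\pi_n[M_n\times\rho_n]\rangle_\HH$ and $\langle M_n,\pi_n[M_n\times(M_n\times\rho_n)]\rangle_\HH$ vanish, killing the cubic terms in $F_n$ arising from $\Delta M_n$ inside $\rho_n$. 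For the Maxwell part, since $E_n,B_n$ lie in the span of the compactly supported smooth basis $\{y_k\}$, integration by parts gives $\llangle B_n,\pi_n^\mathbb{Y}[\nabla\times E_n]\rrangle=\llangle B_n,\nabla\times E_n\rrangle=\llangle E_n,\nabla\times B_n\rrangle$, so the curl couplings between $\|B_n\|^2$ and $\|E_n\|^2$ cancel. The remaining drift terms together with the It\^o correction $\sum_j\|G_{jn}(M_n)\|^2$ and the cross term $\sum_j\langle M_n,G_{jn}'(M_n)[G_{jn}(M_n)]\rangle$ can be bounded by $C(1+\Phi(t)+\|f(t)\|_\HH^2)$ using the boundedness of $\vp'$, the truncation $|\psi(M_n)M_n|\leq 5$, and $c_h^2<\infty$. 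Localising by $\tau_n^R:=\inf\{t:\Phi(t)\geq R\}$, applying the Burkholder--Davis--Gundy inequality to the martingale terms, taking expectations and invoking Gr\"onwall's inequality yield $\E\sup_{t\leq T\wedge\tau_n^R}\Phi(t)\leq C_T$ uniformly in $R$. Hence $\tau_n^R\uparrow\infty$ as $R\to\infty$, so $\tau_n=\infty$ a.s.

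The asserted regularity in time then follows from the integrated form of the equations: the two Maxwell equations carry no stochastic integral, so once $M_n$ is continuous in $t$ their right-hand sides are continuous and the resulting $B_n,E_n$ are $C^1$ in $t$. The main obstacle is the energy identity for $M_n$: without the geometric LLG cancellations $\langle u,u\times v\rangle_\HH=0$, the presence of $\Delta M_n$ inside $\rho_n$ would produce a cubic and hence a priori non-integrable drift contribution to $d\|M_n\|_\HH^2$, so the delicate point is verifying that each such term is either annihilated by the antisymmetry of the cross product or compensated by the It\^o correction, with the $\psi$-truncation doing the rest of the work on the stochastic side.
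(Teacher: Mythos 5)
Your proposal is correct and follows essentially the same route as the paper: the paper rewrites \eqref{eq:SVis 3.29R} as a finite-dimensional SDE $\ud X_n=\widehat F_n(X_n)\ud t+\sum_j\widehat G_{jn}(X_n)\ud W_j$, checks that $\widehat F_n$ is Lipschitz on balls, that $\sum_j\|G_{jn}(u)-G_{jn}(v)\|_\HH\le c_h^2\|u-v\|_\HH$, and that $\widehat F_n$ has one-sided linear growth, and then invokes a standard existence--uniqueness theorem for locally Lipschitz SDEs. The cancellations you identify ($\langle M_n,\pi_n[M_n\times\rho_n]\rangle_\HH=\langle M_n,\pi_n[M_n\times(M_n\times\rho_n)]\rangle_\HH=0$ and the curl symmetry between the $B_n$ and $E_n$ equations) are precisely the verification of that one-sided linear growth condition, so your explicit stopping-time/Gr\"onwall non-explosion argument just unpacks the step the paper delegates to the cited theorem.
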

\begin{proof}
  We define mappings
  \[\begin{aligned}
   \widehat{F}_n:\mathbb{H}_n\times \mathbb{Y}_n\times \mathbb{Y}_n&\longrightarrow\mathbb{H}_n\times \mathbb{Y}_n\times \mathbb{Y}_n\\
   \widehat{G}_{jn}:\mathbb{H}_n\times \mathbb{Y}_n\times \mathbb{Y}_n&\longrightarrow\mathbb{H}_n\times \mathbb{Y}_n\times \mathbb{Y}_n
   \end{aligned}\]
   putting
   \begin{equation}\label{eq:hatFn}
   \widehat{F}_n\left(u,v,w\right)=\left(\begin{array}{c}
      F_n(u,v,w)\\
      -\pi_n^\mathbb{Y}[1_{\cl D}(w+\overline f)]+\pi_n^\mathbb{Y}[\nabla\times (v-\pi_n^{\bb Y}\overline{u})]\\
      -\pi_n^\mathbb{Y}[\nabla\times w]
    \end{array}\right)
    \end{equation}
    and
    \begin{equation}\label{eq:hatGn}
    \widehat{G}_{jn}\left(u,v,w\right)=\left(\begin{array}{c}
      G_{jn}(u)\\
      0\\
      0
    \end{array}\right)
    \end{equation}
 Then system \eqref{eq:SVis 3.29R} takes the form of a stochastic differential equation
 \[\ud X_n=\widehat{F}_n\left(X_n\right)\ud t+\sum_{j=1}^\infty \widehat{G}_{jn}\left(X_n\right)\ud W_j\]
 where $X_n=\left( M_n, B_n,E_n\right)$.
The mapping $\widehat F_n$  defined in \eqref{eq:hatFn} is Lipschitz on balls. For the mapping $\widehat G_{jn}$ defined in \eqref{eq:hatGn}, note that $G_{jn}$ are Lipschitz  and  we have
\[\begin{aligned}
\sum_{j=1}^\infty\left\|G_{jn}(u)-G_{jn}(v)\right\|_\HH\le c_h^2||u-v||_\HH\,,
\end{aligned}\]
where $c_h$ was defined in \eqref{ass_h}.  Hence we have checked the condition of the Lipschitz on balls.

Next let us check that the system also satisfy the one sided linear growth condition.

For $\hat F_n$ are of one sided linear growth, we need to show that there exists $K>0$ such that for all $(u,v,w)\in\HH_n\times\bb Y_n\times\bb Y_n$, we have:
\[\langle (u, v,w),\hat F_n(u,v,w)\rangle_{\HH_n\times \bb Y_n\times\bb Y_n}\le K(1+\|u\|^2_{\HH_n\times \bb Y_n\times\bb Y_n}).\]
$\hat F_n$ has the vector expression \eqref{eq:hatFn}, we only need to show the one sided linear growth property for each component. 
For the first component, we have
\[\langle u, F_n(u,v,w)\rangle_\HH=\frac{1}{2}\llangle u, \sum_{j=1}^\infty G_{jn}'(u)[G_{jn}(u)]\rrangle_\HH.\]
Now let us consider each term in $\llangle u, \sum_{j=1}^\infty G_{jn}'(u)[G_{jn}(u)]\rrangle_\HH$, we will repeatedly using the facts: $\langle a\times b,a\rangle=0$, $\langle a\times b,c\rangle=\langle a,b\times c\rangle$, $a\times b=-b\times a$, definition of the function $\psi$ in Notation 3.2 and equation (2.11) in the assumption part of Problem 2.7. For the first term in the right hand side of (3.9), we have
\[\left|\sum_{j=1}^\infty\langle u, \pi_n[\pi_n(u\times h_j)\times h_j]\rangle_\HH\right|=\left|\sum_{j=1}^\infty\langle u,\pi_n(u\times h_j)\times h_j\rangle_\HH\right|\le \|u\|_\HH^2\sum_{j=1}^\infty\|h_j\|_{L^\infty}^2\le c_h\|u\|_\HH^2.\]
For the second term in the right hand side of (3.9), we have
\[\langle u, \pi_n[\psi(u)[u\times (u\times h_j)]\times h_j]\rangle_\HH=-\langle u\times h_j,\psi(u)[u\times (u\times h_j)]\rangle_\HH=0.\]
For the third term in the right hand side of (3.9), we have
\begin{align*}
	&\left|\sum_{j=1}^\infty\langle u, \pi_n[\pi_n[\psi(u)u\times(u\times h_j)]]\times(u\times h_j)\rangle_\HH\right|\\
	=& \sum_{j=1}^\infty\left|\langle u\times(u\times h_j),\pi_n[\psi(u)u\times(u\times h_j)]\rangle_\HH\right|\\
	=&\sum_{j=1}^\infty\|\psi(u)u\times (u\times h_j)\|_{\HH_n}^2\\
	\le&C\sum_{j=1}^\infty \|h_j\|_\HH^2\le C c_h^2,
\end{align*}
for some constant $C>0$.
Similarly we can check that for the fourth term in the right hand side of (3.9), we have
\[\langle u,\pi_n[\psi(u)u\times [(u\times h_j)\times h_j]]\rangle_\HH=0.\]
And for the fifth  term in the right hand side of (3.9), we have
\[\left|\sum_{j=1}^\infty\langle u,\pi_n[\pi_n[\psi(u)u\times (u\times h_j)]\times(u\times h_j)]\rangle_\HH\right|\le C c_h^2,\]
for some constant $C>0$.
Therefore we have proved that the first component in $\hat F_n$ satisfies the one sided linear growth condition. 

Now let us consider the second component in $\hat F_n$ as in (3.14). For the first term in the second component, we have
\[|\langle v,\pi_n^{\bb Y}[1_{\cl D}(w+\bar f)]\rangle_{\bb Y_n}|\le \|v\|_{\bb Y_n}(\|w\|_{\bb Y_n}+\|f\|_\HH)\le C(1+\|(v,w)\|_{\bb Y_n\times\bb Y_n}^2),\]
for some constant $C>0$.
For the second term in the second component, we have
\[\langle v,\pi_n^{\bb Y}[\nabla\times(v-\pi_n^{\bb Y}\bar u)]\rangle_{\bb Y_n}\le \|v\|_{\bb Y_n}^2+\|v\|_{\bb Y_n}\|u\|_{\bb Y_n}\le 2\|(v,w)\|_{\bb Y_n\times\bb Y_n}^2.\]
Therefore  the second component in $\hat F_n$ also satisfies the one sided linear growth condition.  And it is obvious that the third component in $\hat F_n$ satisfies the one sided linear growth condition. Hence  $\hat F_n$ satisfies the one sided linear growth condition. 

It remains to check if we have 
\[Tr(\sigma(u)\sigma^*(u))\le K(1+\|u\|_{\HH_n}^2),\qquad u\in\HH_n,\]
where $\sigma(u):\HH\longrightarrow \HH_n$ is defined by
\[\sigma(u)(g)=\lmd_1\pi_n(u\times g)-\lmd_2\pi_n\psi(u)u\times(u\times g),\qquad g\in \HH. \]
Hence
\[\sigma^*(u)(h)=-\lmd_1u\times h-\lmd_2\psi(u)u\times(u\times h),\qquad h\in\HH_n.\]
Therefore
\begin{align*}
	&Tr(\sigma(u)\sigma^*(u))=\sum_{k=1}^n\langle \sigma(u)\sigma^*(u)e_k,e_k\rangle_\HH=\sum_{k=1}^n\|\sigma^*(u)e_k\|_\HH^2\\
	\le& 2\sum_{k=1}^n\left(\|u\times e_k\|_\HH^2+\|\psi(u)u\times (u\times e_k)\|_\HH^2\right)\\
	\le&2\|u\|_\HH^2\sum_{k=1}^n\|e_k\|_{L^\infty}^2+C \sum_{k=1}^n\|e_k\|_{L^\infty}^2\\
	\le&K (1+\|u\|_{\HH_n}^2),
\end{align*}
for some constants $C>0$ and $K>0$. So the proof of the one sided linear growth condition is complete.

Therefore the claim follows by standard arguments, see for example Theorem 3.1 in \cite{SA&ZB&JW}.
\end{proof}

\section{A priori estimates}
Next we will get some a priori estimates of the solution to 
equation \eqref{eq:SVis 3.29R}.  
\begin{prop}\label{prop:estimates}
For any $T>0$, $p>0$ and $b>\frac{1}{4}$, there exists a constant $C=C(p,b)>0$ independent of $n$ such that:
\begin{equation}\label{eq:S 3.10R}
  \|M_n\|_{L^\infty(0,T;\mathbb{H})}\leq \|M_0\|_{\mathbb{H}}\,,
\end{equation}
  \begin{equation}\label{eq:SVis 3.32R}
  \mathbb{E}\|B_n-\pi_n^{\bb Y}\overline{M}_n\|_{L^\infty(0,T;\mathbb{L}^{2}(\R^3))}^{p}\leq C\,,
\end{equation}
\begin{equation}\label{eq:SVis 3.33R}
  \mathbb{E}\|E_n\|_{L^\infty(0,T;\mathbb{L}^{2}(\R^3))}^{p}\leq C\,,
\end{equation}
\begin{equation}\label{eq:SVis 3.34R}
  \mathbb{E}\| M_n\|_{L^\infty(0,T;\mathbb{V})}^{p}\leq C\,,
\end{equation}
\begin{equation}\label{eq:SVis 3.35R}
  \mathbb{E}\|M_n\times \rho_n\|_{L^2(0,T;\mathbb{H})}^{p}\leq C\,,
\end{equation}
\begin{equation}\label{eq:SVis 3.321R}
  \mathbb{E}\|B_n\|_{L^\infty(0,T;\mathbb{L}^{2}(\R^3))}^{p}\leq C\,,
\end{equation}
\begin{equation}\label{eq:SLLG 3.13R}
  \mathbb{E}\left(\int_0^T\left\|M_n(t)\times\left(M_n(t)\times \rho_n(t)\right)\right\|^2_{\mathbb{L}^\frac{3}{2}}\ud t\right)^\frac{p}{2}\leq C\,,
\end{equation}
\begin{equation}\label{eq:SLLG 3.14R}
  \mathbb{E}\left\|\pi_n\left[M_n(t)\times\left(M_n(t)\times \rho_n(t)\right)\right]\right\|^2_{L^2(0,T;\mathbb X^{-b})}\leq C\,,
\end{equation}
\begin{equation}\label{eq:SVis 3.38R}
  \mathbb{E}\left\|\frac{\ud E_n}{\ud t}\right\|^p_{L^\infty(0,T;\mathbb{Y}^*)}\leq C\,,
\end{equation}
\begin{equation}\label{eq:SVis 3.39R}
  \mathbb{E}\left\|\frac{\ud B_n}{\ud t}\right\|^p_{L^\infty(0,T;\mathbb{Y}^*)}\leq C\,,
\end{equation}
where $\mathbb X^{-b}$ is the dual space of $X^b=D(A^b)$.
\end{prop}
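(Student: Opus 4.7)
My plan is to apply It\^o's formula to two scalar quantities: first to the squared mass $\|M_{n}\|_{\mathbb{H}}^{2}$ to establish \eqref{eq:S 3.10R}, and then to the total energy $\mathcal{E}_{n}(M_{n},B_{n},E_{n})$ to produce the moment estimates \eqref{eq:SVis 3.32R}--\eqref{eq:SVis 3.35R}. I will then derive all remaining bounds from these by triangle inequality, Sobolev embedding, H\"older's inequality, duality, and direct inspection of the Maxwell equations.

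\textbf{Conservation of mass.} Every summand of both the Stratonovich drift $\tilde F_{n}:=F_{n}-\tfrac12\sum_{j}G'_{jn}(M_{n})[G_{jn}(M_{n})]$ and of each $G_{jn}(M_{n})$ has the form $\pi_{n}[M_{n}\times(\cdot)]$, so by self-adjointness of $\pi_{n}$ together with the pointwise identity $M_{n}\cdot(M_{n}\times v)=0$, the inner products $\langle M_{n},\tilde F_{n}\rangle_{\mathbb{H}}$ and $\langle M_{n},G_{jn}(M_{n})\rangle_{\mathbb{H}}$ both vanish. The Stratonovich chain rule therefore yields $d\|M_{n}\|_{\mathbb{H}}^{2}=0$, so $\|M_{n}(t)\|_{\mathbb{H}}\le\|M_{0}\|_{\mathbb{H}}$ pathwise, which is \eqref{eq:S 3.10R}.

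\textbf{Energy identity.} I would apply It\^o's formula to $\mathcal{E}_{n}(M_{n},B_{n},E_{n})$, using the gradients from Lemma~2.3 adapted to the Galerkin level: $\nabla_{M}\mathcal{E}_{n}=-\rho_{n}$, $\nabla_{B}\mathcal{E}_{n}=B_{n}-\pi_{n}^{\mathbb{Y}}\overline{M}_{n}$, $\nabla_{E}\mathcal{E}_{n}=E_{n}$. The Landau--Lifschitz drift then contracts to $-\lambda_{2}\|M_{n}\times\rho_{n}\|_{\mathbb{H}}^{2}$ via $\rho_{n}\in\mathbb{H}_{n}$, $\langle\rho_{n},M_{n}\times\rho_{n}\rangle=0$, and the Lagrange identity $\langle\rho_{n},M_{n}\times(M_{n}\times\rho_{n})\rangle=-\|M_{n}\times\rho_{n}\|_{\mathbb{H}}^{2}$. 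The two Maxwell cross terms $-\langle B_{n}-\pi_{n}^{\mathbb{Y}}\overline{M}_{n},\nabla\times E_{n}\rangle$ and $\langle E_{n},\nabla\times(B_{n}-\pi_{n}^{\mathbb{Y}}\overline{M}_{n})\rangle$ cancel after integration by parts on $\mathbb{R}^{3}$, leaving the dissipation $-\|1_{\mathcal{D}}E_{n}\|_{\mathbb{L}^{2}(\mathbb{R}^{3})}^{2}$ and the forcing $-\langle 1_{\mathcal{D}}E_{n},\overline{f}\rangle$. Combined with the It\^o correction $\tfrac12\sum_{j}\frac{\partial^{2}\mathcal{E}_{n}}{\partial M^{2}}(G_{jn},G_{jn})$ (read off from~\eqref{eq:d2EnM}) and the residual drift piece $\tfrac12\sum_{j}\langle-\rho_{n},G'_{jn}[G_{jn}]\rangle$, I expect to arrive at
\[d\mathcal{E}_{n}+\lambda_{2}\|M_{n}\times\rho_{n}\|_{\mathbb{H}}^{2}\,dt+\tfrac12\|1_{\mathcal{D}}E_{n}\|_{\mathbb{L}^{2}(\mathbb{R}^{3})}^{2}\,dt\ \le\ C(1+\mathcal{E}_{n})\,dt+C\|f\|_{\mathbb{H}}^{2}\,dt+dN_{t},\]
where $N$ is a local martingale. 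Bounding the residual correction by $C(1+\mathcal{E}_{n})$ would use (a) the truncation $\psi$, which keeps $\psi(M_{n})|M_{n}|$ and $\psi(M_{n})|M_{n}|^{2}$ pointwise bounded, (b) boundedness of $\varphi'$ and $\varphi''$ from the compact support of $\varphi$, (c) the summability assumption~\eqref{ass_h}, and (d) the $\mathbb{V}$-contractivity of $\pi_{n}$; the worst contribution $\langle-\Delta M_{n},G'_{jn}[G_{jn}]\rangle$ is absorbed by Neumann integration by parts, after which $\nabla(G'_{jn}[G_{jn}])$ is controlled in $\mathbb{H}$ by $C(1+\|\nabla M_{n}\|_{\mathbb{H}})$ thanks to the structural placement of the $\pi_{n}$'s in~\eqref{eq:Gjn'}. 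A standard stopping-time argument together with Burkholder--Davis--Gundy (the quadratic variation $[N]_{t}$ is dominated by $\int_{0}^{t}(1+\mathcal{E}_{n})^{2}\,ds$) and Gronwall's lemma then produces $\mathbb{E}\sup_{t\le T}\mathcal{E}_{n}(t)^{p}+\mathbb{E}\bigl(\int_{0}^{T}\|M_{n}\times\rho_{n}\|_{\mathbb{H}}^{2}\,dt\bigr)^{p}\le C$, giving \eqref{eq:SVis 3.32R}--\eqref{eq:SVis 3.35R} simultaneously.

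\textbf{Remaining estimates and main obstacle.} Estimate \eqref{eq:SVis 3.321R} follows from $\|B_{n}\|\le\|B_{n}-\pi_{n}^{\mathbb{Y}}\overline{M}_{n}\|+\|M_{n}\|_{\mathbb{H}}$. For \eqref{eq:SLLG 3.13R}, H\"older combined with the Sobolev embedding $\mathbb{V}\hookrightarrow\mathbb{L}^{6}$ gives $\|M_{n}\times(M_{n}\times\rho_{n})\|_{\mathbb{L}^{3/2}}\le\|M_{n}\|_{\mathbb{L}^{6}}\|M_{n}\times\rho_{n}\|_{\mathbb{H}}\le C\|M_{n}\|_{\mathbb{V}}\|M_{n}\times\rho_{n}\|_{\mathbb{H}}$, already controlled by \eqref{eq:SVis 3.34R}--\eqref{eq:SVis 3.35R}. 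For \eqref{eq:SLLG 3.14R}, the 3D Sobolev embedding $D(A^{b})\hookrightarrow\mathbb{L}^{3}$ for $b>1/4$ dualises to $\mathbb{L}^{3/2}\hookrightarrow\mathbb{X}^{-b}$, and combining this with \eqref{eq:SLLG 3.13R} delivers the claim. Finally, \eqref{eq:SVis 3.38R} and \eqref{eq:SVis 3.39R} are read off directly from the Galerkin Maxwell equations, since $\|\nabla\times u\|_{\mathbb{Y}^{*}}\le\|u\|_{\mathbb{L}^{2}(\mathbb{R}^{3})}$ by integration by parts and every summand on the right-hand sides of $dE_{n}/dt$ and $dB_{n}/dt$ has already been controlled in $\mathbb{L}^{2}(\mathbb{R}^{3})$. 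I expect the main obstacle to be the bookkeeping in the energy-identity step: achieving the uniform-in-$n$ control of the It\^o correction via the cutoff $\psi$ and the structural placement of the $\pi_{n}$'s, and in particular taming the $\Delta M_{n}$ content of $\rho_{n}$ by Neumann integration by parts.
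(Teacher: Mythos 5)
Your proposal is correct and follows essentially the same route as the paper: conservation of $\|M_n\|_{\mathbb{H}}$ from the orthogonality structure of the drift and diffusion, an It\^o energy identity whose Landau--Lifshitz part contracts to $-\lambda_2\|M_n\times\rho_n\|^2_{\mathbb{H}}$ and whose Maxwell cross terms cancel, control of the correction and martingale terms via the cutoff $\psi$, boundedness of $\varphi'$, assumption \eqref{ass_h} and integration by parts for the $\Delta M_n$ content of $\rho_n$, followed by Burkholder--Davis--Gundy and Gronwall, with the remaining bounds obtained exactly as you describe (triangle inequality, $\mathbb{V}\hookrightarrow\mathbb{L}^6$ with H\"older, the dual embedding $\mathbb{L}^{3/2}\hookrightarrow\mathbb{X}^{-b}$, and direct duality estimates on the Galerkin Maxwell equations). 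The only cosmetic difference is that you invoke the Stratonovich chain rule for the mass conservation while the paper applies the It\^o formula and verifies directly that the drift plus It\^o correction vanishes.
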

\begin{proof}[Proof of \eqref{eq:S 3.10R}]
By the It\^{o} formula and straightforward calculus we have
\[\begin{aligned}
   \ud \|M_n\|_{\mathbb{H}}^2&=\sum_{j=1}^\infty2\llangle M_n,G_{nj}(M_n)\rrangle_{\mathbb{H}}\ud W_j+\left[2\llangle M_n,F_n(M_n)\rrangle_{\mathbb{H}}+\sum_{j=1}^\infty\|G_{nj}(M_n)\|_{\mathbb{H}}^2\right]\ud t\\
   &=0
  \end{aligned}\]
 hence
  \[\|M_n(t)\|_{\mathbb{H}}^2=\|M_n(0)\|_{\mathbb{H}}^2=\|\pi_nM_0\|_{\mathbb{H}}^2\leq\|M_0\|_{\mathbb{H}}^2,\qquad t\geq 0.\]
\end{proof}
\begin{proof}[Proof of \eqref{eq:SVis 3.32R}, \eqref{eq:SVis 3.33R}, \eqref{eq:SVis 3.34R}, \eqref{eq:SVis 3.35R}]
By the It$\hat{\textrm{o}}$ formula we get:
\setlength\arraycolsep{2pt}{
  \begin{eqnarray*}
    &&\ud \mathcal{E}_n(M_n(t),B_n(t),E_n(t))\\
    &=&\Bigg[\frac{\partial\mathcal{E}_n}{\partial M_n}\big(F_n(M_n)(t)\big)+\frac{1}{2}\sum_{j=1}^\infty\frac{\partial^2\mathcal{E}_n}{\partial M_n^2}\big(G_{jn}(M_n)(t),G_{jn}(M_n)(t)\big) -\frac{\partial\mathcal{E}_n}{\partial B_n}\big(\pi_n^\mathbb{Y}(\nabla\times E_n(t))\big)\\
    &&+\frac{\partial\mathcal{E}_n}{\partial E_n}\Big(\pi_n^\mathbb{Y}[\nabla\times (B_n(t)-\pi_n^{\bb Y}\overline{M}_n(t))]-\pi_n^\mathbb{Y}[1_{\cl D}(E_n(t)+\overline{f}(t))]\Big)\Bigg]\ud t\\
    &&+\sum_{j=1}^\infty\frac{\partial \mathcal{E}_n}{\partial M_n}\big(G_{jn}(M_n)(t)\big)\ud W_j(t).
  \end{eqnarray*}}
Then by \eqref{eq:dEnM}-\eqref{eq:d2EnM} and \eqref{eq:rhon}, we have
\setlength\arraycolsep{2pt}{
  \begin{eqnarray}
    &&\mathcal{E}_n(t)-\mathcal{E}_n(0)\nonumber\\
    \hspace{1cm}&=&\int_0^t\Bigg\{-\llangle\rho_n(s),F_n(M_n)(s)\rrangle_{\mathbb H}\label{eq:Ent-En0}\\
    &&+\frac{1}{2}\sum_{j=1}^\infty\left\langle\vp''(M_n(s))G_{jn}(M_n)(s),G_{jn}(M_n)(s)\right\rangle_{\mathbb H}\nonumber\\
    &&+\frac{1}{2}\sum_{j=1}^\infty\|\nabla G_{jn}(M_n(s))\|_{\mathbb{H}}^2-\llangle B_n(s)-\pi_n^\mathbb{Y}\overline{M}_n(s),\pi_n^\mathbb{Y}(\nabla\times E_n(s))\rrangle_{\mathbb{L}^2(\R^3)}\nonumber\\
    &&+\llangle E_n(s),\pi_n^\mathbb{Y}[\nabla\times (B_n(s)-\pi_n^{\bb Y}\overline{M}_n(s))]-\pi_n^\mathbb{Y}[1_{\cl D}(E_n(s)+\overline{f}(s))]\rrangle_{\mathbb{L}^2(\R^3)}\Bigg\}\ud s\nonumber\\
    &&-\sum_{j=1}^\infty\int_0^t\llangle \rho_n(s),G_{jn}(M_n)(s)\rrangle\ud W_j(s).\nonumber
  \end{eqnarray}}

Now let's consider each term in the equality \eqref{eq:Ent-En0}.\\
For the term on the left hand side of \eqref{eq:Ent-En0},
\begin{equation}\label{eq:lhsEnt-En0}
\begin{aligned}
\mathcal{E}_n(t)-\mathcal{E}_n(0)&=\int_{\cl D}\vp(M_n(t,x))\ud x-\int_{\cl D}\vp(M_n(0,x))\ud x\\
& + \frac{1}{2} \left\|\nabla M_n(t)\right\|^2_{\mathbb{H}}+\frac{1}{2}\left\|B_n(t)-\pi_n^{\bb Y}\overline{M}_n(t)\right\|^2_{\mathbb{L}^2(\R^3)}+\frac{1}{2}\left\|E_n(t)\right\|^2_{\mathbb{L}^2(\R^3)}\\
 &- \frac{1}{2} \left\|\nabla M_n(0)\right\|^2_{\mathbb{H}}-\frac{1}{2}\left\|B_n(0)-\pi_n^{\bb Y}\overline{M}_n(0)\right\|^2_{\mathbb{L}^2(\R^3)}-\frac{1}{2}\left\|E_n(0)\right\|^2_{\mathbb{L}^2(\R^3)}\,.
  \end{aligned}
  \end{equation}
For the 1st term on the right hand side of \eqref{eq:Ent-En0},  by \eqref{eq:Fn},
 \[\begin{aligned}
 -\llangle \rho_n,F_n(M_n)\rrangle_{\mathbb{H}}=&-\lmd_1\llangle \rho_n,\pi_n[M_n\times \rho_n]\rrangle_{\mathbb{H}}+\lmd_2\llangle\rho_n,\pi_n[M_n\times (M_n\times \rho_n)]\rrangle_{\mathbb{H}}\\
 &-\frac{1}{2}\sum_{j=1}^\infty\lb\rho_n,G_{jn}^\prime\left(M_n\right)\left(G_{jn}\left(M_n\right)\right)\rb\,.
 \end{aligned}\]
  Since
  \[\llangle \rho_n,\pi_n[M_n\times \rho_n]\rrangle_{\mathbb{H}}=\llangle \rho_n,M_n\times \rho_n\rrangle_{\mathbb{H}}=0\,,\]
  and
  \[\llangle\rho_n,\pi_n[M_n\times (M_n\times \rho_n)]\rrangle_{\mathbb{H}}=\llangle\rho_n,M_n\times (M_n\times \rho_n)\rrangle_{\mathbb{H}}=-\|M_n\times \rho_n\|^2_{\mathbb{H}}\,,\]
we find that
 \[-\llangle \rho_n,F_n(M_n)\rrangle_{\mathbb{H}}=-\lmd_2\|M_n\times \rho_n\|^2_{\mathbb{H}}-\frac{1}{2}\sum_{j=1}^\infty \lb\rho_n,G_{jn}^\prime\left( M_n\right)\left[G_{jn}\left( M_n\right)\right]\rb\,.\]

For the 4th and 5th terms on the right hand side of \eqref{eq:Ent-En0},
we notice that
\setlength\arraycolsep{2pt}{
  \begin{eqnarray*}
    &&-\llangle B_n(s)-\pi_n^\mathbb{Y}\overline{M}_n(s),\pi_n^\mathbb{Y}(\nabla\times E_n(s))\rrangle_{\mathbb{L}^2(\R^3)}+\llangle E_n(s),\pi_n^\mathbb{Y}[\nabla\times (B_n(s)-\pi_n^{\bb Y}\overline{M}_n(s))]\rrangle_{\mathbb{L}^2(\R^3)}\\
    &=&  -\llangle B_n(s)-\pi_n^\mathbb{Y}\overline{M}_n(s),\nabla\times E_n(s)\rrangle_{\mathbb{L}^2(\R^3)}+\llangle E_n(s),\nabla\times (B_n(s)-\pi_n^{\bb Y}\overline{M}_n(s))\rrangle_{\mathbb{L}^2(\R^3)}=0.
    \end{eqnarray*}}
Therefore,
\setlength\arraycolsep{2pt}{
  \begin{eqnarray}
    &&-\llangle B_n(s)-\pi_n^\mathbb{Y}\overline{M}_n(s),\pi_n^\mathbb{Y}(\nabla\times E_n(s))\rrangle_{\mathbb{L}^2(\R^3)}\nonumber\\
    &&+\llangle E_n(s),\pi_n^\mathbb{Y}[\nabla\times (B_n(s)-\pi_n^{\bb Y}\overline{M}_n(s))]-\pi_n^\mathbb{Y}[1_{\cl D}(E_n(s)+\overline{f}(s))]\rrangle_{\mathbb{L}^2(\R^3)}\label{eq:rhs45Ent-En0}\\
    &=&-\llangle E_n(s),1_{\cl D}(E_n(s))+\overline{f}(s)\rrangle_{\mathbb{L}^2(\R^3)}=-\|1_{\cl D}E_n\|_{\mathbb{H}}^2-\llangle f,1_{\cl D}E_n\rrangle_{\mathbb{H}}.\nonumber
  \end{eqnarray}}

By \eqref{eq:lhsEnt-En0} and \eqref{eq:rhs45Ent-En0}, equality \eqref{eq:Ent-En0} takes the form
\setlength\arraycolsep{2pt}{
  \begin{eqnarray}
    &&\int_{\cl D}\vp(M_n(t,x))\ud x+ \frac{1}{2} \left\|\nabla M_n(t)\right\|^2_{\mathbb{H}}+\frac{1}{2}\left\|B_n(t)-\pi_n^{\bb Y}\overline{M}_n(t)\right\|^2_{\mathbb{L}^2(\R^3)}+\frac{1}{2}\left\|E_n(t)\right\|^2_{\mathbb{L}^2(\R^3)}\label{eq:SVis 3.31R}\\
    &&+\lmd_2\int_0^t|M_n\times \rho_n|^2\ud s+\frac{1}{2}\sum_{j=1}^\infty\int_0^t\lb\rho_n,G_{jn}^\prime\left(M_n\right)\left[G_{jn}\left(M_n\right)\right]\rb
   \ud s\nonumber\\
    &&-\frac{1}{2}\sum_{j=1}^\infty\int_0^t\left|G_{jn}\left(M_n\right)\right|^2 \ud s-\frac{1}{2}\sum_{j=1}^\infty\int_0^t\left|\nabla G_{jn}\left(M_n\right)\right|^2 \ud s\nonumber\\
    &&-\frac{1}{2}\sum_{j=1}^\infty\int_0^t\lb \vp''(M_n)G_{jn}\left(M_n\right),G_{jn}\left(M_n\right)\rb\ud s\nonumber\\
    &&+\sum_{j=1}^\infty\int_0^t\llangle \rho_n, G_{jn}\left(M_n\right)\rrangle\ud W_j(s)\nonumber\\
    &=&\int_{\cl D}\vp(M_n(0,x))\ud x+ \frac{1}{2} \left\|\nabla M_n(0)\right\|^2_{\mathbb{H}} \nonumber\\
    &&+\frac{1}{2}\left\|B_n(0)-\pi_n^\mathbb{Y}(\overline{M}_n(0))\right\|^2_{\mathbb{L}^2(\R^3)}+\frac{1}{2}\left\|E_n(0)\right\|^2_{\mathbb{L}^2(\R^3)}
    ,\qquad \forall t\in(0,T).\nonumber
    \end{eqnarray}}
Now let us consider some terms in the equality \eqref{eq:SVis 3.31R}.\\
By \eqref{eq:rhon} we have
\setlength\arraycolsep{2pt}{
  \begin{eqnarray*}
   &&\llangle \rho_n, G_{jn}\left(M_n\right)\rrangle\\
    &=&-\llangle \pi_n[\vp'(M_n)],G_{jn}\left(M_n\right)\rrangle+\llangle \Delta M_n, G_{jn}\left(M_n\right)\rrangle\\
    &&+\llangle \pi_n[B_n-\pi_n^{\bb Y}\overline{M}_n],G_{jn}\left(M_n\right)\rrangle\,.
  \end{eqnarray*}}

We also have
 \[\begin{aligned}
  \llangle\Delta M_n,G_{jn}\left(M_n\right)\rrangle&=
  - \llangle \nabla M_n,\nabla G_{jn}\left(M_n\right)\rrangle
    \\
    &=-\lambda_1 \llangle \nabla M_n,\nabla K_{jn}\left(M_n\right)\rrangle \\
    &\leq\|\nabla M_n\|^2_{L^2}\|h_j\|_{\mathbb{L}^\infty(\cl D)}^2+2\|\nabla M_n\|_{L^2}\|M_n\|_{\mathbb{L}^6(D)}\|h_j\|_{\mathbb{L}^\infty(\cl D)}\|\nabla h_j\|_{\mathbb{L}^3(D)}\\
    \leq&\|M_n\|_{\mathbb{V}}^2\|h_j\|_{\mathbb{L}^\infty(\cl D)}^2+2\|M_n\|_{\mathbb{V}}^2\|h_j\|_{\mathbb{L}^\infty(\cl D)}\|\nabla h_j\|_{\mathbb{L}^3(D)}.
  \end{aligned}\]
 Next we have
 \[\left|\llangle \pi_n[B_n-\pi_n^{\bb Y}\overline{M}_n],G_{jn}\left(M_n\right)\rrangle\right|\le
  C\la\left\|h_j\right\|_{\mathbb L^\infty}^2+\left\|1_{\cl D}\left[B_n-\pi_n^{\bb Y}\overline{M}_n\right]\right\|^2_{\mathbb{H}}\ra\]
Since we assume that $\vp'$ is bounded we obtain
\setlength\arraycolsep{2pt}{
  \begin{eqnarray*}
    &&\left|\llangle \pi_n[\vp'(M_n)],G_{jn}\la M_n\ra\rrangle_{\mathbb H}\right|\leq C\|h_j\|_{\mathbb{L}^\infty(\cl D)}\,.
    \end{eqnarray*}}
   
Note that we also have,
\begin{equation*}\label{eq:SVis 3.31R41}
\left|\int_0^t\int_{\cl D}\llangle f, E_n\rrangle\ud x\ud s\right|\leq \frac{1}{2}\int_0^t\int_{\cl D}\left(|f|^2+|E_n|^2\right)\ud x\ud s.
\end{equation*}

Hence by \eqref{eq:S 3.10R} and \eqref{eq:SVis 3.31R} we infer that there exists a constant $C(\alpha,\beta,\cl D)>0$ independent of $n$ such that

\setlength\arraycolsep{2pt}{
  \begin{eqnarray}
    &&\qquad\qquad\frac{1}{2}\|B_n(t)-\pi_n^{\bb Y}\overline{M}_n(t)\|^2_{\mathbb{L}^2(\R^3)}+\frac{1}{2}\|E_n(t)\|^2_{\mathbb{L}^2(\R^3)}\label{eq:goingtoeststocterm}\\
    &&+\lmd_2\int_0^t\|M_n(s)\times \rho_n(s)\|^2_{\mathbb{H}}\ud s+\int_{\cl D}\vp(M_n(t))\ud x+\frac{1}{2} \left\| M_n(t)\right\|_{\mathbb{V}}^2\nonumber\\
    &\leq&\frac{1}{2}\|B_n(0)-\pi_n^{\bb Y}\overline{M}_n(0)\|^2_{\mathbb{L}^2(\R^3)}+\frac{1}{2}\|E_n(0)\|^2_{\mathbb{L}^2(\R^3)}+\frac{1}{2}\int_0^t\|f(s)\|_\HH^2\ud s\nonumber\\
    &&+\int_{\cl D}\vp(M_n(0,x))\ud x +\frac{1}{2}\left\|\nabla M_n(0)\right\|^2_\HH\nonumber\\
    &&+Cc_h\int_0^t\left(\|M_n(s)\|_{\mathbb{V}}^2+\left\|1_{\cl D}\left[B_n(s)-\pi_n^{\bb Y}\overline{M}_n(s)\right]\right\|_{\mathbb{H}}^2\right)\ud s+Cc_ht\nonumber\\
    &&+\left|\sum_{j=1}^\infty\int_0^t \llangle\rho_n, G_{jn}\la M_n\ra\rrangle\ud W_j(s)\right|\nonumber\,.
\end{eqnarray}}
We are going to estimate the stochastic term in the above inequality \eqref{eq:goingtoeststocterm}. We will show first that the infinite sum of stochastic integrals
\begin{equation}\label{eq_ito}
\sum_{j=1}^\infty\int_0^t \llangle\rho_n, G_{jn}\la M_n\ra\rrangle\ud W_j(s)
\end{equation}
 is well defined. We have
\setlength\arraycolsep{2pt}{
  \begin{eqnarray}
  &&\left|\llangle \rho_n,G_{jn}\left(M_n\right)\rrangle_\HH\right|\nonumber\\
  &\le&\left|\llangle -\vp'(M_n)+1_{\cl D}(B_n-\pi_n^{\bb Y}\overline M_n),G_{jn}\left(M_n\right)\rrangle_\HH\right|+\left|\llangle\Delta M_n,G_{jn}\left(M_n\right)\rrangle_\HH\right|\label{eq:eststerm1}\\
  &\le&C\left(\|h_j\|_{\mathbb L^\infty}+\|1_{\cl D}(B_n-\pi_n^{\bb Y}\overline M_n)\|_\HH\|h_j\|_{\mathbb L^\infty}+\|\nabla M_n\|^2_\HH\left( \|\nabla h_j\|_{\bb H}+\|h_j\|_{\bb L^\infty}\right)\right)\nonumber\\
  &\le& C\left( \|\nabla h_j\|_{\bb H}+\|h_j\|_{\bb L^\infty}\right)
  \left(1+\|1_{\cl D}(B_n-\pi_n^{\bb Y}\overline M_n)\|_\HH+\|\nabla M_n\|^2_\HH\right)\nonumber
  \end{eqnarray}}
 
hence
\[\begin{aligned}
\E\sum_{j=1}^\infty&\int_0^t \llangle\rho_n, G_{jn}\la M_n\ra\rrangle^2_{\bb H}\ud s\le c_hC\int_0^t\left(1+\|1_{\cl D}(B_n-\pi_n^{\bb Y}\overline M_n)\|_\HH+\|\nabla M_n\|^2_\HH\right)^2\ud s
\end{aligned}\]
and the It\^o integral \eqref{eq_ito} is a well defined square-integrable martingale.
Secondly, we do some preparation before using the Burkholder-Davis-Gundy inequality on the stochastic term of \eqref{eq:goingtoeststocterm}. Taking supremum over $r\in[0,t]$ on both sides of \eqref{eq:goingtoeststocterm} we obtain
\setlength\arraycolsep{2pt}{
  \begin{eqnarray*}
  &&\frac{1}{2}\sup_{r\in[0,t]}\left(\|B_n(r)-\pi_n^\mathbb{Y}(\overline{M}_n(r))\|^2_{\mathbb{L}^2(\R^3)}+\|E_n(r)\|_{\mathbb{L}^2(\R^3)}^2\right) \\
    &&+\lmd_2\int_0^t\|M_n(s)\times \rho_n(s)\|^2_{\mathbb{H}}\ud s+\sup_{r\in[0,t]}\left(\int_{\cl D}\vp(M_n(r))\ud x+\frac{1}{2} \left\| M_n(r)\right\|_{\mathbb{V}}^2\right)\\
    &\leq&\frac{1}{2}\|B_n(0)-\pi_n^{\bb Y}\overline{M}_n(0)\|^2_{\mathbb{L}^2(\R^3)}+\frac{1}{2}\|E_n(0)\|^2_{\mathbb{L}^2(\R^3)}+\frac{1}{2}\int_0^t\|f(s)\|_\HH^2\ud s\\
    &&+\int_{\cl D}\vp(M_n(0,x))\ud x +\frac{1}{2}\left\|\nabla M_n(0)\right\|^2_\HH\\
    &&+Cc_h\int_0^t\left(\|M_n(s)\|_{\mathbb{V}}^2+\left\|1_{\cl D}\left[B_n(s)-\pi_n^{\bb Y}\overline{M}_n(s)\right]\right\|_{\mathbb{H}}^2\right)\ud s+Cc_ht\\
    &&+\sup_{r\in[0,t]}\left|\sum_{j=1}^\infty\int_0^r\llangle\rho_n,G_{jn}\left(M_n\right)\rrangle_\HH\ud W_j(s)\right|.\nonumber
\end{eqnarray*}}
Let $p\geq 2$. Then using the Jensen inequality we find that for some constant $C$ which includes the initial data, we have
\setlength\arraycolsep{2pt}{
  \begin{eqnarray}
  &&\label{eq:estwW5}\\
    &&\mathbb{E}\Bigg(\sup_{r\in[0,t]}\left(\|[B_n-\pi_n^{\bb Y}\overline{M}_n](r)\|^2_{\mathbb{L}^2(\R^3)} +\|E_n(r)\|^2_{\mathbb{L}^2(\R^3)} +\| M_n(r)\|^2_\mathbb{V}\right)+2\lmd_2\int_0^t\|M_n\times \rho_n\|^2_{\HH}\ud s\Bigg)^p\nonumber\\
    &\leq&Cc_h^pt^{p-1}\bb E\int_0^t\left(\| M_n(s)\|^2_\mathbb{V}+\|[B_n-\pi_n^{\bb Y}\overline{M}_n](s)\|^2_{\mathbb{L}^2(\R^3)}\right)^p\ud s\nonumber\\
    &&+3^{p-1}\mathbb{E}\left(\sup_{r\in[0,t]}\left|\sum_{j=1}^\infty\int_0^r\llangle\rho_n,G_{jn}\left(M_n\right)\rrangle_\HH\ud W_j(s)\right|\right)^p+Cc_h^pt^p.\nonumber
  \end{eqnarray}}

Finally, by the Burkholder-Davis-Gundy inequality, the Jensen's inequality again and \eqref{eq:eststerm1}, there exists a $n$-independent constants $K=K(p)>0$ and $C>0$ such that:

\setlength\arraycolsep{2pt}{
  \begin{eqnarray}
    &&\mathbb{E}\left(\sup_{r\in[0,t]}\left|\sum_{j=1}^\infty\int_0^r\llangle\rho_n,G_{jn}\left(M_n\right)\rrangle_\HH\ud W_j\right|\right)^p\nonumber\\
    &\leq&K\mathbb{E}\left|\sum_{j=1}^\infty\int_0^t\left\langle \rho_n,G_{jn}\left(M_n\right)\right\rangle_{\mathbb H}^2\ud s\right|^\frac{p}{2}\nonumber\\
    &\leq&Ct^{\frac{p}{2}-1}\mathbb{E}\int_0^t\sup_{r\in[0,s]}\left(\|M_n(r)\|_\V^{2p}+\|1_{\cl D}(B_n-\pi_n^{\bb Y}\overline M_n)(r)\|_\HH^{2p}\right)\ud s \label{eq:estoW5}
  \end{eqnarray}}

Hence by \eqref{eq:estwW5} and \eqref{eq:estoW5} there exists $C>0$ independent of $n$ such that,

\setlength\arraycolsep{2pt}{
  \begin{eqnarray*}
    &&\mathbb{E}\sup_{r\in[0,t]}\Bigg(\|[B_n-\pi_n^{\bb Y}\overline{M}_n](r)\|^2_{\mathbb{L}^2(\R^3)} +\|E_n(r)\|^2_{\mathbb{L}^2(\R^3)} +\| M_n(r)\|^2_\mathbb{V}+\int_0^t\|M_n\times \rho_n\|^2_{\HH}\ud\tau\Bigg)^p\\
    &&\leq C(t^{p-1}+t^{\frac{p}{2}-1})\int_0^t\bb E\sup_{r\in[0,s]}\left(\|M_n(r)\|_{\mathbb{V}}^{2p}+\|[B_n-\pi_n^{\bb Y}\overline{M}_n](r)\|_{\mathbb{L}^2(\R^3)}^{2p}\right)\ud s+Ct^p
\end{eqnarray*}}

Hence by the Gronwall inequality, with $C=CT^p e^{C(T^p+T^\frac{p}{2})}$, we get the following four a'priori estimates,
 \begin{equation*}
  \mathbb{E}\|B_n-\pi_n^{\bb Y}\overline{M}_n\|_{L^\infty(0,T;\mathbb{L}^{2}(\R^3))}^{2p}\leq C,
\end{equation*}

\begin{equation*}
  \mathbb{E}\|E_n\|_{L^\infty(0,T;\mathbb{L}^{2}(\R^3))}^{2p}\leq C,
\end{equation*}

\begin{equation*}
  \mathbb{E}\|  M_n\|_{L^\infty(0,T;\mathbb{V})}^{2p}\leq C,
\end{equation*}

\begin{equation*}
  \mathbb{E}\|M_n\times \rho_n\|_{L^2(0,T;\mathbb{H})}^{2p}\leq C.
\end{equation*}

And since $L^{2p}(\Omega)\hookrightarrow L^{q}(\Omega)$ continuously for all $q<2p$, these four inequalities imply the inequalities:
\eqref{eq:SVis 3.32R}, \eqref{eq:SVis 3.33R}, \eqref{eq:SVis 3.34R}, \eqref{eq:SVis 3.35R} for all $p>0$.
\end{proof}

We continue with the proof of Proposition \ref{prop:estimates}.
\begin{proof}[Proof of \eqref{eq:SVis 3.321R}]
For fixed $p\geq 1$, we have
  \setlength\arraycolsep{2pt}{
  \begin{eqnarray*}
    &&\mathbb{E}\|B_n\|_{L^\infty(0,T;\mathbb{L}^2(\R^3))}^{2p}\leq 2^p\left(\mathbb{E}\|[B_n-\pi_n^{\bb Y}\overline{M}_n]\|_{L^\infty(0,T;\mathbb{L}^2(\R^3))}^{2p}+\mathbb{E}\|M_n\|_{L^\infty(0,T;\mathbb{H})}^{2p}\right).
  \end{eqnarray*}}
  By the a'priori estimates \eqref{eq:SVis 3.32R} and \eqref{eq:SVis 3.34R}, there exists some $C>0$ independent of $n$ such that
  \[\mathbb{E}\|B_n\|_{L^\infty(0,T;\mathbb{L}^2(\R^3))}^{2p}\leq C.\]
  Together with the fact $L^{2p}(\Omega)\hookrightarrow L^{q}(\Omega)$ continuously for all $q<2p$, we complete the proof of \eqref{eq:SVis 3.321R}.
\end{proof}

\begin{proof}[Proof of \eqref{eq:SLLG 3.13R}]
By the Soblev imbedding theorem, there is a constant $C$ such that
  \[\big\|M_n\big\|_{\mathbb{L}^6}\leq C\big\|M_n\big\|_{\V},\]
  therefore by the H$\ddot{\textrm{o}}$lder inequality, we have
  \setlength\arraycolsep{2pt}{
  \begin{eqnarray*}
    &&\Big\|M_n(t)\times\big(M_n(t)\times\rho_n(t)\big)\Big\|_{\mathbb{L}^\frac{3}{2}}\leq \big\|M_n(t)\big\|_{\mathbb{L}^6}\big\|M_n(t)\times\rho_n(t)\big\|_{\mathbb{L}^2}\leq C\big\|M_n(t)\big\|_{\V}\big\|M_n(t)\times\rho_n(t)\big\|_{\mathbb{L}^2}.
  \end{eqnarray*}}
  Hence, by the Cauchy-Schwartz inequality,
  \setlength\arraycolsep{2pt}{
  \begin{eqnarray*}
    &&\mathbb{E}\left[\left(\int_0^T\Big\|M_n(t)\times\big(M_n(t)\times\rho_n(t)\big)\Big\|^2_{\mathbb{L}^{\frac{3}{2}}}\ud t\right)^\frac{p}{2}\right]\\
    &\leq&C^p\mathbb{E}\left[\sup_{r\in [0,T]}\big\|M_n(r)\big\|^p_{\V}\left(\int_0^T\big\|M_n(t)\times\rho_n(t)\big\|^2_{\mathbb{L}^2}\ud t\right)^\frac{p}{2}\right]\\
    &\leq&C^p\left(\mathbb{E}\left[\sup_{t\in[0,T]}\big\|M_n(t)\big\|_{\V}^{2p}\right]\right)^\frac{1}{2} \left(\mathbb{E}\left[\left(\int_0^T\big\|M_n(t)\times\rho_n(t)\big\|^2_{\mathbb{L}^2}\ud t\right)^p\right]\right)^\frac{1}{2}.
  \end{eqnarray*}}

  Then by \eqref{eq:SVis 3.34R} and \eqref{eq:SVis 3.35R}, we get \eqref{eq:SLLG 3.13R}.
\end{proof}

\begin{proof}[Proof of \eqref{eq:SLLG 3.14R}]
Since $\|\cdot\|_{X^b}=\|A_1^b\cdot\|_{\HH}=\|\cdot\|_{\HH^{2b}}$,
$X^b\hookrightarrow \mathbb{L}^3$ compactly for $b>\frac{1}{4}$. Hence $\mathbb{L}^\frac{3}{2}$ is compactly embedded in $\mathbb X^{-b}$. Thus there is a constant $C$ independent of $n$ such that

\setlength\arraycolsep{2pt}{
  \begin{eqnarray*}
    &&\hspace{-1.5cm}{\mathbb{E}\int_0^T\left\|\pi_n\left[M_n(t)\times\left(M_n(t)\times \rho_n(t)\right)\right]\right\|^2_{\mathbb X^{-b}}\ud t}\\
    &\leq&\mathbb{E}\int_0^T\left\|\left[M_n(t)\times\left(M_n(t)\times \rho_n(t)\right)\right]\right\|^2_{\mathbb X^{-b}}\ud t\\
    &&\hspace{1.5cm}{\leq C\mathbb{E}\int_0^T\left\|\left[M_n(t)\times\left(M_n(t)\times \rho_n(t)\right)\right]\right\|^2_{\mathbb{L}^\frac{3}{2}}\ud t.}
  \end{eqnarray*}}
  Then by \eqref{eq:SLLG 3.13R}, we get \eqref{eq:SLLG 3.14R}.
\end{proof}

\begin{proof}[Proof of \eqref{eq:SVis 3.38R} and \eqref{eq:SVis 3.39R}]
By the second equation in \eqref{eq:SVis 3.29R}, we have
\setlength\arraycolsep{2pt}{
  \begin{eqnarray*}
   && \mathbb{E}\left\|\frac{\ud E_n}{\ud t}\right\|^p_{L^\infty(0,T;\mathbb{Y}^*)}=\mathbb{E}\|\pi_n^\mathbb{Y}(\nabla\times [B_n-\pi_n^{\bb Y}\overline{M}_n])-\pi_n^\mathbb{Y}\big[1_{D}(E_n+\overline f)\big]\|^p_{L^\infty(0,T;\mathbb{Y}^*)}\\
    &\leq&C_p\mathbb{E}\sup_{t\in(0,T)}\|\nabla\times [B_n(t)-\pi_n^{\bb Y}\overline{M}_n(t)]\|^p_{\mathbb{Y}^*}+C_p\mathbb{E}\sup_{t\in(0,T)}\|1_{D}(E_n(t)+f(t))\|^p_{\mathbb{Y}^*}\\
    &\leq&C_p\mathbb{E}\sup_{t\in(0,T)}\sup_{y\neq 0}\left(\left|\frac{\llangle B_n(t)-\pi_n^{\bb Y}\overline{M}_n(t),\nabla\times y\rrangle_{\mathbb{L}^2(\R^3)}}{\|y\|_\mathbb{Y}}\right|^p+\left|\frac{\llangle 1_{D}(E_n+\overline f),y\rrangle_{\mathbb{L}^2(\R^3)}}{\|y\|_\mathbb{Y}}\right|^p\right)\\
    &\leq&C_p\mathbb{E}\|[B_n-\pi_n^{\bb Y}\overline{M}_n]\|^p_{L^\infty(0,T;\mathbb{L}^2(\R^3))} +C_p\mathbb{E}\|E_n\|^p_{L^\infty(0,T;\mathbb{L}^2(\R^3))}+C_p\|f\|^p_{\mathbb{L}^2(0,T;\HH)}.
  \end{eqnarray*}}
Hence, since $f\in L^2(0,T;\mathbb{H})$,  by \eqref{eq:SVis 3.32R} and \eqref{eq:SVis 3.33R}, we get \eqref{eq:SVis 3.38R} and similarly \eqref{eq:SVis 3.39R}.
\end{proof}
After so many pages of long calculation, the proof of Proposition \ref{prop:estimates} has been finished. Next let us consider the estimate of the stochastic term in the finite dimensional system \eqref{eq:SVis 3.29R}.

\begin{lem}\label{lem:StermWanorm}
  For $a\in [0,\frac{1}{2})$ and  $p\geq 2$, there exists a constant $C\geq0$ such that for all $n\in\mathbb{N}$,
  \begin{equation}\label{eq:SLLG 4.1R}
 \mathbb{E}\Bigg\|\sum_{j=1}^\infty\int_0^\cdot G_{jn}\left(M_n(s)\right)\ud W_j(s)\Bigg\|^p_{W^{a,p}(0,T;\mathbb{H})}\leq C\,.
   \end{equation}
\end{lem}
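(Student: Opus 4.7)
The proof hinges on the standard stochastic-integral estimate in fractional Sobolev spaces due to Flandoli--G\c atarek: for $a\in[0,1/2)$ and $p\geq 2$,
\[
\mathbb{E}\Bigg\|\sum_{j=1}^\infty\int_0^\cdot\Phi_j(s)\,dW_j(s)\Bigg\|^p_{W^{a,p}(0,T;\mathbb{H})}\leq C(p,a,T)\,\mathbb{E}\int_0^T\Bigg(\sum_{j=1}^\infty\|\Phi_j(s)\|^2_{\mathbb{H}}\Bigg)^{p/2}ds.
\]
This can be established from BDG applied to $u(t)-u(s)=\sum_j\int_s^t\Phi_j\,dW_j$ followed by Jensen's inequality, combined with the Slobodeckij characterisation of $W^{a,p}$; the resulting double integral $\int_0^T\!\!\int_0^T|t-s|^{p/2-1-ap}\,ds\,dt$ converges precisely when $a<1/2$. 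Setting $\Phi_j(s):=G_{jn}(M_n(s))$, the task reduces to a uniform (in $n,s,\omega$) bound on $\sum_j\|G_{jn}(M_n(s))\|^2_{\mathbb{H}}$.

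For that pointwise bound I would use the definition \eqref{eq:Gn}, namely $G_{jn}(M_n)=\lambda_1\pi_n[M_n\times h_j]+\lambda_2\pi_n[\psi(M_n)M_n\times(M_n\times h_j)]$, together with three elementary facts: $\pi_n$ is an $\mathbb{H}$-contraction, $|a\times b|\leq|a||b|$ pointwise, and $|\psi(M_n)M_n|\leq 5$ everywhere (since $\psi$ is supported in $\{|x|\leq 5\}$ and $\psi\leq 1$). These give
\[
\|G_{jn}(M_n)\|_{\mathbb{H}}\leq\bigl(|\lambda_1|+5|\lambda_2|\bigr)\|h_j\|_{\mathbb{L}^\infty}\|M_n\|_{\mathbb{H}}.
\]
Squaring and summing, and invoking assumption \eqref{ass_h} (which in particular gives $\sup_j\|h_j\|_{\mathbb{L}^\infty}<\infty$ and therefore $\sum_j\|h_j\|^2_{\mathbb{L}^\infty}\leq(\sup_j\|h_j\|_{\mathbb{L}^\infty})c_h^2<\infty$), together with the conservation-type estimate \eqref{eq:S 3.10R}, $\|M_n(s)\|_{\mathbb{H}}\leq\|M_0\|_{\mathbb{H}}$, yields a deterministic pointwise bound
\[
\sum_{j=1}^\infty\|G_{jn}(M_n(s))\|^2_{\mathbb{H}}\leq K
\]
with $K$ independent of $n$, $s$, and $\omega$. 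Substituting this into the Flandoli--G\c atarek inequality produces \eqref{eq:SLLG 4.1R} with $C=C(p,a,T)\,T\,K^{p/2}$.

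The only delicate point in this plan is the uniformity in $n$ of the pointwise bound on the quadratic term of $G_{jn}$. Without the truncation $\psi$, controlling $\|M_n\times(M_n\times h_j)\|_{\mathbb{H}}$ would require $\|M_n\|^2_{\mathbb{L}^4}\|h_j\|_{\mathbb{L}^\infty}$, which is not directly provided by the conservation law \eqref{eq:S 3.10R}. The cutoff $\psi$ built into the definition of $G_{jn}$ converts the extra factor $|M_n|$ into an $L^\infty$-bounded quantity, thereby making the $\mathbb{H}$-norm bound on $M_n$ alone sufficient; this is exactly why $\psi$ was introduced in the Galerkin scheme.
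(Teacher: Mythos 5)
Your proposal is correct and follows essentially the same route as the paper: apply the Flandoli--G\c atarek estimate (Lemma \ref{lem:FG2.1}) to reduce the claim to a bound on $\mathbb{E}\bigl(\int_0^T\sum_j\|G_{jn}(M_n)\|^2_{\mathbb{H}}\,dt\bigr)^{p/2}$, and then use the cutoff $\psi$ together with assumption \eqref{ass_h} to get a linear-in-$\|M_n\|_{\mathbb{H}}$ bound on $G_{jn}$. The only cosmetic difference is that you close the argument with the pathwise conservation estimate \eqref{eq:S 3.10R}, whereas the paper invokes the moment bound \eqref{eq:SVis 3.34R}; both suffice.
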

To prove Lemma \ref{lem:StermWanorm}, we will use the Lemma 2.1 from Flandoli and Gatarek's paper \cite{Flandoli}, which is stated as Lemma \ref{lem:FG2.1} in the Appendix.
\begin{proof}[Proof of Lemma \ref{lem:StermWanorm}]
  By Lemma \ref{lem:FG2.1}, there exists constant $C_1>0$, such that
 \[\begin{aligned}
  \mathbb{E}\Bigg\|\sum_{j=1}^\infty\int_0^\cdot G_{jn}\left(M_n\right)\ud W_j(s)\Bigg\|^p_{W^{a,p}(0,T;\mathbb{H})}
    &\leq C_1\mathbb{E}\left(\int_0^T\sum_{j=1}^\infty\left\|G_{jn}\left(M_n\right)\right\|^2_{\mathbb{H}}\ud t\right)^\frac{p}{2}\\
    &\leq 2^{p-1}C_1\la \sum_{j=1}^\infty\|h_j\|^2_{\mathbb{L}^\infty}\ra^{p/2}\mathbb{E}\int_0^T\la 1+\|M_n\|^p_{\mathbb{H}}\ra\ud t\\
    &\le C,
  \end{aligned}\]
  where the last inequality followed by \eqref{eq:SVis 3.34R}. This completes the proof of the estimate \eqref{eq:SLLG 4.1R}.
\end{proof}

\begin{rem}
  From now on we will always assume $a\in [0,\frac{1}{2})$, $b>\frac{1}{4}$ and $p\geq 2$.
\end{rem}

\begin{lem}
  For $a\in [0,\frac{1}{2})$, $b>\frac{1}{4}$, $p\geq 2$, there exists $C>0$ such that for all $n\in\mathbb{N}$,
  \begin{equation}\label{eq:SLLG 4.2R}
    \mathbb{E}\left\|M_n\right\|^2_{W^{a,p}(0,T;\mathbb X^{-b})}\leq C.
  \end{equation}
\end{lem}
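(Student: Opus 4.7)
The plan is to decompose $M_n$ into three parts via the It\^o form of \eqref{eq:SVis 3.29R}:
\[
M_n(t)=\pi_n M_0+\int_0^t F_n(M_n,B_n,E_n)(s)\,ds+Z_n(t),
\]
where $Z_n(t)=\sum_{j=1}^\infty\int_0^t G_{jn}(M_n(s))\,dW_j(s)$. By the triangle inequality for the $W^{a,p}(0,T;\mathbb{X}^{-b})$ norm it suffices to bound each of the three contributions separately and then combine them (using $\|\cdot\|_{L^2(\Omega)}\le\|\cdot\|_{L^p(\Omega)}$ since $p\ge 2$).

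The initial-data piece $\pi_n M_0$ is constant in $t$, so its full $W^{a,p}(0,T;\mathbb{X}^{-b})$-norm reduces to the $L^p$-in-time part and is controlled by $T^{1/p}\|M_0\|_{\mathbb H}$ via the continuous embedding $\mathbb H\hookrightarrow\mathbb{X}^{-b}$ together with $\|\pi_n M_0\|_{\mathbb H}\le\|M_0\|_{\mathbb H}$. For the stochastic piece, Lemma \ref{lem:StermWanorm} already yields $\mathbb E\|Z_n\|_{W^{a,p}(0,T;\mathbb H)}^{p}\le C$; composing with $\mathbb H\hookrightarrow\mathbb{X}^{-b}$ and using Jensen's inequality gives $\mathbb E\|Z_n\|_{W^{a,p}(0,T;\mathbb{X}^{-b})}^{2}\le C$.

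The substantial work is the drift part. I will prove that
\[
\mathbb E\|F_n\|_{L^{2}(0,T;\mathbb{X}^{-b})}^{2}\le C \quad\text{uniformly in } n,
\]
and then invoke the Sobolev embedding in time $W^{1,2}(0,T;\mathbb{X}^{-b})\hookrightarrow C^{1/2}([0,T];\mathbb{X}^{-b})\hookrightarrow W^{a,p}(0,T;\mathbb{X}^{-b})$ valid for $a<1/2$ and $p<\infty$, which turns the $L^{2}$ bound on $F_n$ into the desired $W^{a,p}$ bound on $\int_0^\cdot F_n\,ds$. Decomposing $F_n$ using \eqref{eq:Fn}, the term $\lambda_1\pi_n[M_n\times\rho_n]$ is controlled in $\mathbb H\hookrightarrow\mathbb{X}^{-b}$ directly by \eqref{eq:SVis 3.35R}; the term $\lambda_2\pi_n[M_n\times(M_n\times\rho_n)]$ is controlled in $\mathbb{X}^{-b}$ by \eqref{eq:SLLG 3.14R}; the Stratonovich correction $\tfrac12\sum_j G_{jn}^{\prime}(M_n)[G_{jn}(M_n)]$ is estimated term-by-term using the pointwise bound $|\psi(M_n)M_n|\le 5$, H\"older's inequality, the Sobolev embedding $\mathbb V\hookrightarrow\mathbb L^{6}$, and the embedding $\mathbb L^{3/2}\hookrightarrow\mathbb{X}^{-b}$ (valid for $b>1/4$); the outer projections $\pi_n$ are absorbed using the fact that $\pi_n$ commutes with $A_1^{b}$, so $\pi_n$ is a contraction on $\mathbb{X}^{b}$ and, by duality, on $\mathbb{X}^{-b}$. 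The sum over $j$ converges because $\sum_j\|h_j\|_{\mathbb L^\infty}^{2}<\infty$, which follows from the assumption $c_h^{2}<\infty$ in \eqref{ass_h}. Using \eqref{eq:S 3.10R} and \eqref{eq:SVis 3.34R} to absorb the $M_n$-factors, all the Stratonovich pieces combine to a uniform $L^{2}(\Omega;L^{2}(0,T;\mathbb{X}^{-b}))$ bound.

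I expect the main technical obstacle to be the cubic-in-$M_n$ term of the Stratonovich correction, namely $\pi_n\bigl[\pi_n[\psi(M_n)M_n\times(M_n\times h_j)]\times(M_n\times h_j)\bigr]$, since a naive $\mathbb H$ bound runs into an $L^{\infty}$-issue for $M_n$. This is resolved by estimating directly in $\mathbb L^{3/2}\hookrightarrow\mathbb{X}^{-b}$: writing the inner projected factor and then applying H\"older with exponents $(2,6)$, the first factor is bounded by $5\|M_n\|_{\mathbb H}\|h_j\|_{\mathbb L^\infty}$ thanks to the truncation, and the second by $C\|M_n\|_{\mathbb V}\|h_j\|_{\mathbb L^\infty}$ by Sobolev, giving the needed bound uniformly in $n$.
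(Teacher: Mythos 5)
Your proposal is correct and follows essentially the same route as the paper: decompose $M_n$ via the It\^o form of \eqref{eq:SVis 3.29R}, bound the drift in $H^1(0,T;\mathbb X^{-b})$ using \eqref{eq:SVis 3.35R} and \eqref{eq:SLLG 3.14R}, handle the stochastic integral with Lemma \ref{lem:StermWanorm}, and conclude via the embedding of $H^1(0,T;\mathbb X^{-b})$ into $W^{a,p}(0,T;\mathbb X^{-b})$ for $a<\tfrac12$. The only difference is that you spell out the estimate of the Stratonovich correction (via the truncation $\psi$, H\"older, $\mathbb V\hookrightarrow\mathbb L^6$ and $\mathbb L^{3/2}\hookrightarrow\mathbb X^{-b}$) and the initial-data term, which the paper leaves implicit.
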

\begin{proof}
  By \eqref{eq:SVis 3.29R},
\setlength\arraycolsep{2pt}{
  \begin{eqnarray*}
  &&\mathbb{E} \left\|M_n\right\|^2_{W^{a,p}(0,T;\mathbb X^{-b})}\\
  &=& \mathbb{E}\Bigg\|\int_0^\cdot\pi_n\Bigg\{\lmd_1 M_n\times \rho_n-\lmd_2 M_n\times(M_n\times \rho_n)+\frac{1}{2}\sum_{j=1}^\infty G_{jn}^\prime\la M_n\ra\left[G_{jn}\la M_n\ra\right]\Bigg\}\ud s\nonumber\\
  &&+\sum_{j=1}^\infty\int_0^\cdot G_{jn}\la M_n\ra\ud W_j\Bigg\|^2_{W^{a,p}(0,T;\mathbb X^{-b})}.
   \end{eqnarray*} }

By our assumption, $a\in [0,\frac{1}{2})$, so $H^1(0,T;\mathbb X^{-b})\hookrightarrow W^{a,p}(0,T;\mathbb X^{-b})$ compactly for all $p>0$. And since $\mathbb{H}\hookrightarrow\mathbb X^{-b}$ continuously, there is a constant $C$ independent of $n$ such that
\[\begin{aligned}
  \mathbb{E} \left\|M_n\right\|^2_{W^{a,p}(0,T;\mathbb X^{-b})}\leq &C\,\mathbb{E}\Bigg\|\int_0^t\pi_n\Bigg\{\lmd_1 M_n\times \rho_n+\frac{1}{2}\sum_{j=1}^\infty G_{jn}^\prime\la M_n\ra\left[G_{jn}\la M_n\ra\right]\Bigg\}\ud s\Bigg\|^2_{H^1(0,T;\mathbb{H})}\\
 &+C\,\mathbb{E}\left\|\int_0^t\lmd_2\pi_n\left[ M_n\times(M_n\times \rho_n)\right]\ud s\right\|^2_{H^1(0,T;\mathbb X^{-b})}\\
 &+C\,\mathbb{E}\Bigg\|\sum_{j=1}^\infty\int_0^t G_{jn}\la M_n\ra\ud W_j\Bigg\|^2_{W^{a,p}(0,T;\mathbb{H})}
  \end{aligned}\]
To prove \eqref{eq:SLLG 4.2R}, it is enough to consider each term on the right hand side of the above inequality.
By \eqref{eq:SVis 3.35R}, \eqref{eq:SLLG 3.14R} and \eqref{eq:SLLG 4.1R}, we can conclude \eqref{eq:SLLG 4.2R}.
\end{proof}
\section{Tightness results}
In this subsection we will use the a'priori estimates \eqref{eq:S 3.10R}-\eqref{eq:SVis 3.39R} to show that the laws $\{\mathcal{L}(M_n,B_n,E_n):n\in\mathbb{N}\}$ are tight on a suitable path space. Then we will use Skorohod's theorem to obtain another probability space and an almost surely convergent sequence defined on this space whose limit is a weak martingale solution of the Problem \ref{S-LLG}.

To do so, we will need some  compact embedding results from Flandoli and  Gatarek's paper \cite{Flandoli}, which stated in the Appendix as Lemma \ref{lem:compact emb 5}-Lemma \ref{lem:compact emb 6}.
We will also need the following Lemma about tightness.
\begin{lem}\label{lem:tight}
  Let $X,Y$ be separable Banach  spaces and $(\Omega,\mathcal{F},\mathbb{P})$ be a probability space, we assume that $i:X\hookrightarrow Y$ is compact and the random variables $u_n:\Omega\longrightarrow X$, $n\in\mathbb{N}$, satisfy the following condition: there is a constant $C>0$, such that
  \[\mathbb{E}\big(\|u_n\|_X\big)\leq C,\qquad n\in \bb N.\]
 Then the family of laws $\big\{\mathcal{L}(i\circ u_n)\big\}_{n\in\mathbb{N}}$ is tight on $Y$.
\end{lem}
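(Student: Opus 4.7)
The strategy is the classical one combining Markov's inequality with the compactness of the embedding $i:X\hookrightarrow Y$. Fix $\varepsilon>0$. For each $R>0$, let $B_R:=\{x\in X:\|x\|_X\le R\}$ denote the closed $R$-ball in $X$, and set $K_R:=\overline{i(B_R)}^{\,Y}$, the closure in $Y$ of its image under $i$. Because $i$ is compact, $i(B_R)$ is relatively compact in $Y$, so $K_R$ is a compact subset of $Y$.

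The first step is to observe that on the event $\{\|u_n\|_X\le R\}$ we automatically have $i\circ u_n\in K_R$, and hence
\[
\mathbb{P}\!\left(i\circ u_n\notin K_R\right)\le \mathbb{P}\!\left(\|u_n\|_X>R\right).
\]
The second step is to apply Markov's inequality together with the uniform bound $\mathbb{E}\|u_n\|_X\le C$, which gives
\[
\mathbb{P}\!\left(\|u_n\|_X>R\right)\le \frac{\mathbb{E}\|u_n\|_X}{R}\le \frac{C}{R}.
\]
Choosing $R=R(\varepsilon):=C/\varepsilon$ then makes the right-hand side at most $\varepsilon$, uniformly in $n$. Setting $K:=K_{R(\varepsilon)}$ yields a compact subset of $Y$ with $\mathbb{P}(i\circ u_n\in K)\ge 1-\varepsilon$ for every $n\in\mathbb{N}$, which is exactly tightness of the family $\{\mathcal{L}(i\circ u_n)\}_n$ on $Y$.

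The only subtle point is a measurability check: we must know that $\{\|u_n\|_X>R\}$ is measurable, which is automatic since $u_n$ is by assumption an $X$-valued random variable and $\|\cdot\|_X$ is continuous, and also that $K_R$ is a Borel subset of $Y$, which holds because $K_R$ is compact hence closed in $Y$. Beyond that, no step is an obstacle; the statement is essentially a packaging of Markov's inequality through the compact inclusion, and once $K_R$ is identified as the right candidate for the compact set in $Y$ the argument is immediate.
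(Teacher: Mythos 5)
Your proof is correct and is precisely the standard argument (Markov's inequality applied to $\|u_n\|_X$ combined with compactness of $i(B_R)$ in $Y$) that the paper has in mind; the paper in fact states this lemma without proof. No gaps — the measurability points you flag are handled exactly as you say.
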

\begin{proof}
	Let us arbitrarily fix $\eps>0$.
	
	Since $\mathbb{E}\big(\|u_n\|_X\big)\leq C$ for all $n$, we have 
	\[\bb P\left( ||u_n||_X\le \frac{C}{\eps}\right)\ge 1-\eps,\qquad n\in\bb N.\]
	On the other hand, since the embedding map $i:X\hookrightarrow Y$ is compact, 
	\[K_\eps:=\overline{\left\{i\circ u_n(\omega):||u_n(\omega)||_X\le \frac{C}{\eps},\omega\in\Omega\right\}}\]
	is compact in $Y$. And we have
	\[\cl{L}(i\circ u_n)(K_\eps)\ge\bb P\left(||u_n||_X\le \frac{C}{\eps} \right)\ge 1-\eps,\qquad n\in\bb N.\]
	Therefore $\big\{\mathcal{L}(i\circ u_n)\big\}_{n\in\mathbb{N}}$ is tight on $Y$ and the proof is complete.
\end{proof}

Now let's state and prove our tightness results.
\begin{lem}\label{lem:LMntight}
  For any  $p\geq 2$, $q\in [2,6)$ and $b>\frac{1}{4}$ the set of laws $\{\mathcal{L}(M_n):n\in\mathbb{N}\}$ on the Banach space
  \[L^p(0,T;\mathbb{L}^q)\cap C([0,T];\mathbb X^{-b})\]
  is tight.
\end{lem}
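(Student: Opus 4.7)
The plan is to combine the uniform estimate \eqref{eq:SVis 3.34R}, which says $\{M_n\}$ is bounded in $L^p(\Omega;L^\infty(0,T;\mathbb{V}))$, with the fractional Sobolev-in-time estimate \eqref{eq:SLLG 4.2R}, which says $\{M_n\}$ is bounded in $L^2(\Omega; W^{a,p}(0,T;\mathbb{X}^{-b}))$ for any $a\in[0,\tfrac12)$, $b>\tfrac14$, $p\geq2$, and then feed both into the compactness criteria of Flandoli--Gatarek (Lemmas \ref{lem:compact emb 5} and \ref{lem:compact emb 6}) and the tightness lemma \ref{lem:tight}.

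For the factor $L^p(0,T;\mathbb{L}^q)$, I would proceed as follows. By the Rellich--Kondrachov theorem the embedding $\mathbb{V}=H^1(\mathcal{D};\mathbb{R}^3)\hookrightarrow \mathbb{L}^q$ is compact for every $q\in[2,6)$, while $\mathbb{L}^q\hookrightarrow \mathbb{X}^{-b}$ continuously. Lemma \ref{lem:compact emb 5} then yields a compact embedding
\[
L^\infty(0,T;\mathbb{V})\cap W^{a,p}(0,T;\mathbb{X}^{-b})\;\hookrightarrow\hookrightarrow\; L^p(0,T;\mathbb{L}^q).
\]
Since the moment bounds from \eqref{eq:SVis 3.34R} and \eqref{eq:SLLG 4.2R} give a uniform bound in $L^1(\Omega)$ of the norm on the left-hand side, Lemma \ref{lem:tight} implies that the family $\{\mathcal{L}(M_n)\}$ is tight on $L^p(0,T;\mathbb{L}^q)$.

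For the factor $C([0,T];\mathbb{X}^{-b})$ I would choose an intermediate exponent $b'\in(\tfrac14,b)$, so that $\mathbb{V}\hookrightarrow\hookrightarrow \mathbb{X}^{-b'}\hookrightarrow \mathbb{X}^{-b}$ (the first embedding being compact because $\mathbb{V}\hookrightarrow\hookrightarrow\mathbb{H}\hookrightarrow \mathbb{X}^{-b'}$), and I would pick $a\in(\tfrac{1}{p},\tfrac12)$ so that $ap>1$. Under these choices Lemma \ref{lem:compact emb 6} supplies the compact embedding
\[
L^\infty(0,T;\mathbb{V})\cap W^{a,p}(0,T;\mathbb{X}^{-b'})\;\hookrightarrow\hookrightarrow\; C([0,T];\mathbb{X}^{-b}).
\]
Again \eqref{eq:SVis 3.34R} and \eqref{eq:SLLG 4.2R} (applied with $b'$ in place of $b$) give a uniform $L^1(\Omega)$-bound on the norm of $M_n$ in this intersection space, and another application of Lemma \ref{lem:tight} yields tightness of $\{\mathcal{L}(M_n)\}$ in $C([0,T];\mathbb{X}^{-b})$.

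Finally, since tightness of the marginals on each of the two spaces $L^p(0,T;\mathbb{L}^q)$ and $C([0,T];\mathbb{X}^{-b})$ implies tightness on their intersection (for any $\varepsilon$ pick compact sets $K_1,K_2$ in each factor with $\mathbb{P}(M_n\notin K_i)<\varepsilon/2$, and take $K_1\cap K_2$, which is compact in the intersection), the claim follows. The delicate point is the matching of parameters in the second step: the estimate \eqref{eq:SLLG 4.2R} is available for every $b>\tfrac14$, so the room $b'<b$ costs nothing, but one needs $ap>1$ to convert the fractional time regularity into genuine continuity, and this is compatible with $a<\tfrac12$ precisely because $p\geq 2$ can be chosen as large as one wishes in \eqref{eq:SLLG 4.2R}.
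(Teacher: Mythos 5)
Your proof is correct and follows essentially the same route as the paper: the same two a priori estimates \eqref{eq:SVis 3.34R} and \eqref{eq:SLLG 4.2R} are fed into Lemmas \ref{lem:compact emb 5} and \ref{lem:compact emb 6} and then into Lemma \ref{lem:tight}, with the same choice of an intermediate exponent $b'\in(\tfrac14,b)$ and of $a>\tfrac1p$ (hence a possibly enlarged $p$) for the $C([0,T];\mathbb{X}^{-b})$ factor. The only, harmless, difference is that in the first step you take $B=\mathbb{L}^q$ directly as the middle space in Lemma \ref{lem:compact emb 5}, whereas the paper routes through $X^\gamma$ with $\gamma\in(\tfrac34-\tfrac{3}{2q},\tfrac12)$ and then uses the continuous embedding $X^\gamma\hookrightarrow\mathbb{L}^q$.
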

\begin{proof}
Firstly, let us prove $\{\mathcal{L}(M_n):n\in\mathbb{N}\}$ is tight on $L^p(0,T;\mathbb{L}^q)$ for all $p\ge 2$ and $q\in [2,6)$. To this end fix $p\ge 2$, $a\in (0,\frac{1}{2})$, $b>\frac{1}{4}$ and $q\in [2,6)$. Since $q<6$ and the embedding $\V=D(A^{\frac{1}{2}})\hookrightarrow X^\gamma=D(A^\gamma)$ is compact for $\gamma<\frac{1}{2}$, we can choose $\gamma\in (\frac{3}{4}-\frac{3}{2q},\frac{1}{2})$, such that, Lemma \ref{lem:compact emb 5} yields a compact embedding
\[L^p(0,T;\V)\cap W^{a,p}(0,T;\mathbb X^{-b})\hookrightarrow L^p(0,T;X^\gamma)\,.\]

Therefore
\setlength\arraycolsep{2pt}{
  \begin{eqnarray*}
&&\mathbb{P}\Big(\|M_n\|_{L^p(0,T;\V)\cap W^{a,p}(0,T;\mathbb X^{-b})}>r\Big)=\mathbb{P}\Big(\|M_n\|_{L^p(0,T;\V)}+\|M_n\|_{W^{a,p}(0,T;\mathbb X^{-b})}>r\Big)\\
&\leq& \mathbb{P}\left(\|M_n\|_{L^p(0,T;\V)}>\frac{r}{2}\right)+\mathbb{P} \left(\|M_n\|_{W^{a,p}(0,T;\mathbb X^{-b})}>\frac{r}{2}\right)\leq\frac{4}{r^2}\mathbb{E}\left(\|M_n\|^2_{L^p(0,T;\V)}+\|M_n\|^2_{W^{a,p}(0,T;\mathbb X^{-b})}\right).
\end{eqnarray*}}

Let $X_T:=L^p(0,T;\V)\cap W^{a,p}(0,T;\mathbb X^{-b})$. By estimates \eqref{eq:SLLG 4.2R} and \eqref{eq:SVis 3.34R}, there exists a constant $C$, such that
\[\mathbb{P}\big(\|M_n\|_{X_T}>r\big)\leq\frac{C}{r^2},\qquad\forall r,n.\]
hence
\[\mathbb{E}\big(\|M_n\|_{X_T}\big)\leq 1+\int_1^\infty \frac{C}{r^2}\ud r=1+C,\quad\forall n\in\mathbb{N}.\]
By Lemma \ref{lem:tight}, the family of laws $\big\{\mathcal{L}(M_n):n\in\mathbb{N}\big\}$ is tight on  $L^p(0,T;X^\gamma)$.
For $\gamma>\frac{3}{4}-\frac{3}{2q}$, we have $X^\gamma=\mathbb{H}^{2\gamma}(D)\hookrightarrow \mathbb{L}^q$ continuously.
Hence $L^p(0,T;X^\gamma)\hookrightarrow L^p(0,T;\bb L^q)$ continuously and $\big\{\mathcal{L}(M_n):n\in\mathbb{N}\big\}$ is also tight on $L^p(0,T;\bb L^q)$.

Secondly, we prove the laws $\{\mathcal{L}(M_n):n\in\mathbb{N}\}$ are tight on $C([0,T];\mathbb X^{-b})$ for all $b>\frac{1}{4}$. To do this, we fix some $b>\frac{1}{4}$ and choose $b'\in (\frac{1}{4},b)$.  Since $b'<b$, by Lemma \ref{lem:compact emb 6} we have $W^{a,p}(0,T;X^{-b'})\hookrightarrow C([0,T];\mathbb X^{-b})$ compactly for $a\in(0,\frac{1}{2})$ and $p>2$ satisfying $a>\frac{1}{p}$. Therefore by estimate \eqref{eq:SLLG 4.2R} and Lemma \ref{lem:tight} again, we conclude that $\big\{\mathcal{L}(M_n):n\in\mathbb{N}\big\}$ is tight on $C([0,T];\mathbb X^{-b})$.

Therefore $\big\{\mathcal{L}(M_n):n\in\mathbb{N}\big\}$ is tight on $L^p(0,T;\bb L^q)\cap C([0,T];\mathbb X^{-b})$ and the proof is complete.
\end{proof}
To prove the tightness results about $\{\mathcal{L}(E_n)\}$ and $\{\mathcal{L}(B_n)\}$, we need the version (\cite{ZB&EM}, Def. 3.7) of Aldous Condition (\cite{Aldous}), i.e. Definition \ref{def:Aldous} and the tightness criterion Lemma \ref{lem:tightcri}.

\begin{lem}\label{lem:LEBntight}
  The sets of laws $\{\mathcal{L}(E_n)\}$ and $\{\mathcal{L}(B_n)\}$ are tight on the space $L_w^2(0,T;\mathbb{L}^2(\R^3))$.
\end{lem}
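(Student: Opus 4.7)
The plan is to exploit the fact that the weak topology on $L^2(0,T;\bb L^2(\R^3))$ makes closed bounded balls compact (by reflexivity and the Banach-Alaouglu theorem), so tightness on $L^2_w(0,T;\bb L^2(\R^3))$ reduces to showing that the sequences $\{E_n\}$ and $\{B_n\}$ are bounded in probability in $L^2(0,T;\bb L^2(\R^3))$. No compact-embedding machinery or Aldous-type time-regularity argument is needed here, unlike in Lemma \ref{lem:LMntight}.

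First I would introduce, for $R>0$, the closed ball
\[
K_R:=\{u\in L^2(0,T;\bb L^2(\R^3)):\|u\|_{L^2(0,T;\bb L^2(\R^3))}\le R\},
\]
and observe that $K_R$ is weakly compact in $L^2(0,T;\bb L^2(\R^3))$, hence compact in $L^2_w(0,T;\bb L^2(\R^3))$. Thus it suffices to prove that for every $\varepsilon>0$ there exists $R=R(\varepsilon)>0$ such that $\mathbb P(E_n\notin K_R)<\varepsilon$ and $\mathbb P(B_n\notin K_R)<\varepsilon$ uniformly in $n$.

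Next I would invoke the continuous embedding $L^\infty(0,T;\bb L^2(\R^3))\hookrightarrow L^2(0,T;\bb L^2(\R^3))$, which gives
\[
\|E_n\|_{L^2(0,T;\bb L^2(\R^3))}\le T^{1/2}\|E_n\|_{L^\infty(0,T;\bb L^2(\R^3))},
\]
and similarly for $B_n$. Combining this with the a priori estimates \eqref{eq:SVis 3.33R} and \eqref{eq:SVis 3.321R} (taking $p=1$, say) and Chebyshev's inequality yields
\[
\mathbb P\!\left(\|E_n\|_{L^2(0,T;\bb L^2(\R^3))}>R\right)\le \frac{T^{1/2}\,\mathbb E\|E_n\|_{L^\infty(0,T;\bb L^2(\R^3))}}{R}\le\frac{C\,T^{1/2}}{R},
\]
and analogously for $B_n$, with a constant $C$ independent of $n$. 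Choosing $R$ large enough makes this bound smaller than $\varepsilon$, uniformly in $n$, which gives the required tightness for both families.

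There is essentially no obstacle: the argument is a direct consequence of reflexivity of $L^2$ and the a priori bounds already established in Proposition \ref{prop:estimates}. The only point deserving a brief justification is the measurability of the event $\{u\in K_R\}$ with respect to the Borel $\sigma$-algebra of $L^2_w$, which holds because $K_R$ is a closed convex subset of $L^2(0,T;\bb L^2(\R^3))$ (hence weakly closed) and the Borel $\sigma$-algebras of $L^2$ and $L^2_w$ coincide on the separable space at hand.
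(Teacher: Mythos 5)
Your proof is correct, but it takes a genuinely different route from the paper. The paper verifies the hypotheses of the Brze\'zniak--Motyl tightness criterion (Lemma \ref{lem:tightcri}): condition (a) follows from \eqref{eq:SVis 3.33R}, and the Aldous condition is checked in $U^*$ for an auxiliary Hilbert space $U$ compactly embedded in $\mathbb{Y}$, using the bound \eqref{eq:SVis 3.38R} on $\ud E_n/\ud t$. That machinery yields tightness on the larger space $C([0,T];U^*)\cap L^2_w(0,T;\mathbb{L}^2(\R^3))$, i.e.\ it also encodes equicontinuity in time, at the cost of invoking the stopping-time estimate. Your argument bypasses all of this: since the statement only asks for tightness on $L^2_w(0,T;\mathbb{L}^2(\R^3))$, weak compactness of closed balls (Banach--Alaoglu plus reflexivity) reduces everything to a uniform-in-$n$ bound in probability, which Chebyshev and the a priori estimates \eqref{eq:SVis 3.33R}, \eqref{eq:SVis 3.321R} supply immediately; your remarks on metrizability of bounded sets and the coincidence of the weak and strong Borel $\sigma$-algebras (which the paper itself records in the proof of Theorem \ref{prop:tilMn}) take care of the measure-theoretic technicalities needed for Jakubowski's Skorokhod theorem. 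Since Proposition \ref{prop:conv to mu} and the subsequent identification of the limit only use convergence of $\til B_n,\til E_n$ in $L^2_w$, nothing downstream is lost by your more elementary argument; what the paper's approach buys is the additional (here unused) tightness in $C([0,T];U^*)$.
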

\begin{proof}
  Here we will only prove the result about $\{\mathcal{L}(E_n)\}$, the proof about $\{\mathcal{L}(B_n)\}$ is exactly the same.\\
  In order to use Lemma \ref{lem:tightcri}, let us set $H=\mathbb{L}^2(\R^3)$ and choose an auxiliary Hilbert space $U$ such that the embedding $U\hookrightarrow \mathbb{Y}$ is compact.
  (The existence of such $U$ is actually worth to justify, we put it in the Lemma \ref{lem:UcomebY} in the Appendix.)  
  
  Since the embedding $\mathbb{Y}\hookrightarrow \mathbb{L}^2(\R^3)$ is bounded, the embedding $U\hookrightarrow \mathbb{L}^2(\R^3)$ is also compact.

  Next we will check the condition (a) and (b) in Lemma \ref{lem:tightcri}.

  Firstly, let us observe that by estimate \eqref{eq:SVis 3.33R}, condition (a) of the Lemma \ref{lem:tightcri} is satisfied.

  Secondly, we will check the Aldous condition (Definition \ref{def:Aldous}) in the space $U^*$. To this end, fix $\eps>0$, $\eta>0$ and a sequence of $\bb F$-stopping times $\{\tau_n\}$.
  The embedding $\mathbb{Y}^*\hookrightarrow U^*$ is compact so it is bounded and thus there exists a constant $C_1>0$ such that $\|\cdot\|_{\mathbb{Y}^*}\geq C_1 \|\cdot\|_{U^*}$. Hence together with the Chebyshev inequality and estimate \eqref{eq:SVis 3.38R}, we have
  \setlength\arraycolsep{2pt}{
  \begin{eqnarray*}
    &&\mathbb{P}\left(\|E_n(\tau_n+\theta)-E_n(\tau_n)\|_{U^*}\geq\eta\right)\leq\mathbb{P}\left(\|E_n(\tau_n+\theta)-E_n(\tau_n)\|_{\mathbb{Y}^*}\geq C_1\eta\right)\\
    &\leq&\frac{1}{C_1\eta}\mathbb{E}\left(\|E_n(\tau_n+\theta)-E_n(\tau_n)\|_{\mathbb{Y}^*}\right)\leq\frac{1}{C_1\eta}\mathbb{E}\int_{\tau_n}^{\tau_n+\theta}\left\|\frac{\ud E_n(s)}{\ud s}\right\|_{\mathbb{Y}^*}\ud s\leq\frac{C\theta}{C_1\eta},\quad \theta>0.
  \end{eqnarray*}}
  Hence for $\delta\leq \frac{C_1}{C}\eps\eta$, we have
  \[\sup_{n\in\mathbb{N}}\sup_{0\leq\theta\leq \delta}\mathbb{P}\left(\|E_n(\tau_n+\theta)-E_n(\tau_n)\|_{U^*}\geq\eta\right)\leq\eps.\]
  The Aldous condition \eqref{eq:Ald con} has been verified.\\
  Therefore by Lemma \ref{lem:tightcri}, the laws $\{\mathcal{L}(E_n)\}$ are tight on $C([0,T];U^*)\cap L_w^2(0,T;{H})$ and the lemma follows.
\end{proof}

By the previous tightness results and the Prokhorov Theorem, we have the following result of weakly convergence of laws.
\begin{prop}\label{prop:conv to mu}
  There exists a subsequence $\{(M_{n_k},B_{n_k},E_{n_k})\}$ of $\{(M_n,B_n,E_n)\}$, such that \dela{for each $j=1,2,\ldots$,} the laws $\mathcal{L}(M_{n_k},B_{n_k},E_{n_k},W_h)$ \dela{(where $W_j$ is the $1$-dimensional Brownian motion as in the setting of Problem \ref{S-LLG})} converge weakly to a probability measure $\mu$ on $[L^p(0,T;\mathbb{L}^q)\cap C([0,T];\mathbb X^{-b})]\times L_w^2(0,T;\mathbb{L}^2(\R^3))\times L_w^2(0,T;\mathbb{L}^2(\R^3))\times C([0,T];\HH),$
  where $p\in [2,\infty),$ $q\in [2,6)$ and $b>\frac{1}{4}$.
\end{prop}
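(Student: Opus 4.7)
The plan is to combine the tightness results already established with Prokhorov's theorem, after first upgrading the marginal tightness to joint tightness on the product space.

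\textbf{Step 1: Joint tightness of the family.} Lemma \ref{lem:LMntight} shows that $\{\mathcal L(M_n)\}$ is tight on $L^p(0,T;\mathbb L^q)\cap C([0,T];\mathbb X^{-b})$, and Lemma \ref{lem:LEBntight} shows that both $\{\mathcal L(E_n)\}$ and $\{\mathcal L(B_n)\}$ are tight on $L_w^2(0,T;\mathbb L^2(\R^3))$. Since $W_h$ does not depend on $n$, the singleton $\{\mathcal L(W_h)\}$ is trivially tight on $C([0,T];\mathbb H)$ (recall the Gaussian construction of $W_h$ in Remark \ref{rem:QW}). I would then invoke the standard fact that on a product of separable metric spaces, a family of joint laws is tight whenever each family of marginals is tight: given $\eps>0$, choose compact sets $K_1,K_2,K_3,K_4$ in the four factor spaces with $\mathbb P(X_n^{(i)}\notin K_i)<\eps/4$ for every $n$ and every $i\in\{1,2,3,4\}$, and take $K_1\times K_2\times K_3\times K_4$, which is compact in the product.

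\textbf{Step 2: Applying Prokhorov.} The product space
\[
\mathcal Z:=\bigl[L^p(0,T;\mathbb L^q)\cap C([0,T];\mathbb X^{-b})\bigr]\times L_w^2(0,T;\mathbb L^2(\R^3))\times L_w^2(0,T;\mathbb L^2(\R^3))\times C([0,T];\mathbb H)
\]
is a separable, completely metrizable topological space (the weak topology on bounded subsets of a separable Hilbert space is metrizable, and on each tight family the laws effectively live on such bounded sets, so Prokhorov applies). Hence the joint tightness from Step 1 implies, by Prokhorov's theorem, that the family $\{\mathcal L(M_n,B_n,E_n,W_h)\}_{n\in\mathbb N}$ is relatively compact in the weak topology on probability measures on $\mathcal Z$. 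Extracting a weakly convergent subsequence $\{(M_{n_k},B_{n_k},E_{n_k},W_h)\}$ with limit law $\mu$ finishes the proof.

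\textbf{Anticipated obstacle.} The only subtle point is handling $L_w^2(0,T;\mathbb L^2(\R^3))$, since the weak topology is not metrizable on all of $L^2(0,T;\mathbb L^2(\R^3))$, only on bounded balls. However, the a priori estimates \eqref{eq:SVis 3.32R}, \eqref{eq:SVis 3.33R}, \eqref{eq:SVis 3.321R} furnish (for fixed $p$, via Chebyshev) arbitrarily large balls carrying all but $\eps$ of each law $\mathcal L(B_n)$ and $\mathcal L(E_n)$. Restricted to such balls the weak topology is metrizable and compact, which is exactly the form of tightness used already in the proof of Lemma \ref{lem:LEBntight} (where the Aldous criterion gave tightness in $L_w^2(0,T;\mathbb L^2(\R^3))$). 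With this care, Prokhorov applies on each factor, and then on the product, so the construction of $\mu$ goes through without difficulty.
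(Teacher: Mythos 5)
Your proposal is correct and follows essentially the same route as the paper, which proves this proposition simply by invoking the tightness results of Lemmas \ref{lem:LMntight} and \ref{lem:LEBntight} together with Prokhorov's theorem; your Step 1 (marginal tightness implies joint tightness via products of compact sets) is the standard filling-in of that one-line argument. The only caveat is that $L^2_w(0,T;\mathbb{L}^2(\R^3))$ is not metrizable, so the classical Prokhorov theorem does not apply verbatim; your remark about restricting to bounded balls, where the weak topology is compact and metrizable, is the right fix and is precisely the issue the paper's later appeal to Jakubowski's version of the Skorokhod theorem (Lemma \ref{lem:SkorohodJaku}) is designed to accommodate.
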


\section{Construction of new probability space and processes}
Now we are going to use the Skorokhod Theorem to construct our new probability space and processes as the weak solution of Problem \ref{S-LLG}.
\begin{lem}\label{prop:tilMn}
For  $p\in [2,\infty),$ $q\in [2,6)$, $b>\frac{1}{4}$,
  there exists a probability space $(\til{\Omega},\til{\mathcal{F}}, \til{\mathbb{P}})$ and a sequence
  $\{(\til{M}_n,\til{E}_{n},\til{B}_{n},\til{W}_{hn})\}$, 

  of
  \setlength\arraycolsep{2pt}{
  \begin{eqnarray*}
    &&[L^p(0,T;\mathbb{L}^q(D))\cap C([0,T];\mathbb X^{-b})]\times L^2_w(0,T;\mathbb{L}^2(\R^3))\times L^2_w(0,T;\mathbb{L}^2(\R^3))\times C([0,T];\HH)
  \end{eqnarray*}}
  -valued random variables defined on $(\til{\Omega},\til{\mathcal{F}},\til{\mathbb{P}})$ such that
  \begin{trivlist}
    \item[(a)] On the product space
    \setlength\arraycolsep{2pt}{
  \begin{eqnarray*}
   &&[L^p(0,T;\mathbb{L}^q(D))\cap C([0,T];\mathbb X^{-b})]\times L^2_w(0,T;\mathbb{L}^2(\R^3))\times L^2_w(0,T;\mathbb{L}^2(\R^3))\times C([0,T];\HH)
     \end{eqnarray*}}
$$\mathcal{L}(M_n,E_{n},B_{n},W_h)=\mathcal{L}(\til{M}_n,\til{E}_{n},\til{B}_{n},\til{W}_{hn}),\qquad\forall n\in\mathbb{N}$$
    \item[(b)] There exists a random variable $(\til{M},\til{E},\til{B},\til{W}_h):$
    \setlength\arraycolsep{2pt}{
  \begin{eqnarray*}
  (\til{\Omega},\til{\mathcal{F}},\til{\mathbb{P}})&\longrightarrow& [L^p(0,T;\mathbb{L}^q)\cap C([0,T];\mathbb X^{-b})]\times L^2_w(0,T;\mathbb{L}^2(\R^3))\\
    &&\times L^2_w(0,T;\mathbb{L}^2(\R^3))\times C([0,T];\HH),
    \end{eqnarray*}}
    such that
    \begin{trivlist}
    \item[(i)] On the product space
    \setlength\arraycolsep{2pt}{
  \begin{eqnarray*}
    &&[L^p(0,T;\mathbb{L}^q(D))\cap C([0,T];\mathbb X^{-b})]\times L^2_w(0,T;\mathbb{L}^2(\R^3))\times L^2_w(0,T;\mathbb{L}^2(\R^3))\times C([0,T];\HH)
    \end{eqnarray*}}
    \[\mathcal{L}(\til{M},\til{E},\til{B},\til{W}_h)=\mu,\]
    where $\mu$ is same as in Proposition \ref{prop:conv to mu}. Moreover, the following convergence results hold $\til{\mathbb{P}}$-a.s. as $n\rightarrow \infty$,
    \item[(ii)] $\til{M}_n\longrightarrow \til{M}$ in $L^p(0,T;\mathbb{L}^q(D))\cap C([0,T];\mathbb X^{-b})$,
    \item[(iii)]
        $\til{E}_{n}\longrightarrow \til{E}$ in $L^2_w(0,T;\mathbb{L}^2(\R^3))$,
    \item[(iv)]    $\til{B}_{n}\longrightarrow \til{B}$ in $L^2_w(0,T;\mathbb{L}^2(\R^3))$.
    \item[(v)] $\til{W}_{hn}\longrightarrow \til{W}_h$ in $C([0,T];\HH)$.
    \end{trivlist}
  \end{trivlist}
\end{lem}

To prove Lemma \ref{prop:tilMn}, we need the standard Skorohod theorem \cite[Thm 11.7.2]{Dudley2002} for separable metric spaces  as well as the following Jakubowski's version of  Skorohod theorem:
\begin{lem}[\cite{Jakubowski, ZB&MO}, Thm A.1]\label{lem:SkorohodJaku}
  Let $X$ be a topological space such that there exists a sequence of continuous functions $f_m:X\longrightarrow\R$, $m=1,2,\ldots$ which separates points of $X$. Let us denote by $\mathscr{S}$ the $\sigma$-algebra generated by the maps $\{f_m\}$. Then
  \begin{trivlist}
    \item[(i)] every compact subset of $X$ is metrizable,
    \item[(ii)] if $\{\mu_m\}$ is a tight sequence  of probability measures on $(X,\mathscr{S})$, then there exists a subsequence $(m_k)$, a probability space $(\Omega,\mathcal{F},\mathbb{P})=([0,1],\mathcal{B}([0,1]),Leb.)$ with $X$-valued random variables $\xi_k$, $\xi$ such that $\mu_{m_k}$ is the law of $\xi_k$ and $\xi_k$ converges to $\xi$ almost surely. Moreover, the law of  $\xi$ is a Radon measure.
  \end{trivlist}
\end{lem}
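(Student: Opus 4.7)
The plan is to apply Jakubowski's version of the Skorokhod representation theorem (Lemma \ref{lem:SkorohodJaku}) to the tight sequence of laws $\{\mathcal L(M_{n_k},E_{n_k},B_{n_k},W_h)\}$ provided by Proposition \ref{prop:conv to mu}. Set
\[
\mathcal Z := \bigl[L^p(0,T;\mathbb L^q)\cap C([0,T];\mathbb X^{-b})\bigr]\times L^2_w(0,T;\mathbb L^2(\R^3))\times L^2_w(0,T;\mathbb L^2(\R^3))\times C([0,T];\HH),
\]
equipped with the product topology. By Proposition \ref{prop:conv to mu} this sequence of laws converges weakly on $\mathcal Z$ to $\mu$, and is in particular tight. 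After a relabeling of the subsequence I shall continue to index the approximate processes by $n$.

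To invoke Lemma \ref{lem:SkorohodJaku} I must exhibit a countable family of continuous real-valued functions on $\mathcal Z$ that separates points. The first and fourth factors, $L^p(0,T;\mathbb L^q)\cap C([0,T];\mathbb X^{-b})$ and $C([0,T];\HH)$, are separable metric spaces, hence admit such a family via their metric (e.g.\ using a countable dense subset and the distance function). For each weak-topology factor $L^2_w(0,T;\mathbb L^2(\R^3))$, I fix a countable dense subset $\{g_k\}$ of the separable Hilbert space $L^2(0,T;\mathbb L^2(\R^3))$ and take the family of linear functionals $u\mapsto \llangle u,g_k\rrangle$, which are by definition continuous on the weak topology and which separate points of $L^2(0,T;\mathbb L^2(\R^3))$. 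Concatenating these four countable families (each composed with the corresponding projection) produces a countable continuous family on $\mathcal Z$ which separates points.

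With this hypothesis verified, Lemma \ref{lem:SkorohodJaku} yields a probability space $(\til\Omega,\til{\mathcal F},\til{\mathbb P})$ and $\mathcal Z$-valued random variables $(\til M_n,\til E_n,\til B_n,\til W_{hn})$, $(\til M,\til E,\til B,\til W_h)$ such that $\mathcal L(\til M_n,\til E_n,\til B_n,\til W_{hn}) = \mathcal L(M_n,E_n,B_n,W_h)$ for every $n$, establishing (a); such that $\mathcal L(\til M,\til E,\til B,\til W_h) = \mu$, establishing (b)(i); and such that $(\til M_n,\til E_n,\til B_n,\til W_{hn})\to(\til M,\til E,\til B,\til W_h)$ $\til{\mathbb P}$-almost surely in the product topology of $\mathcal Z$. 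Since almost sure convergence in the product topology is equivalent to almost sure convergence in each factor, (b)(ii)--(v) follow directly.

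The main conceptual obstacle is that the weak topology on $L^2(0,T;\mathbb L^2(\R^3))$ is not metrizable on bounded sets in a canonical separable way compatible with the full space, so the classical Dudley--Skorokhod theorem does not apply to $\mathcal Z$ as a whole, and one is forced to pass to Jakubowski's more flexible framework. Once the countable point-separating family above has been written down, the remainder of the proof is a direct application of Lemma \ref{lem:SkorohodJaku}; no further estimates on the approximating sequence are required at this stage, since the needed integrability and tightness have already been established in Propositions \ref{prop:estimates} and \ref{prop:conv to mu}.
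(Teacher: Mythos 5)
Your proposal does not prove the statement it was asked to prove. The statement is Lemma \ref{lem:SkorohodJaku} itself --- Jakubowski's generalization of the Skorokhod representation theorem to non-metric spaces admitting a countable point-separating family of continuous functions. What you have written is instead a proof of Theorem \ref{prop:tilMn}: you \emph{invoke} Lemma \ref{lem:SkorohodJaku} as a black box and verify its hypotheses for the particular product space $\mathcal Z$ arising in the paper. As a proof of the lemma this is circular; neither assertion (i) (every compact subset of $X$ is metrizable) nor assertion (ii) (existence of the subsequence, the representation on $([0,1],\mathcal B([0,1]),\mathrm{Leb})$, and the a.s.\ convergence) is addressed anywhere in your argument. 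For the record, the paper itself does not prove this lemma either --- it is quoted from \cite{Jakubowski} and \cite{ZB&MO} --- so the expectation for a self-contained proof would be: for (i), consider the map $F=(f_m)_{m\ge1}:X\to\R^{\mathbb N}$, which is continuous and injective by the point-separation hypothesis, hence restricts to a homeomorphism of any compact $K\subset X$ onto the metrizable set $F(K)\subset\R^{\mathbb N}$; for (ii), one uses tightness to find a $\sigma$-compact set of full measure, pushes the measures forward by $F$ to $\R^{\mathbb N}$, applies the classical Skorokhod--Dudley theorem there, and pulls back through $F^{-1}$, checking measurability with respect to $\mathscr S$ and Radon-ness of the limit law. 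None of these steps appears in your proposal.

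That said, read as a proof of Theorem \ref{prop:tilMn}, your argument is sound and in fact slightly cleaner than the paper's: the paper splits the product, applying Dudley's metric-space Skorokhod theorem \cite[Thm 11.7.2]{Dudley2002} to the two separable metric factors and Lemma \ref{lem:SkorohodJaku} to the two weak-topology factors, and then assembles a product space; you apply Lemma \ref{lem:SkorohodJaku} once to the whole of $\mathcal Z$ after exhibiting a single countable separating family (metric-based functionals on the metric factors, pairings with a countable dense set of $L^2(0,T;\mathbb L^2(\R^3))$ on the weak factors). But this is an answer to a different question than the one posed.
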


\begin{proof}[Proof of Lemma \ref{prop:tilMn}]
 $L^p(0,T;\mathbb{L}^q(D))\cap C([0,T];\mathbb X^{-b})$ and $C([0,T];\HH)$ are separable metric spaces, so by the Skorohod Theorem for the separable metric spaces \cite[Thm 11.7.2]{Dudley2002}, there exists a probability space $(\Omega_1, \cl F_1, \bb P_1)$ and corresponding random variables take values in
 $$\left[L^p(0,T;\mathbb{L}^q(D))\cap C([0,T];\mathbb X^{-b})\right]\times C([0,T];\HH)$$
 such that the related results in Lemma \ref{prop:tilMn} hold.

 To prove the results relative to the space $L^2_w(0,T;\mathbb{L}^2)$ in Lemma \ref{prop:tilMn}, we will use the Proposition \ref{prop:conv to mu} and Lemma \ref{lem:SkorohodJaku}. Let us recall, that for any separable Hilbert space $H$, the elements of $H^*$ separate points in $H$, so the countable dense subset of $H^*$ also separate points in $H$. We also have that the Borel $\sigma$-algebras generated from strong and weak topologies are coincide, so $\{\mu_m\}$ is tight on $(H,\mathscr{S})$ equivalent to $\{\mu_m\}$ is tight on $(H,\mathscr{B}(H))$.

   Then the product probability space and the corresponding random variables of above two related results are the aims we are looking for and this completes the proof of Lemma \ref{prop:tilMn}.
\end{proof}

\begin{rem}
	We set $\til{\bb{F}}$ to be the filtration generated from $\til W_h$ and $\til M$ 
	 and $\til M_n$ for all $n$. i.e.
	\[\til{\bb{F}}=\sigma\left\{\til W_h(t),\til M(t),\til M_n(t):t\in[0,T],n=1,2,3,\ldots.\right\}\]
	 So now we have a filtered new probability space $(\til{\Omega},\til{\mathcal{F}},\til{\bb F}, \til{\mathbb{P}})$.
	 
		Since $(M_n,W_h)$ and $(\til M_n, \til W_h)$ have same distribution, and the increment $W_h(t)-W_h(s)$ is independent of $\sigma\{M_n(r):r\le s\}$,  we can see that $\til{W}_h(t)-\til{W}_h(s)$ is independent of $\til{\bb F}_s$ for all $t>s$.
\end{rem}

\begin{rem}\label{rem:tilWh}
	As stated in Lemma \ref{prop:tilMn}, $\til{W}_h$ has same distribution on $C([0,T];\HH)$ as $W_h$. Hence it can be proved that $\left\{i_t\circ\til{W}_h\right\}_{t\ge0}$ is also a $\til{\bb F}$-Wiener process on $\HH$
	 (See Lemma \ref{lem:distrWP} in the Appendix), where
	\[i_t:C([0,T];\HH)\ni f\mapsto f(t)\in \HH.\]
	And for convenience, we will use $\til W_h(t)$ to denote $i_t\circ \til W_h$.
	
	Since we assumed that $\{h_j\}_j$ is an ONB of $\HH$ as in Remark \ref{rem:QW}, $\til{W}_h$ has the following representation: 
	\[\til W_h(t)=\sum_{j=1}^\infty \til W_j(t)h_j,\qquad t\in[0,T],\]
	where
	\[\til W_j(t):=\frac{\llangle \til W_h(t),h_j\rrangle_\HH}{\|h_j\|^2_\HH}.\]
	It can be shown that $\til W_j(t)$ is $N(0,t)$ distributed for each $j$ and form a Gaussian family and so are independent for all $j=1,2,\cdots$.
	
	The map:
	\[t\mapsto \frac{\llangle \til W_h(t),h_j\rrangle_\HH}{\|h_j\|^2_\HH}=\til W_j(t)\]
	is continuous almost surely. So $\til W_j$ has continuous trajectory almost surely for every $j$.
	
	The independence of increments of $\til W_j$ for each $j$ follows from the independence of increments of $\til W_h$. Therefore $\til W_j, j=1,2,\cdots$ are independent $1$-dimensional $\til{\bb F}$-Brownian motions. 
	
	Similarly, we also have 
	\[\til W_{hn}(t)=\sum_{j=1}^\infty \til W_{jn}(t)h_j,\qquad t\in[0,T],\]
	for some independent $1$-dimensional $\til{\bb F}$-Brownian motions $\til W_{jn}, j=1,2,\cdots$.
\end{rem}

Let $\til{M}_n$, $\til{B}_n$ and $\til{E}_n$ be as in Lemma \ref{prop:tilMn}, we have the following result:
\begin{prop}\label{prop:LLCn}
The processes $\til{M}_n$, $\til{B}_n$ and $\til{E}_n$ have the following properties:
  \begin{trivlist}
    \item[(i)] $\til{M}_n\in C([0,T];\mathbb{H}_n)$ almost surely and $\mathcal{L}(\til{M}_n)=\mathcal{L}(M_n)$ on $C([0,T]; \mathbb{H}_n)$;
    \item[(ii)] $\til{E}_n\in C([0,T];\mathbb{Y}_n)$ almost surely and $\mathcal{L}(\til{E}_n)=\mathcal{L}(E_n)$ on $C([0,T]; \mathbb{Y}_n)$;
    \item[(iii)] $\til{B}_n\in C([0,T];\mathbb{Y}_n)$ almost surely and $\mathcal{L}(\til{B}_n)=\mathcal{L}(B_n)$ on $C([0,T]; \mathbb{Y}_n)$.
  \end{trivlist}
\end{prop}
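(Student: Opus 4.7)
The plan is to use the Kuratowski-type principle that equality of laws of two random variables on a Polish space transfers Borel-measurable sample-path properties. The essential observation is that $\mathbb{H}_n$ and $\mathbb{Y}_n$ are finite dimensional, so their norm topologies coincide with those induced from any larger ambient Banach space, and $C([0,T];\mathbb{H}_n)$, $C([0,T];\mathbb{Y}_n)$ sit as closed subspaces of the coarser path spaces used in Theorem~\ref{prop:tilMn}. Under this observation the transfer of regularity and equality of laws becomes a matter of checking that the relevant subsets of paths are Borel in the ambient spaces, after which the conclusions are immediate from Theorem~\ref{prop:tilMn}(a) and Lemma~\ref{lem:ndimsolp}.

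For part (i), I would first check that $C^1([0,T];\mathbb{H}_n)$ is a Borel subset of $L^p(0,T;\mathbb{L}^q)\cap C([0,T];\mathbb X^{-b})$. Since $\mathbb{H}_n$ is a finite-dimensional (hence closed) subspace of $\mathbb X^{-b}$ on which all norms are equivalent, $C([0,T];\mathbb{H}_n)$ is closed in $C([0,T];\mathbb X^{-b})$; moreover $C^1([0,T];\mathbb{H}_n)$ is Borel (in fact $F_\sigma$) inside $C([0,T];\mathbb{H}_n)$, because differentiability can be expressed through countably many conditions on difference quotients evaluated at rational times. By Lemma~\ref{lem:ndimsolp}, $\mathbb{P}(M_n\in C^1([0,T];\mathbb{H}_n))=1$, and by Theorem~\ref{prop:tilMn}(a), $\mathcal{L}(M_n)=\mathcal{L}(\til M_n)$ on the ambient space; consequently $\til{\mathbb{P}}(\til M_n\in C^1([0,T];\mathbb{H}_n))=1$. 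Equality of laws on $C^1([0,T];\mathbb{H}_n)$ itself then follows: its Borel $\sigma$-algebra is generated by evaluations at rational times, each of which is a continuous map into the finite-dimensional $\mathbb{H}_n$, and the joint laws of finitely many such evaluations agree by the Skorohod equality on $C([0,T];\mathbb X^{-b})$, so a monotone class argument closes the gap.

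For parts (ii) and (iii) the scheme is identical except that the Skorohod equality for the Maxwell components is only on $L^2_w(0,T;\mathbb{L}^2(\R^3))$. I would embed $C^1([0,T];\mathbb{Y}_n)$ into $L^2_w$ by passing to the a.e.\ equivalence class; this map is injective on continuous paths, and finite-dimensionality of $\mathbb{Y}_n$ together with weak closedness of $\mathbb{Y}_n$ in $\mathbb{L}^2(\R^3)$ lets one identify the image as a Borel set. Once this is done, the same law-transfer argument yields (ii) and (iii). The main technical obstacle I anticipate is precisely this last measurability claim in the $L^2_w$ setting, where one must descend from pathwise representatives to equivalence classes; the cleanest route is to fix a basis $y_1,\dots,y_n$ of $\mathbb{Y}_n$ and convert the problem to one about $\R^n$-valued processes via the coordinate maps $u\mapsto(\llangle u(\cdot),y_k\rrangle_{\mathbb{L}^2(\R^3)})_{k=1}^n$, on which the set of $L^2$-classes admitting a $C^1$ representative is standard Borel.
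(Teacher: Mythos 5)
Your proposal is correct and follows essentially the same route as the paper: both arguments reduce to showing that $C^1([0,T];\mathbb{H}_n)$ (resp.\ $C^1([0,T];\mathbb{Y}_n)$) is a Borel subset of the ambient path space from Theorem~\ref{prop:tilMn}, then transferring the almost-sure membership from Lemma~\ref{lem:ndimsolp} and the equality of laws from Theorem~\ref{prop:tilMn}(a). The only cosmetic difference is that the paper obtains the Borel-measurability claims by invoking the Kuratowski theorem (Theorem~\ref{thm:kuratowski}) together with Lemma~\ref{lem:wcloeqclo} for the weak topology, whereas you verify them directly via difference quotients and coordinate maps.
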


\begin{proof}[Proof of Proposition \ref{prop:LLCn}]
  \begin{trivlist}
    \item[(i)] Since $C([0,T];\mathbb{H}_n)\subset L^p(0,T;\mathbb{L}^1(\cl D))\cap C([0,T];\mathbb X^{-b})$, if we take $\vp$ to be the embedding map, then by the Kuratowski Theorem \ref{thm:kuratowski}, the Borel sets in $C([0,T];\mathbb{H}_n)$ are the Borel sets in $L^p(0,T;\mathbb{L}^1(\cl D))\cap C([0,T];\mathbb X^{-b})$.  On the other hand, by Lemma \ref{prop:tilMn}, $\mathcal{L}(\til{M}_n)=\mathcal{L}(M_n)$ on $L^p(0,T;\mathbb{L}^1(\cl D))\cap C([0,T];\mathbb X^{-b})$, so $\mathcal{L}(\til{M}_n)=\mathcal{L}(M_n)$ on $C([0,T]; \mathbb{H}_n)$.
        By Lemma \ref{lem:ndimsolp}, $\mathbb{P}\{M_n\in C([0,T]; \mathbb{H}_n)\}=1$. Hence $\til{\mathbb{P}}\{\til{M}_n\in C([0,T]; \mathbb{H}_n)\}=1$.
    \item[(ii)] By the Kuratowski Theorem \ref{thm:kuratowski}, the Borel sets in $C([0,T];\mathbb{Y}_n)$ are Borel sets in $L^2(0,T;\mathbb{Y}_n)$. And since $L^2(0,T;\mathbb{Y}_n)$ is closed in $L^2(0,T;\mathbb{L}^2(\R^3))$, by the Lemma \ref{lem:wcloeqclo}, $L^2(0,T;\mathbb{Y}_n)$ is also closed in the space $L^2_w(0,T;\mathbb{L}^2(\R^3))$. Hence the Borel sets in $L^2(0,T;\mathbb{Y}_n)$ are also Borel sets in $L^2_w(0,T;\mathbb{L}^2(\R^3))$. Therefore the Borel sets in $C([0,T];\mathbb{Y}_n)$ are the Borel sets in $L^2_w(0,T;\mathbb{L}^2(\R^3))$.  By Lemma \ref{prop:tilMn}, $\mathcal{L}(\til{E}_n)=\mathcal{L}(E_n)$ on $L^2_w(0,T;\mathbb{L}^2(\R^3))$, so $\mathcal{L}(\til{E}_n)=\mathcal{L}(E_n)$ on $C([0,T]; \mathbb{Y}_n)$.
         By Lemma \ref{lem:ndimsolp},  $\mathbb{P}\{E_n\in C([0,T]; \mathbb{Y}_n)\}=1$. Hence $\til{\mathbb{P}}\{\til{E}_n\in C([0,T]; \mathbb{Y}_n)\}=1$.
    \item[(iii)] Exactly the same as the proof of (ii).
  \end{trivlist}
  This complete the proof of Proposition \ref{prop:LLCn}.
\end{proof}
}

The next result shows that the sequence $(\til{M}_n,\til{B}_n,\til{E}_n)$ satisfies the  similar a'priori estimates as $(M_n,B_n,E_n)$ in Proposition \ref{prop:estimates}.
\begin{prop}\label{prop_tilde}
Let us define
\[\til{\rho}_n:=\pi_n\big[-\vp'(\til{M}_n)+1_{\cl D}(\til{B}_n-\pi_n^\mathbb{Y}\overline{\til{M}}_n)\big]+\Delta \til{M}_n,\]
Then for all $p\geq0$, $b>\frac{1}{4}$, there exists $C>0$ such that for all $n\in\mathbb{N}$,
\begin{equation}\label{eq:S 3.10R'}
\|\til{M}_n\|_{L^\infty(0,T;\mathbb{H})}\leq \|M_0\|_{\mathbb{H}},\qquad \til{\mathbb{P}}-a.s.,
\end{equation}

  \begin{equation}\label{eq:SVis 3.32R'}
  \til{\mathbb{E}}\|\til{B}_n-\pi_n^\mathbb{Y}\overline{\til{M}}_n\|_{L^\infty(0,T;\mathbb{L}^{2}(\R^3))}^{p}\leq C,
\end{equation}

\begin{equation}\label{eq:SVis 3.33R'}
  \til{\mathbb{E}}\|\til{E}_n\|_{L^\infty(0,T;\mathbb{L}^{2}(\R^3))}^{p}\leq C,
\end{equation}

\begin{equation}\label{eq:SVis 3.34R'}
  \til{\mathbb{E}}\| \til{M}_n\|_{L^\infty(0,T;\mathbb{V})}^{p}\leq C,
\end{equation}

\begin{equation}\label{eq:SVis 3.35R'}
  \til{\mathbb{E}}\|\til{M}_n\times \til{\rho}_n\|_{L^2(0,T;\mathbb{H})}^{p}\leq C,
\end{equation}

\begin{equation}\label{eq:SVis 3.321R'}
  \til{\mathbb{E}}\|\til{B}_n\|_{L^\infty(0,T;\mathbb{L}^{2}(\R^3))}^{p}\leq C,
\end{equation}

\begin{equation}\label{eq:SLLG 3.13R'}
  \til{\mathbb{E}}\left(\int_0^T\left\|\til{M}_n(t)\times\left(\til{M}_n(t)\times \til{\rho}_n(t)\right)\right\|^2_{\mathbb{L}^\frac{3}{2}(\cl D)}\ud t\right)^\frac{p}{2}\leq C,
\end{equation}

\begin{equation}\label{eq:SLLG 3.14R'}
  \til{\mathbb{E}}\int_0^T\left\|\pi_n\left[\til{M}_n(t)\times\left(\til{M}_n(t)\times \til{\rho}_n(t)\right)\right]\right\|^2_{\mathbb X^{-b}}\ud t\leq C,
\end{equation}

\begin{equation}\label{eq:SVis 3.38R'}
  \til{\mathbb{E}}\left\|\frac{\ud \til{E}_n}{\ud t}\right\|^p_{L^\infty(0,T;\mathbb{Y}^*)}\leq C.
\end{equation}

\begin{equation}\label{eq:SVis 3.39R'}
  \til{\mathbb{E}}\left\|\frac{\ud \til{B}_n}{\ud t}\right\|^p_{L^\infty(0,T;\mathbb{Y}^*)}\leq C.
\end{equation}
\end{prop}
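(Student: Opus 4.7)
The plan is to derive Proposition~\ref{prop_tilde} as a direct consequence of Proposition~\ref{prop:estimates} together with the equality of laws from Theorem~\ref{prop:tilMn}(a) and the finite-dimensional regularity from Proposition~\ref{prop:LLCn}. The key idea is that every functional appearing in \eqref{eq:S 3.10R}--\eqref{eq:SVis 3.39R} is Borel measurable on the path spaces on which the laws of $(M_n, B_n, E_n, W_h)$ and $(\tilde M_n, \tilde B_n, \tilde E_n, \tilde W_{hn})$ agree, so the corresponding bounds transfer verbatim.

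First, I would observe that by Proposition~\ref{prop:LLCn}, the processes $\tilde M_n$, $\tilde B_n$, $\tilde E_n$ live almost surely in the finite-dimensional spaces $C^1([0,T];\mathbb H_n)$, $C^1([0,T];\mathbb Y_n)$, $C^1([0,T];\mathbb Y_n)$ respectively, and their laws on these spaces coincide with those of $M_n$, $B_n$, $E_n$ (via the Kuratowski argument already used in the proof of Proposition~\ref{prop:LLCn}). Consequently, the quantity $\tilde\rho_n := \pi_n[-\varphi'(\tilde M_n) + 1_{\cl D}(\tilde B_n - \pi_n^{\mathbb Y}\overline{\tilde M}_n)] + \Delta \tilde M_n$ is well defined pathwise; moreover $\tilde\rho_n$, the cross products $\tilde M_n \times \tilde\rho_n$ and $\tilde M_n \times (\tilde M_n \times \tilde \rho_n)$, the gradient $\nabla \tilde M_n$, and the time derivatives $\frac{d \tilde E_n}{dt}$, $\frac{d \tilde B_n}{dt}$ (computed via the second and third equations of \eqref{eq:SVis 3.29R} with tildes) are all continuous (hence Borel) functionals on the finite-dimensional path spaces, because $\pi_n$, $\pi_n^{\mathbb Y}$, $\Delta$, and $\nabla\times$ are bounded operators on these finite-dimensional subspaces.

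Next, for the pathwise estimate \eqref{eq:S 3.10R'}, the set $\{\|\cdot\|_{L^\infty(0,T;\mathbb H)} \leq \|M_0\|_{\mathbb H}\}$ is a Borel subset of $C^1([0,T];\mathbb H_n)$, and it has $\mathbb P$-probability one for $M_n$ by \eqref{eq:S 3.10R}; equality of laws yields the same for $\tilde M_n$ under $\tilde{\mathbb P}$. For each of the expectation estimates \eqref{eq:SVis 3.32R'}--\eqref{eq:SVis 3.39R'}, I would write the left-hand side as $\tilde{\mathbb E}[\Phi(\tilde M_n, \tilde B_n, \tilde E_n)]$ for a Borel measurable nonnegative $\Phi$ (e.g.\ $\Phi(M,B,E) = \|B - \pi_n^{\mathbb Y}\overline M\|_{L^\infty(0,T;\mathbb L^2(\mathbb R^3))}^p$ for \eqref{eq:SVis 3.32R'}, and analogously for the others), then apply the change-of-variables formula to get $\tilde{\mathbb E}[\Phi(\tilde M_n,\tilde B_n,\tilde E_n)] = \mathbb E[\Phi(M_n,B_n,E_n)]$, and invoke the matching estimate from Proposition~\ref{prop:estimates}.

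The only mildly delicate point is to check, for the estimates involving $\tilde\rho_n$ (i.e.\ \eqref{eq:SVis 3.35R'}, \eqref{eq:SLLG 3.13R'} and \eqref{eq:SLLG 3.14R'}), that the functional being integrated really factors through the finite-dimensional variables $(\tilde M_n, \tilde B_n, \tilde E_n)$ via a Borel-measurable map; but since on $\mathbb H_n$ all norms are equivalent, $\Delta|_{\mathbb H_n}$ is a bounded linear operator, and $\pi_n$, $\pi_n^{\mathbb Y}$ and $\varphi' \in C^1_0$ act continuously, the map $(M,B,E) \mapsto \pi_n[M \times (M \times \rho_n(M,B))]$ is continuous from $C^1([0,T];\mathbb H_n \times \mathbb Y_n \times \mathbb Y_n)$ to $C([0,T];\mathbb L^{3/2})$ and hence into $C([0,T];\mathbb X^{-b})$, which suffices.
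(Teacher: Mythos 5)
Your proposal is correct and follows essentially the same route as the paper, which likewise argues that all the relevant maps ($\pi_n\circ\vp'$, $\pi_n\circ 1_{\cl D}\circ\pi_n^{\bb Y}$, $\Delta$, the cross products, the norms) are measurable on the finite-dimensional path spaces and then transfers the bounds of Proposition \ref{prop:estimates} via the equality of laws from Proposition \ref{prop:LLCn}; your write-up simply spells out the details the paper leaves implicit. One small remark: for \eqref{eq:SVis 3.38R'} and \eqref{eq:SVis 3.39R'} you need not invoke the tilde version of \eqref{eq:SVis 3.29R} (not yet established at this stage) — it is enough that $E\mapsto\|\ud E/\ud t\|_{L^\infty(0,T;\mathbb{Y}^*)}$ is already a Borel functional on $C^1([0,T];\mathbb{Y}_n)$.
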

\begin{proof}
 
 Note that all the maps, $\pi_n\circ \vp'$, $\pi_n\circ 1_{\cl D}\circ \pi_n^{\bb Y}$, $\Delta$, all the cross products, the norms etc, are measurable maps on the corresponding spaces.
  Therefore by the Proposition \ref{prop:LLCn} and Proposition \ref{prop:estimates}, we get the estimates \eqref{eq:S 3.10R'}-\eqref{eq:SVis 3.39R'}.
\end{proof}


\begin{rem}
  From now on we will set $p=q=4$  and $b=\frac{1}{2}$ in Lemma \ref{prop:tilMn}. That will be enough to show the existence of the solution of the Problem \ref{S-LLG}.
\end{rem}

\begin{prop}
  As defined in Lemma \ref{prop:tilMn}, the $\til{M}$ satisfies the following estimates:
  \begin{equation}\label{eq:S 4.10}
    \esup_{t\in [0,T]}\|\til{M}(t)\|_{\mathbb{H}}\leq \|M_0\|_{\mathbb{H}}, \qquad \til{\mathbb{P}}-a.s.,
  \end{equation}
  And for some constant $C>0$,
  \begin{equation}\label{eq:S 4.11}
    \esup_{t\in [0,T]}\|\til{M}(t)\|_{\mathbb X^{-b}}\leq C\|M_0\|_{\mathbb{H}}, \qquad \til{\mathbb{P}}-a.s..
  \end{equation}
\end{prop}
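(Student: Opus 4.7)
The plan is to lift the $n$-uniform bound \eqref{eq:S 3.10R'} on the approximating sequence $\{\til{M}_n\}$ to the limit $\til{M}$ by a standard lower semi-continuity argument, using the almost sure convergence from Theorem \ref{prop:tilMn}.

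First, I would observe that by Proposition \ref{prop:LLCn}(i) and the deterministic bound \eqref{eq:S 3.10R}, the law-identity $\mathcal{L}(\til{M}_n)=\mathcal{L}(M_n)$ upgraded to $C^1([0,T];\mathbb{H}_n)$ implies \eqref{eq:S 3.10R'} pathwise, namely $\|\til{M}_n(t)\|_{\mathbb{H}}\le \|M_0\|_{\mathbb{H}}$ for every $t\in[0,T]$, $\til{\mathbb{P}}$-almost surely. In particular the sequence $\{\til{M}_n\}$ is bounded in $L^\infty(0,T;\mathbb{H})\til{\mathbb{P}}$-a.s.

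For \eqref{eq:S 4.10} I would argue $\omega$-by-$\omega$ on the full $\til{\mathbb{P}}$-measure set on which (a) $\til{M}_n(\omega)\to \til{M}(\omega)$ in $L^p(0,T;\mathbb{L}^q)$, and (b) $\|\til{M}_n(\omega)\|_{L^\infty(0,T;\mathbb{H})}\le \|M_0\|_{\mathbb{H}}$. Since $L^\infty(0,T;\mathbb{H})$ is the dual of $L^1(0,T;\mathbb{H})$, the Banach-Alaoglu theorem allows me to extract a subsequence (depending on $\omega$) that converges weakly-$\ast$ to some limit $v\in L^\infty(0,T;\mathbb{H})$ with $\|v\|_{L^\infty(0,T;\mathbb{H})}\le \|M_0\|_{\mathbb{H}}$. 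Testing against elements of $L^1(0,T;\mathbb{H})\cap L^{p'}(0,T;\mathbb{L}^{q'})$ (which is dense in $L^1(0,T;\mathbb{H})$, since $\mathbb{L}^{q'}\cap \mathbb{H}$ is dense in $\mathbb{H}$ for $q\in[2,6)$) and using the strong $L^p(0,T;\mathbb{L}^q)$ convergence, I identify $v=\til{M}$ as elements of $L^p(0,T;\mathbb{L}^q)$, and therefore as elements of $L^\infty(0,T;\mathbb{H})$. This yields $\esup_{t\in[0,T]}\|\til{M}(t)\|_{\mathbb{H}}\le \|M_0\|_{\mathbb{H}}$, which is \eqref{eq:S 4.10}.

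For \eqref{eq:S 4.11} I would use the continuous embedding $\mathbb{H}\hookrightarrow \mathbb X^{-b}$ (valid for $b\ge 0$, with constant $C$ depending only on $b$ and $\cl D$), combined with the almost sure convergence $\til{M}_n\to \til{M}$ in $C([0,T];\mathbb X^{-b})$ from Theorem \ref{prop:tilMn}(b)(ii). For every $t\in[0,T]$ and every $n$,
\[
\|\til{M}(t)\|_{\mathbb X^{-b}}\le \|\til{M}(t)-\til{M}_n(t)\|_{\mathbb X^{-b}}+\|\til{M}_n(t)\|_{\mathbb X^{-b}}\le \|\til{M}-\til{M}_n\|_{C([0,T];\mathbb X^{-b})}+C\|\til{M}_n(t)\|_{\mathbb{H}}.
\]
Taking the supremum over $t\in[0,T]$, using \eqref{eq:S 3.10R'}, and sending $n\to\infty$ gives $\sup_{t\in[0,T]}\|\til{M}(t)\|_{\mathbb X^{-b}}\le C\|M_0\|_{\mathbb{H}}$ pathwise, which is stronger than the essential supremum statement \eqref{eq:S 4.11}.

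The only delicate step is identifying the weak-$\ast$ $L^\infty(0,T;\mathbb{H})$ limit with the $L^p(0,T;\mathbb{L}^q)$ limit $\til{M}$; once this is done, everything else is routine lower semi-continuity and the continuous embedding $\mathbb{H}\hookrightarrow \mathbb X^{-b}$. No probabilistic machinery beyond Theorem \ref{prop:tilMn} is needed, and the measurability of the bounds in $\omega$ follows from the fact that the exceptional set on which convergence fails has $\til{\mathbb{P}}$-measure zero.
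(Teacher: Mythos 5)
Your proposal is correct and uses exactly the ingredients the paper cites for this proposition (the almost sure convergence from Theorem \ref{prop:tilMn}(b)(ii), the continuous embeddings $\mathbb{L}^4\hookrightarrow\mathbb{H}\hookrightarrow\mathbb{X}^{-b}$, and the uniform bound \eqref{eq:S 3.10R'}); the paper's own proof is a one-line citation of these facts, and you have simply written out the details. Your weak-$\ast$ compactness and identification argument for \eqref{eq:S 4.10} is a valid (if slightly heavier than necessary) way to fill in that one-liner --- one could alternatively pass from strong $L^4(0,T;\mathbb{L}^4)$ convergence to a.e.-in-$t$ convergence in $\mathbb{H}$ along a subsequence --- and your triangle-inequality argument for \eqref{eq:S 4.11} is exactly the intended one.
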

\begin{proof}
The results follows from Lemma \ref{prop:tilMn} (b) (ii),  and $\mathbb{L}^4\hookrightarrow \mathbb{H}\hookrightarrow\mathbb X^{-b}$ continuously and the estimate  \eqref{eq:S 3.10R'}.
\end{proof}

We continue to investigate properties of the process $\til{M}$, the next result and it's proof are related to the estimate \eqref{eq:SVis 3.34R'}.
\begin{lem}\label{thm:MregV}
  The process $\til{M}$ defined in Lemma \ref{prop:tilMn}  satisfies the following estimate:
  \begin{equation}\label{eq:S 4.12}
    \til{\mathbb{E}}\big[\esup_{t\in [0,T]}\|\til{M}(t)\|_{\mathbb{V}}^{2r}\big]<\infty,\quad r\geq 0.
  \end{equation}
\end{lem}
\begin{proof}
Since $L^{2r}(\til{\Omega};L^\infty(0,T;\mathbb{V}))$ is isomorphic to $\big[L^\frac{2r}{2r-1}(\til{\Omega};L^1(0,T;\bb X^{-\frac{1}{2}}))\big]^\ast$, by the estimate \eqref{eq:SVis 3.34R'} and the Banach-Alaoglu Theorem we infer that the sequence $\{\til{M}_n\}$ contains a subsequence  (which will be denoted in the same way as the full sequence) and there exists an element
$v \in L^{2r}(\til{\Omega};L^\infty(0,T;\mathbb{V}))$ such that $\til{M}_n \to v$ weakly$^\ast$ in $L^{2r}(\til{\Omega};L^\infty(0,T;\mathbb{V}))$ as $n\rightarrow\infty$. So it remains to show that $\til M=v$.

We have
\[
\lim_{n\rightarrow\infty}\lb \til{M}_n , \varphi \rb = \lb v,  \varphi \rb, \qquad  \varphi \in L^\frac{2r}{2r-1}(\til{\Omega};(L^1(0,T;\bb X^{-\frac{1}{2}}))),
\]
which means that
\[\lim_{n\rightarrow\infty} \int_{\til{\Omega}} \int_0^T \lb \til{M}_n(t,\omega) , \varphi(t,\omega) \rb \ud t \ud\til{\mathbb{P}}(\omega) =
\int_{\til{\Omega}} \int_0^T \lb v(t,\omega) , \varphi(t,\omega) \rb \ud t\ud\til{\mathbb{P}}(\omega).
\]

On the other hand, if we fix $\varphi \in L^4(\til{\Omega};L^\frac{4}{3}(0,T;\mathbb{L}^\frac{4}{3}))$, we have
 \setlength\arraycolsep{2pt}{
 \begin{eqnarray*}
   &&\sup_n\int_{\til{\Omega}}\left|\int_0^T{}_{\mathbb{L}^4}\llangle \til{M}_n(t),\varphi(t)\rrangle_{\mathbb{L}^\frac{4}{3}}\ud t\right|^2\ud \til{\mathbb{P}}(\omega)\leq\sup_n\int_{\til{\Omega}}\left|\int_0^T\|\til{M}_n\|_{\mathbb{L}^4}\|\varphi\|_{\mathbb{L}^\frac{4}{3}}\ud t\right|^2\ud \til{\mathbb{P}}(\omega)\\
   &\leq&\sup_n\int_{\til{\Omega}}\|\til{M}_n\|_{L^\infty(0,T;\mathbb{L}^4)}^2\|\varphi\|_{L^1(0,T;\mathbb{L}^\frac{4}{3})}^2\ud \til{\mathbb{P}}(\omega)\leq \sup_n\|\til{M}_n\|^2_{L^4(\til{\Omega};L^\infty(0,T;\mathbb{L}^4))}\|\varphi\|^2_{L^4(\til{\Omega};L^1(0,T;\mathbb{L}^\frac{4}{3}))}<\infty.
 \end{eqnarray*}}

So the sequence
$\int_0^T{}_{\mathbb{L}^4}\llangle \til{M}_n(t),\varphi(t)\rrangle_{\mathbb{L}^\frac{4}{3}}\ud t$ is uniformly integrable on $\til{\Omega}$. Moreover, by the $\til{\mathbb{P}}$ almost surely convergence of $\til{M}_n$ to $\til{M}$ in $L^4(0,T;\mathbb{L}^4)$, we
infer that $\int_0^T{}_{\mathbb{L}^4}\llangle \til{M}_n(t),\varphi(t)\rrangle_{\mathbb{L}^\frac{4}{3}}\ud t$ converges to $\int_0^T{}_{\mathbb{L}^4}\llangle \til{M}(t),\varphi(t)\rrangle_{\mathbb{L}^\frac{4}{3}}\ud t$ $\til{\mathbb{P}}$ almost surely. Thus for $n\rightarrow\infty$, we have
\[ \int_{\til{\Omega}} \int_0^T{}_{\mathbb{L}^4}\llangle \til{M}_n(t,\omega),\varphi(t,\omega)\rrangle_{\mathbb{L}^\frac{4}{3}}\ud t \ud\til{\mathbb{P}}(\omega) \to
\int_{\til{\Omega}} \int_0^T{}_{\mathbb{L}^4}\llangle \til{M}(t,\omega),\varphi(t,\omega)\rrangle_{\mathbb{L}^\frac{4}{3}}\ud t \ud\til{\mathbb{P}}(\omega).
\]
Hence we deduce that
\[
\int_{\til{\Omega}} \int_0^T{}_{\mathbb{L}^4}\llangle v(t,\omega),\varphi(t,\omega)\rrangle_{\mathbb{L}^\frac{4}{3}}\ud t\ud\til{\mathbb{P}}(\omega)
=\int_{\til{\Omega}} \int_0^T{}_{\mathbb{L}^4}\llangle \til{M}(t,\omega),\varphi(t,\omega)\rrangle_{\mathbb{L}^\frac{4}{3}}\ud t\ud\til{\mathbb{P}}(\omega)
\]

By the arbitrariness of $\varphi$ and densness of $L^4(\til{\Omega};L^\frac{4}{3}(0,T;\mathbb{L}^\frac{4}{3}))$ in $L^\frac{2r}{2r-1}(\til{\Omega};L^1(0,T;\bb X^{-\frac{1}{2}}))$, we infer that $\til{M}=v$ and since $v$ satisfies \eqref{eq:S 4.12} we infer that $\til{M}$ also satisfies \eqref{eq:S 4.12}. In this way the proof of  \eqref{eq:S 4.12} is complete.
\end{proof}

We also investigate the following property of $\til{B}$ and $\til{E}$.
\begin{lem}\label{lem:BEreg}
	The processes $\til{B},\til{E}$ defined in Lemma \ref{prop:tilMn} have following regularities:
  \begin{equation}\label{eq:tilBL2}
    \til{\mathbb{E}}\int_0^T\|\til{B}(t)\|_{\mathbb{L}^2(\R^3)}^2\ud t<\infty.
  \end{equation}
  \begin{equation}\label{eq:tilEL2}
    \til{\mathbb{E}}\int_0^T\|\til{E}(t)\|_{\mathbb{L}^2(\R^3)}^2\ud t<\infty.
  \end{equation}
\end{lem}
\begin{proof}
The proof of \eqref{eq:tilBL2} and \eqref{eq:tilEL2} are similar to the proof of  \eqref{eq:S 4.12}.
\end{proof}

Next we will strengthen part (ii) and (iv) of Lemma \ref{prop:tilMn} (b) about the convergence.
\begin{prop}
  \begin{equation}\label{eq:EMn'L4}
    \lim_{n\rightarrow\infty}\til{\mathbb{E}}\int_0^T\|\til{M}_n(t)-\til{M}(t)\|_{\mathbb{L}^4}^4\ud t=0.
  \end{equation}
\end{prop}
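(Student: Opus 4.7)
The strategy is to combine the almost sure convergence already obtained in Theorem \ref{prop:tilMn}(b)(ii) with a uniform integrability argument, and then to invoke the Vitali convergence theorem. Concretely, with the choice $p=q=4$ fixed in the preceding remark, we know that $\til{M}_n \to \til{M}$ in $L^4(0,T;\mathbb{L}^4(\cl D))$ $\til{\mathbb{P}}$-almost surely, i.e.\ the nonnegative random variables
\[
X_n := \int_0^T \|\til{M}_n(t)-\til{M}(t)\|^4_{\mathbb{L}^4}\,\ud t
\]
converge to $0$ $\til{\mathbb{P}}$-a.s.\ Thus it suffices to show that $\{X_n\}_{n\in\mathbb{N}}$ is uniformly integrable on $\til{\Omega}$.

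The key input is the Sobolev embedding $\mathbb{V}=\mathbb{H}^1(\cl D;\R^3)\hookrightarrow\mathbb{L}^6(\cl D;\R^3)$ on the bounded smooth domain $\cl D$, which together with the continuous embedding $\mathbb{L}^6\hookrightarrow\mathbb{L}^4$ gives a constant $C>0$ with
\[
\|u\|_{\mathbb{L}^4}\ \le\ C\,\|u\|_{\mathbb{V}},\qquad u\in\mathbb{V}.
\]
Therefore, for any $r>1$,
\[
X_n^r\ \le\ T^{r}\,\Big(\sup_{t\in[0,T]}\|\til{M}_n(t)-\til{M}(t)\|_{\mathbb{L}^4}^{4}\Big)^{r}
\ \le\ C_{r,T}\,\Big(\sup_{t\in[0,T]}\|\til{M}_n(t)\|_{\mathbb{V}}^{4r}+\sup_{t\in[0,T]}\|\til{M}(t)\|_{\mathbb{V}}^{4r}\Big).
\]

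I would then take expectations and use the uniform bounds already at hand: estimate \eqref{eq:SVis 3.34R'} in Proposition \ref{prop_tilde} yields $\sup_n\til{\mathbb{E}}\|\til{M}_n\|_{L^\infty(0,T;\mathbb{V})}^{4r}<\infty$, while \eqref{eq:S 4.12} in Theorem \ref{thm:MregV} yields $\til{\mathbb{E}}\|\til{M}\|_{L^\infty(0,T;\mathbb{V})}^{4r}<\infty$. Hence $\sup_n \til{\mathbb{E}}X_n^r<\infty$ for some $r>1$, which implies $\{X_n\}$ is uniformly integrable. Combined with the $\til{\mathbb{P}}$-a.s.\ convergence $X_n\to 0$, the Vitali convergence theorem yields $\til{\mathbb{E}}X_n\to 0$, which is precisely \eqref{eq:EMn'L4}.

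No step should be an obstacle: both the almost sure convergence and the higher-moment bounds have already been established in the preceding sections, so the proof essentially amounts to a routine uniform integrability argument.
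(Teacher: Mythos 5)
Your proposal is correct and follows essentially the same route as the paper: almost sure convergence of $X_n=\int_0^T\|\til{M}_n-\til{M}\|^4_{\mathbb{L}^4}\,\ud t$ to zero from Theorem \ref{prop:tilMn}(b)(ii), combined with a uniform bound on a higher moment of $X_n$ (the paper takes $r=2$ and bounds via $\|\cdot\|^8_{L^4(0,T;\mathbb{L}^4)}$ using \eqref{eq:SVis 3.34R'} and \eqref{eq:S 4.12}, you take a general $r>1$ via the Sobolev embedding $\mathbb{V}\hookrightarrow\mathbb{L}^4$), yielding uniform integrability and hence the conclusion by Vitali. The only cosmetic remark is that the supremum over $t\in[0,T]$ of $\|\til{M}(t)\|_{\mathbb{V}}$ should be an essential supremum, which changes nothing in the argument.
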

\begin{proof}[Proof of \eqref{eq:EMn'L4}]
By the Lemma \ref{prop:tilMn}, $\til{M}_n(t)\longrightarrow \til{M}(t)$ in $L^4(0,T;\mathbb{L}^4)\cap C([0,T];\mathbb X^{-b})$ $\til{\mathbb{P}}$-almost surely, $\til{M}_n(t)\longrightarrow \til{M}(t)$ in $L^4(0,T;\mathbb{L}^4)$ $\til{\mathbb{P}}$-almost surely, that is
\[\lim_{n\rightarrow\infty}\int_0^T\|\til{M}_n(t)-\til{M}(t)\|_{\mathbb{L}^4}^4\ud t=0,\qquad \til{\mathbb{P}}-a.s.,\]
and by \eqref{eq:SVis 3.34R'} and \eqref{eq:S 4.12},
\[\sup_n\til{\mathbb{E}}\left(\int_0^T\|\til{M}_n(t)-\til{M}(t)\|_{\mathbb{L}^4}^4\ud t\right)^2\leq 2^7\sup_n\left(\|\til{M}_n\|^8_{L^4(0,T;\mathbb{L}^4)}+\|\til{M}\|^8_{L^4(0,T;\mathbb{L}^4)}\right)<\infty,\]
hence,
\[\lim_{n\rightarrow\infty}\til{\mathbb{E}}\int_0^T\|\til{M}_n(t)-\til{M}(t)\|_{\mathbb{L}^4}^4\ud t=\til{\mathbb{E}}\left(\lim_{n\rightarrow\infty}\int_0^T\|\til{M}_n(t)-\til{M}(t)\|_{\mathbb{L}^4}^4\ud t\right)=0.\]
This completes the proof.
\end{proof}

\begin{cor}\label{cor:Mn'tilMae}
	
	There is a subsequence  $\{\til M_{n_k}\}\subset\{\til M_n\}$ such that 
   $\til{M}_{n_k}\longrightarrow \til{M}$ almost everywhere in $\til{\Omega}\times[0,T]\times \cl D$ as $k\rightarrow\infty$.

\end{cor}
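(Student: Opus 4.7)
The plan is to deduce the pointwise convergence directly from the $L^4$-convergence already established in \eqref{eq:EMn'L4}. Consider the product measure space $\bigl(\til\Omega\times[0,T]\times\cl D,\,\til{\mathbb P}\otimes\leb\otimes\leb\bigr)$. By Fubini's theorem, the content of \eqref{eq:EMn'L4} is exactly
\[
\int_{\til\Omega\times[0,T]\times\cl D}|\til M_n(\omega,t,x)-\til M(\omega,t,x)|^4\,\ud(\til{\mathbb P}\otimes\leb\otimes\leb)\longrightarrow 0,
\]
so $\til M_n\to\til M$ in $L^4(\til\Omega\times[0,T]\times\cl D;\R^3)$.

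Next I would invoke the standard measure-theoretic fact that $L^p$ convergence implies almost-everywhere convergence along a subsequence (Riesz--Fischer). Extracting such a subsequence $\{\til M_{n_k}\}$ and relabelling it as the full sequence $\{\til M_n\}$ (which is legitimate, since all constructions from Proposition \ref{prop:conv to mu} onwards have already been performed along successive subsequences) yields the claim.

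There is really no obstacle here: the corollary is a direct and essentially immediate consequence of \eqref{eq:EMn'L4} together with the relabelling convention in force throughout the paper. The only point worth remarking on is that, strictly speaking, the statement holds for the relabelled subsequence rather than for the originally indexed sequence, but this is consistent with the extraction procedure used in the construction of $\til M$.
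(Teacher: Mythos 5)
Your proposal is correct and is precisely the deduction the paper intends: the corollary is stated immediately after \eqref{eq:EMn'L4} with no separate proof, and the only available argument is the one you give, namely that $L^4$ convergence on the product space $\til\Omega\times[0,T]\times\cl D$ yields almost-everywhere convergence of a subsequence. Your caveat that, strictly speaking, the statement holds only after a further (relabelled) subsequence extraction is a fair and accurate observation about the paper's phrasing.
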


\begin{rem}
	For convenience, we will still denote the $\{\til M_{n_k}\}$ as in Corollary \ref{cor:Mn'tilMae} by $\{\til M_{n}\}$ in the rest part of this paper.
\end{rem}

\begin{prop}
  \begin{equation}\label{eq:EphiMn'L2}
    \lim_{n\rightarrow\infty}\til{\mathbb{E}}\int_0^T\|\pi_n\vp'(\til{M}_n(s))-\vp'(\til{M}(s))\|_{\mathbb{H}}^2\ud s=0.
  \end{equation}
\end{prop}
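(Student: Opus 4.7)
The plan is to split
\[
\pi_n\vp'(\til M_n)-\vp'(\til M)=\pi_n\bigl[\vp'(\til M_n)-\vp'(\til M)\bigr]+\bigl[\pi_n-I\bigr]\vp'(\til M)
\]
and show that each piece tends to zero in $L^2(\til\Omega\times[0,T];\HH)$ separately. Throughout, I exploit that $\vp\in C_0^2(\R^3;\R^+)$, so $K:=\|\vp'\|_{L^\infty}<\infty$ and $\vp'$ is globally Lipschitz (in particular continuous), and that $\pi_n$ is an orthogonal projection on $\HH$, hence a contraction.

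For the first piece, since $\pi_n$ is a contraction on $\HH$,
\[
\bigl\|\pi_n[\vp'(\til M_n(s))-\vp'(\til M(s))]\bigr\|_{\HH}^2\le\int_{\cl D}\bigl|\vp'(\til M_n(s,x))-\vp'(\til M(s,x))\bigr|^2\ud x.
\]
By Corollary \ref{cor:Mn'tilMae}, $\til M_n\to\til M$ a.e.\ on $\til\Omega\times[0,T]\times\cl D$, so by continuity of $\vp'$ the integrand on the right converges to $0$ a.e.\ and is dominated by the constant $(2K)^2$; dominated convergence (applied to the finite measure on $\til\Omega\times[0,T]\times\cl D$, with dominating function $4K^2$) yields
\[
\til{\mathbb{E}}\int_0^T\bigl\|\pi_n[\vp'(\til M_n(s))-\vp'(\til M(s))]\bigr\|_{\HH}^2\ud s\longrightarrow 0.
\]

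For the second piece, fix $(\omega,s)$. Since $\{e_k\}_{k\ge 1}$ is an orthonormal basis of $\HH$ and $\pi_n$ is the orthogonal projection onto $\mathbb H_n=\mathrm{linspan}\{e_1,\dots,e_n\}$, we have $\pi_n u\to u$ in $\HH$ for every $u\in\HH$; in particular
\[
\bigl\|[\pi_n-I]\vp'(\til M(s))\bigr\|_{\HH}^2\longrightarrow 0\quad\text{as }n\to\infty,
\]
with the uniform bound $\|[\pi_n-I]\vp'(\til M(s))\|_{\HH}^2\le 4\|\vp'(\til M(s))\|_{\HH}^2\le 4K^2|\cl D|$, which is integrable on $\til\Omega\times[0,T]$. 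Dominated convergence again gives
\[
\til{\mathbb{E}}\int_0^T\bigl\|[\pi_n-I]\vp'(\til M(s))\bigr\|_{\HH}^2\ud s\longrightarrow 0.
\]
Combining both pieces via the triangle inequality proves \eqref{eq:EphiMn'L2}.

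There is no real obstacle here: the argument is entirely routine once one observes that the assumption $\vp\in C_0^2(\R^3)$ supplies the $L^\infty$ bound needed to dominate $|\vp'(\til M_n)|$ uniformly in $n$, which is what lets the a.e.\ convergence from Corollary \ref{cor:Mn'tilMae} be upgraded to the $L^2$-in-mean statement. The only point to be slightly careful about is to apply dominated convergence on the product space $\til\Omega\times[0,T]\times\cl D$ (rather than pathwise followed by a second dominated convergence in $\omega$), since the dominating constant $4K^2$ is integrable against the finite measure $\til{\mathbb P}\otimes ds\otimes dx$.
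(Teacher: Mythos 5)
Your proof is correct and follows essentially the same route as the paper: the same decomposition $\pi_n\vp'(\til M_n)-\vp'(\til M)=\pi_n[\vp'(\til M_n)-\vp'(\til M)]+[\pi_n-I]\vp'(\til M)$, with the first piece handled via a.e.\ convergence of $\til M_n$ (Corollary \ref{cor:Mn'tilMae}), continuity and boundedness of $\vp'$, and the second via strong convergence $\pi_n\to I$ on $\HH$. The only cosmetic difference is that the paper passes to the limit under the integral by a uniform-integrability argument based on an $L^4$ bound, whereas you apply dominated convergence directly on the finite product measure space; these are interchangeable here.
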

\begin{proof}[Proof of \eqref{eq:EphiMn'L2}]
  By Corollary \ref{cor:Mn'tilMae}, $\til{M}_n\longrightarrow \til{M}$ almost everywhere in $\til{\Omega}\times[0,T]\times D$. And since $\vp'$ is continuous, $$\lim_{n\rightarrow\infty}\left|\vp'(\til{M}_n)- \vp'(\til{M})\right|^2=0,$$
  almost everywhere in $\til{\Omega}\times[0,T]\times D$. Moreover, $\vp'$ is bounded, so there exists some constant $C>0$ such that $|\vp'(x)|\leq C$ for all $x\in \R^3$. Therefore for almost every $(\omega,s)\in \til{\Omega}\times [0,T]$,
  \[\int_{\cl D}\left|\vp'(\til{M}_n(\omega, s,x))- \vp'(\til{M}(\omega,s,x))\right|^4\ud x\leq 16C^4m(D)<\infty.\]
  Hence $\left|\vp'(\til{M}_n(\omega, s))- \vp'(\til{M}(\omega,s))\right|^2$ is uniformly integrable on $D$, so
  \[\lim_{n\rightarrow\infty} \left\|\vp'(\til{M}_n(\omega,s))-\vp'(\til{M}(\omega,s))\right\|^2_{\mathbb{H}}=0,\quad \til{\Omega}\times [0,T]-a.e..\]
  Therefore for almost every $(\omega,s)\in \til{\Omega}\times [0,T]$,
  \setlength\arraycolsep{2pt}{
  \begin{eqnarray*}
    &&\left\|\pi_n\vp'(\til{M}_n(\omega,s))-\vp'(\til{M}(\omega,s))\right\|^2_{\mathbb{H}}\\
    &\leq&2 \left\|\vp'(\til{M}_n(\omega,s))-\vp'(\til{M}(\omega,s))\right\|^2_{\mathbb{H}}+2 \left\|\pi_n\vp'(\til{M}(\omega,s))-\vp'(\til{M}(\omega,s))\right\|^2_{\mathbb{H}}\rightarrow 0.
  \end{eqnarray*}}
  Moreover since
  \setlength\arraycolsep{2pt}{
  \begin{eqnarray*}
    &&\til{\mathbb{E}}\int_0^T\left\|\pi_n\vp'(\til{M}_n(\omega,s))-\vp'(\til{M}(\omega,s))\right\|^4_{\mathbb{H}}\ud s\leq 16TC^4m(D)<\infty,
  \end{eqnarray*}}
  $\left\|\pi_n\vp'(\til{M}_n)-\vp'(\til{M})\right\|^2_{\mathbb{H}}$ is uniformly integrable on $\til{\Omega}\times [0,T]$. Hence
 \[ \lim_{n\rightarrow\infty}\til{\mathbb{E}}\int_0^T\|\pi_n\vp'(\til{M}_n(s))-\vp'(\til{M}(s))\|_{\mathbb{L}^2}^2\ud s=0.\]
 This completes the proof of \eqref{eq:EphiMn'L2}.
\end{proof}

\begin{prop}
For any $u\in L^2(0,T;\mathbb{H})$, we have
\begin{equation}\label{eq:EBn'L2w}
  \lim_{n\rightarrow\infty}\til{\mathbb{E}}\left|\int_0^T\llangle  u(s),\pi_n1_{\cl D}(\til{B}_n-\til{B})(s)\rrangle_{\mathbb{H}}\ud s\right|=0.
\end{equation}
\end{prop}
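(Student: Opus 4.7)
The plan is to move the projection $\pi_n$ off $1_{\cl D}(\til B_n-\til B)$ and onto $u$ by self-adjointness of $\pi_n$ on $\HH$, then split into a part that vanishes by the weak $L^2$-convergence $\til B_n\to\til B$ and a part that vanishes because $\pi_n u\to u$ in $\HH$. Concretely, since $\pi_n$ is self-adjoint on $\HH$,
\[
\llangle u(s),\pi_n 1_{\cl D}(\til B_n-\til B)(s)\rrangle_\HH
=\llangle \pi_n u(s),1_{\cl D}(\til B_n-\til B)(s)\rrangle_\HH
=\llangle \overline{\pi_n u(s)},(\til B_n-\til B)(s)\rrangle_{\bb L^2(\R^3)},
\]
and I would add and subtract $\overline{u(s)}$ inside this pairing to obtain $I_n=I_n^{(1)}+I_n^{(2)}$, where
\[
I_n^{(1)}=\int_0^T\llangle \overline{\pi_n u(s)}-\overline{u(s)},(\til B_n-\til B)(s)\rrangle_{\bb L^2(\R^3)}\ud s,\qquad
I_n^{(2)}=\int_0^T\llangle \overline{u(s)},(\til B_n-\til B)(s)\rrangle_{\bb L^2(\R^3)}\ud s.
\]

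For $I_n^{(1)}$ I would use Cauchy--Schwarz in the inner pairing and then in time to bound
\[
|I_n^{(1)}|\le \|\pi_n u-u\|_{L^2(0,T;\HH)}\,\|\til B_n-\til B\|_{L^2(0,T;\bb L^2(\R^3))}.
\]
The first factor is deterministic and tends to $0$ by dominated convergence in $L^2(0,T;\HH)$ (pointwise $\pi_n u(s)\to u(s)$ with bound $\|u(s)\|_\HH$), while taking expectation and using Cauchy--Schwarz together with \eqref{eq:SVis 3.321R'} and \eqref{eq:tilBL2} gives $\til{\mathbb E}\|\til B_n-\til B\|_{L^2(0,T;\bb L^2(\R^3))}\le C$ uniformly in $n$. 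Hence $\til{\mathbb E}|I_n^{(1)}|\to 0$.

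For $I_n^{(2)}$, since $\overline u\in L^2(0,T;\bb L^2(\R^3))$ and Theorem \ref{prop:tilMn}(b)(iv) yields $\til B_n\to \til B$ in $L^2_w(0,T;\bb L^2(\R^3))$ $\til{\mathbb P}$-a.s., the integral $I_n^{(2)}$ converges to $0$ $\til{\mathbb P}$-almost surely. To upgrade this to convergence in $L^1(\til\Omega)$, I would check uniform integrability via the bound
\[
|I_n^{(2)}|^2\le \|u\|^2_{L^2(0,T;\HH)}\,\|\til B_n-\til B\|^2_{L^2(0,T;\bb L^2(\R^3))},
\]
whose expectation is uniformly bounded in $n$ by \eqref{eq:SVis 3.321R'} and \eqref{eq:tilBL2}; then Vitali's convergence theorem gives $\til{\mathbb E}|I_n^{(2)}|\to 0$.

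The only mild obstacle is the bookkeeping between the weak-$L^2$ almost-sure convergence of $\til B_n$ (which is what is supplied by Skorokhod) and the $L^1(\til\Omega)$ convergence asserted in \eqref{eq:EBn'L2w}; this is exactly what the uniform second-moment bound \eqref{eq:SVis 3.321R'} is designed to bridge, so no new estimate is needed beyond what has already been established.
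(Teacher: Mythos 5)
Your proof is correct and follows essentially the same route as the paper: almost-sure convergence of the integral to zero from the weak convergence $\til B_n\to\til B$ in $L^2_w(0,T;\mathbb{L}^2(\R^3))$, upgraded to convergence in $L^1(\til\Omega)$ by uniform integrability, using the uniform second-moment bounds \eqref{eq:SVis 3.321R'} and \eqref{eq:tilBL2}. Your decomposition into $I_n^{(1)}$ and $I_n^{(2)}$ is in fact slightly more careful than the paper's argument, which invokes the weak convergence directly against the $n$-dependent test function $\pi_n u$ without separating off the contribution of $\pi_n u-u$.
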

\begin{proof}[Proof of \eqref{eq:EBn'L2w}]
  By (iv) of Lemma \ref{prop:tilMn}, we have
  \[\lim_{n\rightarrow\infty}\left|\int_0^T\llangle  u(s),\pi_n1_{\cl D}(\til{B}_n-\til{B})(s)\rrangle_{\mathbb{H}}\ud s\right|=0,\quad \til{\mathbb{P}}-a.s..\]
  Moreover, by \eqref{eq:SVis 3.321R'} and \eqref{eq:tilBL2} we have
  \setlength\arraycolsep{2pt}{
  \begin{eqnarray*}
    &&\til{\mathbb{E}}\left|\int_0^T\llangle  u(s),\pi_n1_{\cl D}(\til{B}_n-\til{B})(s)\rrangle_{\mathbb{H}}\ud s\right|^2\\
    &\leq&2\|u\|^2_{L^2(0,T;\mathbb{H})}\til{\mathbb{E}}\left(\int_0^T\|1_{\cl D}\til{B}_n(s)\|_{\mathbb{H}}^2\ud s+\int_0^T\|1_{\cl D}\til{B}(s)\|_{\mathbb{H}}^2\ud s\right)<\infty.
  \end{eqnarray*}}
  Hence $\left|\int_0^T\llangle  u(s),\pi_n1_{\cl D}(\til{B}_n-\til{B})(s)\rrangle_{\mathbb{H}}\ud s\right|$ is uniformly integrable on $\til{\Omega}$, so
  \[\lim_{n\rightarrow\infty}\til{\mathbb{E}}\left|\int_0^T\llangle  u(s),\pi_n1_{\cl D}(\til{B}_n-\til{B})(s)\rrangle_{\mathbb{H}}\ud s\right|=0.\]
  The proof of \eqref{eq:EBn'L2w} has been complete.
\end{proof}

\begin{prop}
\begin{equation}\label{eq:4.17phi}
  \nabla_i \til{M}_n\longrightarrow \nabla_i \til{M}\textrm{ weakly in }L^2(\til{\Omega};L^2(0,T;\mathbb{L}^2)),\;i=1,2,3.
\end{equation}
\end{prop}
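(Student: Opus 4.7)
The plan is to combine the uniform gradient bound \eqref{eq:SVis 3.34R'} with the already-established strong convergence \eqref{eq:EMn'L4} via a Banach--Alaoglu / weak-limit identification argument, in the same spirit as the proof of Theorem \ref{thm:MregV}.

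First I would observe that \eqref{eq:SVis 3.34R'} with $p=2$ gives, for $i=1,2,3$,
\[\sup_n\til{\mathbb E}\int_0^T\|\nabla_i\til M_n(t)\|_{\mathbb L^2}^2\,\ud t\le \sup_n\til{\mathbb E}\|\til M_n\|_{L^\infty(0,T;\V)}^2<\infty.\]
Since $L^2(\til\Omega;L^2(0,T;\mathbb L^2))$ is a reflexive Hilbert space, by the Banach--Alaoglu Theorem there exist $v_i\in L^2(\til\Omega;L^2(0,T;\mathbb L^2))$ and a subsequence (still labelled by $n$) such that $\nabla_i\til M_n\to v_i$ weakly in $L^2(\til\Omega;L^2(0,T;\mathbb L^2))$ for each $i$.

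Next I would identify $v_i=\nabla_i\til M$. Fix a test function $\varphi\in L^2(\til\Omega;L^2(0,T;\mathbb H))$ whose trajectories lie in $C_c^\infty((0,T)\times\cl D;\R^3)$; such functions are dense in $L^2(\til\Omega;L^2(0,T;\mathbb L^2))$. By integration by parts in the space variable,
\[\til{\mathbb E}\int_0^T\llangle \nabla_i\til M_n(t),\varphi(t)\rrangle_{\mathbb L^2}\,\ud t=-\til{\mathbb E}\int_0^T\llangle \til M_n(t),\nabla_i\varphi(t)\rrangle_{\mathbb L^2}\,\ud t.\]
The left-hand side converges to $\til{\mathbb E}\int_0^T\llangle v_i,\varphi\rrangle_{\mathbb L^2}\,\ud t$ by the weak convergence of $\nabla_i\til M_n$. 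For the right-hand side, \eqref{eq:EMn'L4} together with the continuous embedding $\mathbb L^4\hookrightarrow\mathbb L^2$ on the bounded domain $\cl D$ yields $\til M_n\to\til M$ strongly in $L^2(\til\Omega;L^2(0,T;\mathbb L^2))$, so
\[-\til{\mathbb E}\int_0^T\llangle \til M_n,\nabla_i\varphi\rrangle_{\mathbb L^2}\,\ud t\longrightarrow-\til{\mathbb E}\int_0^T\llangle \til M,\nabla_i\varphi\rrangle_{\mathbb L^2}\,\ud t=\til{\mathbb E}\int_0^T\llangle \nabla_i\til M,\varphi\rrangle_{\mathbb L^2}\,\ud t,\]
where the last equality uses that $\nabla_i\til M\in L^2(\til\Omega;L^2(0,T;\mathbb L^2))$ by Theorem \ref{thm:MregV} and the boundary terms vanish because $\varphi(t,\cdot)\in C_c^\infty(\cl D)$. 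Density of the test functions then gives $v_i=\nabla_i\til M$.

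Finally, this argument can be applied to any subsequence of $\{\nabla_i\til M_n\}$, and every such subsequence admits a further subsequence weakly converging to the same limit $\nabla_i\til M$; the standard Urysohn subsequence principle therefore upgrades subsequential convergence to convergence of the entire sequence, which proves \eqref{eq:4.17phi}. The main (mild) obstacle is verifying that the strong-in-$L^2(\til\Omega;L^2(0,T;\mathbb L^2))$ convergence of $\til M_n$ suffices to pass to the limit on the right-hand side of the integration-by-parts identity; this is where the previously obtained \eqref{eq:EMn'L4} and the reflexivity-based extraction fit together cleanly.
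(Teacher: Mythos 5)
Your proof is correct and follows essentially the same route as the paper: the uniform bound \eqref{eq:SVis 3.34R'} supplies weak compactness of $\nabla_i\til M_n$ in $L^2(\til\Omega;L^2(0,T;\mathbb{L}^2))$, and the limit is identified via integration by parts against test functions together with the strong convergence \eqref{eq:EMn'L4}. Your version is in fact slightly more careful than the paper's (explicit Banach--Alaoglu extraction, compactly supported test functions to kill boundary terms, and the subsequence principle), but the underlying argument is the same.
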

\begin{proof}
Let us fix $\varphi\in L^2(\til{\Omega};L^2(0,T;\mathbb{V}))$, by \eqref{eq:EMn'L4} $\til{M}_n\longrightarrow \til{M}$ in $L^2(\til{\Omega};L^2(0,T;\mathbb{H}))$, so we have:
\[\til{\mathbb{E}}\int_0^T\llangle \til{M},\nabla_i \varphi\rrangle_{\mathbb{H}}\ud x=\lim_{n\rightarrow\infty}\til{\mathbb{E}}\int_0^T\llangle \til{M}_n,\nabla_i \varphi\rrangle_{\mathbb{H}}\ud x=-\lim_{n\rightarrow\infty}\til{\mathbb{E}}\int_0^T\llangle \nabla_i \til{M}_n, \varphi\rrangle_{\mathbb{H}}\ud x.\]
  By the estimate \eqref{eq:SVis 3.34R'}, $\{\til{M}_n\}_{n=1}^\infty$ is bounded in $L^2(\til{\Omega};L^2(0,T;\mathbb{V}))$, so the limit of the right hand side of above equation exists. Hence the result follows.
\end{proof}

Next we will define $\til{M}\times\til{\rho}$ and show that the limits of the sequences  $\{\til{M}_n\times\til{\rho}_n\}_n$, $\{\til{M}_n\times(\til{M}_n\times\til{\rho}_n)\}_n$ and $\big\{\pi_n\big(\til{M}_n\times(\til{M}_n\times\til{\rho}_n)\big)\big\}_n$ are actually
$\til{M}\times\til{\rho}$, $\til{M}\times(\til{M}\times\til{\rho})$ and $\til{M}\times(\til{M}\times\til{\rho})$.

\begin{prop}
  For $p\geq 1$ and $b>\frac{1}{4}$, there exist $Z_1\in L^{2p}(\til{\Omega};L^2(0,T;\mathbb{H}))$, $Z_2\in L^2(\til{\Omega};L^2(0,T;\mathbb{L}^\frac{3}{2}))$ and $Z_3\in L^2(\til{\Omega};L^2(0,T;\mathbb X^{-b}))$, such that
  \begin{equation}\label{eq:Z1w}
    \til{M}_n\times\til{\rho}_n\longrightarrow Z_1\quad\textrm{weakly in } L^{2p}(\til{\Omega};L^2(0,T;\mathbb{H})),
  \end{equation}
  \begin{equation}\label{eq:4.15M}
    \til{M}_n\times(\til{M}_n\times\til{\rho}_n)\longrightarrow Z_2 \quad\textrm{weakly in } L^2(\til{\Omega};L^2(0,T;\mathbb{L}^\frac{3}{2})),
  \end{equation}
  \begin{equation}\label{eq:4.16M}
    \pi_n(\til{M}_n\times(\til{M}_n\times\til{\rho}_n))\longrightarrow Z_3\quad\textrm{weakly in } L^2(\til{\Omega};L^2(0,T;\mathbb X^{-b})).
  \end{equation}
\end{prop}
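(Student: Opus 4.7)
The proof is a direct application of the Banach--Alaoglu theorem to the three a priori bounds provided by Proposition \ref{prop_tilde}, so the strategy is short and the main work has already been done.

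First I would observe that each of the three target spaces is reflexive: $L^{2p}(\til{\Omega};L^2(0,T;\mathbb{H}))$ for $p\geq 1$ is reflexive as an $L^{2p}$-Bochner space with values in a Hilbert space, while $L^2(\til{\Omega};L^2(0,T;\mathbb{L}^{3/2}))$ is reflexive because $3/2>1$, and $L^2(\til{\Omega};L^2(0,T;\mathbb X^{-b}))$ is itself a Hilbert space. Consequently, bounded sets in each of these spaces are relatively weakly sequentially compact.

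Next I would invoke the corresponding a priori estimates to establish boundedness. Estimate \eqref{eq:SVis 3.35R'} (applied with exponent $2p$ in place of $p$) gives a uniform bound on $\{\til M_n\times\til\rho_n\}$ in $L^{2p}(\til\Omega;L^2(0,T;\mathbb H))$; estimate \eqref{eq:SLLG 3.13R'} (with $p=2$) bounds $\{\til M_n\times(\til M_n\times\til\rho_n)\}$ in $L^2(\til\Omega;L^2(0,T;\mathbb L^{3/2}))$; and estimate \eqref{eq:SLLG 3.14R'} bounds $\{\pi_n(\til M_n\times(\til M_n\times\til\rho_n))\}$ in $L^2(\til\Omega;L^2(0,T;\mathbb X^{-b}))$.

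Finally, I would apply Banach--Alaoglu to each bounded sequence, obtaining weakly convergent subsequences with limits $Z_1$, $Z_2$, $Z_3$ in the respective spaces. A standard diagonal extraction then produces a single subsequence (relabelled as $n$) along which all three weak convergences \eqref{eq:Z1w}, \eqref{eq:4.15M}, \eqref{eq:4.16M} hold simultaneously; this subsequence can moreover be chosen as a sub-subsequence of the one already fixed in Theorem \ref{prop:tilMn}, so that the earlier almost sure convergences are preserved. There is no real obstacle in this argument: the whole point is pure weak compactness. The substantial task, postponed to the identification step that follows in the paper, is to show that $Z_1=\til M\times\til\rho$, $Z_2=\til M\times(\til M\times\til\rho)$ and $Z_3=\til M\times(\til M\times\til\rho)$ in the suitable distributional sense, which is nontrivial because $\til\rho$ is only a distribution and the products are not weakly continuous in $(\til M_n,\til\rho_n)$.
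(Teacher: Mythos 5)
Your proposal is correct and follows exactly the paper's own argument: reflexivity of the three Bochner spaces, the uniform bounds \eqref{eq:SVis 3.35R'}, \eqref{eq:SLLG 3.13R'}, \eqref{eq:SLLG 3.14R'}, and the Banach--Alaoglu theorem. The extra remarks on diagonal extraction and on deferring the identification of $Z_1,Z_2,Z_3$ are accurate but add nothing beyond what the paper does implicitly.
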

\begin{proof}
The spaces $L^{2p}(\til{\Omega};L^2(0,T;\mathbb{H}))$, $L^2(\til{\Omega};L^2(0,T;\mathbb{L}^\frac{3}{2}))$ and $L^2(\til{\Omega};L^2(0,T;\mathbb X^{-b}))$ are reflexive. Then by equations \eqref{eq:SVis 3.35R'}, \eqref{eq:SLLG 3.13R'}, \eqref{eq:SLLG 3.14R'} and
  by the Banach-Alaoglu Theorem, we get equations \eqref{eq:Z1w}, \eqref{eq:4.15M} and \eqref{eq:4.16M}.
\end{proof}

\begin{prop}
  \[Z_2=Z_3 \textrm{ in the space } L^2(\til{\Omega};L^2(0,T;\mathbb X^{-b})).\]
\end{prop}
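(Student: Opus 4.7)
The strategy is to identify $Z_2$ and $Z_3$ by testing them against a dense class of smooth test functions and exploiting that the Galerkin projection $\pi_n$ is self-adjoint with respect to the $X^{-b}\text{--}X^b$ duality.

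First I would record an embedding reduction. Since $X^b\hookrightarrow\mathbb L^3$ continuously for $b>\frac14$, duality gives $\mathbb L^{3/2}\hookrightarrow\mathbb X^{-b}$ continuously. Hence \eqref{eq:4.15M} upgrades to weak convergence of the unprojected sequence in $L^2(\til\Omega;L^2(0,T;\mathbb X^{-b}))$, so in that space both $Z_2$ and $Z_3$ live and can be compared.

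Next I would choose the right class of test functions. Let $\varphi\in L^2(\til\Omega;L^2(0,T;\HH_m))$ for some fixed $m\in\bb N$. For all $n\ge m$, $\pi_n\varphi=\varphi$ because $\HH_m\subset\HH_n$. Because $\{e_k\}$ are eigenvectors of $A$, $\pi_n$ commutes with every power of $A_1$, and its natural extension to $X^{-b}$ via
\[
\langle\pi_n u,v\rangle_{\mathbb X^{-b},X^b}:=\langle u,\pi_n v\rangle_{\mathbb X^{-b},X^b},\qquad u\in\mathbb X^{-b},\,v\in X^b,
\]
is self-adjoint in the $X^{-b}\text{--}X^b$ duality; moreover on $\mathbb L^{3/2}\hookrightarrow\mathbb X^{-b}$ this extension agrees with the $\HH$-orthogonal projection since $e_k\in C^\infty\cap D(A)\subset\mathbb L^3$. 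Consequently, for $n\ge m$,
\begin{align*}
\til{\mathbb E}\int_0^T\langle\pi_n(\til M_n\times(\til M_n\times\til\rho_n)),\varphi\rangle_{\mathbb X^{-b},X^b}\,\ud t
&=\til{\mathbb E}\int_0^T\langle \til M_n\times(\til M_n\times\til\rho_n),\pi_n\varphi\rangle\,\ud t\\
&=\til{\mathbb E}\int_0^T\langle \til M_n\times(\til M_n\times\til\rho_n),\varphi\rangle\,\ud t,
\end{align*}
with the last pairing interpreted equally as the $\mathbb L^{3/2}\text{--}\mathbb L^3$ duality (legitimate since $\HH_m\subset C^\infty\subset\mathbb L^3$).

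Now I would pass to the limit $n\to\infty$ on both sides. The left-hand side converges to $\til{\mathbb E}\int_0^T\langle Z_3,\varphi\rangle_{\mathbb X^{-b},X^b}\,\ud t$ by \eqref{eq:4.16M}; the right-hand side converges to $\til{\mathbb E}\int_0^T\langle Z_2,\varphi\rangle_{\mathbb L^{3/2},\mathbb L^3}\,\ud t$ by \eqref{eq:4.15M}. The two pairings coincide for $\varphi\in\HH_m$, so
\[
\til{\mathbb E}\int_0^T\langle Z_3-Z_2,\varphi\rangle_{\mathbb X^{-b},X^b}\,\ud t=0.
\]
Finally, since $\{e_k\}$ is an ONB of $\HH$ of eigenvectors of $A_1$, the algebraic span $\bigcup_m\HH_m$ is dense in $X^b=D(A_1^b)$, hence $L^2(\til\Omega;L^2(0,T;\HH_m))$, $m\in\bb N$, is dense in $L^2(\til\Omega;L^2(0,T;X^b))$. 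Because this latter space is the predual of $L^2(\til\Omega;L^2(0,T;\mathbb X^{-b}))$, we conclude $Z_2=Z_3$ in $L^2(\til\Omega;L^2(0,T;\mathbb X^{-b}))$.

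The only delicate point is the coherence of the two dualities and the self-adjointness of the extended $\pi_n$; once one observes that $\pi_n$ preserves the spectral decomposition of $A_1$ and has range in $C^\infty\subset\mathbb L^3\cap X^b$, both checks become routine and the identification follows.
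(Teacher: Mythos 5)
Your proposal is correct and follows essentially the same route as the paper: both arguments test against elements of the finite-dimensional spaces spanned by the eigenvectors of $A$ (where $\pi_n$ acts as the identity for large $n$), use the self-adjointness of $\pi_n$ to move the projection onto the test function, pass to the limit via the two weak convergences \eqref{eq:4.15M} and \eqref{eq:4.16M}, and conclude by density of these test functions in $L^2(\til\Omega;L^2(0,T;X^{b}))$. The only cosmetic difference is that you keep the intermediate pairing as the $\mathbb L^{3/2}$--$\mathbb L^3$ duality while the paper routes it through the $\HH$ inner product; both are legitimate since the Galerkin quantities are smooth.
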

\begin{proof}
  Notice that $(L^\frac{3}{2})^*=L^3$, and  $X^b=\mathbb{H}^{2b}$. $X^b\subset L^3$ for $b>\frac{1}{4}$, hence $L^\frac{3}{2}\subset\mathbb X^{-b}$, so $$L^2(\til{\Omega};L^2(0,T;\mathbb{L}^\frac{3}{2}))\subset L^2(\til{\Omega};L^2(0,T;\mathbb X^{-b})).$$
  Therefore $Z_2\in L^2(\til{\Omega};L^2(0,T;\mathbb X^{-b}))$ as well as $Z_3$.\\
  Since by definition $X^{b}=D(A^b)$ and $A$ is self-adjoint, we can define
  \[X^{b}_n:=\left\{\pi_nx=\sum_{j=1}^n\llangle x,e_j\rrangle_{\mathbb{H}}e_j: x\in \HH,\,\sum_{j=1}^\infty\lmd_j^{2b}\llangle x,e_j\rrangle_{\mathbb{H}}^2<\infty\right\}.\]
  Then $X^{b}=\bigcup_{n=1}^\infty X_n^{b}$, $L^2(\til{\Omega};L^2(0,T;X^{b}))=\bigcup_{n=1}^\infty L^2(\til{\Omega};L^2(0,T;X^{b}_n))$.

  Firstly, we prove the result for each $u_n\in L^2(\til{\Omega};L^2(0,T;X^{b}_n))$. To do this, let us fix $n$ and take  $u_n\in L^2(\til{\Omega};L^2(0,T;X^{b}_n))$, then  for any $m\geq n$, we have
  \setlength\arraycolsep{2pt}{
  \begin{eqnarray*}
  &&{}_{L^2(\til{\Omega};L^2(0,T;\mathbb X^{-b}))}\llangle \pi_m(\til{M}_m\times(\til{M}_m\times\til{\rho}_m)),u_n\rrangle_{L^2(\til{\Omega};L^2(0,T;X^{b}))}\\
  &=&\til{\mathbb{E}}\int_0^T \,_{\mathbb X^{-b}}\llangle \pi_m(\til{M}_m(t)\times(\til{M}_m(t)\times\til{\rho}_m(t))),u_n(t)\rrangle_{X^b}\ud t\\
  &=&\til{\mathbb{E}}\int_0^T \,\llangle \pi_m(\til{M}_m(t)\times(\til{M}_m(t)\times\til{\rho}_m(t))),u_n(t)\rrangle_{\mathbb{H}}\ud t\\
  &=&\til{\mathbb{E}}\int_0^T \,\llangle \til{M}_m(t)\times(\til{M}_m(t)\times\til{\rho}_m(t)),u_n(t)\rrangle_{\mathbb{H}}\ud t\\
  &=&\til{\mathbb{E}}\int_0^T \,_{\mathbb X^{-b}}\llangle \til{M}_m(t)\times(\til{M}_m(t)\times\til{\rho}_m(t)),u_n(t)\rrangle_{X^b}\ud t\\
  &=&_{L^2(\til{\Omega};L^2(0,T;\mathbb X^{-b}))}\llangle \til{M}_m\times(\til{M}_m\times\til{\rho}_m),u_n\rrangle_{L^2(\til{\Omega};L^2(0,T;X^{b}))}.
  \end{eqnarray*}}
  Hence let $m\rightarrow\infty$ on both sides of above equality, we have
  \[_{L^2(\til{\Omega};L^2(0,T;\mathbb X^{-b}))}\llangle Z_3,u_n\rrangle_{L^2(\til{\Omega};L^2(0,T;X^{b}))}=_{L^2(\til{\Omega};L^2(0,T;\mathbb X^{-b}))}\llangle Z_2,u_n\rrangle_{L^2(\til{\Omega};L^2(0,T;X^{b}))},\]
  $\forall u_n\in L^2(\til{\Omega};L^2(0,T;X^{b}_n))$.

  Secondly, for any $u\in L^2(\til{\Omega};L^2(0,T;X^{b})$, there exists $L^2(\til{\Omega};L^2(0,T;X^{b}_n))\ni u_n\longrightarrow u$ as $n\longrightarrow\infty$, hence for all $u\in L^2(\til{\Omega};L^2(0,T;X^{b})$, we have
  \setlength\arraycolsep{2pt}{
  \begin{eqnarray*}
  &&  _{L^2(\til{\Omega};L^2(0,T;\mathbb X^{-b}))}\llangle Z_3,u\rrangle_{L^2(\til{\Omega};L^2(0,T;X^{b}))}= \lim_{n\rightarrow\infty}\,_{L^2(\til{\Omega};L^2(0,T;\mathbb X^{-b}))}\llangle Z_3,u_n\rrangle_{L^2(\til{\Omega};L^2(0,T;X^{b}))}\\
    &=&\lim_{n\rightarrow\infty}\,_{L^2(\til{\Omega};L^2(0,T;\mathbb X^{-b}))}\llangle Z_2,u_n\rrangle_{L^2(\til{\Omega};L^2(0,T;X^{b}))}=_{L^2(\til{\Omega};L^2(0,T;\mathbb X^{-b}))}\llangle Z_2,u\rrangle_{L^2(\til{\Omega};L^2(0,T;X^{b}))}
  \end{eqnarray*}}
  Therefore $Z_2=Z_3\in L^2(\til{\Omega};L^2(0,T;\mathbb X^{-b}))$ and this concludes the proof.
\end{proof}

In next Lemma, we look into $Z_1$. 
\begin{lem}\label{lem:4.5M}
  $Z_1$ is the unique element in $L^{2}(\til{\Omega};L^2(0,T;\mathbb{H}))$ such that for any  $u\in L^4(\til{\Omega};L^4(0,T;\mathbb{W}^{1,4}))$,  the following equality holds
  \setlength\arraycolsep{2pt}{
  \begin{eqnarray*}
    &&\lim_{n\rightarrow\infty}\til{\mathbb{E}}\int_0^T\llangle \til{M}_n(s)\times \til{\rho}_n(s),u(s)\rrangle_{\mathbb{H}}\ud s=\til{\mathbb{E}}\int_0^T\llangle Z_1(s),u(s)\rrangle_{\mathbb{H}}\ud s\\
    &=&\til{\mathbb{E}}\int_0^T\llangle \til{M}(t)\times\big(\vp'(\til{M}(t))+1_{\cl D}(\til{B}-\til{\overline{M}})(t)\big),u(t)\rrangle_{\mathbb{H}}\ud t+\sum_{i=1}^3\til{\mathbb{E}}\int_0^T\llangle\nabla_i\til{M}(t),\til{M}(t)\times \nabla_iu(t)\rrangle_{\mathbb{H}}\ud t.
  \end{eqnarray*}}
\end{lem}
\begin{proof}
Let us recall that
\[\til{\rho}_n:=\pi_n\big[-\vp'(\til{M}_n)+1_{\cl D}(\til{B}_n-\pi_n^\mathbb{Y}\overline{\til{M}}_n)\big]+\Delta \til{M}_n,\]
so we take three parts to prove the desired result.

Firstly we show that
\[\lim_{n\rightarrow\infty}\til{\mathbb{E}}\int_0^T\llangle \til{M}_n(t)\times\Delta \til{M}_n(t),u(t)\rrangle_{\HH}\ud t=\sum_{i=1}^3\til{\mathbb{E}}\int_0^T\llangle \nabla_i\til{M}(t),\til{M}(t)\times \nabla_iu(t)\rrangle_{\HH}\ud t.\]
Proof of above equality:  for each $n\in \mathbb{N}$, we have
  \begin{equation}\label{eq:4.19M}
    \llangle \til{M}_n(t)\times \Delta \til{M}_n(t),u(t)\rrangle_{\mathbb{L}^2}=\sum_{i=1}^3\llangle\nabla_i\til{M}_n(t),\til{M}_n(t)\times \nabla_iu(t)\rrangle_{\mathbb{L}^2}
  \end{equation}
  for almost every $t\in [0,T]$ and $\til{\mathbb{P}}$ almost surely.
  Moreover, by the results: \eqref{eq:4.17phi}, \eqref{eq:SVis 3.34R'} and \eqref{eq:EMn'L4}, we have for $i=1,2,3$,
  \setlength\arraycolsep{2pt}{
  \begin{eqnarray*}
    &&\left|\til{\mathbb{E}}\int_0^T\llangle \nabla_i\til{M},\til{M}\times \nabla_iv\rrangle_{\mathbb{H}}\ud t-\til{\mathbb{E}}\int_0^T\llangle \nabla_i\til{M}_n,\til{M}_n\times \nabla_iv\rrangle_{\mathbb{H}}\ud t\right|\\
    &\leq&\left|\til{\mathbb{E}}\int_0^T\llangle \nabla_i\til{M}-\nabla_i\til{M}_n,\til{M}\times \nabla_iv\rrangle_{\mathbb{H}}\ud t\right|+\left|\til{\mathbb{E}}\int_0^T\llangle \nabla_i\til{M}_n,(\til{M}-\til{M}_n)\times \nabla_iv\rrangle_{\mathbb{H}}\ud t\right|\\
    &\leq&\left(\til{\mathbb{E}}\int_0^T\|\nabla_i\til{M}_n\|_{\mathbb{H}}^2\ud t\right)^\frac{1}{2}\left(\til{\mathbb{E}}\int_0^T\|\til{M}-\til{M}_n\|_{\mathbb{L}^4}^4\ud t\right)^\frac{1}{4}\left(\til{\mathbb{E}}\int_0^T\|\nabla_iv\|_{\mathbb{L}^4}^4\ud t\right)^\frac{1}{4}\\
    &&\hspace{+0.2truecm}+\left|\til{\mathbb{E}}\int_0^T\llangle \nabla_i\til{M}-\nabla_i\til{M}_n,\til{M}\times \nabla_iv\rrangle_{\mathbb{H}}\ud t\right|\rightarrow0,\qquad \textrm{as }n\rightarrow\infty.
  \end{eqnarray*}}

   Secondly we show that
   \[\lim_{n\rightarrow\infty}\til{\mathbb{E}}\int_0^T\llangle \til{M}_n(t)\times\pi_n\vp'(\til{M}_n(t)),u(t)\rrangle_{\mathbb{L}^2}\ud t=\til{\mathbb{E}}\int_0^T\llangle \til{M}(t)\times\vp'(\til{M}(t)),u(t)\rrangle_{\mathbb{L}^2}\ud t.\]
   Proof of the above equality: By \eqref{eq:EMn'L4} and \eqref{eq:EphiMn'L2}, we have
   \setlength\arraycolsep{2pt}{
  \begin{eqnarray*}
    &&\left|\til{\mathbb{E}}\int_0^T\llangle \til{M}_n(s)\times \pi_n\vp'(\til{M}_n(s))-\til{M}(s)\times\vp'(\til{M}(s)),u(s)\rrangle_{\mathbb{H}}\ud s\right|\\
    &\leq&\til{\mathbb{E}}\int_0^T\left|\llangle \big[\til{M}_n(s)-\til{M}(s)\big]\times u(s),\pi_n\vp'(\til{M}_n(s))\rrangle_{\mathbb{H}}\right|\ud s\\
    &&+\til{\mathbb{E}}\int_0^T\left|\llangle \til{M}(s)\times u(s),\pi_n\vp'(\til{M}_n(s))-\vp'(\til{M}(s))\rrangle_{\mathbb{H}}\right|\ud s\\
    &\leq&\left(\til{\mathbb{E}}\int_0^T\|\til{M}_n(s)-\til{M}(s)\|_{\mathbb{L}^4}^4\ud s\right)^\frac{1}{4} \left(\til{\mathbb{E}}\int_0^T\|u(s)\|_{\mathbb{L}^4}^4\ud s\right)^\frac{1}{4} \left(\til{\mathbb{E}}\int_0^T\|\vp'(\til{M}_n(s))\|_{\mathbb{L}^2}^2\ud s\right)^\frac{1}{2}\\
    &&+\left(\til{\mathbb{E}}\int_0^T\|\til{M}(s)\|_{\mathbb{L}^4}^4\ud s\right)^\frac{1}{4}\left(\til{\mathbb{E}}\int_0^T\|u(s)\|_{\mathbb{L}^4}^4\ud s\right)^\frac{1}{4}\left(\til{\mathbb{E}}\int_0^T\|\pi_n\vp'(\til{M}_n(s))-\vp'(\til{M}(s))\|_{\mathbb{L}^2}^2\ud s\right)^\frac{1}{2}\rightarrow0,
  \end{eqnarray*}}
  as $n\rightarrow\infty$.

 Finally, we will show that
 \[\lim_{n\rightarrow\infty}\til{\mathbb{E}}\int_0^T\llangle \til{M}_n(t)\times\pi_n1_{\cl D}(\til{B}_n-\pi_n^\mathbb{Y}\til{\overline{M}}_n)(t),u(t)\rrangle_{\mathbb{L}^2}\ud t=\til{\mathbb{E}}\int_0^T\llangle \til{M}(t)\times1_{\cl D}(\til{B}-\til{\overline{M}})(t),u(t)\rrangle_{\mathbb{L}^2}\ud t.\]
   Proof of the above equality: By \eqref{eq:EMn'L4} and \eqref{eq:EBn'L2w}, we have
 \setlength\arraycolsep{2pt}{
  \begin{eqnarray*}
    &&\left|\til{\mathbb{E}}\int_0^T\llangle \til{M}_n(s)\times \pi_n1_{\cl D}(\til{B}_n-\pi_n^\mathbb{Y}\til{\overline{M}}_n)(s)-\til{M}(s)\times1_{\cl D}(\til{B}-\til{\overline{M}})(s),u(s)\rrangle_{\mathbb{H}}\ud s\right|\\
    &\leq&\til{\mathbb{E}}\int_0^T\left|\llangle \big[\til{M}_n(s)-\til{M}(s)\big]\times u(s),\pi_n1_{\cl D}(\til{B}_n-\pi_n^\mathbb{Y}\til{\overline{M}}_n)(s)\rrangle_{\mathbb{H}}\right|\ud s\\
    &&+\til{\mathbb{E}}\left|\int_0^T\llangle \til{M}(s)\times u(s),\pi_n1_{\cl D}(\til{B}_n-\pi_n^\mathbb{Y}\til{\overline{M}}_n)(s)-1_{\cl D}(\til{B}-\til{\overline{M}})(s)\rrangle_{\mathbb{H}}\ud s\right|\\
    &\leq&\left(\til{\mathbb{E}}\int_0^T\|\til{M}_n(s)-\til{M}(s)\|_{\mathbb{L}^4}^4\ud s\right)^\frac{1}{4} \left(\til{\mathbb{E}}\int_0^T\|u(s)\|_{\mathbb{L}^4}^4\ud s\right)^\frac{1}{4} \left(\til{\mathbb{E}}\int_0^T\|1_{\cl D}(\til{B}_n-\pi_n^\mathbb{Y}\til{\overline{M}}_n)(s)\|_{\mathbb{L}^2}^2\ud s\right)^\frac{1}{2}\\
    &&+\left(\til{\mathbb{E}}\int_0^T\|\til{M}(s)\|_{\mathbb{L}^4}^4\ud s\right)^\frac{1}{4}\left(\til{\mathbb{E}}\int_0^T\|u(s)\|_{\mathbb{L}^4}^4\ud s\right)^\frac{1}{4}\left(\til{\mathbb{E}}\int_0^T\|\pi_n1_{\cl D}\pi_n^\mathbb{Y}\til{\overline{M}}_n(s)-1_{\cl D}\til{\overline{M}}(s)\|_{\mathbb{L}^2}^2\ud s\right)^\frac{1}{2}\\
    &&+\til{\mathbb{E}}\left|\int_0^T\llangle \til{M}(s)\times u(s),\pi_n1_{\cl D}(\til{B}_n-\til{B})(s)\rrangle_{\mathbb{H}}\ud s\right|\longrightarrow 0,\qquad \textrm{as }n\rightarrow\infty.
  \end{eqnarray*}}
So far we have shown that
  \setlength\arraycolsep{2pt}{
  \begin{eqnarray*}
 &&\lim_{n\rightarrow\infty}\til{\mathbb{E}}\int_0^T\llangle \til{M}_n(s)\times \til{\rho}_n(s),u(s)\rrangle_{\mathbb{H}}\ud s\\
 &=&\til{\mathbb{E}}\int_0^T\llangle \til{M}(t)\times\big(\vp'(\til{M}(t))+1_{\cl D}(\til{B}-\til{\overline{M}})(t)\big),u(t)\rrangle_{\mathbb{H}}\ud t+\sum_{i=1}^3\llangle\nabla_i\til{M}_n(t),\til{M}_n(t)\times \nabla_iu(t)\rrangle_{\mathbb{H}},
   \end{eqnarray*}}
   for all $u\in L^4(\til{\Omega};L^4(0,T;\mathbb{W}^{1,4}))$.
Since $L^4(\til{\Omega};L^4(0,T;\mathbb{W}^{1,4}))$ is dense in $ L^2(\til{\Omega};L^2(0,T;\mathbb{H}))$, we also have
\[
  \lim_{n\rightarrow\infty}\til{\mathbb{E}}\int_0^T\llangle \til{M}_n(s)\times \til{\rho}_n(s),u(s)\rrangle_{\mathbb{H}}\ud s=\til{\mathbb{E}}\int_0^T\llangle Z_1(s),u(s)\rrangle_{\mathbb{H}}\ud s,\quad u\in L^4(\til{\Omega};L^4(0,T;\mathbb{W}^{1,4})),
  \]
  and such $Z_1$ is unique in $L^{2}(\til{\Omega};L^2(0,T;\mathbb{H}))$.

  This completes the proof.
\end{proof}

\begin{notation}	\label{nt:Mxrho}
We will denote $\til{M}\times \til{\rho}:=Z_1$.
\end{notation}

\begin{rem}\label{rem:MxrhoinL2}
By the Notation \ref{nt:Mxrho}, the Lemma \ref{lem:4.5M} shows that $\til{M}\times \til{\rho}\in L^{2}(\til{\Omega};L^2(0,T;\mathbb{H}))$ and
\[\lim_{n\rightarrow\infty}\til{\mathbb{E}}\int_0^T\llangle \til{M}_n(s)\times \til{\rho}_n(s),u(s)\rrangle_{\mathbb{H}}\ud s=\til{\mathbb{E}}\int_0^T\llangle \til{M}(s)\times \til{\rho}(s),u(s)\rrangle_{\mathbb{H}}\ud s,\]
for all $u\in L^2(\til{\Omega};L^2(0,T;\mathbb{H}))$.
By \eqref{eq:S 4.12}, we also have
\[\til{M}\times (\til{M}\times \til{\rho})\in L^\frac{4}{3}(\til{\Omega};L^2(0,T;\mathbb{L}^\frac{3}{2}(\cl D))).\]
\end{rem}

\begin{lem}\label{lem:4.8M}
  For any  $\eta\in L^4(\til{\Omega};L^4(0,T;\mathbb{L}^4)$ we have
  \setlength\arraycolsep{2pt}{
  \begin{eqnarray}
    &&\lim_{n\rightarrow\infty}\til{\mathbb{E}}\int_0^T{}_{\mathbb{L}^\frac{3}{2}}\llangle \til{M}_n(s)\times(\til{M}_n(s)\times \til{\rho}_n(s)),\eta(s)\rrangle_{\mathbb{L}^3(D)}\ud s\nonumber\\
    &=&\til{\mathbb{E}}\int_0^T{}_{\mathbb{L}^\frac{3}{2}}\llangle Z_2(s),\eta(s)\rrangle_{\mathbb{L}^3(D)}\ud s\label{eq:4.25M}\\
    &=&\til{\mathbb{E}}\int_0^T{}_{\mathbb{L}^\frac{3}{2}}\llangle \til{M}(s)\times Z_1(s),\eta(s)\rrangle_{\mathbb{L}^3(D)}\ud s\label{eq:4.26M}
  \end{eqnarray}}
\end{lem}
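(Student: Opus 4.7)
My plan has two parts, one for each equality.

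For the first equality \eqref{eq:4.25M}, I would simply invoke the weak convergence \eqref{eq:4.15M} and verify that $\eta$ is a legitimate test function. Since $(\mathbb{L}^{3/2})^* = \mathbb{L}^3$, the dual of $L^2(\til\Omega;L^2(0,T;\mathbb{L}^{3/2}))$ is $L^2(\til\Omega;L^2(0,T;\mathbb{L}^3))$. Because $\cl D$ is bounded, $\til{\mathbb{P}}$ is a probability measure, and $(0,T)$ has finite measure, we have the continuous embedding $L^4(\til\Omega;L^4(0,T;\mathbb{L}^4))\hookrightarrow L^2(\til\Omega;L^2(0,T;\mathbb{L}^3))$, so $\eta$ lies in the dual space and the weak convergence gives \eqref{eq:4.25M} immediately.

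For the second equality \eqref{eq:4.26M}, the main tool is the scalar triple product identity $(a\times b)\cdot c = (b\times c)\cdot a$, which allows the rewriting
\[
\llangle \til M_n\times(\til M_n\times\til\rho_n),\eta\rrangle_{\mathbb H} = \llangle \til M_n\times\til\rho_n,\eta\times \til M_n\rrangle_{\mathbb H}\,.
\]
I would then decompose
\[
\eta\times \til M_n = \eta\times\til M + \eta\times(\til M_n-\til M)
\]
and handle the two resulting terms separately.

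For the term with $\eta\times\til M$, I would use the weak convergence \eqref{eq:Z1w} of $\til M_n\times\til\rho_n$ to $Z_1$ in $L^2(\til\Omega;L^2(0,T;\mathbb H))$, after checking that $\eta\times\til M$ belongs to this space: by H\"older, $\|\eta\times\til M\|_{\mathbb H}\le \|\eta\|_{\mathbb L^4}\|\til M\|_{\mathbb L^4}$, and Theorem \ref{thm:MregV} together with the Sobolev embedding $\V\hookrightarrow \mathbb L^4$ gives $\til M\in L^4(\til\Omega;L^\infty(0,T;\mathbb L^4))$, so a Cauchy--Schwarz argument produces the required bound. The weak convergence then gives the limit $\til{\mathbb E}\int_0^T\llangle Z_1,\eta\times\til M\rrangle_{\mathbb H}\ud s$, which equals $\til{\mathbb E}\int_0^T\llangle \til M\times Z_1,\eta\rrangle\ud s$ by the triple product identity again.

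For the remainder term involving $\eta\times(\til M_n-\til M)$, I would apply H\"older three times---first pointwise ($\mathbb L^2\cdot\mathbb L^4\cdot\mathbb L^4$), then in time and in $\til\Omega$---to bound it by
\[
\lf(\til{\mathbb E}\int_0^T\|\til M_n\times\til\rho_n\|_{\mathbb H}^2\ud s\rh)^{1/2}\lf(\til{\mathbb E}\int_0^T\|\eta\|_{\mathbb L^4}^4\ud s\rh)^{1/4}\lf(\til{\mathbb E}\int_0^T\|\til M_n-\til M\|_{\mathbb L^4}^4\ud s\rh)^{1/4}\,.
\]
The first factor is uniformly bounded by the a priori estimate \eqref{eq:SVis 3.35R'}, the second is finite by assumption on $\eta$, and the third converges to zero by \eqref{eq:EMn'L4}, so this remainder vanishes in the limit. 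There is no essential obstacle here; the delicate point is simply keeping track of which norms are controlled by which estimate, and ensuring that the test function $\eta\times\til M$ indeed lies in $L^2(\til\Omega;L^2(0,T;\mathbb H))$ so that \eqref{eq:Z1w} applies.
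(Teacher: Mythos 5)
Your proposal is correct and follows essentially the same route as the paper: duality of $\mathbb{L}^{3/2}$ and $\mathbb{L}^3$ for \eqref{eq:4.25M}, then the scalar triple product identity, the splitting $\eta\times\til M_n=\eta\times\til M+\eta\times(\til M_n-\til M)$, the weak convergence \eqref{eq:Z1w} tested against $\eta\times\til M\in L^2(\til\Omega;L^2(0,T;\mathbb{H}))$, and a H\"older estimate with \eqref{eq:SVis 3.35R'} and \eqref{eq:EMn'L4} for the remainder. Your explicit three-factor H\"older bound for the remainder is, if anything, a slightly cleaner write-up of the same estimate the paper performs.
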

\begin{proof}
  Let us denote $Z_{1n}:=\til{M}_n\times\til{\rho}_n$ for each $n\in \mathbb{N}$. $L^4(\til{\Omega};L^4(0,T;\mathbb{L}^4))\subset L^2(\til{\Omega};L^2(0,T;\mathbb{L}^3))$ which is the dual space of $L^2(\til{\Omega};L^2(0,T;\mathbb{L}^\frac{3}{2}))$. Hence \eqref{eq:4.15M} implies that \eqref{eq:4.25M} holds.

  Next we are going to prove \eqref{eq:4.26M}.

  By \eqref{eq:EMn'L4}, $\til{M}\in L^4(\til{\Omega};L^4(0,T;\mathbb{L}^4))$, hence by the H$\ddot{\textrm{o}}$lder inequality, we have
  \setlength\arraycolsep{2pt}{
  \begin{eqnarray*}
    &&\til{\mathbb{E}}\int_0^T\|\eta\times \til{M}\|_{\mathbb{L}^2}^2\ud t\le \til{\mathbb{E}}\int_0^T\|\eta\|_{\mathbb{L}^4}^2\|\til{M}\|_{\mathbb{L}^4}^2\ud t\\
    &\leq&\til{\mathbb{E}}\int_0^T\|\eta\|_{\mathbb{L}^4}^4\ud t+\til{\mathbb{E}}\int_0^T\|\til{M}\|_{\mathbb{L}^4}^4\ud t<\infty.
  \end{eqnarray*}}
   So $\eta\times \til{M}\in L^2(\til{\Omega};L^2(0,T;\mathbb{L}^2))$ and similarly $\eta\times \til{M}_n\in L^2(\til{\Omega};L^2(0,T;\mathbb{L}^2))$.

   By \eqref{eq:Z1w}, $Z_{1n}\in L^2(\til{\Omega};L^2(0,T;\mathbb{L}^2))$. And $\eta\times \til{M}_n\in L^2(\til{\Omega};L^2(0,T;\mathbb{L}^2))$. Hence
   \setlength\arraycolsep{2pt}{
  \begin{eqnarray}
  &&  \,\!_{\mathbb{L}^\frac{3}{2}}\llangle \til{M}_n\times Z_{1n},\eta\rrangle_{\mathbb{L}^3}=\int_{\cl D}\llangle \til{M}_n(x)\times Z_{1n}(x),\eta(x)\rrangle\ud x\nonumber\\
    &=&\int_{\cl D}\llangle Z_{1n}(x),\eta(x)\times \til{M}_n(x)\rrangle\ud x\label{eq:4.261M}=\llangle Z_{1n},\eta\times \til{M}_n\rrangle_{\mathbb{L}^2}
  \end{eqnarray}}
   By \eqref{eq:Z1w}, $Z_1\in L^2(\til{\Omega};L^2(0,T;\mathbb{L}^2))$. And $\eta\times \til{M}\in L^2(\til{\Omega};L^2(0,T;\mathbb{L}^2))$. So
   \setlength\arraycolsep{2pt}{
  \begin{eqnarray}
    &&\,\!_{\mathbb{L}^\frac{3}{2}}\llangle \til{M}\times Z_1,\eta\rrangle_{\mathbb{L}^3}=\int_{\cl D}\llangle \til{M}(x)\times Z_1(x),\eta(x)\rrangle\ud x\nonumber\\
    &=&\int_{\cl D}\llangle Z_1(x),\eta(x)\times \til{M}(x)\rrangle\ud x\label{eq:4.262M}=\llangle Z_1,\eta\times \til{M}\rrangle_{\mathbb{L}^2}
  \end{eqnarray}}
  By \eqref{eq:4.261M} and \eqref{eq:4.262M},
  \setlength\arraycolsep{2pt}{
  \begin{eqnarray*}
    \,\!_{\mathbb{L}^\frac{3}{2}}\llangle \til{M}_n\times Z_{1n},\eta\rrangle_{\mathbb{L}^3}-\,\!_{\mathbb{L}^\frac{3}{2}}\llangle \til{M}\times Z_1,\eta\rrangle_{\mathbb{L}^3}&=&\llangle Z_{1n},\eta\times \til{M}_n\rrangle_{\mathbb{L}^2}-\llangle Z_1,\eta\times \til{M}\rrangle_{\mathbb{L}^2}\\
    &=&\llangle Z_{1n}-Z_1,\eta\times \til{M}\rrangle_{\mathbb{L}^2}+\llangle Z_{1n},\eta\times(\til{M}_n-\til{M})\rrangle_{\mathbb{L}^2}.
  \end{eqnarray*}}
  By \eqref{eq:Z1w}, and since $\eta\times \til{M}\in L^2(\til{\Omega};L^2(0,T;\mathbb{L}^2))$,
  \[\lim_{n\rightarrow\infty}\til{\mathbb{E}}\int_0^T\llangle Z_{1n}(s)-Z_1(s),\eta(s)\times \til{M}(s)\rrangle_{\mathbb{L}^2}\ud s=0.\]
  By the Cauchy-Schwartz inequality,
  \setlength\arraycolsep{2pt}{
  \begin{eqnarray*}
    &&\llangle Z_{1n},\eta\times(\til{M}_n-\til{M})\rrangle_{\mathbb{L}^2}^2\leq\|Z_{1n}\|^2_{\mathbb{L}^2}\|\eta\times (\til{M}_n-\til{M})\|^2_{\mathbb{L}^2}\\
    &\leq&\|Z_{1n}\|^2_{\mathbb{L}^2}(\|\eta\|_{\mathbb{L}^4}^4+\|\til{M}_n-\til{M}\|_{\mathbb{L}^4}^4)\rightarrow 0,\qquad\textrm{as }n\longrightarrow\infty.
  \end{eqnarray*}}
  Hence
  \[\lim_{n\rightarrow\infty}\til{\mathbb{E}}\int_0^T\llangle Z_{1n}(s),\eta\times(\til{M}_n-\til{M})(s)\rrangle_{\mathbb{L}^2}\ud s=0.\]
  Therefore,
  \[\lim_{n\rightarrow\infty}\til{\mathbb{E}}\int_0^T\ \! _{\mathbb{L}^\frac{3}{2}}\llangle \til{M}_n(s)\times(\til{M}_n(s)\times\til{\rho}_n(s)),\eta(s)\rrangle_{\mathbb{L}^3}\ud s=\til{\mathbb{E}}\int_0^T\ \! _{\mathbb{L}^\frac{3}{2}}\llangle \til{M}(s)\times Z_1(s),\eta(s)\rrangle_{\mathbb{L}^3}\ud s.\]
  This completes the proof of the Lemma \ref{lem:4.8M}.
\end{proof}

\begin{rem}\label{rem:Z2=MxMxrho}
  By the notation \ref{nt:Mxrho}, the Lemma \ref{lem:4.8M} has proved that
  \[Z_2=\til{M}\times(\til{M}\times \til{\rho})\]
  in $L^2(\til{\Omega};L^2(0,T;\mathbb{L}^\frac{3}{2}))$.
  So
  \[\til{M}_n\times(\til{M}_n\times\til{\rho}_n)\longrightarrow \til{M}\times(\til{M}\times \til{\rho}) \quad\textrm{weakly in } L^2(\til{\Omega};L^2(0,T;\mathbb{L}^\frac{3}{2})).\]
\end{rem}

The next result will be used to show that the process $\til{M}$ satisfies the condition $|\til{M}(t,x)|=1$ for all $t\in [0,T]$, $x\in \cl D$ and $\til{\mathbb{P}}$-almost surely.

\begin{lem}
  For any bounded measurable function $\varphi:\cl D\longrightarrow \R$, we have
  \[\llangle Z_1(s,\omega),\varphi \til{M}(s,\omega)\rrangle_{\mathbb{H}}=0,\]
  for almost every $(s,\omega)\in [0,T]\times \til{\Omega}$.
\end{lem}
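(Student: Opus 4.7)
The starting point is the pointwise orthogonality relation in $\R^3$: for every $x\in\cl D$ the vector $\til M_n(x)\times\til\rho_n(x)$ is perpendicular to $\til M_n(x)$. Consequently, for any bounded measurable $\varphi:\cl D\to\R$ and any bounded measurable $\eta:\til\Omega\times[0,T]\to\R$, the integrand vanishes pointwise, giving
\[
\til\E\int_0^T\eta(s,\omega)\,\llangle \til M_n(s)\times\til\rho_n(s),\,\varphi\,\til M_n(s)\rrangle_\HH\,\ud s=0,\qquad n\in\bb N.
\]
Since the desired conclusion (for a.e.\ $(s,\omega)$) is equivalent, by a standard density argument in $L^\infty(\til\Omega\times[0,T])$, to showing that the analogous integral against the limit vanishes for every such $\eta$, the whole proof reduces to passing to the limit $n\to\infty$ in the displayed identity.

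To carry out the passage to the limit, I will write
\[
\til\E\int_0^T\eta\llangle\til M_n\times\til\rho_n,\varphi\til M_n\rrangle_\HH\ud s-\til\E\int_0^T\eta\llangle Z_1,\varphi\til M\rrangle_\HH\ud s=\mathrm{I}_n+\mathrm{II}_n,
\]
where
\[
\mathrm{I}_n:=\til\E\int_0^T\eta\llangle \til M_n\times\til\rho_n-Z_1,\,\varphi\til M\rrangle_\HH\ud s,\qquad
\mathrm{II}_n:=\til\E\int_0^T\eta\llangle \til M_n\times\til\rho_n,\,\varphi(\til M_n-\til M)\rrangle_\HH\ud s.
\]
The term $\mathrm{I}_n\to0$ by the weak convergence $\til M_n\times\til\rho_n\to Z_1$ in $L^2(\til\Omega;L^2(0,T;\HH))$ established in Remark~\ref{rem:MxrhoinL2}, once I verify that $\eta\varphi\til M$ belongs to $L^2(\til\Omega;L^2(0,T;\HH))$; this follows immediately from the boundedness of $\eta$ and $\varphi$ together with estimate~\eqref{eq:S 4.12}. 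For $\mathrm{II}_n$, Cauchy--Schwarz gives
\[
|\mathrm{II}_n|\le \|\eta\|_\infty\|\varphi\|_\infty
\bigl(\til\E\|\til M_n\times\til\rho_n\|_{L^2(0,T;\HH)}^2\bigr)^{1/2}
\bigl(\til\E\|\til M_n-\til M\|_{L^2(0,T;\HH)}^2\bigr)^{1/2}.
\]
The first factor is uniformly bounded by the a'priori estimate~\eqref{eq:SVis 3.35R'}, while the second factor converges to $0$ by the strong convergence~\eqref{eq:EMn'L4} (which is stronger than $L^2$-convergence in space and time). Hence $\mathrm{II}_n\to0$ as well.

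Combining these observations,
\[
\til\E\int_0^T\eta(s,\omega)\,\llangle Z_1(s,\omega),\varphi\,\til M(s,\omega)\rrangle_\HH\,\ud s=0
\]
for every bounded measurable $\eta$ on $\til\Omega\times[0,T]$, from which the desired a.e.\ identity $\llangle Z_1(s,\omega),\varphi\til M(s,\omega)\rrangle_\HH=0$ follows by a standard measure-theoretic argument. The only non-routine point is the mixed weak/strong limit in $\mathrm{I}_n+\mathrm{II}_n$; the splitting above is exactly what is needed to avoid having to control the product of two weakly convergent sequences.
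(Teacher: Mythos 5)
Your proposal is correct and follows essentially the same route as the paper: exploit the pointwise orthogonality $\llangle \til M_n\times\til\rho_n,\varphi\til M_n\rrangle=0$, then pass to the limit by pairing the weak convergence $\til M_n\times\til\rho_n\rightharpoonup Z_1$ in $L^2(\til\Omega;L^2(0,T;\mathbb H))$ with the strong convergence $\til M_n\to\til M$ from \eqref{eq:EMn'L4} (the paper tests against indicators $1_B$ rather than general bounded $\eta$, which is equivalent). Your explicit $\mathrm{I}_n+\mathrm{II}_n$ splitting, with the uniform bound \eqref{eq:SVis 3.35R'} controlling $\mathrm{II}_n$, simply spells out the weak--strong limit step that the paper leaves implicit.
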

\begin{proof}
  Let $B\subset [0,T]\times \til{\Omega}$ be a measurable set and $1_B$ be the indicator function of $B$. Then
  \setlength\arraycolsep{2pt}{
  \begin{eqnarray*}
    &&\til{\mathbb{E}}\int_0^T\|1_B\varphi \til{M}_n(t)-1_B\varphi \til{M}(t)\|_{\mathbb{L}^2}\ud t=\til{\mathbb{E}}\int_0^T\|1_B\varphi[\til{M}_n(t)-\til{M}(t)]\|_{\mathbb{L}^2}\ud t\\
    &\leq&\|\varphi\|_{\mathbb{L}^\infty}\til{\mathbb{E}}\int_0^T\|\til{M}_n(t)-\til{M}(t)\|_{\mathbb{L}^2}\ud t\leq C\|\varphi\|_{\mathbb{L}^\infty}\til{\mathbb{E}}\int_0^T\|\til{M}_n(t)-\til{M}(t)\|_{\mathbb{L}^4}\ud t,
  \end{eqnarray*}}
  for some constant $C>0$. Hence by \eqref{eq:EMn'L4}, we have
  \[\lim_{n\rightarrow\infty}\til{\mathbb{E}}\int_0^T\|1_B\varphi \til{M}_n(t)-1_B\varphi \til{M}(t)\|_{\mathbb{L}^2}\ud t=0.\]
  Together with the fact that $\til{M}_n\times \til{\rho}_n$ converges to $Z_1$ weakly in $L^2(\til{\Omega};L^2(0,T;\mathbb{L}^2))$ we can infer that
  \[0=\lim_{n\rightarrow\infty}\til{\mathbb{E}}\int_0^T1_B(s)\llangle \til{M}_n(s)\times\til{\rho}_n(s),\varphi \til{M}_n(s)\rrangle_{\mathbb{L}^2}\ud s=\til{\mathbb{E}}\int_0^T1_B(s)\llangle Z_1(s),\varphi \til{M}(s)\rrangle_{\mathbb{L}^2}\ud s.\]
  This complete the proof.
\end{proof}

\section{The existence of a weak solution}
In this section, we will prove that the process $(\til{M},\til{B},\til{E})$ from Lemma \ref{prop:tilMn} is a weak solution of the Problem \ref{S-LLG}.

To explain how we will prove the result, let us define
\begin{equation}\label{eq:xinsol}
\begin{aligned}
\xi_n(t)&:=M_n(t)-M_n(0)\\
&\quad-\int_0^t\Bigg\{\lambda_1\pi_n\left[ M_n\times \rho_n\right]-\lmd_2 \pi_n\left[M_n\times(M_n\times \rho_n)\right]+\frac{1}{2}\sum_{j=1}^\infty G^\prime_{jn}\la M_n\ra\left[G_{jn}\la M_n\ra\right]\Bigg\}\ud s
\end{aligned}
\end{equation}
Because $M_n$ satisfies \eqref{eq:SVis 3.29R}, we have
\[\xi_n(t)=\sum_{j=1}^\infty\int_0^tG_{jn}\la M_n\ra\ud W_j(s).\]
  Then the proof will consists in three steps:
\begin{trivlist}
\item[Step 1]:
 We are going to find some $\til{\xi}$ as a limit of $\til{\xi}_n$ which are similar to $\xi_n$ defined  in \eqref{eq:xinsol} as $n\rightarrow\infty$.
\item[Step 2]:
  We will show the second "$=$" in \eqref{eq:xinsol} holds for the limit process $\til{\xi}$, but with $\til{M}$ instead of $M_n$ and $\til{W}_j$ instead of $W_j$, etc.
  \item[Step 3]:
  We will get rid of the auxiliary function $\psi$ and finish the proof.
 \end{trivlist}
\subsection{Step 1}
Let us denote
\setlength\arraycolsep{2pt}{
  \begin{eqnarray*}
\til{\xi}_n(t)&:=&\til{M}_n(t)-\til{M}_n(0)-\int_0^t\Bigg\{\pi_n\left[\lmd_1 \til{M}_n\times \til{\rho}_n\right]-\lmd_2 \pi_n\left[\til{M}_n\times(\til{M}_n\times \til{\rho}_n)\right]\nonumber\\
  &&+\frac{1}{2}\sum_{j=1}^\infty G_{jn}^\prime\la\til M_n\ra\left[G_{jn}\la\til M_n\ra\right]\Bigg\}\ud s\nonumber.
  \end{eqnarray*}}

\begin{lem}\label{lem:xi't}
  For each $t\in [0,T]$ the sequence of random variables $\til{\xi}_n(t)$ converges weakly in $L^2(\til{\Omega};\mathbb X^{-b})$ to the limit
  \setlength\arraycolsep{2pt}{
  \begin{eqnarray*}
\til{\xi}(t)&:=&\til{M}(t)-M_0-\int_0^t\Bigg\{\left[\lmd_1 \til{M}\times \til{\rho}\right]-\lmd_2 \left[\til{M}\times(\til{M}\times \til{\rho})\right]\nonumber\\
  &&+\frac{1}{2}\sum_{j=1}^\infty\gj\Bigg\}\ud s\nonumber.
  \end{eqnarray*}}
  as $n\rightarrow\infty$.
\end{lem}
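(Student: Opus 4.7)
The strategy is to show weak convergence in $L^2(\til\Omega;\mathbb X^{-b})$ by testing $\til\xi_n(t)$ against an arbitrary $u\in L^2(\til\Omega;X^b)$ and passing to the limit term-by-term in the defining expression. Each of the four contributions in $\til\xi_n(t)$ will be handled separately, combining the a.s. convergence $\til M_n\to\til M$ in $C([0,T];\mathbb X^{-b})$ from Theorem \ref{prop:tilMn}, the uniform moment estimates of Proposition \ref{prop_tilde}, and the weak-limit identifications already contained in Remarks \ref{rem:MxrhoinL2} and \ref{rem:Z2=MxMxrho}.

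For the pointwise contribution $\til M_n(t)-\til M_n(0)$, Theorem \ref{prop:tilMn}(b)(ii) yields $\til M_n(t)\to\til M(t)$ in $\mathbb X^{-b}$ $\til{\mathbb P}$-a.s., while \eqref{eq:SVis 3.34R'} together with the continuous embedding $\V\hookrightarrow\mathbb X^{-b}$ provides the uniform integrability of the scalar pairings $\llangle\til M_n(t),u\rrangle$ needed to upgrade this to $\til M_n(t)\rightharpoonup \til M(t)$ weakly in $L^2(\til\Omega;\mathbb X^{-b})$. The initial datum is easier: $\til M_n(0)=\pi_n M_0\to M_0$ strongly in $\mathbb H\hookrightarrow\mathbb X^{-b}$. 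For the cross-product drift, I rewrite the pairing using the self-adjointness of $\pi_n$ as
\[
\til{\mathbb E}\int_0^T 1_{[0,t]}(s)\,\llangle \til M_n(s)\times\til\rho_n(s),\pi_n u\rrangle_{\mathbb H}\,\ud s,
\]
and combine the strong convergence $\pi_n u\to u$ in $\mathbb H$ (valid since $X^b\subset\mathbb H$) with the weak convergence $\til M_n\times\til\rho_n\rightharpoonup \til M\times\til\rho$ in $L^2(\til\Omega;L^2(0,T;\mathbb H))$ from Remark \ref{rem:MxrhoinL2}; the weak-times-strong pairing then passes to the limit. The double cross-product term is handled in the same spirit by testing against $1_{[0,t]}(s)\,u\in L^2(\til\Omega;L^2(0,T;X^b))$ and invoking Remark \ref{rem:Z2=MxMxrho}.

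The genuine obstacle is the It\^o correction $\frac12\sum_j\int_0^t G'_{jn}(\til M_n)[G_{jn}(\til M_n)]\,\ud s$. Expanding each summand as in \eqref{eq:Gjn'} produces five trilinear or quadrilinear terms in $\til M_n$ and $\psi(\til M_n)$, each carrying coefficients built from $h_j$ and an outer $\pi_n$. For fixed $j$, Corollary \ref{cor:Mn'tilMae} provides $\til{\mathbb P}\otimes\ud t\otimes\ud x$-a.e. convergence of $\til M_n$ to $\til M$, so continuity of $\psi$ and of the cross-product operations gives pointwise a.e. convergence of each summand to the corresponding factor of $(G^\psi_j)'(\til M)[G^\psi_j(\til M)]$. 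The boundedness $|\psi|\le 1$, the uniform bound \eqref{eq:SVis 3.34R'} (via $\V\hookrightarrow\mathbb L^4$), and the $\mathbb L^\infty\cap\mathbb W^{1,2}$ norms of $h_j$ yield a majorant sufficient to apply dominated convergence in $L^1(\til\Omega\times[0,T];\mathbb H)$; the outer $\pi_n$ is then removed by writing $\pi_n F=F-(I-\pi_n)F$ and using $\|(I-\pi_n)v\|_{\mathbb H}\to 0$ for every $v\in\mathbb H$. Summability in $j$ is supplied by the standing assumption \eqref{ass_h}, which provides a $j$-summable majorant and so permits exchange of the infinite sum with the limit. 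Pairing with $u\in L^2(\til\Omega;X^b)$ and assembling the four contributions gives $\til\xi_n(t)\rightharpoonup\til\xi(t)$ in $L^2(\til\Omega;\mathbb X^{-b})$, which is the claim.
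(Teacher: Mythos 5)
Your proposal is correct and follows essentially the same route as the paper's proof: test against $U\in L^2(\til\Omega;X^b)$, pass to the limit term by term using a.s. convergence in $C([0,T];\mathbb X^{-b})$ plus uniform integrability for $\til M_n(t)$, the established weak limits $Z_1$ and $Z_2=Z_3$ for the two drift terms, and a domination argument in $j$ for the It\^o correction. If anything you are slightly more explicit than the paper about removing the projections $\pi_n$ and about the weak-times-strong pairing for the $\til M_n\times\til\rho_n$ term.
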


\begin{proof}
The dual space of $L^2(\til{\Omega};\mathbb X^{-b})$ is $L^2(\til{\Omega};X^b)$. Let $t\in(0,T]$ and $U\in L^2(\til{\Omega};X^b)$. We have
\setlength\arraycolsep{2pt}{
  \begin{eqnarray*}
    &&\,\!_{L^2(\til{\Omega};\mathbb X^{-b})}\llangle \til{\xi}_n(t),U\rrangle_{L^2(\til{\Omega};X^b)}=\til{\mathbb{E}}\left[_{\mathbb X^{-b}}\llangle \til{\xi}_n(t),U\rrangle_{X^b}\right]\\
    &=&\til{\mathbb{E}}\Bigg\{\llangle \til{M}_n(t),U\rrangle_\mathbb{H}-\llangle M_n(0),U\rrangle_\mathbb{H}-\lmd_1\int_0^t\llangle \til{M}_n(s)\times\til{\rho}_n(s),\pi_n U\rrangle_{\mathbb{H}}\ud s\\
    &&+\lmd_2\int_0^t \llangle (\til{M}_n(s)\times(\til{M}_n(s)\times\til{\rho}_n(s))),\pi_n U\rrangle_\mathbb{H}\ud s
    -\frac{1}{2}\sum_{j=1}^\infty\int_0^t\llangle\gjn,\pi_nU\rrangle\ud s\Bigg\}
  \end{eqnarray*}}
  Next we are going to consider the right hand side of above equality term by term.

  By the Lemma \ref{prop:tilMn}, $\til{M}_n\longrightarrow \til{M}$ in $C([0,T];\mathbb X^{-b})$ $\til{\mathbb{P}}$-a.s., so
   \[\sup_{t\in[0,T]}\|M_n(t)-M(t)\|_{\mathbb X^{-b}}\longrightarrow0,\quad \til{\mathbb{P}}-a.s.\]
 and $\,\!_{\mathbb X^{-b}}\llangle \cdot,U\rrangle_{X^b}$ is a continuous function on $\mathbb X^{-b}$, hence
   \[\lim_{n\rightarrow\infty}\,\!_{\mathbb X^{-b}}\llangle \til{M}_n(t),U\rrangle_{X^b}=\,\!_{\mathbb X^{-b}}\llangle \til{M}(t),U\rrangle_{X^b},\quad \til{\mathbb{P}}-a.s.\]
   By \eqref{eq:S 3.10R'}, $\sup_{t\in[0,T]}|\til{M}_n(t)|_\mathbb{H}\leq|M_0|_{\mathbb{H}}$, so that we can find a constant $C$ such that
   \setlength\arraycolsep{2pt}{
  \begin{eqnarray*}
    &&\sup_n\til{\mathbb{E}}\left[\left|{}_{\mathbb X^{-b}}\llangle \til{M}_n(t),U\rrangle_{X^b}\right|^2\right]\leq\sup_n\til{\mathbb{E}}\|U\|^2_{X^b}\til{\mathbb{E}}\|\til{M}_n(t)\|^2_{\mathbb X^{-b}}\\
    &\leq&C\til{\mathbb{E}}\|U\|_{X^b}^2\til{\mathbb{E}}\sup_n\|\til{M}_n(t)\|^2_{\mathbb{H}}\leq C\til{\mathbb{E}}\|U\|_{X^b}^2\til{\mathbb{E}}\|M_0\|_{\mathbb{H}}^2<\infty.
  \end{eqnarray*}}
 Therefore, the sequence ${}_{\mathbb X^{-b}}\llangle \til{M}_n(t),U\rrangle_{X^{b}}$ is uniformly integrable.
   So the almost surely convergence and uniform integrability implies that
    $$\lim_{n\rightarrow \infty}\til{\mathbb{E}}[\,\!_{\mathbb X^{-b}}\llangle \til{M}_n(t),U\rrangle_{X^b}]=\til{\mathbb{E}}[\,\!_{\mathbb X^{-b}}\llangle \til{M}(t),U\rrangle_{X^b}].$$
   By \eqref{eq:Z1w},
   \[\lim_{n\rightarrow\infty}\til{\mathbb{E}}\int_0^t\llangle \til{M}_n(s)\times\til{\rho}_n(s),\pi_nU\rrangle_\mathbb{H}\ud s=\til{\mathbb{E}}\int_0^t\llangle Z_1(s), U\rrangle_\mathbb{H}.\]
   By \eqref{eq:4.16M}
   \[\lim_{n\rightarrow\infty}\til{\mathbb{E}}\int_0^t\,\!_{\mathbb X^{-b}}\llangle \pi_n(\til{M}_n(s)\times(\til{M}_n(s)\times\til{\rho}_n(s))),U\rrangle_{X^b}\ud s=\til{\mathbb{E}}\int_0^t\,\!\llangle Z_2(s),U\rrangle_{X^b}\ud s.\]
   By the H$\ddot{\textrm{o}}$lder's inequality,
   \[\|\til{M}_n(t)-\til{M}(t)\|_{\mathbb{L}^2}^2\leq\|\til{M}_n(t)-\til{M}(t)\|_{\mathbb{L}^4}^2.\]
   We will show that for any $U\in L^2\left(\til{\Omega};L^2(0,T;\mathbb{H}\right)$
  \begin{equation}\label{eq_lim}
   \lim_{n\to\infty}\E\sum_{j=1}^\infty\int_0^t\llangle\gjn,\pi_nU\rrangle_{\mathbb H}\ud s=\E\sum_{j=1}^\infty\int_0^t\llangle\gj,U\rrangle_{\mathbb H}\ud s\,.
   \end{equation}
  Using \eqref{eq:S 3.10R'} we can prove
  \[\left|\llangle\gjn,\pi_nU\rrangle_{\mathbb H}\right|^2\le C\left\|h_j\right\|^4_{\mathbb L^\infty}\|U\|_{\mathbb H}^2\,,\]
  it remains to show that
  \[\lim_{n\to\infty}\llangle\gjn,\pi_nU\rrangle_{\mathbb H}=\llangle\gj,U\rrangle_{\mathbb H}\,.\]
  This follows imediately from the convergence of $\til M_n(t)$ to $\til M(t)$ for every $t\in[0,T]$ $\bb P$-a.s.
  
  Hence
  \setlength\arraycolsep{2pt}{
  \begin{eqnarray*}
    &&\lim_{n\rightarrow\infty}\,\!_{L^2(\til{\Omega};\mathbb X^{-b})}\llangle \til{\xi}_n(t),U\rrangle_{L^2(\til{\Omega};X^b)}\\
    &=&\til{\mathbb{E}}\Big[\,\!_{\mathbb X^{-b}}\llangle \til{M}(t), U\rrangle_{X^b}-\,\!_{X^{-\beta}}\llangle M_0,U\rrangle_{X^b}-\lmd_1\int_0^t\llangle Z_1(s),U\rrangle_\mathbb{H}\ud s\\
    &&+\lmd_2\int_0^t\,\!_{\mathbb X^{-b}}\llangle Z_2(s),U\rrangle_{X^b}\ud s-\frac{1}{2}\sum_{j=1}^\infty  \llangle G_j^\prime\la\til M\ra\left[G_j\la\til M\ra\right],U\rrangle\ud s
  \end{eqnarray*}}
  Since by Lemma \ref{lem:4.5M} and Lemma \ref{lem:4.8M}, we have $Z_1=\til{M}\times \til{\rho}$ and $Z_2=\til{M}\times(\til{M}\times\til{\rho})$. Therefore for any $U\in L^2(\til{\Omega};X^b)$,
  \[\lim_{n\rightarrow\infty}\,\!_{L^2(\til{\Omega};\mathbb X^{-b})}\llangle \til{\xi}_n(t),U\rrangle_{L^2(\til{\Omega};X^b)}=\,\!_{L^2(\til{\Omega};\mathbb X^{-b})}\llangle \til{\xi}(t),U\rrangle_{L^2(\til{\Omega};X^b)}.\]
  This concludes the proof.
\end{proof}

\subsection{Step 2}
In this step we are going to show that
\begin{equation}
	\til \xi(t)=\sum_{j=1}^\infty\int_0^tG_j^\psi(\til M(s))\ud \til W_j(s).
\end{equation}
We will finish this step by the approximation method. To do this, we need the next Lemma for preparation. Let us define, for each $m\in\mathbb{N}$, a partition $\left\{s_i^m:=\frac{iT}{m},\;i=0,\ldots,m\right\}$ of $[0,T]$.  It will be convenient to define on $[0,T]$ processes
  \begin{equation}\label{sigma1}
  \sigma_{jn}(s)=G_{jn}\la\til M_n(s)\ra\,,
  \end{equation}
  and
  \begin{equation}\label{sigma2}
  \sigma_{jn}^m(s)=\sum_{i=0}^{m-1}G_{jn}\la\til M_n\la s_i^m\ra\ra 1_{(s_i^m,s_{i+1}^m]}(s)=\sum_{i=0}^{m-1}\sigma_{jn}\la s_i^m\ra 1_{(s_i^m,s_{i+1}^m]}(s)\,.
  \end{equation}

\begin{lem}\label{lem:4 ineqM}
   For any $\eps>0$, We can choose $m\in\mathbb{N}$ sufficiently large such that:
      \begin{trivlist}
        \item[(i)]
       \[
  \limsup_{n\rightarrow\infty}\til{\mathbb{E}}\Bigg\|\sum_{j=1}^\infty\int_0^t\left[\sigma_{jn}(s)-\sigma^m_{jn}(s)\right]\ud \til{W}_{jn}(s)\Bigg\|^2_{\mathbb X^{-b}}<\frac{\eps^2}{4};
  \]
        \item[(ii)]
    \[
         \lim_{n\rightarrow\infty}\til{\mathbb{E}}\left\|\sum_{j=1}^\infty\int_0^t\sigma_{jn}^m(s)\ud \til W_{jn}(s)-\sum_{j=1}^\infty \int_0^t\sigma_{jn}^m(s)\ud \til W_{j}(s)\right\|^2_{\mathbb X^{-b}}=0\,,\]
        \item[(iii)]
   \[ \limsup_{n\rightarrow\infty}\til{\mathbb{E}}\left\|\sum_{j=1}^\infty\la \sigma^m_{jn}(s)-
 \sigma_{jn}(s)\ra\ud\til W_j(s)
        \right\|^2_{\mathbb X^{-b}}<\frac{\eps^2}{4}\,,
        \]

        \item[(iv)]
        \[\lim_{n\rightarrow\infty}\til{\mathbb E}\left\|
      \int_0^t\sum_{j=1}^\infty\la \sigma_{jn}(s)-G^\psi_j\la\til M(s)\ra\ra\ud\til W_j(s)
       \right\|^2_{\mathbb X^{-b}}=0\,. \]
      \end{trivlist}
    \end{lem}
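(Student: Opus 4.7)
The four assertions form the classical chain allowing one to identify the It\^o integral in the limit: discretise the integrand in time (part (i)), swap the driving noise on the discretisation (part (ii)), undo the discretisation against the new noise (part (iii)), and finally compare the original integrand with the target $G_j^\psi(\til M)$ (part (iv)). For each piece I will apply the It\^o isometry in the Hilbert space $\mathbb X^{-b}$, absorbing the sum over $j$ into the Hilbert--Schmidt norm, so that the problem reduces to an $L^2(\til\Omega\times[0,t])$ estimate on the integrand. The continuous embedding $\mathbb H\hookrightarrow\mathbb X^{-b}$, the summability inherent in $c_h^2<\infty$ (cf.~\eqref{ass_h}), and the a priori bounds of Proposition \ref{prop_tilde} are then enough to close the estimates.

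\textbf{Parts (i) and (iii).} These are symmetric. For (i), It\^o isometry gives
\begin{equation*}
\til{\mathbb E}\Bigl\|\sum_{j=1}^\infty\int_0^t(\sigma_{jn}-\sigma_{jn}^m)\,d\til W_{jn}\Bigr\|_{\mathbb X^{-b}}^2 \le \til{\mathbb E}\int_0^t\sum_{j=1}^\infty\|\sigma_{jn}(s)-\sigma_{jn}^m(s)\|_{\mathbb X^{-b}}^2\,ds.
\end{equation*}
The cut-off $\psi$ makes $G_{jn}$ Lipschitz as a map $\mathbb H\to\mathbb H$ with a constant controlled by $\|h_j\|_{\mathbb L^\infty}$, so after using $\mathbb H\hookrightarrow\mathbb X^{-b}$ the right-hand side is dominated by $Cc_h^2\sum_i\int_{s_i^m}^{s_{i+1}^m}\til{\mathbb E}\|\til M_n(s)-\til M_n(s_i^m)\|_{\mathbb X^{-b}}^2\,ds$. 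The $n$-uniform bound on $\til M_n$ in $W^{a,p}(0,T;\mathbb X^{-b})$ transferred via equality of laws from \eqref{eq:SLLG 4.2R}, combined with the Sobolev embedding $W^{a,p}\hookrightarrow C^{a-1/p}$ (valid whenever $ap>1$), yields a bound of order $O(m^{-2(a-1/p)})$ uniformly in $n$; taking $m$ sufficiently large delivers the required $\eps^2/4$ estimate. Part (iii) is verbatim identical once $\til W_{jn}$ is replaced by the $\til{\mathbb F}$--Brownian motion $\til W_j$ (cf.~Remark \ref{rem:tilWh}).

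\textbf{Part (ii).} Since $\sigma_{jn}^m$ is piecewise constant in time, both stochastic integrals collapse to finite Riemann sums, so their difference equals
\begin{equation*}
\sum_{j=1}^\infty\sum_{i=0}^{m-1}\sigma_{jn}(s_i^m)\bigl[(\til W_{jn}-\til W_j)(s_{i+1}^m\wedge t)-(\til W_{jn}-\til W_j)(s_i^m\wedge t)\bigr].
\end{equation*}
Theorem \ref{prop:tilMn}(b)(v) gives $\til W_{hn}\to\til W_h$ in $C([0,T];\mathbb H)$ $\til{\mathbb P}$--a.s., and reading off the coefficients along the ONB $\{h_j\}$ as in Remark \ref{rem:tilWh} yields $\til W_{jn}\to\til W_j$ uniformly on $[0,T]$ a.s.~for every $j$. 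Together with the a.s.~bound $\|\sigma_{jn}(s_i^m)\|_{\mathbb H}\le C\|h_j\|_{\mathbb L^\infty}\|M_0\|_{\mathbb H}$ coming from \eqref{eq:S 3.10R'} and the summability of $\|h_j\|_{\mathbb L^\infty}^2$, dominated convergence yields the claimed $L^2(\til\Omega;\mathbb X^{-b})$ convergence to $0$ for every fixed $m$.

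\textbf{Part (iv) and the main obstacle.} It\^o isometry against $\til W_j$ reduces (iv) to showing that $\sum_j\|G_{jn}(\til M_n)-G_j^\psi(\til M)\|_{\mathbb X^{-b}}^2\to 0$ in $L^1(\til\Omega\times[0,T])$. I would split $G_{jn}(\til M_n)-G_j^\psi(\til M) = [G_{jn}(\til M_n)-G_{jn}(\til M)] + [G_{jn}(\til M)-G_j^\psi(\til M)]$: the first bracket, by the Lipschitz bound $\sum_j\|G_{jn}(u)-G_{jn}(v)\|_{\mathbb H}^2\le Cc_h^2\|u-v\|_{\mathbb H}^2$ and the embedding $\mathbb H\hookrightarrow\mathbb X^{-b}$, reduces to \eqref{eq:EMn'L4}; the second bracket equals $(\pi_n-I)G_j^\psi(\til M)$ via the identity $G_{jn}=\pi_n G_j^\psi$, which vanishes as $n\to\infty$ thanks to the strong convergence $\pi_n\to I$ in $\mathbb H$ together with the uniform summable control $\sum_j\|G_j^\psi(\til M)\|_{\mathbb H}^2\le Cc_h^2(1+\|\til M\|_{\mathbb H}^2)$, via dominated convergence. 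I expect parts (i)/(iii) to be the genuine hard work: one must locate a norm simultaneously weak enough for the piecewise--constant error to be small uniformly in $n$ and strong enough for composition with $G_{jn}$ to make sense; the $W^{a,p}(0,T;\mathbb X^{-b})$ bound \eqref{eq:SLLG 4.2R} is the natural tool, provided one can select $a,p$ with $ap>1$ inside the admissible range $a\in[0,1/2)$, $p\ge 2$, which forces $p$ strictly greater than $2$ and $a$ sufficiently close to $1/2$.
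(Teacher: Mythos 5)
Your proposal is correct in substance and reproduces the paper's proof of parts (ii) and (iv) essentially step by step: the same collapse of the piecewise-constant integrals to Riemann sums compared through the increments of $\til W_{jn}-\til W_j$ for (ii), and the same splitting $G_{jn}(\til M_n)-G^\psi_j(\til M)=[G_{jn}(\til M_n)-G_{jn}(\til M)]+[(\pi_n-I)G^\psi_j(\til M)]$ for (iv). For (i) and (iii) you take a genuinely different route. The paper inserts the limit process and writes the discretisation error as $I_1(n)+I_2(m)+I_3(n,m)$, killing $I_1$ and $I_3$ by the a.s.\ convergence $\til M_n\to\til M$ in $C([0,T];\mathbb X^{-b})$ and making $I_2$ small using only the uniform continuity of the single limiting path $\til M$. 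You instead extract a modulus of continuity that is uniform in $n$ from the bound \eqref{eq:SLLG 4.2R} via the embedding $W^{a,p}(0,T;\mathbb X^{-b})\hookrightarrow C^{a-1/p}([0,T];\mathbb X^{-b})$ for $ap>1$. Your route is quantitative (it yields a rate $O(m^{-2(a-1/p)})$ and in fact a supremum over $n$ rather than a $\limsup$), but it needs two ingredients the paper does not invoke here: the transfer of \eqref{eq:SLLG 4.2R} to the new probability space by equality of laws (legitimate, since the $W^{a,p}$-norm is Borel on $C([0,T];\mathbb X^{-b})$, and analogous to what Proposition \ref{prop_tilde} does for the other estimates), and the vector-valued fractional Sobolev embedding with $p>2$, $a$ close to $\tfrac12$, which you correctly flag.

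One step deserves a warning. You pass from $\sum_j\|\sigma_{jn}(s)-\sigma^m_{jn}(s)\|^2_{\mathbb X^{-b}}$ to $Cc_h^2\|\til M_n(s)-\til M_n(s_i^m)\|^2_{\mathbb X^{-b}}$ by citing the Lipschitz property of $G_{jn}$ on $\mathbb H$ together with $\mathbb H\hookrightarrow\mathbb X^{-b}$. That argument only controls the left-hand side by $\|\til M_n(s)-\til M_n(s_i^m)\|^2_{\mathbb H}$, a quantity for which no $n$-uniform time modulus is available; what your (and the paper's) subsequent estimate actually requires is that $G_{jn}$ be Lipschitz from $\mathbb X^{-b}$ to $\mathbb X^{-b}$, which is far from obvious, since multiplication by $h_j$ and the nonlinear cutoff do not act boundedly on $H^{-2b}$. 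The paper asserts exactly this bound without proof, so you are no worse off than the printed argument, but your stated justification does not deliver it. If one insists on the genuine $\mathbb H\to\mathbb H$ Lipschitz property, the gap can be closed by interpolating $\|\cdot\|_{\mathbb H}\le C\|\cdot\|_{\V}^{\theta}\|\cdot\|_{\mathbb X^{-b}}^{1-\theta}$ against the uniform $L^\infty(0,T;\V)$ bound \eqref{eq:SVis 3.34R'}, at the cost of a smaller H\"older exponent; this repair is available to both your argument and the paper's.
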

\begin{proof}
\begin{trivlist}
  \item[(i)] By It\^o isometry, our assumptions on $\psi$ and $h_j$, there exists some constants $C>0$, such that
  \[\begin{aligned}
  &\til{\mathbb{E}}\Bigg\|\sum_{j=1}^\infty\int_0^t\left[\sigma_{jn}(s)-\sigma^m_{jn}(s)\right]\ud \til{W}_{jn}(s)\Bigg\|^2_{\mathbb X^{-b}}\\
  =&\sum_{j=1}^\infty\til{\mathbb E}\int_0^t\left\|\sigma_{jn}(s)-\sigma_{jn}^m(s)\right\|^2_{\mathbb X^{-b}}\ud s
  = \sum_{j=1}^\infty
  \sum_{i=1}^{m-1}\til{\mathbb E}\int_{s_i^m}^{s_{i+1}^m}\left\|G_{jn}\la\til M_n(s)\ra-G_{jn}\la\til M_n\la s_i^m\ra\ra\right\|^2_{\mathbb X^{-b}}\ud s \\
  \le& C \la\sum_{j=1}^\infty\left\|h_j\right\|_{\mathbb L^\infty}^2\ra \la\sum_{i=1}^{m-1}\til{\mathbb E}\int_{s_i^m}^{s_{i+1}^m}\left\|\til M_n(s)-\til M_n\la s_i^m\ra\right\|^2_{\mathbb X^{-b}}\ud s\ra\\
  \le& C\,\sum_{i=1}^{m-1}\til{\mathbb E}\int_{s_i^m}^{s_{i+1}^m}\left\|\til M_n(s)-\til M\la s\ra\right\|^2_{\mathbb X^{-b}}\ud s+C\,\sum_{i=1}^{m-1}\til{\mathbb E}\int_{s_i^m}^{s_{i+1}^m}\left\|\til M(s)-\til M\la s_i^m\ra\right\|^2_{\mathbb X^{-b}}\ud s\\
  &\,\,\,\,\,\,+C\,\sum_{i=1}^{m-1}\til{\mathbb E}\int_{s_i^m}^{s_{i+1}^m}\left\|\til M\la s_i^m\ra-\til M_n\la s_i^m\ra\right\|^2_{\mathbb X^{-b}}\ud s\\
  =&C\la I_1(n)+I_2(m)+I_3(n,m)\ra\,.
 \end{aligned} \]
  By the estimate \eqref{eq:EMn'L4}, $\lim_{n\to\infty}I_1(n)=0$.
   
   Since $\til M\in C([0,T];\bb X^{-b})$, for every $\eps>0$ we can find $m_0$ such that
  \[I_2(m)<\frac{\eps^2}{4 C},\quad\mathrm{for}\quad m>m_0\,.\]
For any $m\ge 1$
\[I_3(n,m)\le T \E\sup_{s\in[0,T]}\left\|\til M(s)-\til M_n(s)\right\|^2_{\mathbb X^{-b}}\,.\]
By Lemma \ref{prop:tilMn} (ii), 
\[\lim_{n\rightarrow\infty}\sup_{s\in[0,T]}\left\|\til M(s)-\til M_n(s)\right\|^2_{\mathbb X^{-b}}=0,\qquad \til{\bb P}-a.s.,\]
so by the dominated convergence theorem, 
 $\lim_{n\to\infty}I_3(m,n)=0$ for every $m$. Finally, combining these facts together we obtain (i). \\
 (ii) By the estimate \eqref{eq:S 3.10R'}, remark \ref{rem:tilWh} and Jensen inequality we have
 \[\begin{aligned}
 &\til{\mathbb{E}}\left\|\sum_{j=1}^\infty\int_0^t\sigma_{jn}^m(s)\ud \til W_{jn}(s)-\sum_{j=1}^\infty \int_0^t\sigma_{jn}^m(s)\ud \til W_{j}(s)\right\|^2_{\mathbb X^{-b}}\\
 \le&\E\left[\sum_{j=1}^\infty\sum_{i=1}^{m-1}\left\|G_{jn}\left(\til M_n\left( s_i^m\right)\right)\right\|_{\mathbb X^{-b}}\left| \til W_{jn}\left( t\wedge s_{i+1}^m\right)-\til W_{jn}\left( t\wedge s_{i}^m\right)-\left(  \til W_{j}\left( t\wedge s_{i+1}^m\right)-\til W_{j}\left( t\wedge s_{i}^m\right)\right)\right|\right]^2\\
 \le& C\E\left[\sum_{j=1}^\infty \left\|h_j\right\|_{\mathbb L^\infty}\sum_{i=1}^{m-1}\left| \til W_{jn}\left( t\wedge s_{i+1}^m\right)-\til W_{jn}\left( t\wedge s_{i}^m\right)-\left(  \til W_{j}\left( t\wedge s_{i+1}^m\right)-\til W_{j}\left( t\wedge s_{i}^m\right)\right)\right|\right]^2\\
\le & C\sum_{j=1}^\infty\left\|h_j\right\|_{\mathbb L^\infty}^2 \E\la \sum_{i=1}^{m-1} \left| \til W_{jn}\left( t\wedge s_{i+1}^m\right)-\til W_{jn}\left( t\wedge s_{i}^m\right)-\left(  \til W_{j}\left( t\wedge s_{i+1}^m\right)-\til W_{j}\left( t\wedge s_{i}^m\right)\right)\right|\ra^2\\
\le  &C_m\sum_{j=1}^\infty\left\|h_j\right\|_{\mathbb L^\infty}^2\E\sum_{i=1}^{m-1}\left| \til W_{jn}\left( t\wedge s_{i+1}^m\right)-\til W_{jn}\left( t\wedge s_{i}^m\right)-\left(  \til W_{j}\left( t\wedge s_{i+1}^m\right)-\til W_{j}\left( t\wedge s_{i}^m\right)\right)\right|^2
  \end{aligned}\]
  For $m$ fixed we have
\[\sup_{n,j}\E\la \sum_{i=1}^{m-1}\left| \til W_{jn}\left( t\wedge s_{i+1}^m\right)-\til W_{jn}\left( t\wedge s_{i}^m\right)-\left(  \til W_{j}\left( t\wedge s_{i+1}^m\right)-\til W_{j}\left( t\wedge s_{i}^m\right)\right)\right|^2\ra^2<\infty\,.\]
Therefore, we can pass with $n$ to the limit under the sum and expectation above and since $\til W_n$ converges to $\til W$ in $C\la[0,T];\R^{\mathbb N}\ra$ we obtain
\[\lim_{n\to\infty}\til{\mathbb{E}}\left\|\sum_{j=1}^\infty\int_0^t\sigma_{jn}^m(s)\ud \til W_{jn}(s)-\sum_{j=1}^\infty \int_0^t\sigma_{jn}^m(s)\ud \til W_{j}(s)\right\|^2_{\mathbb X^{-b}}=0\,.\]

  \item[(iii)] The proof of (iii) is same as the proof of (i).
  \item[(iv)] By It\^{o} isometry, we have
    \[
    \til{\mathbb E}\left\|
      \int_0^t\sum_{j=1}^\infty\la \sigma_{jn}(s)-G^\psi_j\la\til M(s)\ra\ra\ud\til W_j(s)
       \right\|^2_{\mathbb X^{-b}}=\sum_{j=1}^\infty\E\int_0^t\left\|G_{jn}\la\til M_n(s)\ra-G^\psi_j\la\til M(s)\ra\right\|^2_{\mathbb X^{-b}}\ud s.
       \]
 By our assumption on $h_j$, the estimates \eqref{eq:S 3.10R'} and \eqref{eq:S 4.11}, we have
 \[\sup_n\sum_{j=1}^\infty\E\int_0^t\left\|G_{jn}\la\til M_n(s)\ra-G^\psi_j\la\til M(s)\ra\right\|^4_{\mathbb X^{-b}}\le\sup_n\sum_{j=1}^\infty C\left\|h_j\right\|^4_{\mathbb L^\infty}\E\int_0^t\la \left\|\til M_n(s)\right\|^4_{\mathbb X^{-b}}+\left\|\til M(s)\right\|^4_{\mathbb X^{-b}}\ra\ud s<\infty
 \]
 and
 \[\begin{aligned}
 \left\|G_{jn}\la\til M_n(s)\ra-G^\psi_j\la\til M(s)\ra\right\|^2_{\mathbb X^{-b}}&\le 2\left\|G_{jn}\la\til M_n(s)\ra-G_{jn}\la\til M(s)\ra\right\|^2_{\mathbb X^{-b}}+2\left\|G_{jn}\la\til M(s)\ra-G^\psi_j\la\til M(s)\ra\right\|^2_{\mathbb X^{-b}}\\
 &\le C\left\|h_j\right\|^2_{\mathbb L^\infty}\left\|\til M_n(s)-\til M(s)\right\|^2_{\mathbb X^{-b}}+2\left\|G_{jn}\la\til M(s)\ra-G^\psi_j\la\til M(s)\ra\right\|^2_{\mathbb X^{-b}}.
 \end{aligned}\]
By the Lemma \ref{prop:tilMn}, $\til{M}_n\longrightarrow \til{M}$ in $C([0,T];\mathbb X^{-b})$ $\til{\mathbb{P}}$-a.s., therefore
 \[\lim_{n\to\infty} \left\|G_{jn}\la\til M_n(s)\ra-G^\psi_j\la\til M(s)\ra\right\|^2_{\mathbb X^{-b}}=0\,,\til{\mathbb{P}}-a.s..\]
 and (iv) follows by the uniform integrability.
\end{trivlist}
\end{proof}
After the above preparation, now we can finish the Step 2 by the following Lemma.
\begin{lem}\label{lem:xi'}
  For each $t\in [0,T]$, we have
  \[\til{\xi}(t)=\sum_{j=1}^\infty\int_0^tG_j^\psi\la\til M\ra\ud \til{W}_j(s),\]
  in $L^2(\til{\Omega};\mathbb X^{-b})$.
\end{lem}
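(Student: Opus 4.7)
The plan is to combine the four-term decomposition provided by Lemma \ref{lem:4 ineqM} with the weak convergence from Lemma \ref{lem:xi't}, using uniqueness of limits. Since $M_n$ satisfies the Galerkin equation \eqref{eq:SVis 3.29R} and the law of $(M_n,W_j)_{j\ge 1}$ coincides with that of $(\til M_n,\til W_{jn})_{j\ge 1}$ on the path space of Proposition \ref{prop:LLCn}, the stochastic representation in \eqref{eq:xinsol} carries over almost surely to the new probability space, giving $\til\xi_n(t)=\sum_{j=1}^\infty\int_0^t\sigma_{jn}(s)\,\ud\til W_{jn}(s)$ with $\sigma_{jn}$ as in \eqref{sigma1}. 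Verifying this identity pathwise is the first (routine but necessary) step.

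Second, I would introduce the telescoping decomposition
\[
\til\xi_n(t)-\sum_{j=1}^\infty\int_0^t G_j^\psi(\til M(s))\,\ud\til W_j(s)=A_1(n,m)+A_2(n,m)+A_3(n,m)+A_4(n),
\]
where
\[
A_1(n,m)=\sum_{j=1}^\infty\int_0^t[\sigma_{jn}-\sigma_{jn}^m]\,\ud\til W_{jn},\qquad A_2(n,m)=\sum_{j=1}^\infty\int_0^t\sigma_{jn}^m\,\ud\til W_{jn}-\sum_{j=1}^\infty\int_0^t\sigma_{jn}^m\,\ud\til W_j,
\]
\[
A_3(n,m)=\sum_{j=1}^\infty\int_0^t[\sigma_{jn}^m-\sigma_{jn}]\,\ud\til W_j,\qquad A_4(n)=\sum_{j=1}^\infty\int_0^t[\sigma_{jn}-G_j^\psi(\til M)]\,\ud\til W_j,
\]
and $\sigma_{jn}^m$ is the step-process approximation from \eqref{sigma2}. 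The terms $A_1,A_2,A_3,A_4$ are precisely those controlled by items (i)--(iv) of Lemma \ref{lem:4 ineqM}.

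Third, given $\eps>0$, use items (i) and (iii) of Lemma \ref{lem:4 ineqM} to fix $m$ so large that $\limsup_n\|A_1(n,m)\|_{L^2(\til\Omega;\bb X^{-b})}<\eps/2$ and $\limsup_n\|A_3(n,m)\|_{L^2(\til\Omega;\bb X^{-b})}<\eps/2$; for this fixed $m$, items (ii) and (iv) give $\lim_n\|A_2(n,m)\|_{L^2(\til\Omega;\bb X^{-b})}=\lim_n\|A_4(n)\|_{L^2(\til\Omega;\bb X^{-b})}=0$. Minkowski's inequality in $L^2(\til\Omega;\bb X^{-b})$ then yields
\[
\limsup_{n\to\infty}\left\|\til\xi_n(t)-\sum_{j=1}^\infty\int_0^t G_j^\psi(\til M(s))\,\ud\til W_j(s)\right\|_{L^2(\til\Omega;\bb X^{-b})}\le\eps,
\]
and since $\eps$ was arbitrary, $\til\xi_n(t)$ converges strongly in $L^2(\til\Omega;\bb X^{-b})$ to $\sum_{j=1}^\infty\int_0^t G_j^\psi(\til M)\,\ud\til W_j(s)$.

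Finally, Lemma \ref{lem:xi't} identifies the weak $L^2(\til\Omega;\bb X^{-b})$-limit of $\til\xi_n(t)$ as $\til\xi(t)$. Strong convergence implies weak convergence, and weak limits are unique, so the two limits must agree, establishing the claimed equality. The main obstacle was already absorbed into Lemma \ref{lem:4 ineqM}: once its four estimates are available the present lemma is a bookkeeping argument, with the only subtle point being the passage between the stochastic integrals driven by $\til W_{jn}$ and those driven by $\til W_j$ in items (ii) and the identification (via Proposition \ref{prop:LLCn} and Remark \ref{rem:tilWh}) that allows one to write $\til\xi_n(t)$ as an It\^o integral with respect to $\til W_{jn}$ on the new probability space.
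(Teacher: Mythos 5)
Your proposal is correct and follows essentially the same route as the paper: transfer the It\^o-integral representation of $\til\xi_n(t)$ to the new probability space via equality of laws, apply the four-term decomposition controlled by items (i)--(iv) of Lemma \ref{lem:4 ineqM} to get strong convergence in $L^2(\til\Omega;\mathbb X^{-b})$, and identify the limit with $\til\xi(t)$ through Lemma \ref{lem:xi't} and uniqueness of weak limits. Your write-up in fact makes the final identification step slightly more explicit than the paper does.
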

 \begin{proof}
Firstly, we show that
\[\til{\xi}_n(t)=\sum_{j=1}^\infty\int_0^tG_{jn}\la\til M\ra\ud \til{W}_{jn}(s)\]
$\til{\mathbb{P}}$  almost surely for each $t\in[0,T]$ and $n\in\mathbb{N}$.\\
      Let us fix that $t\in[0,T]$ and $n\in\mathbb{N}$. For each $m\in\mathbb{N}$ we define the partition $\left\{s_i^m:=\frac{iT}{m},i=0,\ldots,m\right\}$ of $[0,T]$. By Lemma \ref{prop:tilMn} and Propositon \ref{prop:LLCn}, $(\til{M}_n,\til B_n,\til E_n \til{W}_{hn})$ and $(M_n, B_n,E_n,W_h)$ have same distribution on $C([0,T];\mathbb{H}_n)\times C([0,T];\mathbb{Y}_n)\times C([0,T];\mathbb{Y}_n)\times C\la [0,T];\HH\ra$, so for each $m$,  the $\mathbb{H}$-valued random variables:
      \[\xi_n(t)-\sum_{j=1}^\infty\sum_{i=0}^{m-1}G_{jn}\la M_n\la s_i^m\ra\ra
      \la W_j\la t\wedge s_{i+1}^m\ra-W_j\la t\wedge s_i^m\ra\ra\]
   and
    \[\til\xi_n(t)-\sum_{j=1}^\infty\sum_{i=0}^{m-1}G_{jn}\la \til M_n\la s_i^m\ra\ra
      \la \til W_j\la t\wedge s_{i+1}^m\ra-\til W_j\la t\wedge s_i^m\ra\ra\]
      have the same distribution. For each $j$, we have
      \[\lim_{m\to\infty}\E\left\|\sum_{i=0}^{m-1}G_{jn}\la M_n\la s_i^m\ra\ra
      \la W_j\la t\wedge s_{i+1}^m\ra-W_j\la (t\wedge s_i^m\ra\ra-\int_0^tG_{jn}\la M_n(s)\ra \ud W_j(s)\right\|^2_{\mathbb H}=0\]
      and
\[\lim_{m\to\infty}\E\left\|\sum_{i=0}^{m-1}  G_{jn}\la \til M_n\la s_i^m\ra\ra
      \la\til W_j\la t\wedge s_{i+1}^m\ra-\til W_j\la (t\wedge s_i^m\ra\ra -
      \int_0^tG_{jn}\la\til M_n(s)\ra\ud\til W_{jn}(s)\right\|^2_{\mathbb H}=0,\]
      so
      \[\int_0^tG_{jn}\la M_n(s)\ra\ud W_{j}(s)\quad\mathrm{and}\quad \int_0^tG_{jn}\la\til M_n(s)\ra\ud\til W_{jn}(s)\]
      have the same distribution. Hence
  \[\sum_{j=1}^\infty\int_0^tG_{jn}\la M_n(s)\ra\ud W_{j}(s)\quad\mathrm{and}\quad \sum_{j=1}^\infty\int_0^tG_{jn}\la\til M_n(s)\ra\ud\til W_{jn}(s)\]
 have the same distribution. Therefore
      \[\xi_n(t)-\sum_{j=1}^\infty\int_0^tG_{jn}\la M_n(s)\ra\ud W_{j}(s)\]
      and
      \[\til{\xi}_n(t)-\sum_{j=1}^\infty\int_0^tG_{jn}\la \til M_n(s)\ra\ud \til{W}_{jn}(s)\]
have the same distribution. But
\[\xi_n(t)=\sum_{j=1}^\infty\int_0^t G_{jn}\la M_n(s)\ra\ud W_{j}(s),\quad \mathbb{P}-a.s.\]
 and thereby
\[\til{\xi}_n(t)=\sum_{j=1}^\infty\int_0^tG_{jn}\la \til M_n(s)\ra\ud \til{W}_{jn}(s),\quad \til{\mathbb{P}}-a.s.\]
 We will show that $\til{\xi}_n(t)$ converges in $L^2(\til{\Omega};\mathbb X^{-b})$ to
  $$\sum_{j=1}^\infty\int_0^tG_j^\psi\la\til M(s)\ra\ud \til{W}_j(s)$$
   as $n\rightarrow\infty$. Indeed, using notation \eqref{sigma1}, and \eqref{sigma2} we obtain for a certain $C>0$
  \[\begin{aligned}
  &C\,\til{\mathbb{E}}\left\|\til{\xi}_n(t)-\sum_{j=1}^\infty\int_0^tG_j^\psi\la\til M(s)\ra\ud \til{W}_j(s)\right\|_{\mathbb X^{-b}}^2\\
 =&\til{\mathbb{E}}\,\Bigg\|\sum_{j=1}^\infty\int_0^t\sigma_{jn}(s)\ud \til{W}_{jn}(s)-\sum_{j=1}^\infty\int_0^tG_j^\psi\la\til M(s)\ra\ud \til{W}_j(s)\Bigg\|_{\mathbb X^{-b}}^2\\
  \leq&\til{\mathbb{E}}\,\left\|\sum_{j=1}^\infty\int_0^t\la \sigma_{jn}(s)-\sigma_{jn}^m(s)\ra \ud \til{W}_{jn}(s)\right\|_{\mathbb X^{-b}}^2\\
&\,\,\,\,\,\,+\til{\mathbb{E}}\,\left\|\sum_{j=1}^\infty\int_0^t\sigma^m_{jn}(s)\ud \til{W}_{jn}(s)-\sum_{j=1}^\infty\int_0^t\sigma^m_{jn}(s)\ud \til{W}_{j}(s)\right\|_{\mathbb X^{-b}}^2\\
&\,\,\,\,\,\,+\til{\mathbb{E}}\,\left\|\sum_{j=1}^\infty\int_0^t\la\sigma^m_{jn}(s)-\sigma_{jn}(s)\ra\ud \til{W}_{j}(s)\right\|_{\mathbb X^{-b}}^2
+\til{\mathbb{E}}\,\left\|\sum_{j=1}^\infty\int_0^t\la\sigma_{jn}(s)-G^\psi_j\la\til M(s)\ra\ra\ud \til{W}_{j}(s)\right\|_{\mathbb X^{-b}}^2,\\
 \end{aligned}\]
 and invoking Lemma \ref{lem:4 ineqM}, we conclude the proof.
\end{proof}
\begin{cor}
  \setlength\arraycolsep{2pt}{
  \begin{eqnarray}\label{eq:Meqwithpsi}
\til{M}(t)&=&M_0+\int_0^t\Bigg\{\left[\lmd_1 \til{M}\times \til{\rho}\right]-\lmd_2 \left[\til{M}\times(\til{M}\times \til{\rho})\right]\nonumber+\frac{1}{2}\sum_{j=1}^\infty\la G^\psi_j\la\til M\ra\ra^\prime\left[G^\psi_j\la\til M\ra\right]\Bigg\}\ud s
\nonumber\\
  &&+\sum_{j=1}^\infty\int_0^tG^\psi_j\la\til M\ra\ud \til{W}_j(s),
  \end{eqnarray}}
  in $L^2(\til{\Omega};\mathbb X^{-b})$.
\end{cor}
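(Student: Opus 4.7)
The proof is essentially a one-line consequence of combining the two preceding lemmas, together with a small bookkeeping argument to make sense of the equality in $L^2(\til\Omega;\mathbb X^{-b})$. The plan is as follows.

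First, I would recall the definition introduced at the start of Step 1:
\[
\til\xi(t) = \til M(t) - M_0 - \int_0^t\!\Big\{\lmd_1 \til M\times\til\rho - \lmd_2\,\til M\times(\til M\times\til\rho) + \tfrac{1}{2}\sum_{j=1}^\infty (G_j^\psi(\til M))'[G_j^\psi(\til M)]\Big\}\,\ud s.
\]
Lemma \ref{lem:xi't} asserts this $\til\xi(t)$ is the weak limit in $L^2(\til\Omega;\mathbb X^{-b})$ of $\til\xi_n(t)$, while Lemma \ref{lem:xi'} identifies the same $\til\xi(t)$ (in the same space $L^2(\til\Omega;\mathbb X^{-b})$) with $\sum_{j=1}^\infty\int_0^t G_j^\psi(\til M)\,\ud\til W_j(s)$. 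Rearranging the definition of $\til\xi(t)$ and substituting the stochastic integral for $\til\xi(t)$ delivers the corollary immediately.

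The only thing left to verify is that each of the four terms on the right-hand side of the claimed identity is a well-defined element of $L^2(\til\Omega;\mathbb X^{-b})$, so that the equality is meaningful. For the drift: $\til M\times\til\rho = Z_1\in L^2(\til\Omega;L^2(0,T;\mathbb H))$ by Remark \ref{rem:MxrhoinL2}, and $\til M\times(\til M\times\til\rho)=Z_2\in L^2(\til\Omega;L^2(0,T;\mathbb X^{-b}))$ by Remark \ref{rem:Z2=MxMxrho}, so their time integrals lie in $L^2(\til\Omega;\mathbb X^{-b})$ via the continuous embedding $\mathbb H\hookrightarrow\mathbb X^{-b}$ and H\"older's inequality in time. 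The Stratonovich correction $(G_j^\psi(\til M))'[G_j^\psi(\til M)]$ is controlled using $\sum_j\|h_j\|_{\mathbb L^\infty}<\infty$ from \eqref{ass_h} together with the $\mathbb H$-bound \eqref{eq:S 3.10R'}, yielding a summable and integrable series. Finally, the It\^o isometry, combined with the same summability of $\{h_j\}$ and the $L^2$-control on $\til M$, shows that the stochastic integral is square-integrable with values in $\mathbb H$, hence in $\mathbb X^{-b}$.

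I do not foresee a substantive obstacle here, since the analytic work, namely the identification of the weak limits $Z_1, Z_2$ (Theorem \ref{lem:4.5M}, Lemma \ref{lem:4.8M}), the tightness arguments, and the passage to the limit in the stochastic integral via the partition-based approximation in Lemma \ref{lem:4 ineqM}, has already been performed. The corollary is simply the equation obtained by equating the two expressions for $\til\xi(t)$ provided by Lemmas \ref{lem:xi't} and \ref{lem:xi'}, and rearranging to isolate $\til M(t)$ on the left-hand side.
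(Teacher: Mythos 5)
Your proposal matches the paper's own argument: the corollary is obtained by equating the expression for $\til{\xi}(t)$ from Lemma \ref{lem:xi't} with the stochastic integral identified in Lemma \ref{lem:xi'} and rearranging. The additional checks you sketch on the well-definedness of each term are consistent with the estimates already established and do not change the substance of the (correct) one-line deduction.
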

\begin{proof}
The corollary follows immediately from Lemma \ref{lem:xi't} and Lemma \ref{lem:xi'}.
\end{proof}

\subsection{Step 3}
In order to get rid of the auxiliary function $\psi$ from the equation \eqref{eq:Meqwithpsi} and finish the proof of the existence of the weak solution,
now we  need to prove  the constraint condition of $\til M$, i.e. condition (iii) of the main Theorem \ref{thm:mainthm}.
\begin{lem}\label{lem:M=1}
  Let $\til{M}$ be a process defined in Lemma \ref{prop:tilMn}. Then for each $t\in [0,T]$, we have $\til{\mathbb{P}}$-almost surely
  \begin{equation}\label{eq:M=1}
    |\til{M}(t,x)|=1,\quad\textrm{ for a.e. }x\in \cl D\,.
  \end{equation}
\end{lem}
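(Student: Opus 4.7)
The plan is to combine the pointwise perpendicularity of the cross products in the LLG equation with the $\mathbb H$-norm conservation already established at the Galerkin level \eqref{eq:S 3.10R}. Formally, in Stratonovich form every summand in the drift and diffusion of the equation for $\tilde M$ is perpendicular to $\tilde M$ at each point $x\in\mathcal D$: both $\tilde M\times\tilde\rho$ and $\tilde M\times(\tilde M\times\tilde\rho)$ annihilate $\tilde M$ pointwise, and so does $G_j^\psi(\tilde M)=\lambda_1\tilde M\times h_j+\lambda_2\psi(\tilde M)\tilde M\times(\tilde M\times h_j)$. Hence the Stratonovich chain rule gives, at least formally, $d|\tilde M(t,x)|^2=2\tilde M\cdot d\tilde M=0$, so the claim $|\tilde M(t,x)|^2=|M_0(x)|^2=1$ follows from the initial constraint.

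To execute this rigorously, I would fix an arbitrary test weight $\varphi\in L^\infty(\mathcal D)$ and apply the finite-dimensional It\^o formula in $\mathbb H_n$ to the smooth functional $u\mapsto\int_{\mathcal D}\varphi(x)|u(x)|^2\,dx$ evaluated along the strong solution $\tilde M_n$ of Problem \ref{problem:SLLGn}. Using the adjoint identity $\langle \varphi\tilde M_n,\pi_n v\rangle_{\mathbb H}=\langle \pi_n(\varphi\tilde M_n),v\rangle_{\mathbb H}$, the pointwise annihilations above, and the algebraic identity between $(G_j^\psi)'(\tilde M_n)[G_j^\psi(\tilde M_n)]$ and $\sum_j|G_{jn}(\tilde M_n)|^2$ which already underlies the proof of \eqref{eq:S 3.10R}, one obtains
\begin{equation*}
\int_{\mathcal D}\varphi(x)|\tilde M_n(t,x)|^2\,dx=\int_{\mathcal D}\varphi(x)|\pi_n M_0(x)|^2\,dx+R_n^\varphi(t),
\end{equation*}
where the remainder $R_n^\varphi$ is a sum of pairings of the form $\langle(I-\pi_n)(\varphi\tilde M_n),\tilde M_n\times\tilde\rho_n\rangle_{\mathbb H}$, the analogous quantity with $\tilde M_n\times(\tilde M_n\times\tilde\rho_n)$, and corresponding stochastic and Stratonovich-correction contributions.

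The next step is to show that $\tilde{\mathbb E}|R_n^\varphi(t)|\to 0$ as $n\to\infty$. Decomposing $(I-\pi_n)(\varphi\tilde M_n)=(I-\pi_n)(\varphi\tilde M)+(I-\pi_n)(\varphi(\tilde M_n-\tilde M))$ and combining strong convergence of $\pi_n$ to the identity on $\mathbb H$ with the $L^4$-convergence \eqref{eq:EMn'L4} and the uniform bounds \eqref{eq:SVis 3.35R'}, \eqref{eq:SLLG 3.13R'}, each summand tends to zero; the orthogonality $\langle\varphi\tilde M,\tilde M\times\tilde\rho\rangle_{\mathbb H}=0$ proved in the lemma immediately preceding the one we are treating is used to handle the first piece before the projection error is absorbed. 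Passing to the limit in the Galerkin identity via Corollary \ref{cor:Mn'tilMae}, the uniform integrability of $|\tilde M_n|^2$ on $\tilde\Omega\times[0,T]\times\mathcal D$, and $\pi_n M_0\to M_0$ in $\mathbb H$ together with $|M_0|=1$, one concludes $\int_{\mathcal D}\varphi(x)(|\tilde M(t,x)|^2-1)\,dx=0$ for every $\varphi\in L^\infty(\mathcal D)$, hence $|\tilde M(t,x)|=1$ for a.e.\ $x\in\mathcal D$, $\tilde{\mathbb P}$-a.s.

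The principal obstacle is the bookkeeping of the It\^o--Stratonovich correction in the presence of both the projection $\pi_n$ and the cutoff $\psi$: the algebraic cancellation that makes the Stratonovich form formally preserve $|\tilde M|^2$ pointwise is fully available when $\psi(\tilde M_n)=1$, i.e.\ on $\{|\tilde M_n|\le 3\}$, while on the complement the cancellation is only partial; the missing piece must be dominated using the a priori estimate \eqref{eq:SVis 3.34R'} on $\|\tilde M_n\|_{\mathbb V}$ and the Sobolev embedding $\mathbb V\hookrightarrow\mathbb L^6$ to force the Lebesgue measure of $\{|\tilde M_n|>3\}$ to shrink in the limit. Once the conclusion $|\tilde M|=1$ is reached, $\psi(\tilde M)\equiv1$ and the cutoff can be discarded in the subsequent arguments, as already anticipated in the remark following the definition of $\psi$.
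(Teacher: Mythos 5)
Your architecture is genuinely different from the paper's. The paper does not return to the Galerkin level at all for this lemma: it applies Pardoux's generalized It\^o formula (Lemma \ref{lem:Pardoux}) directly to the limit equation for $\til M$ obtained in the Corollary following Lemma \ref{lem:xi'}, with the functional $\gamma(M)=\llangle M,\eta M\rrangle_{\mathbb H}$, $\eta\in C_0^\infty(\cl D;\R)$; the hypotheses of that formula are verified from the regularity already established (\eqref{eq:SVis 3.34R'}, \eqref{eq:SVis 3.35R'}, \eqref{eq:SLLG 3.14R'}), and the pointwise orthogonality of the cross products then kills every term without any projection bookkeeping. Your route --- finite-dimensional It\^o formula for $\int\varphi|\til M_n|^2$ followed by $n\to\infty$ --- is viable in principle and your treatment of the $\pi_n$-remainders (the splitting $(I-\pi_n)(\varphi\til M_n)=(I-\pi_n)(\varphi\til M)+(I-\pi_n)(\varphi(\til M_n-\til M))$, the use of \eqref{eq:EMn'L4} and of the lemma giving $\llangle Z_1,\varphi\til M\rrangle_{\mathbb H}=0$) is the right shape, though the term $\langle(I-\pi_n)(\varphi\til M_n),\til M_n\times(\til M_n\times\til\rho_n)\rangle$ is only controlled in the $\mathbb L^{3/2}$--$\mathbb L^3$ duality and needs $\varphi$ smooth plus an interpolation/uniform boundedness argument for $\pi_n$ on $\mathbb V$, not just on $\mathbb H$.

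The genuine gap is in your treatment of the cutoff. The It\^o--Stratonovich mismatch does not cancel pointwise where $0<\psi<1$: a direct computation gives, per mode $j$, the residual $\lambda_2^2\,\psi(\til M)\bigl(\psi(\til M)-1\bigr)\bigl|\til M\times(\til M\times h_j)\bigr|^2$, supported on $\{3<|\til M_n|<5\}$. Your proposed mechanism --- that the $\mathbb V$-bound \eqref{eq:SVis 3.34R'} and $\mathbb V\hookrightarrow\mathbb L^6$ ``force the Lebesgue measure of $\{|\til M_n|>3\}$ to shrink'' --- is false: Chebyshev only yields $\bigl|\{|\til M_n(t)|>3\}\bigr|\le 3^{-6}\|\til M_n(t)\|_{\mathbb L^6}^6$, which is bounded uniformly in $n$ but does not tend to zero; knowing that this set is eventually null is essentially equivalent to the conclusion you are trying to prove. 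The repair is to use the sign of the residual: since $\psi(\psi-1)\le 0$, testing with $\varphi\ge 0$ yields in the limit $\int_{\cl D}\varphi\,(|\til M(t)|^2-1)\,\ud x\le 0$, hence $|\til M(t,\cdot)|\le 1$ a.e.; this forces $\psi(\til M)\equiv 1$ and (via $\til M_n\to\til M$ a.e., Corollary \ref{cor:Mn'tilMae}) kills the residual in the limit, after which the identity holds with equality for arbitrary $\varphi$ and gives $|\til M(t,\cdot)|=1$. Without this two-step bootstrap your passage to the limit does not close.
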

\begin{proof}
We will use a version of the It\^o formula proved in Pardoux's paper \cite{Pardoux}, see Lemma \ref{lem:Pardoux}.
  Let $\eta\in C_0^\infty(D,\R)$ and let $\gamma$ denote a function
  \[\gamma:\mathbb{H}\ni M\longmapsto \llangle M,\eta M\rrangle_{\mathbb{H}}\in \R\,.\]
 Then  $\gamma\in C^2\mathbb H)$, $\gamma^\prime(M)=2\eta M$ and $\gamma^{\prime\prime}(M)(v)=2\eta v$ for $M,v\in \mathbb{H}$.\\
In view of  definition of the problem and  \eqref{eq:SVis 3.34R'}, \eqref{eq:SVis 3.35R'} and \eqref{eq:SLLG 3.14R'}, all the assumptions of Lemma \ref{lem:Pardoux} are satisfied. Therefore,

 Lemma \ref{lem:Pardoux} yields for  $t\in[0,T]$ $\til{\mathbb{P}}$-a.s.
  \setlength\arraycolsep{2pt}{
  \begin{eqnarray*}
    &&\llangle \til{M}(t),\eta \til{M}(t)\rrangle_{\mathbb{H}}-\llangle M_0,\eta M_0\rrangle_{\mathbb{H}}\\
    &=&\int_0^t{}_{\mathbb X^{-b}}\llangle \lmd_1\til{M}\times \til{\rho}-\lmd_2\til{M}\times(\til{M}\times\til{\rho})+\frac{1}{2}\sum_{j=1}^\infty\gj,2\eta \til{M}(s)\rrangle_{X^b}\ud s\\
    &&+\sum_{j=1}^\infty\int_0^t\llangle 2\eta \til{M}(s),G^\psi_j\la\til M\ra\rrangle_{\mathbb{H}}\ud \til{W}_j(s)
   +\sum_{j=1}^\infty\int_0^t\llangle \eta G_j^\psi\la\til M\ra,G_j^\psi\la\til M\ra\rrangle_{\mathbb{H}}\ud s=0.
  \end{eqnarray*}}
  Hence we have
  \setlength\arraycolsep{2pt}{
  \begin{eqnarray*}
    &&\llangle \eta,|\til{M}(t)|^2-|\til{M}_0|^2\rrangle_{L^2(\cl D;\R)}=\llangle \til{M}(t),\eta \til{M}(t)\rrangle_{\mathbb{H}}-\llangle \til{M}_0,\eta \til{M}_0\rrangle_{\mathbb{H}}=0.
  \end{eqnarray*}}
  Since $\eta$ is arbitrary and $|M_0(x)|=1$ for almost every $x\in \cl D$, we infer that $|\til{M}(t,x)|=1$ for almost every $x\in \cl D$ as well.
\end{proof}

 Note that, if
$\left|\til M(t,x)\right|=1$ then $\psi\la \til M(t,x)\ra=1$, so we can get rid of it, which means that now we have the following equalities:
\[G^\psi_j\la\til M(t,x)\ra=G_j\la\til M(t,x)\ra,\quad \la G^\psi_j\ra^\prime\la\til M(t,x)\ra\left[G^\psi_j\la\til M(t,x)\ra\right]=G_j^\prime\la\til M(t,x)\ra\left[G_j\la\til M(t,x)\ra\right].\]
Hence we have the following result.
\begin{lem}\label{thm:solSDE}
  The process $(\til{M},\til{E},\til{B})$ is a weak martingale solution of Problem \ref{S-LLG}, that is, $(\til{M},\til{E},\til{B})$ satisfies \eqref{eq:Vis 3.5}, \eqref{eq:Vis 3.7} and \eqref{eq:Vis 3.6}.
\end{lem}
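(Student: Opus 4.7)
My plan is to obtain each of the three equations \eqref{eq:Vis 3.5}--\eqref{eq:Vis 3.6} by testing against suitable test functions and passing to the limit. Since Lemma \ref{lem:M=1} gives $|\til M(t,x)| = 1$ a.e., we have $\psi(\til M) \equiv 1$, so the cut-off drops out and the corollary following Lemma \ref{lem:xi'} yields, in $\mathbb X^{-b}$,
\[
\til M(t) = M_0 + \int_0^t\!\!\left[\lambda_1 \til M \times \til\rho - \lambda_2 \til M \times (\til M \times \til\rho) + \tfrac{1}{2}\sum_{j=1}^\infty G_j'(\til M)[G_j(\til M)]\right]\ud s + \sum_{j=1}^\infty \int_0^t G_j(\til M)\,\ud \til W_j(s).
\]

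To obtain \eqref{eq:Vis 3.5}, I would pair this identity with an arbitrary $u \in \mathbb V \subset \mathbb X^b$. Using the cyclic identity $(a \times b)\cdot c = (b\times c)\cdot a$ and the pointwise fact $\til M \times \til M = 0$, together with the identifications $Z_1 = \til M \times \til\rho$ and $Z_2 = \til M\times(\til M\times \til\rho)$ from Theorem \ref{lem:4.5M} and Lemma \ref{lem:4.8M}, the non-Laplacian part of $\til\rho = -\vp'(\til M) + 1_{\cl D}\til B - \til M + \Delta \til M$ produces the term $\llangle \til B - \til M - \vp'(\til M),\,\lambda_1 u\times \til M - \lambda_2(u\times \til M)\times \til M\rrangle$, while the $\Delta \til M$ contribution, handled by the integration-by-parts formula proved inside Theorem \ref{lem:4.5M}, yields $-\sum_i\llangle \nabla_i \til M,\,\lambda_1 \nabla_i u \times \til M - \lambda_2(\nabla_i u \times \til M + u\times \nabla_i \til M)\times \til M\rrangle$ (the remaining Leibniz term $(u\times \til M)\times \nabla_i\til M$ drops out by orthogonality with $\nabla_i \til M$). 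Since $u$ is deterministic, the drift $\tfrac{1}{2}\sum_j \llangle G_j'(\til M)[G_j(\til M)], u\rrangle$ combined with the It\^o integral $\sum_j \llangle G_j(\til M), u\rrangle\,\ud \til W_j$ is precisely the It\^o representation of the Stratonovich integral $\sum_j \llangle G_j(\til M), u\rrangle \circ \ud \til W_j$ appearing on the right-hand side of \eqref{eq:Vis 3.5}.

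For \eqref{eq:Vis 3.7} and \eqref{eq:Vis 3.6}, I would start from the identities satisfied by $\til B_n, \til E_n$ (transplanted from Problem \ref{problem:SLLGn} via Proposition \ref{prop:LLCn}), pair them with arbitrary $v \in \mathbb Y$, and move the curl off $\til E_n$ and $\til B_n - \pi_n^\mathbb{Y}\overline{\til M}_n$ by integration by parts (justified since $\mathbb Y_n \subset C_0^\infty(\R^3;\R^3)$). Multiplying by an arbitrary $\phi\in L^2(0,T)$ and applying Fubini, I would pass to the limit using the weak convergences $\til B_n\rightharpoonup \til B$, $\til E_n\rightharpoonup \til E$ in $L^2_w(0,T;\mathbb L^2(\R^3))$ from Theorem \ref{prop:tilMn}, the strong convergence $\overline{\til M}_n \to \overline{\til M}$ in $L^2(0,T;\mathbb L^2(\R^3))$ provided by \eqref{eq:EMn'L4}, and the strong convergence $\pi_n^\mathbb{Y} v \to v$ (and analogously for $\nabla \times v$) in $\mathbb L^2(\R^3)$. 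Arbitrariness of $\phi$ then yields the identities for almost every $t\in[0,T]$, and the equicontinuity estimates \eqref{eq:SVis 3.38R'}--\eqref{eq:SVis 3.39R'} force $\til B,\til E\in C([0,T];\mathbb Y^*)$ $\til{\mathbb P}$-almost surely, which upgrades the equalities to every $t\in[0,T]$.

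The main obstacle will be the careful bookkeeping in the derivation of \eqref{eq:Vis 3.5}: the cross-product identities, integrations by parts and decomposition of $\til\rho$ into its four constituents must be executed at the level of the duality pairing $\mathbb X^{-b}$--$\mathbb X^b$, keeping in mind that the individual pieces of $\til M\times \til\rho$ are only distributions while the full product lies in $L^2(0,T;\mathbb H)$. A secondary and softer issue is the pointwise-in-$t$ reading of the Maxwell-type equations, which is resolved by the equicontinuity estimates in $\mathbb Y^*$ rather than merely the weak $L^2$-convergence.
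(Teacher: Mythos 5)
Your proposal is correct and follows essentially the same route as the paper: \eqref{eq:Vis 3.5} is obtained from the corollary to Lemma \ref{lem:xi'} together with $|\til M|=1$ (so $\psi(\til M)\equiv1$) and the identifications of $Z_1$ and $Z_2$ in Theorem \ref{lem:4.5M} and Lemma \ref{lem:4.8M}, while the Maxwell equations are obtained by testing the finite-dimensional identities and passing to the limit via the weak convergences of $\til B_n,\til E_n$ and the strong convergence of $\pi_n^{\mathbb Y}$ applied to the test functions. The only cosmetic difference is that the paper uses time-dependent test functions in $H^1(0,T;\mathbb Y)$ where you use $v\otimes\phi$ with $\phi\in L^2(0,T)$ plus the $\mathbb Y^*$-equicontinuity estimates to recover the identity for every $t$; both are equivalent.
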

\begin{proof}[Proof of \eqref{eq:Vis 3.5}]
By Lemma \ref{lem:xi'} and Lemma \ref{lem:M=1}, we have $\psi(\til{M}(t))\equiv 1$ for $t\in[0,T]$. Hence we deduce that for $t\in [0,T]$, the following equation holds in $L^2(\til{\Omega};\mathbb X^{-b})$.
  \setlength\arraycolsep{2pt}{
  \begin{eqnarray*}
\til{M}(t)&=&M_0+\int_0^t\Bigg\{\left[\lmd_1 \til{M}\times \til{\rho}\right]-\lmd_2 \left[\til{M}\times(\til{M}\times \til{\rho})\right]+\frac{1}{2}\sum_{j=1}^\infty G_j^\prime\la\til M\ra G_j\la\til M\ra\Bigg\}\ud s\nonumber\\
  &&+\sum_{j=1}^\infty\int_0^t G_j\la\til M\ra\ud \til{W}_j(s)\\
  &=&M_0+\int_0^t\left[\lmd_1 \til{M}\times \til{\rho}-\lmd_2 \til{M}\times(\til{M}\times \til{\rho})\right]\ud s+\sum_{j=1}^\infty\left\{\int_0^tG_j\la\til M\ra\circ\ud \til{W}_j(s)\right\}.
  \end{eqnarray*}}
Then \eqref{eq:Vis 3.5} follows from our explaination of $\til M\times \til\rho$, see Lemma \ref{lem:4.5M}.
\end{proof}

\begin{proof}[Proof of \eqref{eq:Vis 3.7}]
By Lemma \ref{prop:tilMn} and the equation \eqref{eq:SVis 3.29R}, we have

\begin{equation}\label{eq:Bn't}
   \til{B}_n(t)-\til{B}_n(0)=-\int_0^t\pi_n^\mathbb{Y}[\nabla\times \til{E}_n(s)]\ud s,\qquad \til{\mathbb{P}}-a.s.
  \end{equation}

We also have
\begin{trivlist}
    \item[(a)] $\til{E}_{n}\longrightarrow \til{E}$ in $L^2_w(0,T;\mathbb{L}^2(\R^3))$ $\til{\mathbb{P}}$ almost surely, and
    \item[(b)] $\til{B}_{n}\longrightarrow \til{B}$ in $L^2_w(0,T;\mathbb{L}^2(\R^3))$ $\til{\mathbb{P}}$ almost surely.
  \end{trivlist}

Hence for any $u\in H^1(0,T;\mathbb{Y})$,
\setlength\arraycolsep{2pt}{
  \begin{eqnarray*}
    &&\int_0^t\llangle \til{B}(s),\frac{\ud u(s)}{\ud s}\rrangle_{\mathbb{L}^2(\R^3)}\ud s=\lim_{n\rightarrow\infty}\int_0^t\llangle \til{B}_n(s),\frac{\ud u(s)}{\ud s}\rrangle_{\mathbb{L}^2(\R^3)}\ud s\\
    &=&-\lim_{n\rightarrow\infty}\int_0^t\llangle \frac{\ud \til{B}_n(s)}{\ud s},u(s)\rrangle_{\mathbb{L}^2(\R^3)}\ud s=\lim_{n\rightarrow\infty}\int_0^t\llangle\pi_n^\mathbb{Y}\big[\nabla\times \til{E}_n(s)\big],u(s)\rrangle_{\mathbb{L}^2(\R^3)}\ud s\\
    &=&\lim_{n\rightarrow\infty}\int_0^t\llangle \nabla\times \til{E}_n(s),\pi_n^\mathbb{Y}u(s)\rrangle_{\mathbb{L}^2(\R^3)}\ud s=\lim_{n\rightarrow\infty}\int_0^t\llangle  \til{E}_n(s),\nabla\times\pi_n^\mathbb{Y}u(s)\rrangle_{\mathbb{L}^2(\R^3)}\ud s
  \end{eqnarray*}}

Since
\setlength\arraycolsep{2pt}{
  \begin{eqnarray*}
    &&\lim_{n\rightarrow\infty}\left|\int_0^t\llangle  \til{E}_n(s),\nabla\times\pi_n^\mathbb{Y}u(s)\rrangle_{\mathbb{L}^2(\R^3)}-\llangle  \til{E}(s),\nabla\times u(s)\rrangle_{\mathbb{L}^2(\R^3)}\ud s\right|\\
    &\leq&\lim_{n\rightarrow\infty}\int_0^t\left|\llangle \til{E}_n(s),\nabla\times\left(\pi_n^\mathbb{Y}u(s)-u(s)\right)\rrangle_{\mathbb{L}^2(\R^3)}\right|\ud s+\lim_{n\rightarrow\infty}\left|\int_0^t\llangle \til{E}_n(s)-\til{E}(s),\nabla\times u(s)\rrangle_{\mathbb{L}^2(\R^3)}\ud s\right|\\
    &\leq&\lim_{n\rightarrow\infty}\left(\int_0^t\|\til{E}_n(s)\|_{\mathbb{L}^2(\R^3)}^2\ud s\right)^\frac{1}{2}\left(\int_0^t\|\pi_n^\mathbb{Y}u(s)-u(s)\|^2_{\mathbb{Y}}\ud s\right)^\frac{1}{2}+0=0,\qquad \til{\mathbb{P}}-a.s.,
  \end{eqnarray*}}

we have
\[\lim_{n\rightarrow\infty}\int_0^t\llangle  \til{E}_n(s),\nabla\times\pi_n^\mathbb{Y}u(s)\rrangle_{\mathbb{L}^2(\R^3)}\ud s=\int_0^t\llangle  \til{E}(s),\nabla\times u(s)\rrangle_{\mathbb{L}^2(\R^3)}\ud s.\]
Therefore
\[\int_0^t\llangle \til{B}(s),\frac{\ud u(s)}{\ud s}\rrangle_{\mathbb{L}^2(\R^3)}\ud s=\int_0^t\llangle  \til{E}(s),\nabla\times u(s)\rrangle_{\mathbb{L}^2(\R^3)}\ud s,\]
for all $u\in H^1(0,T;\mathbb{Y})$.

  Hence for $t\in[0,T]$,
  \[\til{B}(t)=B_0-\int_0^t\nabla\times \til{E}(s)\ud s,\quad\in \mathbb{Y}^*,\;\til{\mathbb{P}}-a.s..\]
 \end{proof}

\begin{proof}[Proof of \eqref{eq:Vis 3.6}]
  Similar as in the proof of \eqref{eq:Vis 3.7}.
  Let $p=q=2$ in Lemma \ref{prop:tilMn}, we have
\begin{trivlist}
    \item[(a)] $\til M_{n}\longrightarrow \til{M}$ in $L^2(0,T;\mathbb{H})$ $\til{\mathbb{P}}$ almost surely,
    \item[(b)] $\til{E}_{n}\longrightarrow \til{E}$ in $L^2_w(0,T;\mathbb{L}^2(\R^3))$ $\til{\mathbb{P}}$ almost surely, and
    \item[(c)] $\til{B}_{n}\longrightarrow \til{B}$ in $L^2_w(0,T;\mathbb{L}^2(\R^3))$ $\til{\mathbb{P}}$ almost surely.
  \end{trivlist}
  Hence by \eqref{eq:SVis 3.29R} we have for all $u\in H^1(0,T;\mathbb{Y})$,
  \setlength\arraycolsep{2pt}{
  \begin{eqnarray*}
    &&\int_0^t\llangle \til{E}(s),\frac{\ud u(s)}{\ud s}\rrangle_{\mathbb{L}^{2}(\R^3)}\ud s=\lim_{n\rightarrow\infty}\int_0^t\llangle \til{E}_n(s),\frac{\ud u(s)}{\ud s}\rrangle_{\mathbb{L}^{2}(\R^3)}\ud s\\
    &=&-\lim_{n\rightarrow\infty}\int_0^t\llangle \pi_n^\mathbb{Y}[1_{\cl D}(\til{E}_n(s))+f(s)]-\pi_n^\mathbb{Y}[\nabla\times( \til{B}_n(s)-\pi_n^\mathbb{Y}(\overline{\til{M}}_n(s)))],u(s)\rrangle_{\mathbb{L}^{2}(\R^3)}\ud s\\
    &=&\int_0^t\llangle \til{B}(s)-\overline{\til{M}}(s),\nabla\times u(s)\rrangle_{\mathbb{L}^{2}(\R^3)}-\llangle 1_{\cl D}\til{E}(s)+f(s),u(s)\rrangle_{\mathbb{L}^{2}(\R^3)}\ud s.
   \end{eqnarray*}}

  Hence for $t\in[0,T]$,
  \[\til{E}(t)=E_0+\int_0^t\nabla\times[\til{B}(s)-\overline{\til{M}}(s)]\ud s-\int_0^t[1_{\cl D}\til{E}(s)+\overline{f}(s)]\ud s,\quad\in \mathbb{Y}^*,\;\til{\mathbb{P}}-a.s.\]
  
  Therefore the proof of Lemma \ref{thm:solSDE} is complete. 
\end{proof}

\section{Some further regularities of the weak solution}
Next we will show some further regularity of $\til{M}$.
\begin{lem}\label{thm:MinH5}
For $t\in[0,T]$ the following equation holds in $L^2(\til{\Omega};\mathbb{H})$.
  \setlength\arraycolsep{2pt}{
  \begin{eqnarray}
\til{M}(t)&=&M_0+\int_0^t\Bigg\{\left[\lmd_1 \til{M}\times \til{\rho}\right]-\lmd_2 \left[\til{M}\times(\til{M}\times \til{\rho})\right]\nonumber\\
  &&+\frac{1}{2}\sum_{j=1}^\infty G_{j}^\prime\la \til M\ra\left[G_{j}\la \til M\ra\right]\Bigg\}\ud s+\sum_{j=1}^\infty G_{j}\la\til M\ra\ud \til{W}_j(s)\label{eq:tilML2}\\
  &=&M_0+\int_0^t\left\{\lmd_1 \til{M}\times \til{\rho}-\lmd_2 \til{M}\times(\til{M}\times \til{\rho})\right\}\ud s+\sum_{j=1}^\infty\left\{\int_0^tG_j\la\til M\ra\circ\ud \til{W}_j(s)\right\}.\nonumber
  \end{eqnarray}}
\end{lem}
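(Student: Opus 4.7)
The plan is to upgrade the equation already established in the corollary following Lemma \ref{lem:xi'}, which holds in $L^2(\tilde{\Omega};\mathbb{X}^{-b})$, to the space $L^2(\tilde{\Omega};\mathbb{H})$. The key new input is Lemma \ref{lem:M=1}: since $|\tilde{M}(t,x)|=1$ for a.e.\ $(t,x)$, we may replace $G_j^\psi$ by $G_j$ and $(G_j^\psi)'[G_j^\psi]$ by $G_j'[G_j]$ everywhere. Once this identification is made, it suffices to verify that every term on the right-hand side of the claimed identity already defines an element of $L^2(\tilde{\Omega};\mathbb{H})$; because $\mathbb{H}\hookrightarrow \mathbb{X}^{-b}$ continuously, equality in $\mathbb{X}^{-b}$ then automatically forces equality in $\mathbb{H}$.

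First I would check each Lebesgue-integrable term. The initial value $M_0\in\mathbb{V}\subset\mathbb{H}$ is trivial. For $\tilde{M}\times\tilde{\rho}$, this is precisely the object $Z_1$, which by Theorem \ref{lem:4.5M} and Remark \ref{rem:MxrhoinL2} lies in $L^2(\tilde{\Omega};L^2(0,T;\mathbb{H}))$. For the double cross product $\tilde{M}\times(\tilde{M}\times\tilde{\rho})=\tilde{M}\times Z_1$, I would exploit the pointwise identity $a\times(a\times b)=(a\cdot b)a-|a|^2 b$ together with $|\tilde{M}|=1$ to obtain the pointwise bound $|\tilde{M}\times Z_1|\leq |Z_1|$ a.e., which immediately gives membership in $L^2(\tilde{\Omega};L^2(0,T;\mathbb{H}))$. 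For the Stratonovich correction term, each summand in $G_j'(\tilde{M})[G_j(\tilde{M})]$ is a polynomial in $\tilde{M}$ and $h_j$ of bounded degree, so using $|\tilde{M}|=1$ pointwise and the summability assumption \eqref{ass_h} on $\{h_j\}$ gives $\sum_{j=1}^\infty \|G_j'(\tilde{M})[G_j(\tilde{M})]\|_{\mathbb{H}}\leq C\,c_h^2\,|\mathcal{D}|^{1/2}$ uniformly, so this term is in $L^\infty(0,T;\mathbb{H})$ $\tilde{\mathbb{P}}$-a.s.

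Next I would handle the It\^o stochastic integral. By the It\^o isometry in $\mathbb{H}$,
\[
\tilde{\mathbb{E}}\Bigl\|\sum_{j=1}^\infty\int_0^t G_j(\tilde{M})\,d\tilde{W}_j(s)\Bigr\|_{\mathbb{H}}^2
=\tilde{\mathbb{E}}\int_0^t\sum_{j=1}^\infty \|G_j(\tilde{M}(s))\|_{\mathbb{H}}^2\,ds,
\]
and using $|\tilde{M}|=1$ again together with the pointwise estimate $\|G_j(u)\|_{\mathbb{H}}\leq C\|h_j\|_{\mathbb{L}^\infty}\|u\|_{\mathbb{H}}$ when $|u|=1$, the right-hand side is bounded by $C\,T\,c_h^2\,|\mathcal{D}|$. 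Hence the stochastic integral defines an element of $L^2(\tilde{\Omega};\mathbb{H})$.

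With each term on the right identified as a member of $L^2(\tilde{\Omega};\mathbb{H})$, the sum belongs to $L^2(\tilde{\Omega};\mathbb{H})$; on the other hand, from the corollary after Lemma \ref{lem:xi'}, this sum equals $\tilde{M}(t)$ in $L^2(\tilde{\Omega};\mathbb{X}^{-b})$. Since $\tilde{M}(t)\in \mathbb{V}\subset \mathbb{H}$ by Theorem \ref{thm:MregV} and the injection $\mathbb{H}\hookrightarrow\mathbb{X}^{-b}$ is continuous and injective, the identity must already hold in $L^2(\tilde{\Omega};\mathbb{H})$. The final Stratonovich rewriting is then standard bookkeeping using the classical identity $\sum_j G_j(\tilde{M})\circ d\tilde{W}_j=\sum_j G_j(\tilde{M})\,d\tilde{W}_j+\frac{1}{2}\sum_j G_j'(\tilde{M})[G_j(\tilde{M})]\,dt$. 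The main subtlety is confirming that the constraint $|\tilde{M}|=1$ really does remove $\psi$ everywhere including inside the (formally defined) derivative expressions $(G_j^\psi)'[G_j^\psi]$, but this is immediate from the definitions once $\psi(\tilde{M})\equiv 1$ is available pointwise a.e.
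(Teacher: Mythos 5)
Your proposal is correct and follows essentially the same route as the paper: both arguments take the identity already established in $L^2(\til\Omega;\mathbb X^{-b})$ (the corollary after Lemma \ref{lem:xi'}), use Lemma \ref{lem:M=1} to replace $G_j^\psi$ by $G_j$, and then verify via the constraint $|\til M|=1$, Remark \ref{rem:MxrhoinL2} and the summability of $(h_j)$ that each term on the right-hand side lies in $L^2(\til\Omega;\mathbb H)$, so that the equality upgrades to $\mathbb H$. The only difference is cosmetic: the paper treats just the two representative terms $\int_0^t\til M\times(\til M\times\til\rho)\,\ud s$ and the stochastic integral (using Burkholder--Davis--Gundy where you use the It\^o isometry), whereas you check all terms.
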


\begin{proof}
	We will only show the following two terms of \eqref{eq:tilML2} are in $L^2(\til{\Omega};\mathbb{H})$, the other terms can be dealt with similarly.
	
	Firstly, we consider the term $\int_0^t\til M\times(\til M\times \til \rho)\ud s$. Making use of Jensen's inequality, \eqref{eq:M=1} and Remark \ref{rem:MxrhoinL2}, we have
	\setlength\arraycolsep{2pt}{
		\begin{eqnarray*}
			&&\left\|\int_0^t \til M\times(\til M\times \til \rho)\ud s\right\|_{L^2(\til \Omega;\HH)}^2=\til\E \left\|\int_0^t \til M\times(\til M\times \til \rho)\ud s\right\|_\HH^2\\
			&\le&C\til\E \int_0^t\int_{\cl D} |\til M\times(\til M\times \til \rho)|^2\ud x\ud s\le C\til{\mathbb{E}}\|\til{M}\times \til{\rho}\|^2_{L^2(0,T;\mathbb{H})}<\infty.
	\end{eqnarray*}}
	So for all $t\in[0,T]$, $\int_0^t\til M\times(\til M\times \til \rho)\ud s\in L^2(\til \Omega;\HH)$.
	
	Secondly, we consider the term $\sum_{j=1}^\infty\int_0^t\til M\times (\til M\times h_j)\ud W_j$. Making use of Burkholder-Davis-Gundy inequality, Jensen's inequality, \eqref{eq:M=1} and our assumption on $h_j$, we have
	\setlength\arraycolsep{2pt}{
		\begin{eqnarray*}
			&&\left\|\sum_{j=1}^\infty\int_0^t\til M\times (\til M\times h_j)\ud W_j\right\|_{L^2(\til \Omega;\HH)}\le \sum_{j=1}^\infty\left\|\int_0^t\til M\times (\til M\times h_j)\ud W_j\right\|_{L^2(\til \Omega;\HH)}\\
			&=&\sum_{j=1}^\infty\left(\til\E\int_{\cl D}\left|\int_0^t\til M\times(\til M\times h_j)\ud W_j\right|^2\ud x\right)^\frac{1}{2}\le C\sum_{j=1}^\infty\left(\int_{\cl D}\til\E\int_0^t|\til M\times(\til M\times h_j)|^2\ud s\ud x\right)^\frac{1}{2}\\
			&\le&C\sum_{j=1}^\infty\|h_j\|_\HH<\infty.
	\end{eqnarray*}}
	So for all $t\in[0,T]$, $\sum_{j=1}^\infty\int_0^t\til M\times (\til M\times h_j)\ud W_j\in L^2(\til \Omega;\HH)$.
	The proof is complete.
\end{proof}

\begin{lem}\label{thm:tilMregt}
  The process $\til{M}$ introduced in Lemma \ref{prop:tilMn} satisfies: 
  \[\til{M}\in C^\theta(0,T;\mathbb{H}),\qquad  \til{\mathbb{P}}-a.s.,\;\theta\in [0,\frac{1}{2}).\]
\end{lem}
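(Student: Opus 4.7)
The plan is to start from the It\^o-form equation in Lemma \ref{thm:MinH5}, which holds in $L^2(\til{\Omega};\mathbb{H})$, and decompose the increment $\til{M}(t)-\til{M}(s)$, for $0\le s<t\le T$, into a drift part and a martingale part:
\[
\til{M}(t)-\til{M}(s)=A(s,t)+B(s,t),
\]
where
\[
A(s,t)=\int_s^t\Bigl\{\lambda_1\til{M}\times\til{\rho}-\lambda_2\til{M}\times(\til{M}\times\til{\rho})+\tfrac{1}{2}\sum_{j=1}^\infty G_j'(\til{M})[G_j(\til{M})]\Bigr\}du,
\]
\[
B(s,t)=\sum_{j=1}^\infty\int_s^t G_j(\til{M})\,d\til{W}_j(u).
\]
I will then show, for every $p\ge 1$, the Kolmogorov-type moment bound
\[
\til{\mathbb{E}}\bigl\|\til{M}(t)-\til{M}(s)\bigr\|_{\mathbb{H}}^{2p}\le C_p\,|t-s|^p,
\]
from which standard Kolmogorov continuity yields a version of $\til{M}$ which is $\theta$-H\"older with any exponent $\theta<(p-1)/(2p)$; letting $p\to\infty$ covers the full range $\theta\in[0,\tfrac12)$.

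For the drift term $A(s,t)$, I would exploit the constraint $|\til{M}|=1$ from Lemma \ref{lem:M=1}, which gives the pointwise bound $|\til{M}\times(\til{M}\times\til{\rho})|\le|\til{M}\times\til{\rho}|$, and the fact that $G_j'(\til{M})[G_j(\til{M})]$ is pointwise bounded by $C\|h_j\|_{\mathbb{L}^\infty}^2$ so that the series is controlled by $c_h^2$. A Cauchy--Schwarz estimate in time then gives
\[
\|A(s,t)\|_{\mathbb{H}}\le C|t-s|^{1/2}\Bigl(\int_0^T\|\til{M}\times\til{\rho}\|_{\mathbb{H}}^2du\Bigr)^{1/2}+C|t-s|.
\]
Raising this to the $2p$-th power and taking expectation, the required bound $C_p|t-s|^p$ follows provided $\til{M}\times\til{\rho}\in L^{2p}(\til{\Omega};L^2(0,T;\mathbb{H}))$ for all $p$; this in turn follows from the uniform bounds \eqref{eq:SVis 3.35R'} on $\til{M}_n\times\til{\rho}_n$ together with weak convergence \eqref{eq:Z1w} and lower semicontinuity of the norm, combined with Notation \ref{nt:Mxrho}.

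For the stochastic term $B(s,t)$, I would use the Burkholder--Davis--Gundy inequality and the assumption \eqref{ass_h}:
\[
\til{\mathbb{E}}\|B(s,t)\|_{\mathbb{H}}^{2p}\le C_p\,\til{\mathbb{E}}\Bigl(\sum_{j=1}^\infty\int_s^t\|G_j(\til{M}(u))\|_{\mathbb{H}}^2du\Bigr)^{p}\le C_p\bigl(|\cl D|\,c_h^2\bigr)^p\,|t-s|^p,
\]
again invoking $|\til{M}|=1$ so that $\|G_j(\til{M})\|_{\mathbb{H}}^2\le C\|h_j\|_{\mathbb{L}^\infty}^2|\cl D|$. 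Combining the two estimates yields the desired moment bound uniformly in $s,t$.

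The argument is essentially routine once one has the It\^o-form representation of Lemma \ref{thm:MinH5} and the constraint $|\til{M}|=1$ in hand; the only mildly delicate point is verifying that $\til{M}\times\til{\rho}$ has moments of all orders in $L^2(0,T;\mathbb{H})$, which is where the full strength of the uniform a priori estimate \eqref{eq:SVis 3.35R'} (valid for \emph{every} $p>0$) is used via weak-limit lower semicontinuity. No genuine obstacle beyond that---this is the classical application of Kolmogorov's continuity criterion to obtain the optimal Brownian H\"older exponent $\theta<1/2$.
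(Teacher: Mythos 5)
Your proposal is correct and follows essentially the same route as the paper: the identical drift/martingale decomposition of $\til{M}(t)-\til{M}(s)$ from Lemma \ref{thm:MinH5}, the Burkholder--Davis--Gundy bound $\til{\mathbb{E}}\|B(s,t)\|_{\mathbb{H}}^{2p}\le C_p c_h^{2p}|t-s|^p$ for the stochastic part, and the Kolmogorov continuity test. The only (harmless) variation is that you fold the drift into the Kolmogorov moment estimate, which requires $\til{M}\times\til{\rho}\in L^{2p}(\til{\Omega};L^2(0,T;\mathbb{H}))$ for all $p$ via \eqref{eq:SVis 3.35R'} and weak lower semicontinuity, whereas the paper treats the drift pathwise, observing that $\int_0^T\|F(t)\|_{\mathbb{H}}^2\,dt<\infty$ a.s.\ already gives $C^{1/2}$ trajectories for $V(t)=\int_0^tF(s)\,ds$ by Cauchy--Schwarz.
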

\begin{proof}
By Lemma \ref{thm:MinH5}, we have
\[\begin{aligned}
\til{M}(t)-\til{M}(s)=&\int_s^t\Bigg\{\lmd_1 \til{M}\times \til{\rho}-\lmd_2\til{M}\times(\til{M}\times \til{\rho})+\frac{1}{2}\sum_{j=1}^\infty G_{j}^\prime\la \til M\ra\left[G_{j}\la \til M\ra\right]\Bigg\}\ud \tau\\
 &+\sum_{j=1}^\infty\int_s^tG_{j}\la \til M\ra\ud \til{W}_j(\tau)\\
 &=\int_s^t F(s)\ud s+\sum_{j=1}^\infty\int_s^tG_{j}\la \til M\ra\ud \til{W}_j(\tau)
  \end{aligned}\]
  for $0\leq s<t\leq T$. By the constraint \eqref{eq:M=1}, the estimate \eqref{eq:S 4.10} and the regularity of $\til M\times\til \rho$ as in  \eqref{eq:Z1w}, we have
  \[\int_0^T\|F(t)\|_{\mathbb H}^2\ud t<\infty,\quad \til{\mathbb P}-a.s.,\]
hence the process
\[V(t)= \int_0^t F(s)\ud s\in\mathbb H\]
has trajectories in $C^{1/2}([0,T];\mathbb{H})$. Putting
\[N(t)=\sum_{j=1}^\infty\int_0^tG_{j}\la \til M\ra\ud \til{W}_j(\tau)\]
and invoking the Burkholder-Davis-Gundy inequalities we obtain for any $p\ge 2$
\[\E\left\|N(t)-N(s)\right\|^{2p}\le C_p\E\la\sum_{j=1}^\infty\int_s^t\left\|G_j\la\til M\ra\right\|^2_{\mathbb H}\ud \tau\ra^p\le C\la\sum_{j=1}^\infty\left\|h_j\right\|^2_{\mathbb L^\infty}\ra^p(t-s)^p\,.\]
Then the Kolmogorov continuity test, see Lemma \ref{lem:Kol con}, yields
  \[N\in C^\theta([0,T];\mathbb{H}), \quad \theta\in \left(0,\frac{1}{2}\right).\]
  since $\til M(t)=V(t)+N(t)$, the lemma follows.
\end{proof}

We can also prove that $\Delta \til M\in L^1(\til \Omega;L^1(0,T;\bb L^1))$. To do that, we need the following Corollary of Lemma \ref{lem:M=1}.
\begin{cor}
	\begin{equation}\label{eq:MperpnablaiM}
		\nabla_i\til M(t,x)\perp \til M(t,x),\qquad a.e. (t,x)\in[0,T]\times \cl D,\,\til{\bb P}-a.s.,\,i=1,2,3.
	\end{equation}
\end{cor}
\begin{proof}
	By the equation \eqref{eq:M=1} and by the chain rule of weak derivatives, we have
	\[0=\frac{1}{2}\nabla_i|\til M(t,x)|^2=\llangle \nabla_i \til M(t,x),\til M(t,x)\rrangle,\]
	for almost every $(t,x)\in[0,T]\times \cl D$, $\til{\bb P}$-almost surely and $i=1,2,3.$ Hence the proof is complete.
\end{proof}

We will also need the following results. 
\begin{prop}\label{prop:MxD2MnCon}
	\begin{equation}\label{eq:MxD2MnCon}
		\til M\times \Delta \til M_n\longrightarrow \til M\times \Delta \til M \textrm{ weakly star in } L^2(\til\Omega;L^2(0,T;(\V\cap\bb L^\infty)^*)) \textrm{ as }n\rightarrow \infty.
	\end{equation}
\end{prop}
\begin{proof}
	For any $u\in L^2(\til\Omega;L^2(0,T;(\V\cap\bb L^\infty)))$, note that by the equation \eqref{eq:M=1} and \eqref{eq:S 4.12}, we also have $\til M\times u\in L^2(\til\Omega;L^2(0,T;(\V\cap\bb L^\infty)))$. Hence by \eqref{eq:4.17phi} we have
	 \setlength\arraycolsep{2pt}{
		\begin{eqnarray*}
			&&\left|\til\E\int_0^T\llangle \til M\times \Delta \til M_n-\til M\times \Delta \til M,u\rrangle_\HH\ud t\right|\\
			&=&\left|\til\E\int_0^T {}_{(\V\cap\bb L^\infty)^*}\Big\langle\Delta \til M_n- \Delta \til M,\til M\times u\Big\rangle_{\V\cap\bb L^\infty}\ud t\right|\\
			&\le&\sum_{i=1}^3\til\E\int_0^T\left|\llangle \nabla_i\til M_n-\nabla_i \til M,\nabla_i\til M\times u+\til M\times\nabla_i u\rrangle_\HH\right|\longrightarrow 0,\qquad \textrm{as }n\rightarrow \infty.
		\end{eqnarray*}}
	The proof is complete.
\end{proof}

\begin{prop}
	We have 
	\begin{equation}\label{eq:MxMxD2M=}
		-M\times (M\times\Delta M)=M\sum_{i=1}^3|\nabla_i M|^2+\Delta M\qquad\textrm{in the space}\quad L^2(\til\Omega;L^2(0,T;(\V\cap\bb L^\infty)^*)).
	\end{equation}
\end{prop}
\begin{proof}
	Let us arbitrarily fix $u\in L^2(\til\Omega;L^2(0,T;(\V\cap\bb L^\infty)))$. Similar as in the proof of Proposition \ref{prop:MxD2MnCon}, we also have $\til M\times u\in L^2(\til\Omega;L^2(0,T;(\V\cap\bb L^\infty)))$. By \eqref{eq:MxD2MnCon}, \eqref{eq:MperpnablaiM} and by \eqref{eq:4.17phi} we have the following equality:
	\setlength\arraycolsep{2pt}{
		\begin{eqnarray*}
			&&{}_{L^2(\til\Omega;L^2(0,T;(\V\cap\bb L^\infty)^*))}\llangle -\til M\times (\til M\times \Delta \til M),u\rrangle_{L^2(\til\Omega;L^2(0,T;(\V\cap\bb L^\infty)))}\\
			&=&\lim_{n\rightarrow\infty}\llangle \til M\times \Delta \til M_n,\til M\times u\rrangle_{L^2}\\
			&=&\lim_{n\rightarrow\infty}\sum_{i=1}^3\llangle\nabla_i\til M_n,\nabla_i \til M\times(\til M\times u)+\til M\times(\nabla_i\til M\times u)+\til M\times(\til M\times\nabla_i u)\rrangle_{L^2}\\
			&=&\lim_{n\rightarrow\infty}\sum_{i=1}^3\llangle \nabla_i\til M_n,\til M\langle\nabla_i\til M,u\rangle+\nabla_i\til M\langle \til M,u\rangle+\til M\langle \til M,\nabla_i u\rangle-\nabla_i u\rrangle_{L^2}\\
			&=&\sum_{i=1}^3\llangle\nabla_i\til M,\nabla_i \til M\langle \til M,u\rangle-\nabla_i u\rrangle_{L^2}={}_{L^2(\til\Omega;L^2(0,T;(\V\cap\bb L^\infty)^*))}\llangle \til M\sum_{i=1}^3|\nabla_i \til M|^2+\Delta \til M,u\rrangle_{L^2(\til\Omega;L^2(0,T;(\V\cap\bb L^\infty)))}
	    \end{eqnarray*}}
    The proof is complete.
\end{proof}

\begin{lem}\label{lem:D2MinL1}
	We have the following regularity result about $\Delta \til M$, 
	\begin{equation}
		\Delta \til M\in L^1(\til \Omega;L^1(0,T;\bb L^1)).
	\end{equation}
\end{lem}
\begin{proof}
	By the proof of Lemma \ref{thm:MinH5}, we have $\til M\times(\til M\times\Delta \til M)\in L^2(\til\Omega;L^2(0,T;\HH))$. And by \eqref{eq:S 4.12}, it is easy to check that
	$\til M\sum_{i=1}^3|\nabla_i \til M|^2\in  L^1(\til\Omega;L^1(0,T;\bb L^1))$. Hence by \eqref{eq:MxMxD2M=}, we have $\Delta \til M\in L^1(\til \Omega;L^1(0,T;\bb L^1))$. 
\end{proof}

\section{Proof of the main result}
Finally we are ready to finish the proof the main result (Theorem \ref{thm:mainthm}) of this paper.
\begin{proof}[Proof of Theorem \ref{thm:mainthm}]
	\begin{trivlist}
		\item[(i)] The results follows from Lemma \ref{thm:MregV}, Lemma \ref{lem:BEreg}, Lemma \ref{lem:4.5M} and Lemma \ref{lem:D2MinL1}.
		\item[(ii)] The results follows from Lemma \ref{thm:MinH5} and Lemma \ref{thm:solSDE}.
		\item[(iii)] The result follows from Lemma \ref{lem:M=1}.
		\item[(iv)] The result follows from Lemma \ref{thm:tilMregt}.
	\end{trivlist}
\end{proof}

\section{Acknowledgments}	
Zdzis{\l}aw Brze\'zniak was partially supported by the Australian Research Council Discovery Project DP200101866 and partially supported by  National Natural Science Foundation of China project project 12071433.\\
Beniamin Goldys was partially supported by the Australian Research Council Discovery Project DP200101866.\\
Liang Li was partially supported by  National Natural Science Foundation of China project 11901026 and project 12071433 and project 12171032 and Fundamental Research Funds for the Central Universities project ZY1913.

\section{Appendix: Proofs of some auxiliary results}
Here are some auxiliary results and their proofs. 

\begin{prop}\label{prop:sumWjhjCh}
	The sequence $\{\sum_{j=1}^n W_jh_j\}_n$ introduced in the Remark \ref{rem:QW} is a Cauchy sequence in $L^2(\Omega; C([0,T];\HH))$.
\end{prop}
\begin{proof}
		Let us introduce the notation $a_n:=\{\sum_{j=1}^nW_jh_j\}_n$, and consider $a_{n+k}-a_n$.
	\begin{align*}	
		&\|a_{n+k}-a_n\|_{L^2(\Omega;C([0,T];\HH))}
		=\left\|\sum_{j=n+1}^{n+k}W_jh_j\right\|_{L^2(\Omega;C([0,T];\HH))}\\
		\le&\sum_{j=n+1}^{n+k}\left\|W_jh_j\right\|_{L^2(\Omega;C([0,T];\HH))}
		= \sum_{j=n+1}^{n+k}\left[\E\left(\sup_{t\in[0,T]}\|W_j(t)h_j\|_\HH^2\right)\right]^\frac{1}{2}.
	\end{align*}
	By the Doob's maximal inequality, we have
	\[\E\left(\sup_{t\in[0,T]}\|W_j(t)h_j\|_\HH^2\right) \le 4T\|h_j\|_\HH^2.\]
	Hence we have
	\[\|a_{n+k}-a_n\|_{L^2(\Omega;C([0,T];\HH))}\le 2\sqrt T\sum_{j=n+1}^{n+k}\|h_j\|_\HH.\]
	Therefore by our assumption on $h_j$ in the statement of Problem 2.7 (the equation (2.11)), $\{a_n\}$ is Cauchy in the space $L^2(\Omega;C([0,T];\HH))$.
	\end{proof}

The next result used in the proof of Lemma \ref{lem:StermWanorm}.
\begin{lem}\label{lem:FG2.1}\cite{Flandoli}
	Let $p\ge 2$ and $a\in[0,\frac{1}{2})$ be given. There exists a constant $C(p,a)>0$ such that for any progressively measurable process $\xi=\sum_{j=1}^\infty\xi_j\in L^p(\Omega\times[0,T];\HH)$ with $\sum_{j=1}^\infty\|\xi_j\|_\HH^2<\infty$, we have
	\[\E\left\|\sum_{j=1}^\infty\int_0^\cdot\xi_j(t)\ud W_j(t)\right\|^p_{W^{a,p}(0,T;\HH)}\le C(p,a)\E\int_0^T\left(\sum_{j=1}^\infty\left\|\xi_j(t)\right\|^2_\HH\right)^\frac{p}{2}\ud t.\]
\end{lem}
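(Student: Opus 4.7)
Write $M(t)=\sum_{j=1}^\infty\int_0^t\xi_j(s)\,\ud W_j(s)$. The plan is to estimate separately the two pieces of the Sobolev--Slobodeckij norm
\[\|M\|^p_{W^{a,p}(0,T;\HH)}=\|M\|^p_{L^p(0,T;\HH)}+\int_0^T\!\!\int_0^T\frac{\|M(t)-M(s)\|^p_{\HH}}{|t-s|^{1+ap}}\,\ud s\,\ud t,\]
reducing each to Burkholder--Davis--Gundy applied to the $\HH$-valued martingale $M$. The BDG inequality for vector-valued stochastic integrals gives, for every $0\le s\le t\le T$,
\[\E\,\|M(t)-M(s)\|^p_{\HH}\le C_p\,\E\la\sum_{j=1}^\infty\int_s^t\|\xi_j(u)\|_{\HH}^2\,\ud u\ra^{p/2}.\]
This will be the single analytic ingredient I use.

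First I would bound the $L^p$ part. Taking $s=0$ in the BDG estimate and integrating in $t$ yields
\[\E\int_0^T\|M(t)\|_{\HH}^p\,\ud t\le C_pT\,\E\la\sum_{j=1}^\infty\int_0^T\|\xi_j(u)\|_{\HH}^2\,\ud u\ra^{p/2}\le C_p\,T^{p/2}\,\E\int_0^T\la\sum_{j=1}^\infty\|\xi_j(u)\|_{\HH}^2\ra^{p/2}\ud u\]
by Jensen's inequality applied to the convex function $x\mapsto x^{p/2}$ on $[0,T]$.

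Next, for the Gagliardo seminorm, the key step is to absorb the time increment $|t-s|$ via Jensen: since $p/2\ge 1$,
\[\la\sum_{j=1}^\infty\int_s^t\|\xi_j(u)\|_{\HH}^2\,\ud u\ra^{p/2}\le |t-s|^{p/2-1}\int_s^t\la\sum_{j=1}^\infty\|\xi_j(u)\|_{\HH}^2\ra^{p/2}\ud u.\]
Combined with the BDG estimate above, this gives
\[\E\,\|M(t)-M(s)\|_{\HH}^p\le C_p\,|t-s|^{p/2-1}\,\E\int_s^t\la\sum_{j=1}^\infty\|\xi_j(u)\|_{\HH}^2\ra^{p/2}\ud u.\]
Substituting into the Gagliardo seminorm and applying Fubini, the double integral becomes
\[\int_0^T\!\!\int_0^T\frac{\E\,\|M(t)-M(s)\|_{\HH}^p}{|t-s|^{1+ap}}\,\ud s\,\ud t\le C_p\,\E\int_0^T\Phi(u)\la\int\!\!\int_{\{s<u<t\}\cap[0,T]^2}|t-s|^{p/2-2-ap}\,\ud s\,\ud t\ra\ud u,\]
with $\Phi(u):=\la\sum_{j=1}^\infty\|\xi_j(u)\|_{\HH}^2\ra^{p/2}$. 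The inner deterministic kernel integral is finite precisely because $a<\tfrac12$ forces $p/2-2-ap>-2$, so the singularity at $t=s$ is integrable; a direct computation bounds it by a constant $K(p,a,T)$ uniformly in $u\in[0,T]$.

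The main obstacle is purely bookkeeping for the kernel integral $\int\!\!\int|t-s|^{p/2-2-ap}\,\ud s\,\ud t$ restricted to the triangle where $u$ lies between $s$ and $t$: one must carefully verify the condition $a<1/2$ yields convergence and produce a constant depending only on $p,a,T$. Once this is done, combining the two pieces gives the claim with $C(p,a)=C_p(T^{p/2}+K(p,a,T))$. No Hilbert-space structure beyond vector-valued BDG is needed, so the argument carries over verbatim from the scalar proof in Flandoli--Gatarek by replacing absolute values with $\|\cdot\|_{\HH}$.
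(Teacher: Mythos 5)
Your argument is correct and is essentially the same as the proof in the cited reference \cite{Flandoli} (the paper itself only states the lemma with that citation): Burkholder--Davis--Gundy for the $\HH$-valued martingale, Jensen/H\"older to pull out the factor $|t-s|^{p/2-1}$, and Fubini plus the observation that $a<\tfrac12$ makes the kernel $|t-s|^{p/2-2-ap}$ integrable near the diagonal. The kernel computation you flag as remaining bookkeeping does go through, since $p/2-2-ap=p(\tfrac12-a)-2>-2$ gives a uniform bound $K(p,a,T)$ for the double integral over the region where $u$ lies between $s$ and $t$.
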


The next two results used in the proof of Lemma \ref{lem:LMntight}.
\begin{lem}\cite{Flandoli}\label{lem:compact emb 5}
  Let $B_0\subset B\subset B_1$ be Banach spaces, $B_0$ and $B_1$ being reflexive and the embedding $B_0\hookrightarrow B$ to be compact. Let $p\in (1,\infty)$ and $a\in (0,1)$ be given. Then the embedding
  \[L^p(0,T;B_0)\cap W^{a,p}(0,T;B_1)\hookrightarrow L^p(0,T;B)\]
  is compact.
\end{lem}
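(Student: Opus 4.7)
The plan is to prove this via the Kolmogorov--Riesz--Fréchet compactness criterion in $L^p(0,T;B)$, following the classical Aubin--Lions--Simon strategy but adapted to fractional time regularity. Let $(u_n)$ be a bounded sequence in $X_T := L^p(0,T;B_0)\cap W^{a,p}(0,T;B_1)$. By reflexivity of $B_0$ and $B_1$, the space $X_T$ is reflexive, so after extracting a subsequence we may assume $u_n \rightharpoonup u$ weakly in $X_T$. Replacing $u_n$ by $u_n - u$, it suffices to show that any such sequence converging weakly to $0$ in $X_T$ converges strongly to $0$ in $L^p(0,T;B)$.

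The first key tool is the Ehrling lemma, a direct consequence of the compact embedding $B_0 \hookrightarrow B$ together with the continuous embedding $B \hookrightarrow B_1$: for every $\eta>0$ there exists $C_\eta>0$ with
\[
\|v\|_B \le \eta \|v\|_{B_0} + C_\eta \|v\|_{B_1}, \qquad v\in B_0.
\]
Raising to the $p$-th power and integrating in time yields
\[
\|u_n\|_{L^p(0,T;B)}^p \le C\,\eta^p \|u_n\|_{L^p(0,T;B_0)}^p + C_\eta^p \|u_n\|_{L^p(0,T;B_1)}^p.
\]
Since $(u_n)$ is bounded in $L^p(0,T;B_0)$, the first term is $\le C\eta^p$ uniformly in $n$. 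As $\eta$ is arbitrary, it remains to show that $u_n \to 0$ strongly in $L^p(0,T;B_1)$.

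For the latter, I would invoke the Kolmogorov--Riesz criterion in $L^p(0,T;B_1)$: a bounded subset is relatively compact if (i) it is tight at each slice (e.g., equi-integrable in $B_1$) and (ii) the $L^p(0,T;B_1)$-norm of time translations $\tau_h u - u$ tends to $0$ as $h\to 0$, uniformly on the family. Tightness follows easily from the bound in $L^p(0,T;B_0)$ and compactness of $B_0\hookrightarrow B_1$ (factoring through $B$). The crucial point is (ii), which must be extracted from the bound in $W^{a,p}(0,T;B_1)$ whose seminorm is the double integral
\[
[u]_{W^{a,p}}^p = \int_0^T\!\!\int_0^T \frac{\|u(t)-u(s)\|_{B_1}^p}{|t-s|^{1+ap}}\,dt\,ds.
\]
The translation estimate is obtained by a direct computation: for $h\in(0,T)$,
\[
\int_0^{T-h}\!\|u(t+h)-u(t)\|_{B_1}^p\,dt = \int_0^{T-h} \frac{\|u(t+h)-u(t)\|_{B_1}^p}{h^{1+ap}}\,h^{1+ap}\,dt \le h^{ap}\cdot h\cdot\sup_{0<s<h}(\cdots),
\]
which after averaging in $s$ or a localisation of the singular kernel yields a bound $\le C h^{ap}[u]_{W^{a,p}}^p$. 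Combined with weak convergence $u_n \rightharpoonup 0$ in $L^p(0,T;B_1)$, Kolmogorov--Riesz then upgrades this to strong convergence in $L^p(0,T;B_1)$, completing the proof.

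The main obstacle I expect is rigorously deriving the translation estimate from the Slobodeckij seminorm, since it involves a singular symmetric kernel rather than a direct translation integral; the right trick is to integrate the elementary inequality $\|u(t+h)-u(t)\|_{B_1}^p\le C(\|u(t+h)-u(s)\|_{B_1}^p+\|u(s)-u(t)\|_{B_1}^p)$ against the weight $|t+h-s|^{-1-ap}$ (or $|t-s|^{-1-ap}$) over $s\in(t, t+h)$, and use Fubini to recover a fraction of the $W^{a,p}$ seminorm with a factor $h^{ap}$. All other steps (reflexivity, Ehrling, tightness, Kolmogorov--Riesz) are routine.
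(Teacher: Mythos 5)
The paper does not prove this lemma; it is quoted verbatim from Flandoli--Gatarek \cite{Flandoli} (whose proof in turn reduces to Simon's compactness theorem in $L^p(0,T;B)$). Your argument --- reflexivity to extract a weak limit, Ehrling's inequality to reduce to strong convergence in $L^p(0,T;B_1)$, relative compactness of the time-averages via the $L^p(0,T;B_0)$ bound, and the translation estimate $\int_0^{T-h}\|u(t+h)-u(t)\|_{B_1}^p\,dt\le C h^{ap}[u]_{W^{a,p}}^p$ obtained by averaging over $s\in(t,t+h)$ against the kernel $|t-s|^{-1-ap}$ and Fubini --- is exactly that standard proof, and the step you flag as the main obstacle is resolved correctly by the trick you describe.
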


\begin{lem}\cite{Flandoli}\label{lem:compact emb 6}
  Assume that $B_1\subset B_2$ are two Banach spaces with compact embedding, and $a\in(0,1)$, $p>1$ satisfying $a>\frac{1}{p}$. Then the space $W^{a,p}(0,T;B_1)$ is compactly embedded into $C([0,T];B_2).$
\end{lem}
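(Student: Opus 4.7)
The plan is to derive the conclusion from two ingredients: a continuous Sobolev--Morrey embedding $W^{a,p}(0,T;B_1)\hookrightarrow C^{a-1/p}([0,T];B_1)$ in the Bochner setting, and a vector-valued Arzel\`a--Ascoli argument that exploits the compactness of $B_1\hookrightarrow B_2$.

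First I would establish the Morrey-type inequality: there is a constant $C=C(a,p,T)$ such that
\[
\|u(t)-u(s)\|_{B_1}\le C\,|t-s|^{a-1/p}\,\|u\|_{W^{a,p}(0,T;B_1)},\qquad s,t\in[0,T],
\]
for every $u\in W^{a,p}(0,T;B_1)$, after modification on a Lebesgue-negligible set. This is the standard scalar Morrey embedding, transported to the Bochner setting; one derives it from the Gagliardo seminorm by averaging the two-sided triangle inequality $\|u(t)-u(s)\|_{B_1}\le \|u(t)-u(\tau)\|_{B_1}+\|u(\tau)-u(s)\|_{B_1}$ over $\tau$ in a small interval near $s$ and $t$, and then applying H\"older's inequality with respect to the kernel $|t-\tau|^{-(1+ap)/p}$. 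The condition $a>1/p$ enters precisely to guarantee integrability of this kernel.

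Next, take any bounded sequence $\{u_n\}\subset W^{a,p}(0,T;B_1)$. By the previous step, after identifying each $u_n$ with its continuous representative, the family $\{u_n\}$ is uniformly bounded in $C^{a-1/p}([0,T];B_1)$. Composing with the bounded inclusion $B_1\hookrightarrow B_2$, the family is therefore equicontinuous in $C([0,T];B_2)$. Furthermore, for each fixed $t\in[0,T]$, the set $\{u_n(t)\}$ is bounded in $B_1$, hence relatively compact in $B_2$ by the hypothesis that $B_1\hookrightarrow B_2$ is compact.

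Finally, the vector-valued Arzel\`a--Ascoli theorem applied in $C([0,T];B_2)$ delivers a subsequence converging uniformly in $B_2$, which establishes the desired compact embedding. The main obstacle is the first step: verifying the Bochner-valued Sobolev--Morrey embedding with care, in particular confirming that the scalar derivation extends verbatim to $B_1$-valued functions and that $a>1/p$ is used sharply (the borderline case $a=1/p$ already fails, even scalarly). Once that embedding is in hand, the rest of the argument is a standard equicontinuity-plus-pointwise-compactness application of Arzel\`a--Ascoli.
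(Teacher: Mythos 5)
Your argument is correct and is essentially the standard proof of this result, which the paper does not prove but simply cites from Flandoli--Gatarek; their proof likewise combines the fractional Sobolev--Morrey embedding $W^{a,p}(0,T;B_1)\hookrightarrow C^{a-1/p}([0,T];B_1)$ (valid since $ap>1$, and whose scalar derivation carries over verbatim to Bochner-valued functions) with the vector-valued Arzel\`a--Ascoli theorem. The only point worth making explicit is that the pointwise boundedness of $\{u_n(t)\}$ in $B_1$ uses the $L^p$ part of the $W^{a,p}$-norm together with the H\"older increment bound, but this is routine.
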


	The Lemma \ref{lem:UcomebY} and Definition \ref{def:Aldous} and Lemma \ref{lem:tightcri} are used in the proof of Lemma \ref{lem:LEBntight}.
\begin{lem}\label{lem:UcomebY}
	The Hilbert space 
	$$U=H^2(\R^3;\R^3)\cap \tilde{\mathbb Y}$$
	is compactly and densely embedded into $\bb Y$, where the weighted space $\tilde{\mathbb Y}$ is defined as
	\[\tilde{\mathbb Y}:=\left\{f\in \bb Y: \int_{\R^3}|f(x)|^2|x|^2\ud x+\int_{\R^3}|\nabla\times f(x)|^2|x|^2\ud x<\infty\right\}.\]
\end{lem}
\begin{proof}[Proof of that $U$  is compactly and densely embedded into $\bb Y$]
	The embedding is dense follows from that $C_0^\infty(\R^3;\R^3)$ is dense in $\bb Y$. So we only need to prove the embedding is compact.
	
	Let us take an arbitrary bounded set $V\subset U$. We want to show that for any $\eps>0$, there exists $n(\eps)\in \bb N$ and $y_1,\ldots, y_{n(\eps)}\in \bb Y$ such that 
	$$V\subset \bigcup_{i=1}^{n(\eps)}B_\eps(y_i).$$
	
	Now let us arbitrary fix an $\eps>0$, we claim that there exists $R>0$, such that for all $v\in V$, we have 
	\[\int_{B_R^c}\left(|v(x)|^2+|\nabla\times v(x)|^2\right)\ud x<\frac{\eps}{2}.\]
	In fact if it is not the case, then for any $R>0$, there exists $v\in V$ such that
	\[\int_{B_R^c}\left(|v(x)|^2+|\nabla\times v(x)|^2\right)\ud x\ge \frac{\eps}{2}.\]
	Then
	\[\|v\|^2_U\ge \int_{B_R^c}\left(|v(x)|^2+|\nabla\times v(x)|^2\right)|x|^2\ud x\ge \frac{\eps}{2}R^2,\]
	which contradict to the boundness of $V$ in $U$. 
	
	By the Rellich-Kondrachov Theorem, $H^2(B_R;\R^3)$ compactly embedded into $H^1(B_R;\R^3)$, so the embedding $ H^2(B_R;\R^3)\hookrightarrow \bb Y_R$ is also compact, where
	\[\bb Y_R:=\{y\in\bb Y:y(x)=0 \textrm{ for }|x|>R\}.\]
	Hence there exist $n(\eps)\in\bb N$ and $y_1,\ldots,y_{n(\eps)}\in \bb Y_R$ such that for any $v\in V$ there is some $j\in\{1,2,\ldots,n(\eps)\}$ such that
	\[\int_{B_R}\left(|v(x)-y_j(x)|^2+|\nabla\times(v(x)-y_j(x))|^2\right)\ud x<\frac{\eps}{2}.\]
	Since $y_j=0$ outside $B_R$, we actually have
	\begin{align*}
		\|v-y_j\|_{\bb Y}^2&=\int_{\R^3}\left(|v(x)-y_j(x)|^2+|\nabla\times(v(x)-y_j(x))|^2\right)\ud x\\
		&=\int_{B_R}\left(|v(x)-y_j(x)|^2+|\nabla\times(v(x)-y_j(x))|^2\right)\ud x\\
		&\qquad+\int_{B_R^c}\left(|v(x)|^2+|\nabla\times v(x)|^2\right)\ud x<\eps.
	\end{align*}
	Therefore the embedding $U\hookrightarrow \bb Y$ is compact and the proof is complete.
\end{proof}

\begin{defn}[Aldous condition]\label{def:Aldous}
  Let $(\Omega,\mathcal{F},\mathbb{P})$ be a probability space with a filtration $\mathbb{F}$. Let $(S,\rho)$ be a separable metric space with the metric $\rho$. We say that  $\{X_n(t)\}$, $t\in[0,T]$, of $S$-valued processes satisfies the \textit{Aldous condition} iff $\forall \eps>0$, $\forall \eta>0$, $\exists \delta>0$ such that for every sequence $\{\tau_n\}$ of $\mathbb{F}$-stopping times with $\tau_n\leq T$ a.s.  one has:
  \begin{equation}\label{eq:Ald con}
  \sup_{n\in\mathbb{N}}\sup_{0\leq\theta\leq \delta}\mathbb{P}\{\rho(X_n(\tau_n+\theta),X_n(\tau_n))\geq\eta\}\leq\eps.
  \end{equation}
\end{defn}
We will also need the following Tightness Criterion.
\begin{lem}[Tightness Criterion]\label{lem:tightcri}(\cite{ZB&EM}, Cor 3.10)
Let $(\Omega,\mathcal{F},\mathbb{P})$ be a probability space with the filtration $\mathbb{F}$. Let $H$ be a separable Hilbert space, $U$ be another Hilbert space such that the embedding $U\hookrightarrow H$ is compact and dense, $U^*$ be the dual space of $U$.
  Let $\{X_n(t)\}_{n\in\mathbb{N}}$, $t\in[0,T]$ be a sequence of continuous $\mathbb{F}$-adapted  $U^*$ valued process such that
  \begin{trivlist}
    \item[(a)] there exists a positive constant $C$ such that
    \[\sup_{n\in\mathbb{N}}\mathbb{E}\left[\sup_{s\in [0,T]}\left\|X_n(s)\right\|_{{H}}\right]\leq C.\]
    \item[(b)] $\{X_n\}_{n\in\mathbb{N}}$ satisfies the Aldous condition \eqref{eq:Ald con} in $U^*$.
  \end{trivlist}
  Then the laws of $X_n$ on $C([0,T];U^*)\cap L_w^2([0,T];{H})$ are tight.
\end{lem}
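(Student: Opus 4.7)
The plan is to establish tightness on each of the two components of the intersection $C([0,T];U^*)\cap L_w^2(0,T;H)$ separately, and then combine the resulting compact subsets by a soft topological argument. First I would address tightness on $C([0,T];U^*)$. The compact, dense embedding $U\hookrightarrow H$ together with the Riesz identification $H\simeq H^*$ yields a compact embedding $H\hookrightarrow U^*$, so closed balls in $H$ are compact in $U^*$. From condition (a) and the Chebyshev inequality, for each $\varepsilon>0$ one finds $R_\varepsilon>0$ such that $\sup_n\mathbb{P}\bigl(\sup_{t\in[0,T]}\|X_n(t)\|_H>R_\varepsilon\bigr)<\varepsilon/2$, giving uniform control of the trajectories inside the compact ball $\{x\in U^*:\|x\|_H\le R_\varepsilon\}$. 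Combined with the Aldous condition (b), the standard proof of Aldous's theorem (an equicontinuity-in-probability argument followed by an Arzelà--Ascoli criterion applied pathwise) produces a relatively compact set $K_1\subset C([0,T];U^*)$ with $\sup_n\mathbb{P}(X_n\notin K_1)<\varepsilon/2$; continuity of trajectories in $U^*$ then ensures $K_1$ sits inside $C([0,T];U^*)$ rather than only in the Skorokhod space.

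For tightness on $L_w^2(0,T;H)$, the key facts are that $L^2(0,T;H)$ is a separable Hilbert space, hence reflexive, and that its closed balls are weakly compact by Banach--Alaoglu and weakly metrizable by separability. Condition (a) gives the uniform bound $\sup_n\mathbb{E}\|X_n\|_{L^2(0,T;H)}\le T^{1/2}C$, and a further application of Chebyshev produces a radius $R'_\varepsilon$ such that the closed ball $K_2:=\{f\in L^2(0,T;H):\|f\|_{L^2(0,T;H)}\le R'_\varepsilon\}$, viewed as a compact subset of $L_w^2(0,T;H)$, satisfies $\sup_n\mathbb{P}(X_n\notin K_2)<\varepsilon/2$.

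Finally, the intersection $\mathcal{Z}:=C([0,T];U^*)\cap L_w^2(0,T;H)$ carries the topology induced by the diagonal embedding into the product $C([0,T];U^*)\times L_w^2(0,T;H)$. Since both factor topologies are Hausdorff, the diagonal is closed, so $K:=K_1\cap K_2$ (identified with its diagonal image, which is closed inside the compact product $K_1\times K_2$) is compact in $\mathcal{Z}$, and $\sup_n\mathbb{P}(X_n\notin K)<\varepsilon$, yielding the desired tightness. The main obstacle is the first step: one must carefully verify that the Aldous condition in $U^*$ combined with the uniform $H$-bound produces an equicontinuous, uniformly $U^*$-bounded set whose $U^*$-closure is compact in $C([0,T];U^*)$. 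The crucial point there is that equicontinuity must be obtained in a \emph{uniform} sense in $n$ (not just along stopping times), which requires the standard but delicate reduction from the Aldous criterion to a modulus-of-continuity estimate; once that Arzelà--Ascoli-type statement is in place, the combination of the two tightness results is essentially formal.
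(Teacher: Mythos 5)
Your outline is essentially the argument behind the cited result: the paper itself gives no proof of this lemma, quoting it directly from \cite{ZB&EM} (Cor.\ 3.10), and the proof there proceeds exactly as you describe --- compact containment in $U^*$ from the $H$-bound plus the Aldous condition yields tightness in $C([0,T];U^*)$, the $L^2(0,T;H)$-bound yields tightness in $L^2_w(0,T;H)$ via weak compactness and metrizability of balls, and the two are combined by intersecting compacts. The one imprecise step is your justification that the diagonal is closed in $K_1\times K_2$: Hausdorffness of the two factors separately is not enough when the factors carry \emph{different} topologies on the same underlying functions; what one actually needs is that a sequence converging uniformly in $U^*$ and weakly in $L^2(0,T;H)$ has the same limit, which holds here because both modes of convergence imply weak convergence in $L^2(0,T;U^*)$, where limits are unique --- this is precisely the content of the auxiliary lemma on tightness in intersections used in \cite{ZB&EM}.
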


	The next Lemma states that the $\til W_h$ introduced in Lemma \ref{prop:tilMn} is a $Q$-Wiener process.
\begin{lem}\label{lem:distrWP}
	Let $\HH$ be a separable Hilbert space. Let $(\Omega, \cl F, \bb P)$ be a probability space and $W_h$ is a $\HH$-valued $Q$-Wiener process on it for some covariance operator $Q$. Let $(\til \Omega,\til{\cl F},\til{\bb P})$ be another probability space and $\til{W}_h$ is a $\HH$-valued adapted stochastic process on it. $W_h$ and $\til{W}_h$ have same distribution on $C([0,T];\HH)$ for some $T>0$. Then $\til{W}_h$ is also a $Q$-Wiener process. 
\end{lem}
\begin{proof}
	To show $\til{W}_h$ is a Wiener process, we will show that the following four conditions are satisfied:
	\begin{trivlist}
		\item[(i)] $\til{W}_h(0)=0$, $\til{\bb P}$-a.s.;
		\item[(ii)] $\til{W}_h$ has continuous trajectories,  $\til{\bb P}$-a.s.;
		\item[(iii)] $\til{W}_h$ has independent increments;
		\item[(iv)] $\mathscr{L}(\til{W}_h(t)-\til{W}_h(s))=N(0,(t-s)Q)$.
	\end{trivlist}
	Now let us prove them one by one. We will repeatedly use the fact that all the cylindrical sets in $C([0,T];\HH)$ are Borel sets and the assumption that $W_h$ and $\til{W}_h$ have same distribution on $C([0,T];\HH)$.
	
	\begin{trivlist}
		\item[(i)] $\til{W}_h(0)=0$, $\til{\bb P}$-a.s.;
		
		Note that $\{x\in C([0,T];\HH):x_0=0\}$ is a cylindrical set, so it is a Borel set, therefore we have the following equalities:
		\setlength\arraycolsep{2pt}{
			\begin{eqnarray*}
				&&\til{\bb P}(\til{W}_h(0)=0)\\
				&=&\til{\bb P}\big\{\til\omega\in \til\Omega:\til{W}_h(\cdot,\til\omega)\in\{x\in C([0,T];\HH):x_0=0\}\big\}\\
				&=&\mathbb{P}\big\{\omega\in \Omega:W(\cdot,\omega)\in\{x\in C([0,T];\HH):x_0=0\}\big\}\\
				&=&\mathbb{P}(W_h(0)=0)=1.
		\end{eqnarray*}}
		Hence the condition (i) is proved. 
		\item[(ii)] $\til{W}_h$ has continuous trajectories,  $\til{\bb P}$-a.s.;
		
		Since $W_h$ and $\til{W}_h$ have same distribution on $C([0,T];\HH)$, we have
		\[\til{\bb P}\{\til{W}_h\in C([0,T];\HH)\}=\bb P\{W_h\in C([0,T];\HH)\}=1.\]
		Hence the condition (ii) is proved.
		
		\item[(iii)] $\til{W}_h$ has independent increments;
		
		For $0\leq t_1<t_2\leq t_3<t_4\leq T$, any $A,B\in\cl{B}(\HH)$, we have the following equality:
		\setlength\arraycolsep{2pt}{
			\begin{eqnarray*}
				&&\{\til{W}_h(t_2)-\til{W}_h(t_1)\in A\}\cap\{ \til{W}_h(t_4)-\til{W}_h(t_3)\in B\}\\
				&=&\big\{\til\omega\in\til\Omega:\til{W}_h(\cdot,\til\omega)\in \{x:x_{t_2}-x_{t_1}\in A\}\cap\{x:x_{t_4}-x_{4_3}\in B\}\big\}.
		\end{eqnarray*}}
		$A,B\in\cl{B}(\HH)$, so $\{x:x_{t_2}-x_{t_1}\in A\}$, $\{x:x_{t_4}-x_{4_3}\in B\}$ are cylindrical sets and hence they are Borel sets, so $\{x:x_{t_2}-x_{t_1}\in A\}\cap\{x:x_{t_4}-x_{4_3}\in B\}$ is also a Borel set in $C([0,T];\HH)$. Since $\til{W}_h$ and $W_h$ have same law and $W_h$ has independent increments, we have
		\setlength\arraycolsep{2pt}{
			\begin{eqnarray*}
				&&\til{\bb P}\Big(\big\{\til\omega\in\til\Omega:\til{W}_h(\cdot,\til\omega)\in \{x:x_{t_2}-x_{t_1}\in A\}\cap\{x:x_{t_4}-x_{t_3}\in B\}\big\}\Big)\\
				&=&\mathbb{P}\Big(\big\{\omega\in\Omega:W_h(\cdot,\omega)\in \{x:x_{t_2}-x_{t_1}\in A\}\cap\{x:x_{t_4}-x_{4_3}\in B\}\big\}\Big).
		\end{eqnarray*}}
		Hence
		\setlength\arraycolsep{2pt}{
			\begin{eqnarray*}
				&&\til{\bb P}\big(\{\til{W}_h(t_2)-\til{W}_h(t_1)\in A\}\cap\{ \til{W}_h(t_4)-\til{W}_h(t_3)\in B\}\big)\\
				&=&\mathbb{P}\Big(\big\{\omega\in\Omega:W_h(\cdot,\omega)\in \{x:x_{t_2}-x_{t_1}\in A\}\cap\{x:x_{t_4}-x_{4_3}\in B\}\big\}\Big)\\
				&=&\mathbb{P}\big(\{W_h(t_2)-W_h(t_1)\in A\}\cap\{ W_h(t_4)-W_h(t_3)\in B\}\big)\\
				&=&\mathbb{P}\big(\{W_h(t_2)-W_h(t_1)\in A\}\big)\mathbb{P}\big(\{ W_h(t_4)-W_h(t_3)\in B\}\big)\\
				&=&\mathbb{P}\big(W_h\in\{x\in C([0,T];\HH):x_{t_2}-x_{t_1}\in A\}\big)\mathbb{P}(W_h\in\{x\in C([0,T];\HH):x_{t_4}-x_{t_3}\in B\}\big)\\
				&=&\til{\bb P}\big(\til{W}_h\in\{x\in C([0,T];\HH):x_{t_2}-x_{t_1}\in A\}\big)\til{\bb P}(\til{W}_h\in\{x\in C([0,T];\HH):x_{t_4}-x_{t_3}\in B\}\big)\\
				&=&\til{\bb P}\big(\{\til{W}_h(t_2)-\til{W}_h(t_1)\in A\}\big)\mathbb{P}\big(\{ \til{W}_h(t_4)-\til{W}_h(t_3)\in B\}\big)
		\end{eqnarray*}}
		Hence $\til{W}_h(t_2)-\til{W}_h(t_1)$ and $\til{W}_h(t_4)-\til{W}_h(t_3)$ are independent, so the condition (iii) is proved.
		
		\item[(iv)] $\mathscr{L}(\til{W}_h(t)-\til{W}_h(s))=N(0,(t-s)Q)$ for all $s,t\in[0,T]$.
		
		Similarly as before, since $\til{W}_h$ and $W_h$ have same law, we have
		\begin{align*}	
			&\til{\bb P}\{\til{W}_h(t)-\til{W}_h(s)\in A\}=\til{\bb P}\big\{\til\omega\in\til\Omega:\til{W}_h(\cdot,\til\omega)\in \{x:x_{t}-x_{s}\in A\}\big\}\\
			=&\bb P\big\{\omega\in\Omega:{W}_h(\cdot,\til\omega)\in \{x:x_{t}-x_{s}\in A\}\big\}=\bb P\{W_h(t)-W_h(s)\in A\}.
		\end{align*}
		for all $A\in\mathcal{B}(\HH)$ and $s,t\in [0,T]$. Therefore since $W_h$ is a Wiener process, we have $\mathscr{L}(\til{W}_h(t)-\til{W}_h(s))=\mathscr{L}(W_h(t)-W_h(s))=N(0,(t-s)Q)$ for all $s,t\in[0,T]$. Hence the condition (iv) is proved.
	\end{trivlist}
	Therefore the proof of Lemma \ref{lem:distrWP} is complete.
\end{proof}

The next two results are used in the proof of Proposition \ref{prop:LLCn}.
\begin{lem}[Kuratowski Theorem]\label{thm:kuratowski}\cite{key-1}
 Let $X_1,X_2$ be Polish spaces equipped with their Borel $\sigma$-field $\B(X_1),\B(X_2)$, and $\vp:X_1\longrightarrow X_2$ be a one to one Borel measurable map, then for any $E\in\B(X_1)$, $\vp(E)\in\B(X_2)$.
\end{lem}
\begin{lem}[\cite{Rudin Func}, Page 66, Thm 3.12]\label{lem:wcloeqclo}
  Suppose $E$ is a convex subset of a locally convex space $X$. Then the weak closure $\overline{E}_w$ of $E$ is equal to its original closure $\overline{E}$.
\end{lem}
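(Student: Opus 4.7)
The plan is to prove the two inclusions $\overline{E}\subseteq\overline{E}_w$ and $\overline{E}_w\subseteq\overline{E}$ separately, the first being essentially tautological and the second being the substantive content that requires convexity and the Hahn--Banach separation theorem.

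For the inclusion $\overline{E}\subseteq\overline{E}_w$, I would simply observe that the weak topology on $X$ is by definition coarser than the original locally convex topology (since every weakly continuous linear functional is, in particular, continuous). Consequently every weakly closed set is closed in the original topology. In particular $\overline{E}_w$ is closed in the original topology and contains $E$, hence contains the smallest such set $\overline{E}$. No use of convexity is needed here.

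For the reverse inclusion $\overline{E}_w\subseteq\overline{E}$ it suffices to show that $\overline{E}$ is itself weakly closed. First I would note that since $E$ is convex, $\overline{E}$ is convex as well: this is a standard fact in topological vector spaces, following from the continuity of addition and scalar multiplication applied to limits of nets in $E$. Then, fixing an arbitrary $x_0\in X\setminus\overline{E}$, I would apply the geometric (separation) form of the Hahn--Banach theorem to the disjoint convex sets $\{x_0\}$ (compact) and $\overline{E}$ (closed) in the locally convex space $X$: there exists a continuous linear functional $\Lambda\in X^*$ and constants $c_1<c_2$ such that
\[
\mathrm{Re}\,\Lambda(y)\le c_1<c_2\le \mathrm{Re}\,\Lambda(x_0),\qquad y\in\overline{E}.
\]
The set $U:=\{z\in X:\mathrm{Re}\,\Lambda(z)>c_1\}$ is then a weakly open neighbourhood of $x_0$ disjoint from $\overline{E}$, proving $x_0\notin\overline{E}_w$. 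Hence $\overline{E}_w\subseteq\overline{E}$.

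The only genuinely non-trivial ingredient is the Hahn--Banach separation theorem in the form valid for locally convex spaces (closed convex set separated from a point by a continuous linear functional), which is precisely the step where local convexity and convexity of $E$ are used; the rest is formal. Combining the two inclusions yields $\overline{E}_w=\overline{E}$, completing the argument.
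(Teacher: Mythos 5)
Your proof is correct and is exactly the standard argument (the one in the cited reference, Rudin's Theorem 3.12): the weak topology is coarser so $\overline{E}\subseteq\overline{E}_w$ trivially, and the reverse inclusion follows because a closed convex set in a locally convex space is weakly closed, by Hahn--Banach separation of a point from the closed convex set $\overline{E}$. The paper itself offers no proof --- it quotes the result from \cite{Rudin Func} --- so there is nothing further to compare.
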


The next result is a version of It\^o formula, which is used in the proof of Lemma \ref{lem:M=1}.
\begin{lem}\cite{Pardoux}(Th. 1.2)\label{lem:Pardoux}
  Let $V$ and $H$ be two separable Hilbert spaces, such that $V\hookrightarrow H$ continuously and densely. We identify $H$ with it's dual space. And let $M^2(0,T;\mathbb{H})$  denote the space of  $H$-valued measurable process with the filtered probability space $(\Omega,(\mathcal{F}_t)_{t\in [0,T]},\mathbb{P})$ which satisfy: $\vp\in M^2(0,T;\mathbb{H})$ if and only if
  \begin{trivlist}
    \item[(i)] $\vp(t)$ is $\mathcal{F}_t$ measurable for almost every $t$;
    \item[(ii)] $\mathbb{E}\int_0^t|\vp(t)|^2\ud t<\infty.$
  \end{trivlist}
  We suppose that
  \setlength\arraycolsep{2pt}{
  \begin{eqnarray*}
    &&u\in M^2(0,T;V),\quad u_0\in \mathbb{H},\quad v\in M^2(0,T;V'),\\
    &&\mathbb{E}\int_0^T\sum_{j=1}^\infty\|z_j(t)\|_{\mathbb{H}}^2\ud t<\infty,
  \end{eqnarray*}}
  with
  \[u(t)=u_0+\int_0^tv(s)\ud s+\sum_{j=1}^\infty\int_0^tz_j(s)\ud W_j(s).\]
  Let $\gamma$ be a twice differentiable functional on $H$, which satisfies:
  \begin{trivlist}
    \item[(i)] $\gamma$, $\gamma'$ and $\gamma''$ are locally bounded.
    \item[(ii)] $\gamma$ and $\gamma'$ are continuous on $H$.
    \item[(iii)] Let $\mathscr{L}^1(H)$ be the Banach space of all the trace class operators on $H$. Then $\forall Q\in \mathscr{L}^1(H)$, $Tr[Q\circ \gamma'']$ is a continuous functional on $H$.
    \item[(iv)] If $u\in V$, $\psi'(u)\in V$; $u\mapsto\gamma'(u)$ is continuous from $V$ (with the strong topology) into $V$ endowed with the weak topology.
    \item[(v)] $\exists k$ such that $\|\gamma'(u)\|_{V}\leq k(1+\|u\|_{V})$, $\forall u\in V$.
  \end{trivlist}
  Then $\mathbb{P}$ almost surely,
  \setlength\arraycolsep{2pt}{
  \begin{eqnarray*}
    \gamma(u(t))&=&\gamma(u_0)+\int_0^t{}_{V'}\llangle v(s),\gamma'(u(s))\rrangle_{V} \ud s+\sum_{j=1}^\infty\int_0^t{}{}_{H}\llangle\gamma'(u(s)),z_j(s)\rrangle_{H}\ud W_j(s)\\
    &&\qquad+\frac{1}{2}\sum_{j=1}^\infty\int_0^t{}_{H}\llangle\gamma''(u(s))z_j(s),z_j(s)\rrangle_{H}\ud s.
  \end{eqnarray*}}
\end{lem}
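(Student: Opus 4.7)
The plan is to follow Pardoux's original strategy: regularize $u$, $v$ and the $z_j$ in time, apply the classical $C^2$ Itô formula in $H$ to the smoothed processes, and then pass to the limit using precisely assumptions (i)--(v) on $\gamma$. The deep point of the lemma is that $u$ lives in $V$ while $v$ lives in $V'$, so the chain-rule expansion must produce a $V'$-$V$ duality pairing; the classical Itô formula in $H$ produces an $H$-pairing, and recovering the dual pairing in the limit is what forces conditions (iv)--(v).

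First, I would regularize in time by setting $u_\alpha(t) = \alpha\int_0^t e^{-\alpha(t-s)}u(s)\,ds$, and similarly $v_\alpha$ and $z_{j,\alpha}$. Then $u_\alpha$ is absolutely continuous with values in $V$, $u_\alpha \to u$ in $M^2(0,T;V)$, $v_\alpha \to v$ in $M^2(0,T;V')$, and $\mathbb{E}\int_0^T\sum_j\|z_{j,\alpha}-z_j\|_H^2\,dt\to 0$ as $\alpha\to\infty$. Since $u_\alpha$ is a continuous semimartingale in $H$ with absolutely continuous drift part, the standard Hilbert-space Itô formula (as in Da Prato--Zabczyk) applied to $\gamma\in C^2(H)$ yields
\begin{equation*}
\gamma(u_\alpha(t)) = \gamma(u_\alpha(0)) + \int_0^t\langle\gamma'(u_\alpha(s)),\dot u_\alpha(s)\rangle_H\,ds + \sum_{j=1}^\infty\int_0^t\langle\gamma'(u_\alpha(s)),z_{j,\alpha}(s)\rangle_H\,dW_j(s) + \frac{1}{2}\sum_{j=1}^\infty\int_0^t\langle\gamma''(u_\alpha(s))z_{j,\alpha}(s),z_{j,\alpha}(s)\rangle_H\,ds.
\end{equation*}
Since $\gamma'(u_\alpha)\in V$ pathwise a.e.\ by (v), the first integral on the right can be rewritten as the $V'$-$V$ duality pairing $\int_0^t{}_{V'}\langle v_\alpha(s),\gamma'(u_\alpha(s))\rangle_V\,ds$ (modulo a negligible term from the martingale part of $\dot u_\alpha$ that vanishes by orthogonality).

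Second, I would pass to the limit $\alpha\to\infty$ term-by-term. For the stochastic integral, the Itô isometry plus continuity of $\gamma'$ on $H$ (condition (ii)) combined with dominated convergence gives the limit. For the trace-type term, assumption (iii) (continuity of $Q\mapsto\mathrm{Tr}[Q\circ\gamma'']$) together with convergence of the covariance operators built from $\{z_{j,\alpha}\}$ allows Vitali's theorem to identify the limit. For $\gamma(u_\alpha(t))$ itself, local boundedness and continuity of $\gamma$ on $H$ suffice. The delicate step is the drift term: from (iv), $\gamma'(u_\alpha)$ converges to $\gamma'(u)$ weakly in $V$ along a subsequence (after a.e.\ strong $V$-convergence $u_\alpha\to u$), and the uniform bound $\|\gamma'(u_\alpha)\|_V\le k(1+\|u_\alpha\|_V)$ from (v) combined with $u\in M^2(0,T;V)$ gives uniform integrability in $M^2(0,T;V)$; since $v_\alpha\to v$ strongly in $M^2(0,T;V')$, the strong$\times$weak duality argument yields
\begin{equation*}
\int_0^t{}_{V'}\langle v_\alpha(s),\gamma'(u_\alpha(s))\rangle_V\,ds \longrightarrow \int_0^t{}_{V'}\langle v(s),\gamma'(u(s))\rangle_V\,ds.
\end{equation*}

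The main obstacle is precisely this passage to the limit in the dual pairing. Without (iv)--(v) one would only know from (ii) that $\gamma'(u_\alpha)\to\gamma'(u)$ in $H$, which is insufficient since $v\in V'$ is strictly larger than $H'\cong H$; Pardoux's conditions (iv) and (v) are tailored exactly so that $\gamma'(u_\alpha)$ has enough $V$-regularity and weak-continuity to pair with $v\in V'$ in the limit. A subsidiary technical issue is ensuring that $\gamma(u(\cdot))$ has a continuous version and that all the convergences hold simultaneously $\mathbb{P}$-a.s.\ for each fixed $t$; this follows a posteriori from the limiting identity and the continuity of its right-hand side as a function of $t$.
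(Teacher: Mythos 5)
First, a point of reference: the paper offers no proof of this statement. It is quoted from Pardoux (Th.\ 1.2) in the appendix and used as a black box, so there is no internal argument of the paper to compare yours against; I can only assess your sketch on its own terms.

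There is a genuine gap at the very first step. If you set $u_\alpha(t)=\alpha\int_0^te^{-\alpha(t-s)}u(s)\,ds$, then $u_\alpha$ solves $\dot u_\alpha=\alpha\,(u-u_\alpha)$, i.e.\ it is an absolutely continuous, bounded--variation process with \emph{no martingale part}. Consequently the displayed ``It\^o formula'' for $\gamma(u_\alpha)$, containing the stochastic integrals of $\langle\gamma'(u_\alpha),z_{j,\alpha}\rangle_H$ and the trace correction, is false: the chain rule for $\gamma(u_\alpha)$ produces only the single drift term $\int_0^t\langle\gamma'(u_\alpha(s)),\alpha(u(s)-u_\alpha(s))\rangle_H\,ds$. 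Equivalently, it is not true that $du_\alpha=v_\alpha\,dt+\sum_jz_{j,\alpha}\,dW_j$ when $v_\alpha$ and $z_{j,\alpha}$ are the same time--convolutions of $v$ and $z_j$; a (stochastic) Fubini computation gives instead $u_\alpha(t)=(1-e^{-\alpha t})u_0+\int_0^t(1-e^{-\alpha(t-r)})v(r)\,dr+\sum_j\int_0^t(1-e^{-\alpha(t-r)})z_j(r)\,dW_j(r)$, whose integrands depend on $t$, so no cancellation rescues the claimed decomposition. Since every subsequent limit is taken inside an identity that does not hold, the argument collapses as written. The standard repairs are either (a) to regularize in \emph{space}: apply the resolvent $J_\lambda=(I+\lambda\Lambda)^{-1}$ of an operator realizing the Gelfand triple (or the Galerkin projection onto a basis of $V$), so that $J_\lambda u$ remains a genuine semimartingale with $J_\lambda u(t)=J_\lambda u_0+\int_0^tJ_\lambda v\,ds+\sum_j\int_0^tJ_\lambda z_j\,dW_j$ and with $J_\lambda v\in M^2(0,T;H)$, whence the classical $H$-valued It\^o formula applies, and then let $\lambda\to0$ using (i)--(v) exactly as in your second step; or (b) to follow Pardoux's actual route of a time partition, a second--order Taylor expansion of $\gamma$ over each increment, and substitution of the equation, in which the $V'$--$V$ pairing appears directly. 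Your discussion of the limit passage (strong $V'$ against weak $V$ for the drift, It\^o isometry for the martingale part, condition (iii) for the trace term) has the right shape once the approximation is fixed, and your identification of (iv)--(v) as the crux is correct.
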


The last Lemma is used in the proof of Lemma \ref{thm:tilMregt}.
\begin{lem}[Kolmogorov continuity]\label{lem:Kol con}
  Let $\{u(t)\}_{t\in[0,T]}$ be a stochastic process with values in a separable Banach space $\bb X$, such that for some $C>0$, $\eps>0$, $\delta>1$ and all $t,s\in[0,T]$,
  \[\mathbb{E}\big\|u(t)-u(s)\big\|_{\bb X}^\delta\leq C|t-s|^{1+\eps}.\]
  Then there exists a version of $u$ with $\mathbb{P}$ almost surely trajectories being H$\ddot{\textrm{o}}$lder continuous functions with an arbitrary exponent smaller than $\frac{\eps}{\delta}$.
\end{lem}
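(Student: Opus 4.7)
The plan is to adapt the classical Kolmogorov--Chentsov argument to the Banach--space setting; the scalar proof transfers verbatim once absolute value is replaced by $\|\cdot\|_{\bb X}$. Fix $\alpha\in(0,\eps/\delta)$ arbitrary; the target is to build a modification $\til u$ whose trajectories are H\"older continuous of exponent $\alpha$ on $[0,T]$.

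First I would discretize on a dyadic grid. Set $t_k^n:=kT/2^n$, $k=0,\ldots,2^n$. Chebyshev's inequality together with the moment bound yields
\[
\mathbb{P}\left(\|u(t_{k+1}^n)-u(t_k^n)\|_{\bb X}\geq 2^{-n\alpha}\right)\leq C\,T^{1+\eps}\,2^{n\alpha\delta}\,2^{-n(1+\eps)},
\]
and a union bound over $k=0,\ldots,2^n-1$ gives
\[
\mathbb{P}\left(\max_{0\leq k<2^n}\|u(t_{k+1}^n)-u(t_k^n)\|_{\bb X}\geq 2^{-n\alpha}\right)\leq C'\,2^{-n(\eps-\alpha\delta)}.
\]
Since $\alpha<\eps/\delta$, the right-hand side is summable in $n$, so by the Borel--Cantelli lemma there exists a set $\Omega^\ast$ with $\mathbb{P}(\Omega^\ast)=1$ and an integer-valued random variable $N(\omega)<\infty$ on $\Omega^\ast$ such that for every $n\geq N(\omega)$ the above maximum is strictly less than $2^{-n\alpha}$.

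Next I would perform the standard chaining argument on the dyadic rationals $\mathbb{D}:=\bigcup_n\{t_k^n:k=0,\ldots,2^n\}$. For $\omega\in\Omega^\ast$ and $s,t\in\mathbb{D}$ with $0<t-s<2^{-N(\omega)}$, choose $n_0$ with $2^{-n_0-1}\leq t-s<2^{-n_0}$ and decompose $[s,t]$ into at most two blocks at every dyadic level $n\geq n_0$ to obtain
\[
\|u(t,\omega)-u(s,\omega)\|_{\bb X}\leq 2\sum_{n\geq n_0}2^{-n\alpha}=K_1\,2^{-n_0\alpha}\leq K(\omega)\,|t-s|^\alpha,
\]
for some random constant $K(\omega)$ finite on $\Omega^\ast$. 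Hence $u|_{\mathbb{D}}$ is almost surely uniformly H\"older of exponent $\alpha$ and, since $\bb X$ is a complete metric space, extends uniquely to a continuous function $\til u(\cdot,\omega):[0,T]\to\bb X$ inheriting the same H\"older estimate. Setting $\til u\equiv 0$ off $\Omega^\ast$ produces a process with trajectories in $C^\alpha([0,T];\bb X)$ almost surely.

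Finally I would verify that $\til u$ is a version of $u$: for any fixed $t\in[0,T]$, take a dyadic sequence $t_n\to t$. The moment bound and Chebyshev give $u(t_n)\to u(t)$ in probability, while $u(t_n,\omega)=\til u(t_n,\omega)\to\til u(t,\omega)$ pointwise on $\Omega^\ast$ by continuity of $\til u(\cdot,\omega)$, so $u(t)=\til u(t)$ $\mathbb{P}$-a.s. There is no substantive obstacle here: the only point requiring mild care is the passage from the dyadic estimate to the continuous modification, where completeness of $\bb X$ enters (separability is invoked only to guarantee measurability of $\til u$ and of the exceptional sets).
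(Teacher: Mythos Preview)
The paper does not actually supply a proof of this lemma: it is listed in the Appendix as a standard result, stated without argument and used as a black box in the proof of Lemma~\ref{thm:tilMregt}. Your proposal is the classical Kolmogorov--Chentsov dyadic argument and is correct as written; there is nothing to compare against in the paper itself.
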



\begin{thebibliography}{99}
\bibitem{SA&ZB&JW} S. Albeverio, Z. Brze{\'z}niak and J. Wu: \textit{Existence of global solutions and invariant measures for stochatic differential equations driven by Possion type noise with non-Lipschitz coefficients}, Journal of Mathematical Analysis and Applications, 2010, 371(1): 309-322. DOI: 10.1016/j.jmaa.2010.05.039
\bibitem{Aldous} D. Aldous: \textit{Stopping times and tightness}, The Annals of Probability, 1978, Vol 6, No. 2, 335-340. DOI: 10.1214/aop/1176995579
\bibitem{LB&ZB&AP&MN} L. Ba{\v n}as., Z. Brze{\'z}niak, M. Neklyudov, and A. Prohl: \textit{A Convergent finite element based discretization of the stochastic Landau-Lifshitz-Gilbert equation}, IMA J Numer Anal (2013), drt020. DOI: 10.1093/imanum/drt020
\bibitem{LB&ZB&AP&MNbk} L.. Ba{\v n}as, Z. Brze{\'z}niak, M. Neklyudov and A. Prohl: {\sc Stochastic ferromagnetism: analysis and numerics,} De Gruyter Studies in Mathematics, 2013. 
\bibitem{berkow}
D. V. Berkov: \textit{Magnetization Dynamics Including Thermal Fluctuations: Basic
Phenomenology, Fast Remagnetization Processes and Transitions Over High-Energy
barriers}, In Handbook of Magnetism and Advanced Magnetic Materials, vol. 2, edited by
H. Kronm\"uller and S. Parkin. Micromagnetism: Wiley, 2007. DOI: 10.1002/9780470022184.hmm204
\bibitem{brown}
W. F. Brown: \textit{Thermal fluctuations of a single-domain particle}, Physical Review 130, no. 5 (1963) 1677-1686. DOI: 10.1103/PhysRev.130.1677
\bibitem{ZB&BG&TJ} Z. Brze{\'z}niak, B. Goldys and T. Jegaraj: \textit{Large deviations and transitions between equilibria for stochastic Landau-Lifshitz-Gilbert equation}, Arch. Ration. Mech. Anal. 226 (2017), no. 2, 497-558. DOI: 10.1007/s00205-017-1117-0
\bibitem{ZB&BG&TJ1} Z. Brze{\'z}niak, B. Goldys and T. Jegaraj: \textit{Weak solutions of a stochastic Landau-Lifshitz-Gilbert Equation}, Applied Mathematics Research eXpress, 2013, 2013(1): 1-33. DOI: 10.1093/amrx/abs009
\bibitem{ZB&LL1} Z. Brze{\'z}niak, L. Li: \textit{Weak solutions of the Stochastic Landau-Lifshitz Equations with non-zero anisotrophy energy}, Applied Mathematics Research eXpress, 2016(2), 334-375. DOI: 10.1093/amrx/abw003
\bibitem{ZB&EM}  Z. Brze{\'z}niak, E. Motyl:  \textit{Existence of a martingale solution of the stochastic Navier-Stokes equations in unbounded 2D and 3D domains}, Journal of Differential Equations, 2012. DOI: 10.1016/j.jde.2012.10.009
\bibitem{ZB&MO} Z. Brze{\'z}niak, M. Ondrej{\'a}t: \textit{Stochastic geometric wave equations with values in compact Riemannian homogeneous spaces}, The Annals of Probability, 2013, 41(3B): 1938-1977. DOI: 10.1214/11-AOP690
\bibitem{DP&ZSE} G. Da Prato and J. Zabczyk: {\sc Stochastic equations in infinite dimensions.} Second edition, Cambridge University Press, 2014.
\bibitem{Dudley2002} R.M. Dudley: {\sc Real Analysis and Probability.} Cambridge University Press, 2002.
\bibitem{dumas}
 E. Dumas and F. Sueur: \textit{On the weak solutions to the Maxwell-Landau-Lifshitz equations and to the Hall-magneto-hydrodynamic equations}, Comm. Math. Phys. 330 (2014), no. 3, 1179-1225. DOI: 10.1007/s00220-014-1924-1
\bibitem{Flandoli} F.  Flandoli and D. Gatarek: \textit{Martingale and stationary solutions for stochastic Navier-Stokes equations}, Probab. Theory Related Fields, 1995, 102 no.3, 367-391. DOI:10.1007/BF01192467
\bibitem{Gilbert} T. L. Gilbert: \textit{A Lagrangian formulation of the gyromagnetic equation of the magnetization field}, Phys. Rev., 1955, 100, 1243
\bibitem{Goldys&Le} B. Goldys, K. N. Le  and T. Tran: \textit{A finite element approximation for the stochastic Landau-Lifshitz-Gilbert equation}, J. Differential Equations 260 (2016), no. 2, 937-970. DOI: 10.1016/j.jde.2015.09.012
\bibitem{GLT}
B. Goldys, K. N. Le  and T. Tran:\textit{A finite element approximation for the stochastic Maxwell--Landau--Lifshitz--Gilbert system}, https://arxiv.org/abs/1702.03027, 2017
\bibitem{GGL} B. Goldys, J. F. Grotowski and K. N. Le: \textit{Weak martingale solutions to the stochastic Landau-Lifshitz-Gilbert equation with multi-dimensional noise via a convergent finite-element scheme}, Stochastic Process. Appl. 130 (2020), no. 1, 232-261. DOI: 10.1016/j.spa.2019.02.011
\bibitem{Jakubowski} A. Jakubowski: \textit{The almost sure Skorokhod representation for subsequences in nonmetric spaces}, Theory Probab. Appl., 1997, 42: 167-174.
\bibitem{joly}
 J. L. Joly, G. M\'etivier and J.  Rauch''  \textit{Global solutions to Maxwell equations in a ferromagnetic medium}, Ann. Henri Poincar\'e 1 (2000), no. 2, 307-340   
\bibitem{Landau} L. Landau and E. Lifshitz: \textit{On the theory of the dispersion of magnetic permeability in ferromagnetic bodies} Phys. Z. Sowj. 8, 153 (1935); terHaar, D. (eds.) Reproduced in: Collected Papers of L. D. Landau, pp. 101-114. New York: Pergamon Press 1965. DOI:10.1016/B978-0-08-036364-6.50008-9
\bibitem{moser}
R. Moser: \textit{Moving boundary vortices for a thin-film limit in micromagnetics},  Comm. Pure Appl. Math. 58 (2005), no. 5, 701-721. DOI: 10.1002/cpa.20057
\bibitem{neel}
L. N\'eel: \textit{Bases d'une nouvelle th\'eorie g\'en\'erale du champ coercitif},  Annales de l'universit\'e
de Grenoble 22 (1946): 299-343.
\bibitem{Pardoux} E. Pardoux: \textit{Stochastic Partial Differential Equations and Filtering of Diffusion Processes}, Stochastics, 1980, 3(1-4): 127-167. DOIL10.1080/17442507908833142
\bibitem{key-1} K. R. Parthasarathy:  {\sc Probability measures on metric spaces.} AMS Bookstore, 1967.
\bibitem{Rudin Func}  W. Rudin: {\sc Functional Analysis.}  McGraw-Hill Science/Engineering/Math, 1991.
\bibitem{Visintin}  A. Visintin: \textit{On Landau-Lifshitz' Equations for Ferromagnetism} Japan journal of applied mathematics, 1985, 2(1): 69-84.
\bibitem{zamponi}
N, Zamponi and A.J\"{u}ngel: \textit{Analysis of a coupled spin drift-diffusion Maxwell-Landau-Lifshitz system}, J. Differential Equations 260 (2016), no. 9, 6828-6854. DOI: 10.1016/j.jde.2016.01.010
\end{thebibliography}
\end{document}